\newtheorem{thm}{Theorem}[section]
\newtheorem{fact}[thm]{Fact}
\newtheorem{lem}[thm]{Lemma}
\newtheorem{prop}[thm]{Proposition}
\newtheorem{cor}[thm]{Corollary}
\theoremstyle{definition}
\newtheorem{defn}[thm]{Definition}
\newtheorem{defns}[thm]{Definitions}
\theoremstyle{remark}
\newtheorem{remark}[thm]{Remark}
\newtheorem{remarks}[thm]{Remarks}
\newtheorem{example}[thm]{Example}
\newtheorem{examples}[thm]{Examples}
\numberwithin{equation}{section}
\def\@seccntformat#1{%
 \protect\textup{\protect\@secnumfont
 \ifnum\pdfstrcmp{subsection}{#1}=0 \bfseries\fi
 \csname the#1\endcsname
 \protect\@secnumpunct
 }%
} 
 \newcommand{\N}{{\mathbb N}}
\newcommand{\Z}{{\mathbb Z}} \newcommand{\R}{{\mathbb R}}
\newcommand{\gtp}{{\mathfrak p}} \newcommand{\gtq}{{\mathfrak q}}
\newcommand{\gtm}{{\mathfrak m}} \newcommand{\gtn}{{\mathfrak n}}
\newcommand{\gta}{{\mathfrak a}} \newcommand{\gtb}{{\mathfrak b}}
 \newcommand{\gtQ}{{\mathfrak Q}}
\newcommand{\Dd}{{\EuScript D}}
\newcommand{\Uu}{{\EuScript U}}
\newcommand{\Ff}{{\EuScript F}}
\newcommand{\partialder}{d}
\newcommand{\im}{\operatorname{Im}}
\newcommand{\qf}{\operatorname{qf}}
\newcommand{\hgt}{\operatorname{ht}}
\newcommand{\supp}{\operatorname{supp}}
\newcommand{\depth}{\operatorname{{\tt d}}}
\newcommand{\Spec}{\operatorname{Spec}}
\newcommand{\Specmax}{\beta}
\newcommand{\Sper}{\operatorname{Sper}}
\newcommand{\tr}{\operatorname{tr}}
\newcommand{\cl}{\operatorname{Cl}}
\newcommand{\diam}{{\text{\tiny$\displaystyle\diamond$}}}
\newcommand{\x}{{\tt x}} \newcommand{\y}{{\tt y}} 
\newcommand{\z}{{\tt z}} \renewcommand{\t}{{\tt t}}
\newcommand{\veps}{\varepsilon}
\newcommand{\ol }{\overline}
\begin{document}

\title[Rings of differentiable semialgebraic functions]{Rings of differentiable semialgebraic functions}
\author{E. Baro}
\author{Jos\'e F. Fernando}
\author{J.M. Gamboa}
\address{Departamento de \'Algebra, Geometr\'\i a y Topolog\'\i a, Facultad de Ciencias Matem\'aticas, Universidad Complutense de Madrid, 28040 MADRID (SPAIN)}
\email{eliasbaro@pdi.ucm.es, josefer@mat.ucm.es, jmgamboa@mat.ucm.es}

\date{August 16th, 2019}
\subjclass[2010]{Primary 14P10, 46E25; Secondary 12D15, 13E99}
\keywords{Differentiable semialgebraic function of class $r$, Zariski and maximal spectra, real closed field, real closed ring, real closure, \L ojasiewicz's Nullstellensatz, Nash functions}

\thanks{Authors supported by Spanish GRAYAS MTM2014-55565-P, Spanish STRANO MTM2017-82105-P and Grupos UCM 910444}
\begin{abstract} 
In this work we analyze the main properties of the Zariski and maximal spectra of the ring ${\mathcal S}^r(M)$ of differentiable semialgebraic functions of class ${\mathcal C}^r$ on a semialgebraic set $M\subset\R^m$. Denote ${\mathcal S}^0(M)$ the ring of semialgebraic functions on $M$ that admit a continuous extension to an open semialgebraic neighborhood of $M$ in $\cl(M)$. This ring is the real closure of ${\mathcal S}^r(M)$. If $M$ is locally compact, the ring ${\mathcal S}^r(M)$ enjoys a \L ojasiewicz's Nullstellensatz, which becomes a crucial tool. Despite ${\mathcal S}^r(M)$ is not real closed for $r\geq1$, the Zariski and maximal spectra of this ring are homeomorphic to the corresponding ones of the real closed ring ${\mathcal S}^0(M)$. In addition, the quotients of ${\mathcal S}^r(M)$ by its prime ideals have real closed fields of fractions, so the ring ${\mathcal S}^r(M)$ is close to be real closed. The missing property is that the sum of two radical ideals needs not to be a radical ideal. The homeomorphism between the spectra of ${\mathcal S}^r(M)$ and ${\mathcal S}^0(M)$ guarantee that all the properties of these rings that arise from spectra are the same for both rings. For instance, the ring ${\mathcal S}^r(M)$ is a Gelfand ring and its Krull dimension is equal to $\dim(M)$. We also show similar properties for the ring ${\mathcal S}^{r*}(M)$ of differentiable bounded semialgebraic functions. In addition, we confront the ring ${\mathcal S}^{\infty}(M)$ of differentiable semialgebraic functions of class ${\mathcal C}^{\infty}$ with the ring ${\mathcal N}(M)$ of Nash functions on $M$.
\end{abstract}

\maketitle
\setcounter{tocdepth}{2}
{\small
\begin{spacing}{0.01}
\tableofcontents
\end{spacing}}

\section{Introduction}\label{s1}

Recall that a \em semialgebraic set \em $M\subset\R^m$ is a set that can be described as a finite boolean combination of polynomial equalities and inequalities. A map $f:M\to N$ between semialgebraic sets $M\subset\R^m$ and $N\subset\R^n$ is \em semialgebraic \em if its graph is a semialgebraic subset of $\R^m\times\R^n$. In order to lighten notation we call \em semialgebraic \em the functions $f:M\to\R$ that are continuous and have semialgebraic graph.

Let $r\geq1$ be a positive integer. An initial problem when dealing with differentiable semialgebraic functions of class ${\mathcal C}^r$ on a semialgebraic set $M\subset\R^m$ is to find an intrinsic definition of such type of functions. If $M\subset\R^m$ is in addition an open subset, we say that $f:M\to\R$ is an \em ${\mathcal S}^r$-function \em if it is a differentiable function of class ${\mathcal C}^r$ with semialgebraic graph. In \cite{kp2,at,th} the authors made a careful analysis of an intrinsic definition of ${\mathcal S}^r$-function in terms of jets of order $r$ of (continuous) semialgebraic functions \cite[Def.1.1]{at}. Their purpose is to achieve a semialgebraic version of Whitney's extension theorem when $M$ is closed in $\R^m$. More precisely, \em if $f:M\to\R$ is an ${\mathcal S}^r$-function on a closed semialgebraic subset of $\R^m$, there exists an ${\mathcal S}^r$-function $F:\R^m\to\R$ such that $F|_M=f$\em, see \cite[Thm.1.2]{at}. This approach follows the same type of ideas developed to prove Whitney's extension theorem \cite[\S1]{m} adapted to the semialgebraic case.

In case $r=1$ the authors go beyond and prove that \em a semialgebraic function $f:M\to\R$ on a closed semialgebraic subset of $\R^m$ is ${\mathcal S}^1$ if and only if for each point $x\in M$ there exists a (non-necessarily semialgebraic) ${\mathcal C}^r$ extension $F_x:W^x\to\R$ to an open semialgebraic neighborhood $W^x\subset\R^m$ of the restriction $f|_{M\cap W^x}$\em, see \cite[Cor.7.14]{at}. Their proof follows the strategy developed in \cite{feff,feff2} adapted to the semialgebraic case. The authors hope that a suitable modification to the semialgebraic case of the very sophisticated techniques developed in \cite{feff} will allow to prove an analogous result for each positive integer $r\geq1$. In fact, if $M\subset\R^n$ is compact and the local ${\mathcal C}^r$ extension $F_x$ of $f$ on an open semialgebraic neighborhood of each point $x\in M$ are in addition semialgebraic, the existence of an ${\mathcal S}^r$-extension of $f$ to $\R^m$ is guaranteed via an appropriate (finite) ${\mathcal S}^r$-partition of unity. 

In this work we adopt the definition of ${\mathcal S}^r$-functions proposed in \cite{at} in terms of jets of order $r$ of (continuous) semialgebraic functions and we denote ${\mathcal S}^r(M)$ the set of ${\mathcal S}^r$-functions on an arbitrary semialgebraic set $M\subset\R^m$. This set is an $\R$-algebra with respect to the usual sum and product of functions (see Definition \ref{def:jets}). We will denote by ${\mathcal S}(M)$ the set of semialgebraic functions on $M$, whereas ${\mathcal S}^0(M)$ is the set of semialgebraic functions on $M$ that can be extended continuously to an open semialgebraic neighborhood of $M$ in $\cl(M)$, or equivalently, to an open semialgebraic neighborhood of $M$ in $\R^m$ (see Lemma \ref{ext}). For $r\geq 0$ we denote ${\mathcal S}^{r*}(M)$ the subring of ${\mathcal S}^r(M)$ of bounded ${\mathcal S}^r$-functions on $M$, whereas ${\mathcal S}^*(M)$ is the subring of ${\mathcal S}(M)$ of bounded ${\mathcal S}$-functions on $M$. In order to ease notation we write ${\mathcal S}^{r\diam}(M)$ to refer indistinctly to both rings ${\mathcal S}^r(M)$ and ${\mathcal S}^{r*}(M)$. We proceed similarly with ${\mathcal S}^\diam(M)$.

In Section \ref{s2} we undertake a development of ${\mathcal S}^r$-functions for $r\geq 0$. One of our first results (Lemma \ref{lips}) is that even though the definition of ${\mathcal S}^r$-function on an \emph{arbitrary} semialgebraic set $M$ is of intrinsic nature, it enclose a Lipschitz condition. Therefore ${\mathcal S}^r$-functions own ${\mathcal S}^{r-1}$ extensions to open semialgebraic neighborhoods of $M$ in $\R^m$. In particular, we have the following chain of inclusions:
$$
{\mathcal S}^{r\diam}(M)\hookrightarrow{\mathcal S}^{0\diam}(M)\hookrightarrow{\mathcal S}^{\diam}(M).
$$
Recall that if $M$ is locally compact, ${\mathcal S}^{0\diam}(M)={\mathcal S}^{\diam}(M)$. In this paper we are mainly concerned in \emph{arbitrary} semialgebraic sets, so it is natural to ask in which situations the equality ${\mathcal S}^{0\diam}(M)={\mathcal S}^{\diam}(M)$ holds. To that aim, we introduce the following definitions. Let $M\subset\R^m$ be a semialgebraic set. We say that a semialgebraic set $M\subset\R^m$ is \em problematic at $x\in M$ \em if there exists a sequence of points $\{x_k\}_k\subset\cl(M)$ converging to $x$ such that each germ $M_{x_k}$ is disconnected. If $M$ is locally compact at $x\in M$, then there exists an open ball $B$ centered in $x$ such that $M\cap B=\cl(M) \cap B$, so $M$ is not problematic at $x$. In particular, locally compact semialgebraic sets are \em non-problematic\em. 

The set of points of $M$ of dimension $k$ is a semialgebraic subset of $M$. Denote $M_1,\ldots,M_s$ the closures in $M$ of those ones that are non-empty and order them in such a way that $\dim(M_i)>\dim(M_{i+1})$ for $i=1,\ldots,s-1$. Each semialgebraic set $M_i$ is pure dimensional, $M=\bigcup_{i=1}^sM_i$ and they are univocally determined by $M$. It holds that $M$ is locally compact at $x\in M$ if and only if each $M_i$ is locally compact at $x$. Thus, $M$ is locally compact if and only if each $M_i$ is locally compact.

\begin{thm}\label{main2}
Let $M\subset\R^m$ be a semialgebraic set. The following assertions are equivalent:
\begin{itemize}
\item[(i)] ${\mathcal S}^{0\diam}(M)={\mathcal S}^{\diam}(M)$. 
\item[(ii)] The map $\varphi:\Spec^\diam(M)\to\Spec^{0\diam}(M),\ \gtp\mapsto\gtp\cap{\mathcal S}^{0\diam}(M)$ is injective.
\item[(iii)] $M$ is either locally compact or the set ${\mathfrak F}$ of the indexes $i=1,\ldots,s$ such that $M_i$ is non-locally compact is a singleton $\{i_0\}$, $\dim(M_{i_0})=2$ and $M_{i_0}$ is non-problematic.
\end{itemize} 
In particular, if $\varphi$ is injective then it is surjective.
\end{thm}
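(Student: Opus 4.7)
The plan is to establish the cycle (i) $\Rightarrow$ (ii) $\Rightarrow$ (iii) $\Rightarrow$ (i); the final clause follows automatically because injectivity of $\varphi$ forces the rings to coincide via (ii) $\Rightarrow$ (i), after which $\varphi$ is the identity and in particular surjective. The implication (i) $\Rightarrow$ (ii) is immediate: if ${\mathcal S}^{0\diam}(M)={\mathcal S}^{\diam}(M)$ then $\varphi$ is literally the identity on $\Spec^{\diam}(M)$, hence injective.

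For (iii) $\Rightarrow$ (i), take $f\in{\mathcal S}^{\diam}(M)$ and decompose $M=\bigcup_{i=1}^sM_i$ into its pure-dimensional pieces. When $M$ is locally compact each $M_i$ is locally compact and the classical equality ${\mathcal S}^{0\diam}(M_i)={\mathcal S}^{\diam}(M_i)$ applies componentwise; the resulting extensions are glued to produce a continuous semialgebraic extension of $f$ to an open semialgebraic neighborhood of $M$. Otherwise (iii) provides a unique non-locally compact piece $M_{i_0}$, two-dimensional and non-problematic. The remaining pieces $M_i$ with $i\neq i_0$ admit continuous semialgebraic extensions of $f|_{M_i}$ to open semialgebraic neighborhoods in $\R^m$ by local compactness. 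For $M_{i_0}$, the non-problematic hypothesis ensures that at every $x_k\in\cl(M_{i_0})$ sufficiently close to each non-locally compact point $x\in M_{i_0}$, the germ $(M_{i_0})_{x_k}$ is connected, so the limit of $f$ at $x_k$ along $M_{i_0}$ is uniquely determined and defines a continuous semialgebraic extension across the bad locus. The dimension-$2$ hypothesis is used to guarantee that this locus is $1$-dimensional and thus tame enough to be crossed without producing multivaluedness. A semialgebraic partition of unity adapted to a suitable cover then patches the local extensions into a single continuous semialgebraic function defined on an open semialgebraic neighborhood of $M$ in $\R^m$.

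For (ii) $\Rightarrow$ (iii) I would argue contrapositively. Assuming (iii) fails, exactly one of three alternatives occurs, and in each case I would exhibit distinct prime ideals $\gtp_1\neq\gtp_2$ of ${\mathcal S}^{\diam}(M)$ with $\gtp_1\cap{\mathcal S}^{0\diam}(M)=\gtp_2\cap{\mathcal S}^{0\diam}(M)$, realized as primes supported on suitable ultrafilters of semialgebraic zero-sets. If $|{\mathfrak F}|\geq 2$, two non-locally compact pieces supply disjoint geometric witnesses giving incomparable ultrafilters. If the unique $M_{i_0}$ has dimension $\geq 3$, the extra codimension of the bad locus permits an oscillating semialgebraic function that is continuous on $M_{i_0}$ but not continuously extendable, yielding two primes detecting opposite oscillatory behaviors invisible to ${\mathcal S}^{0\diam}(M)$. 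If $M_{i_0}$ is problematic, the disconnected nearby germs directly furnish two incompatible limiting traces of functions in ${\mathcal S}^{\diam}(M)$ which coincide on every continuously extendable function, and hence on the whole subring ${\mathcal S}^{0\diam}(M)$.

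The main obstacle is (iii) $\Rightarrow$ (i): patching the local extensions through the $1$-dimensional locus of non-local-compactness of $M_{i_0}$ demands a careful local analysis of how $\cl(M_{i_0})$ meets $M$, and showing that the pair of geometric conditions (dimension exactly $2$, non-problematic) is precisely what forbids the oscillatory or branching obstructions exploited in (ii) $\Rightarrow$ (iii) is the geometric crux of the theorem.
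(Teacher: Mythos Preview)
Your cycle (i) $\Rightarrow$ (ii) $\Rightarrow$ (iii) $\Rightarrow$ (i) matches the paper's architecture, and (i) $\Rightarrow$ (ii) is fine. But both non-trivial implications have real gaps compared to the paper's argument.

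\textbf{(iii) $\Rightarrow$ (i).} Your claim that ``the germ $(M_{i_0})_{x_k}$ is connected, so the limit of $f$ at $x_k$ along $M_{i_0}$ is uniquely determined'' is false as stated: a connected germ does not force a continuous semialgebraic function to have a limit (it could oscillate or blow up). The paper does not argue this way. Instead it isolates the two-dimensional statement as a separate proposition (Proposition~\ref{main2:2dim}) and for the forward direction invokes an external result \cite[Cor.~6]{fe2}: \emph{if $M$ is a $2$-dimensional semialgebraic set such that $M_x$ is connected for every $x\in\cl(M)$, then ${\mathcal S}(M)={\mathcal S}^0(M)$}. This is applied locally (near each $x\in M$ the non-problematic hypothesis gives a ball where all germs at $\cl(M)$ are connected), and then a first-order uniformity argument produces a global open neighborhood $N\subset\cl(M)$ with an extension. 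The gluing with the locally compact part $M':=\bigcup_{j\neq i_0}M_j$ is not by partition of unity; the paper checks that both $N_{i_0}$ and $M'$ are \emph{closed} in $N_{i_0}\cup M'$ (after a suitable shrinking of $N_{i_0}$), so the extension is defined piecewise.

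\textbf{(ii) $\Rightarrow$ (iii).} The paper's route is quite different from your sketch. First, since each brick $M_\ell$ is closed in $M$, the restriction ${\mathcal S}^\diam(M)\to{\mathcal S}^\diam(M_\ell)$ is surjective, so injectivity of $\varphi$ passes to each brick; this reduces to the pure-dimensional case. For a non-locally compact brick of dimension $\geq 3$ the paper does \emph{not} build an oscillating function. It triangulates $\cl(M)$, finds a simplex $\tau$ with $\tau^0\subset\cl(M)\setminus M$ containing the bad point $p$, takes a $k$-simplex $\sigma\supset\tau$ inside $M$, and uses $\dim(\sigma)\geq 3$ to carve out two $2$-simplices $\sigma_1,\sigma_2\subset\sigma$ meeting along a $1$-simplex through $p$ lying in $\cl(M)\setminus M$. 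The resulting $T:=(\sigma_1\cup\sigma_2)\cap M$ is a closed $2$-dimensional \emph{problematic} subset, and the $2$-dimensional Proposition~\ref{main2:2dim} gives the contradiction. For the $2$-dimensional problematic case itself, the distinguishing primes are not built from ultrafilters of zero-sets: the paper fixes a real closed extension $R\supset\R$ containing an infinitesimal, picks a point $p_0\in\tau_R$ infinitesimally close to $p$, and defines $\gtp_j$ as the kernel of the evaluation-at-$p_0$ homomorphism on ${\mathcal S}(M)$ via the extension of $f|_{T_j}$ to $\cl(T_j)$ (which exists by the same \cite[Cor.~6]{fe2}). An explicit barycentric-type function on $T_1$ versus a bump on $T_2$ shows $\gtp_1\neq\gtp_2$, while any $g\in{\mathcal S}^0(M)$ has a single extension near $p$, forcing $\gtp_1\cap{\mathcal S}^0(M)=\gtp_2\cap{\mathcal S}^0(M)$.

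In short: the missing ingredients are the external $2$-dimensional extension theorem from \cite{fe2}, the triangulation-based reduction of the $\dim\geq 3$ case to a $2$-dimensional problematic subset, and the infinitesimal-evaluation construction of the separating primes.
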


Another key result from Section \ref{s2} is: \em If $M\subset\R^m$ is a semialgebraic set, then
\begin{equation}\label{dlim}
{\mathcal S}^r(M)\cong\displaystyle\lim_{\longrightarrow}({\mathcal S}^r(E),{\tt j})
\end{equation}
where $E\subset\R^m$ is a closed semialgebraic set and ${\tt j}:=({\tt j}_1,\ldots,{\tt j}_m):M\to E$ is an ${\mathcal S}^r$-embedding such that $E\subset\cl({\tt j}(M))$\em. A similar result
\begin{equation}\label{dlim2}
{\mathcal S}^{r*}(M)\cong\displaystyle\lim_{\longrightarrow}({\mathcal S}^r(E),{\tt j}).
\end{equation}
holds in the bounded case, but in this case we impose that the semialgebraic sets $E$ are compact. 

We show also that some properties of ${\mathcal S}^r$-functions on locally compact semialgebraic sets transfer through the direct limit to arbitrary semialgebraic sets using formula \eqref{dlim}. This transfer method has limitations. For instance, in \cite{fg5} it is shown that \L ojasiewicz's inequality \cite[Cor.2.6.7]{bcr} holds in the $\R$-algebra ${\mathcal S}(M)$ if and only if $M$ is locally compact. 

In Section \ref{s3} we analyze the Zariski spectra of the ring ${\mathcal S}^{r\diam}(M)$. Given a commutative ring $A$, we denote $\Spec(A)$ its Zariski spectrum endowed with the Zariski topology, which has as a subbasis of open sets the family of \emph{basic open sets} $\Dd(a):=\{\gtp\in\Spec(A):\ a\not\in\gtp\}$ for $a\in A$. The \emph{constructible} subsets of $\Spec(A)$ are the Boolean combinations of the latter basic open sets. To lighten notations, for each $r\geq0$ we write $\Spec^{r\diam}(M):=\Spec({\mathcal S}^{r\diam}(M))$ and $\Spec^\diam(M):=\Spec({\mathcal S}^\diam(M))$. The main result of Section \ref{s3} is that the Zariski spectra of ${\mathcal S}^{r\diam}(M)$ and ${\mathcal S}^{0\diam}(M)$ are homeomorphic. More precisely,

\begin{thm}\label{main1}
Let $M\subset\R^m$ be a semialgebraic set. Then the map
$$
\varphi:\Spec^{0\diam}(M)\to\Spec^{r\diam}(M),\ \gtp\mapsto\gtp\cap{\mathcal S}^{r\diam}(M)
$$
is a homeomorphism and its inverse map is
$$
\psi:\Spec^{r\diam}(M)\to\Spec^{0\diam}(M),\ \gtq\mapsto\sqrt{\gtq{\mathcal S}^0(M)}.
$$
\end{thm}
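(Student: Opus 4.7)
The plan is to establish the theorem first in the locally compact case, where Łojasiewicz's Nullstellensatz is available for ${\mathcal S}^r(M)$ as announced in the abstract, and then to transfer to arbitrary $M$ via the direct limit presentations \eqref{dlim} and \eqref{dlim2}. For the transfer I would realize both ${\mathcal S}^{r\diam}(M)$ and ${\mathcal S}^{0\diam}(M)$ as filtered direct limits over the same cofinal system of ${\mathcal S}^r$-embeddings ${\tt j}:M\hookrightarrow E$ (with $E$ closed, respectively compact in the bounded case, and $E\subset\cl({\tt j}(M))$). Since the Zariski spectrum converts a filtered direct limit of rings into an inverse limit of spectral spaces, a compatible system of homeomorphisms at each stage $E$ automatically yields the desired homeomorphism for $M$.

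In the locally compact case, the central input is the following consequence of Łojasiewicz's Nullstellensatz: for $f\in{\mathcal S}^r(M)$ and $g\in{\mathcal S}^0(M)$ whose zero set is contained in that of $f$, some power $f^N$ lies in the principal ideal $g\cdot{\mathcal S}^0(M)$; conversely, every zero set of an ${\mathcal S}^0$-function is already the zero set of an ${\mathcal S}^r$-function (for instance, by composition with a smooth cutoff and squaring). From this I would deduce that $\gta\mapsto\sqrt{\gta\cdot{\mathcal S}^0(M)}$ and $\gtb\mapsto\gtb\cap{\mathcal S}^r(M)$ are mutually inverse bijections between the sets of radical ideals of ${\mathcal S}^r(M)$ and ${\mathcal S}^0(M)$.

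To upgrade the radical bijection to prime ideals, suppose $\gtq\in\Spec^{r\diam}(M)$ and $f_1f_2\in\sqrt{\gtq\cdot{\mathcal S}^0(M)}$; approximate each $f_i$ by some $g_i\in{\mathcal S}^r(M)$ with the same zero set and use the power-divisibility to reduce to the primeness of $\gtq$. The identity $\varphi\circ\psi=\id$ then reduces to $\sqrt{\gtq\cdot{\mathcal S}^0(M)}\cap{\mathcal S}^r(M)=\gtq$; the containment $\supseteq$ is immediate, while for $\subseteq$ the Łojasiewicz estimates combined with the real closedness of the field of fractions of ${\mathcal S}^r(M)/\gtq$ (as stated in the abstract) permit one to manufacture ${\mathcal S}^r$-witnesses from ${\mathcal S}^0$-witnesses of ideal membership. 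The other identity $\psi\circ\varphi=\id$ is the radical-bijection assertion applied to $\gta=\gtp\cap{\mathcal S}^r(M)$. The contraction map $\varphi$ is automatically continuous, and $\psi$ is continuous because for $a\in{\mathcal S}^r(M)$ the preimage $\psi^{-1}(\Dd(a))$ coincides with $\Dd(a)\subset\Spec^{0\diam}(M)$ by the radical bijection.

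The main obstacle I anticipate is precisely the step $\sqrt{\gtq\cdot{\mathcal S}^0(M)}\cap{\mathcal S}^r(M)=\gtq$: this is the only place where the additional regularity of ${\mathcal S}^r$-functions (as opposed to ${\mathcal S}^0$) must be seriously exploited, and it is where the absence of real closedness of ${\mathcal S}^r(M)$ makes itself felt, since one is forced to re-express ${\mathcal S}^0$-certificates of ideal membership by genuinely differentiable ones without a direct algebraic mechanism. The bounded case ${\mathcal S}^{r*}(M)\hookrightarrow{\mathcal S}^{0*}(M)$ should then proceed identically once one restricts to the compact approximating sets $E$ furnished by \eqref{dlim2}.
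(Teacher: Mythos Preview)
Your reduction to the locally compact case via the direct limit presentations \eqref{dlim} and \eqref{dlim2} has a genuine gap. You need ${\mathcal S}^{0\diam}(M)$ to be the filtered colimit of the rings ${\mathcal S}^{0}(E)$ indexed by the \emph{same} system $\mathfrak{C}^{r\diam}$ of ${\mathcal S}^r$-embeddings that presents ${\mathcal S}^{r\diam}(M)$; but Lemma~\ref{LD} only gives ${\mathcal S}^{0\diam}(M)=\displaystyle\lim_{\longrightarrow}{\mathcal S}^{0}(E)$ over $\mathfrak{C}^{0\diam}$. The missing ingredient is that $\mathfrak{C}^{r\diam}$ is cofinal in $\mathfrak{C}^{0\diam}$, i.e.\ that every $f\in{\mathcal S}^{0\diam}(M)$ factors through some ${\mathcal S}^r$-embedding ${\tt j}:M\to E$. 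In the unbounded case this is recoverable (embed $M$ in the graph of $1/h$ for $h$ Nash with $Z(h)=\cl(M)\setminus N$, where $N$ carries a continuous extension of $f$), but you have not said this, and it is precisely the kind of extension statement that the whole theory is set up to handle.

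In the bounded case the gap is worse, and your remark that it ``should proceed identically'' is not correct. For $(E,{\tt j})\in\mathfrak{C}^{r*}$ the components of ${\tt j}$ must be bounded, so the $1/h$-trick is unavailable, and I do not see how to dominate an arbitrary ${\mathcal S}^{0*}$-completion by an ${\mathcal S}^{r*}$-completion. The paper confronts this head-on and does \emph{not} reduce to the compact stages: it first proves the unbounded case for arbitrary $M$ directly, via a weak \L ojasiewicz statement (Proposition~\ref{crucial}) that produces an ${\mathcal S}^r$-multiplier $h$ with $f_2^\ell=f_1 h$ whenever $f_1,f_2$ extend compatibly to some open $N\subset\cl(M)$ with $Z(F_1)\subset Z(F_2)$; this is exactly the device that lets one show $\sqrt{\gtq{\mathcal S}^0(M)}\cap{\mathcal S}^r(M)=\gtq$ without any appeal to residue fields. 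For ${\mathcal S}^{r*}$ the paper introduces a genuinely different tool (Lemma~\ref{gorro}): a ``closure'' $\widehat{\gta}\subset{\mathcal S}^{0*}(M)$ of an ideal $\gta\subset{\mathcal S}^{r*}(M)$ built from Nash approximation, which handles the primes meeting ${\mathcal W}^r(M)$; the remaining primes are dealt with by localizing at ${\mathcal W}^r(M)$ and invoking the already-proved unbounded case.

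Finally, your appeal to the real closedness of $\qf({\mathcal S}^r(M)/\gtq)$ to manufacture ${\mathcal S}^r$-witnesses is both unnecessary and, in the paper's logical order, circular: Theorem~\ref{main3} is proved \emph{after} Theorem~\ref{main1} and uses it. In the locally compact case the identity $\sqrt{\gtq{\mathcal S}^0(M)}\cap{\mathcal S}^r(M)=\gtq$ follows from \L ojasiewicz alone: if $f\in{\mathcal S}^r(M)$ satisfies $f^k=\sum g_ih_i$ with $g_i\in\gtq$, $h_i\in{\mathcal S}^0(M)$, then $Z(\sum g_i^2)\subset Z(f)$, so Theorem~\ref{null20} gives $f^\ell\in\gtq$.
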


The previous result implies that for each $r\geq0$, both ${\mathcal S}^r(M)$ and ${\mathcal S}^{r*}(M)$ are Gelfand rings and their Krull dimensions coincide with the dimension of $M$. Denote $\beta_s^{r\diam}M$ the maximal spectra of ${\mathcal S}^{r\diam}(M)$. As homeomorphisms between topological spaces preserve closed points, the restriction
\begin{align*}
\varphi|_{\beta^{0\diam}M}:\beta^{0\diam}M\to\beta^{r\diam}M,\ &\gtm\mapsto\gtm\cap{\mathcal S}^{r\diam}(M)
\end{align*}
is also a homeomorphism. 

The proof of Theorem \ref{main1} relies on the fact that each ring ${\mathcal S}^r(M)$ enjoys a \L ojasiewiz's Nullstellesatz when $M$ is locally compact. Denote $Z(f)$ the zero-set of a function $f\in{\mathcal S}^r(M)$. We have the following.

\begin{thm}[\L ojasiewicz's Nullstellensatz]\label{null2}
Let $M\subset\R^m$ be a locally compact semialgebraic set and let $f,g\in{\mathcal S}^r(M)$ with $Z(f)\subset Z(g)$. Then there exist an integer $\ell>0$ and $h\in{\mathcal S}^r(M)$ such that $g^{\ell}=hf$ and $Z(g)=Z(h)$.
\end{thm}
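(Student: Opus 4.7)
The plan is to bootstrap the classical continuous \L ojasiewicz Nullstellensatz to a $\mathcal{C}^r$ statement by replacing $g$ with a sufficiently high power $g^\ell$ and verifying that the resulting quotient $g^\ell/f$ extends across $Z(f)$ with enough regularity, via iterated \L ojasiewicz-type estimates on higher derivatives.

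First I would invoke the case $r=0$ (the continuous Nullstellensatz on locally compact semialgebraic sets, available from \cite[Cor.2.6.7]{bcr} and \cite{fg5}): there exist an integer $N\geq 1$ and $h_0\in\mathcal{S}(M)$ with $g^N=f\,h_0$, so in particular $|g|^N\leq |h_0|\,|f|$ on $M$. Fix an integer $\ell\geq N(r+1)+r+1$ and set
$$
h:=h_0\cdot g^{\ell-N}\in\mathcal{S}(M),
$$
so that $g^\ell=f\,h$ on $M$. On the open semialgebraic set $U:=M\setminus Z(f)$, $h$ coincides with the quotient $g^\ell/f$ of $\mathcal{S}^r$-functions with non-vanishing denominator, and is therefore of class $\mathcal{C}^r$ on $U$. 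The remaining and crucial task is to prove that $h$ is of class $\mathcal{C}^r$ across $Z(f)$, which by a standard mean value / Whitney-type criterion reduces to showing that every partial derivative of $h|_U$ of order $\leq r$ extends continuously to $Z(f)$ by zero.

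The heart of the proof is the following derivative estimate. Iterating the quotient and Leibniz rules, every partial derivative of $g^\ell/f$ of order $k\leq r$ on $U$ is a finite sum of terms of the shape
$$
\frac{g^{\ell-s}\cdot P}{f^{j+1}},\qquad 0\leq s\leq k,\ 0\leq j\leq k,
$$
where $P$ is a polynomial in the partial derivatives of $f$ and $g$ of order $\leq k$, and in particular a continuous semialgebraic function on $M$. Raising $|g|^N\leq |h_0|\,|f|$ to the power $j+1$ gives on $U\setminus Z(g)$
$$
\left|\frac{g^{\ell-s}\cdot P}{f^{j+1}}\right|\ \leq\ |g|^{\ell-s-N(j+1)}\cdot|P|\cdot|h_0|^{j+1},
$$
while on $U\cap Z(g)$ the term vanishes identically since $\ell>s$. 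The choice of $\ell$ ensures $\ell-s-N(j+1)\geq 1$; local compactness of $M$ provides compact neighborhoods of any point of $Z(f)$ on which $|P|\cdot|h_0|^{j+1}$ is bounded; and the continuity of $g$ with $g|_{Z(f)}=0$ (because $Z(f)\subset Z(g)$) forces $|g|^{\ell-s-N(j+1)}\to 0$ as one approaches $Z(f)$ from $U$. Hence every partial derivative of $h|_U$ of order $\leq r$ extends continuously by $0$ across $Z(f)$, so $h\in\mathcal{S}^r(M)$. Finally $Z(h)=Z(g)$ is immediate: on $U$ one has $h(x)=0$ iff $g(x)=0$, and on $Z(f)\subset Z(g)$ the function $h$ vanishes by the extension.

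The main obstacle is the derivative estimate, which requires a careful bookkeeping of the vanishing order of $g^\ell$ against the blow-up order of $f^{-(j+1)}$ across all summands of all higher derivatives; this forces the power $\ell$ to grow linearly with $r$. Local compactness of $M$ is unavoidable here on two counts: the continuous Nullstellensatz producing $h_0$ is known to fail on arbitrary semialgebraic sets (see \cite{fg5}), and local compactness is precisely what allows the auxiliary factor $|P|\cdot|h_0|^{j+1}$ to be controlled uniformly on compact neighborhoods of $Z(f)$.
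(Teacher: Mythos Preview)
Your derivative-estimate idea is sound and is in fact the engine behind the paper's Proposition~\ref{suboclase}: multiply by a high enough power of the vanishing function and check that the quotient and all its derivatives extend continuously by zero. The paper organizes this inductively on $r$ rather than via an explicit Leibniz bookkeeping, but the substance is the same.

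The genuine gap is that you run the entire argument on $M$ itself. On a general locally compact semialgebraic set $M\subset\R^m$, ``partial derivatives of $h|_U$'' are not intrinsically defined (the $\mathcal{S}^r$ notion is via jets, which are not unique on $M$), and the ``standard mean value / Whitney-type criterion'' you invoke is a statement about functions on \emph{open} subsets of $\R^m$, not about Whitney jets on closed sets. Showing that the jet components of $h$ extend continuously by zero across $Z(f)$ is not the same as verifying condition~\ref{diff} there. So as written your argument only makes sense once one has reduced to an ambient open Euclidean set, and that reduction is exactly what you are missing.

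The paper handles this as follows: first embed $M$ as a closed subset of $\R^m$ (Lemma~\ref{ext}) and extend $f,g$ to $F_1,F_2\in\mathcal{S}^r(\R^m)$ (Fact~\ref{thfact}); then, since $Z(F_1)\subset Z(F_2)$ need not hold off $M$, pick $L\in\mathcal{S}^r(\R^m)$ with $Z(L)=M$ and replace $F_i$ by $L^2+F_i^2$, so that the zero-set inclusion holds on all of $\R^m$. Now one is on an open set, your derivative estimate (packaged as Proposition~\ref{suboclase}) applies verbatim, and restricting back to $M$ gives the result. Without this $L^2+F_i^2$ trick your quotient-rule computation has no ambient space to live in, and the final ``$h\in\mathcal{S}^r(M)$'' step is unjustified.
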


In Section \ref{s4} we analyze the field of fractions of the quotient ${\mathcal S}^{r\diam}(M)/\gtq$ of the ring ${\mathcal S}^{r\diam}(M)$ by a prime ideal $\gtq$ and we prove the following.

\begin{thm}\label{main3}
Let $\gtp$ be a prime ideal of ${\mathcal S}^{0\diam}(M)$ and consider the inclusion 
$$
{\tt j}:{\mathcal S}^{r\diam}(M)/(\gtp\cap{\mathcal S}^{r\diam}(M))\to{\mathcal S}^{0\diam}(M)/\gtp.
$$
Then ${\tt j}$ induces an isomorphism between the fields of fractions $\kappa(\gtp\cap{\mathcal S}^{r\diam}(M))$ and $\kappa(\gtp)$ of the integral domains ${\mathcal S}^{r\diam}(M)/(\gtp\cap{\mathcal S}^{r\diam}(M))$ and ${\mathcal S}^{0\diam}(M)/\gtp$. In particular, $\kappa(\gtp\cap{\mathcal S}^{r\diam}(M))$ is a real closed field.
\end{thm}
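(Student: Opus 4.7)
Since $\gtp\cap{\mathcal S}^{r\diam}(M)$ is by construction the contraction of $\gtp$ along the inclusion ${\mathcal S}^{r\diam}(M)\hookrightarrow{\mathcal S}^{0\diam}(M)$, the map ${\tt j}$ is an injection of integral domains, so the induced map $\bar{{\tt j}}:\kappa(\gtp\cap{\mathcal S}^{r\diam}(M))\hookrightarrow\kappa(\gtp)$ is automatically injective. My plan is to establish the surjectivity of $\bar{\tt j}$; the last assertion is then formal, since ${\mathcal S}^{0\diam}(M)$ is a real closed ring (see the abstract) so $\kappa(\gtp)$ is real closed, and the isomorphism produced by $\bar{\tt j}$ passes that property to $\kappa(\gtp\cap{\mathcal S}^{r\diam}(M))$. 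After clearing denominators, surjectivity amounts to showing that for every $f\in{\mathcal S}^{0\diam}(M)$ there exist $a,b\in{\mathcal S}^{r\diam}(M)$ with $b\notin\gtp$ and $bf-a\in\gtp$.

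To get access to \L ojasiewicz's Nullstellensatz, which is only available in the locally compact setting, I first reduce to the case in which $M$ is locally compact via the direct-limit formulas \eqref{dlim} and \eqref{dlim2}: both ${\mathcal S}^{r\diam}(M)$ and ${\mathcal S}^{0\diam}(M)$ are filtered colimits indexed by the ${\mathcal S}^r$-embeddings ${\tt i}:M\to E$ with $E$ closed (respectively compact) semialgebraic and $E\subset\cl({\tt i}(M))$, and since prime ideals and residue fields commute with filtered colimits of rings, it suffices to prove the statement on each such $E$. With $M$ now locally compact, I fix $f\in{\mathcal S}^{0\diam}(M)$, consider the closed semialgebraic subset $S:=\Sing_r(f)\subset M$ of points at which $f$ fails to be of class ${\mathcal C}^r$, and pick $h\in{\mathcal S}^r(M)$ with $Z(h)=S$. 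The partial derivatives of $f$ blow up semialgebraically along $S$ and, by a standard \L ojasiewicz-type estimate, $h^Nf\in{\mathcal S}^{r\diam}(M)$ once $N$ is large enough. This supplies the candidate denominator $b:=h^N$ and numerator $a:=h^Nf$.

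It remains to guarantee $h^N\notin\gtp$. Here I would exploit the explicit inverse $\psi(\gtq)=\sqrt{\gtq\,{\mathcal S}^0(M)}$ from Theorem~\ref{main1}, which yields $\gtp=\sqrt{(\gtp\cap{\mathcal S}^{r\diam}(M)){\mathcal S}^{0\diam}(M)}$; this radical identity affords flexibility in choosing $h$, so that generically $h\notin\gtp$. In the degenerate situation, where every ${\mathcal S}^r$-function whose zero set contains $S$ lies in $\gtp$, I would instead apply Theorem~\ref{null2} to the ideal of ${\mathcal S}^r$-functions vanishing on $S$ in order to produce an element $e\in\gtp$ with $f+e\in{\mathcal S}^{r\diam}(M)$, so that $\bar f=\overline{f+e}$ trivially lies in the image of $\bar{\tt j}$. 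The main obstacle I anticipate is exactly this last step: proving that the ``denominator ideal'' $\{b\in{\mathcal S}^{r\diam}(M):bf\in{\mathcal S}^{r\diam}(M)+\gtp\}$ is never contained in $\gtp$, which will require a careful interplay between \L ojasiewicz's Nullstellensatz, the spectrum homeomorphism of Theorem~\ref{main1}, and the real closed ring structure of ${\mathcal S}^{0\diam}(M)$.
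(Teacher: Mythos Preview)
Your overall strategy---produce, for each $f\in{\mathcal S}^{0\diam}(M)$, elements $a,b\in{\mathcal S}^{r\diam}(M)$ with $b\notin\gtp$ and $bf=a$---is the same as the paper's, but both of your two main steps have genuine gaps.

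\textbf{Reduction to locally compact.} The formulas \eqref{dlim} and \eqref{dlim2} express ${\mathcal S}^{r\diam}(M)$ as a colimit over the directed set ${\mathfrak C}^{r\diam}$ of ${\mathcal S}^r$-embeddings; they say nothing about ${\mathcal S}^{0\diam}(M)$. Your assertion that ${\mathcal S}^{0\diam}(M)$ is a colimit over the \emph{same} index set ${\mathfrak C}^{r\diam}$ is not in the paper and is not obvious: given $f\in{\mathcal S}^{0*}(M)$ extending to an open $N\subsetneq\cl(M)$, the natural graph-type embedding of $M$ that makes $f$ factor through a closed set uses $1/h$ with $Z(h)=\cl(M)\setminus N$, which is typically unbounded on $M$, so you do not land in ${\mathfrak C}^{r*}$. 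The paper bypasses this by developing \emph{brimming completions} (Lemma~\ref{brimming1} and Corollary~\ref{cor:brimming}): using transcendence-degree arguments it finds a single $(E_2,{\tt j}_2)\in{\mathfrak C}^{r\diam}$ with $\kappa(\gtq)=\kappa(\gtq_{E_2})$ and a compatible $(E_1,{\tt j}_1)\in{\mathfrak C}^{0\diam}$ with $\kappa(\gtp)=\kappa(\gtp_{E_1})$, linked by an ${\mathcal S}^r$-map $\rho:E_1\to E_2$; then Corollary~\ref{trdeg}(ii) identifies $\kappa(\gtp_{E_1})\cong\kappa(\gtp_{E_2})$. This is substantially more than ``residue fields commute with filtered colimits''.

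\textbf{The locally compact step.} Your anticipated obstacle is exactly the crux, and your proposed workarounds do not resolve it. Saying that the radical identity from Theorem~\ref{main1} ``affords flexibility in choosing $h$, so that generically $h\notin\gtp$'' has no content: $\gtp$ is fixed, and since radical ideals in the locally compact case are $z$-ideals (Lemma~\ref{zloc}), as soon as \emph{one} $h$ with $Z(h)=\Sing_r(f)$ lies in $\gtp$, \emph{every} such $h$ does. In that ``degenerate'' case you propose to extract from Theorem~\ref{null2} an $e\in\gtp$ with $f+e\in{\mathcal S}^{r\diam}(M)$, but the Nullstellensatz gives nothing of the sort---it produces multiplicative relations $g^\ell=hf$, not additive corrections. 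The paper's resolution is to avoid the problem altogether: first restrict to $N:=Z(f_0)$ for $f_0\in\gtp$ with $\dim N=\depth(\gtp)$, which forces the induced prime to be \emph{minimal}; then pass to a pure-dimensional brick of $N$. In that situation Lemma~\ref{ExTDiFF} supplies $g\in{\mathcal S}^{r*}$ with $fg\in{\mathcal S}^r$ and $\dim Z(g)<\dim N$, and minimality plus pure-dimensionality force $g\notin\gtp$ (any $v$ with $gv=0$ would have $\dim D(v)<\dim N$, hence $v=0$). Without this reduction to minimal primes, the denominator-ideal argument you sketch has no mechanism to succeed.
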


In Section \ref{s5} we contextualize the ring of ${\mathcal S}^{r\diam}$-functions within the theory of \emph{real closed rings} (Definition \ref{def:rcr}), introduced by Schwartz in the 80's of the last century \cite{s1,s3}. As it is well-known, the rings ${\mathcal S}^{\diam}(M)$ are particular cases of such real closed rings. The theory of real closed rings has been deeply developed until now as a fruitful attempt to establish new foundations for semi-algebraic geometry with relevant interconnections to model theory, see \cite{cd1,cd2}, \cite{s1,s2,s3,s4,s5}, \cite{ps,sm,scht} and \cite{t1,t2,t3}. This theory generalizes the classical techniques concerning the semi-algebraic spaces of Delfs-Knebusch (see \cite{dk2}), provides a powerful machinery to approach problems concerning certain rings of real-valued functions and contributes to achieve a better understanding of the algebraic and topological properties of such rings. We highlight some celebrated examples:
\begin{itemize}
\item Rings of real-valued continuous functions on Tychonoff spaces.
\item Rings of semi-algebraic functions on semi-algebraic sets over an arbitrary
real closed field.
\item Rings of definable continuous functions on definable sets in o-minimal
expansions of fields.
\end{itemize}

Every unital commutative ring $A$ has a so-called \em real closure ${\rm rcl}(A)$\em, which is unique up to a unique ring homomorphism over $A$ (see \cite[\S I]{s4}). This means that ${\rm rcl}(A)$ is a real closed ring and there exists a (not necessarily injective) ring homomorphism ${\tt i}:A\to{\rm rcl}(A)$ such that for every ring homomorphism $f:A\to B$ to some other real closed ring $B$ there exists a unique ring homomorphism $\ol{f}:{\rm rcl(A)}\to B$ satisfying $f=\ol{f}\circ{\tt i}$ (universal property). A strong property is that the real spectra of ${\rm rcl}(A)$ and $A$ are homeomorphic. A crucial example for us is that the real closure of the polynomial ring $\R[\x]:=\R[\x_1,\ldots,\x_n]$ is the ring of continuous semialgebraic functions ${\mathcal S}(\R^n)$. In fact, if $Z\subset\R^n$ is an algebraic set, the real closed ring ${\mathcal S}(Z)$ is the real closure of the ring of polynomial functions on $Z$. 

In our setting we show: \em the real closure of ${\mathcal S}^{r\diam}(M)$ is ${\mathcal S}^{0\diam}(M)$\em, see Proposition \ref{f:rcl1}. In case $M$ is locally compact, the previous statement follows almost straightforwardly from the results of Schwartz and Tressl. Thus, in such situation we know from the very beginning that the real spectra of ${\mathcal S}^{r\diam}(M)$ and ${\mathcal S}^{0\diam}(M)$ are homeomorphic (and the residue fields of the latter are the real closures of the former). In Theorems \ref{main1} and \ref{main3} we provide more (explicit) information concerning Zariski spectra and the residue fields of ${\mathcal S}^{r\diam}(M)$, which suggests that ${\mathcal S}^{r\diam}(M)$ is close to be a real closed ring. We would like to point out that the only proof we know of the fact that ${\mathcal S}^{0\diam}(M)$ is the real closure of ${\mathcal S}^{r\diam}(M)$ for an arbitrary semialgebraic set $M$ is the one provided here, which uses Theorems \ref{main1} and \ref{main3} in a crucial way.
 
Finally, we consider the ring ${\mathcal S}^{\infty\diam}(M):=\bigcap_{r\geq0}{\mathcal S}^{r\diam}(M)$, which we can also describe as the inverse limit of the family of rings $\{{\mathcal S}^r(M)\}_{r\geq0}$ with respect to the inclusion relation. It is natural to compare the ring ${\mathcal S}^{\infty}(M)$ with its subring of Nash functions ${\mathcal N}(M)$, that is, the collection of functions $f:M\to\R$ that admit a Nash extension to an open semialgebraic neighborhood $U\subset\R^m$ of $M$. Recall that a real function $g$ on an open semialgebraic set $U\subset\R^m$ is \em Nash \em if it is semialgebraic and smooth on $U$. This property is equivalent to be an analytic function and algebraic over the polynomials, that is, there exists a non-zero polynomial $P\in\R[\x_1,\ldots,\x_m,\y]$ such that $P(x,f(x))=0$ for each $x\in U$. We show that the rings ${\mathcal S}^{\infty}(M)$ and ${\mathcal N}(M)$ are different in general (Example \ref{cex:nash}) and they coincide if $M$ is a coherent Nash set (Corollary \ref{cohNash}). Recall that a semialgebraic set $M\subset\R^m$ is a \em Nash set \em if it is locally compact and there exists a Nash function $f$ on the open semialgebraic set $U:=\R^m\setminus(\cl(M)\setminus M)$ such that $M=Z(f)$, see \cite{fg2}. The coherent condition refers to Serre's notion \cite[\S 2.B]{bfr} and it is the natural tool when globalizing local information in the analytic and Nash settings. In practice coherence in the Nash setting can be summarized as follows. Let $I(M):=\{f\in{\mathcal N}(U):\ M\subset Z(f)\}$ and let $I(M_x):=\{f\in{\mathcal N}(U_x):\ M_x\subset Z(f_x)\}$ for each $x\in U$, where subindices are used to stress the use of germs. The Nash set $M$ is \em coherent \em if $I(M_x)=I(M){\mathcal N}(U_x)$ for each $x\in U$. In the forth coming paper \cite{fgh} it is provided a full characterization of the semialgebraic sets $M\subset\R^m$ such that ${\mathcal N}(M)={\mathcal S}^{\infty}(M)$ in terms of the equality of the ideals $I(M_x)$ and $I(M){\mathcal N}(U_x)$ at each point $x\in M$. In case $M$ is a Nash set the equality ${\mathcal N}(M)={\mathcal S}^{\infty}(M)$ holds if and only if $M$ is coherent.

The fact that the real closure of ${\mathcal S}^{\infty\diam}(M)$ equals the ring ${\mathcal S}^{0\diam}(M)$ is a strong condition as the following result reveals.

\begin{thm}\label{nash}
Let $M\subset\R^m$ be a semialgebraic set such that the inclusion ${\tt j}:{\mathcal S}^{\infty\diam}(M)\hookrightarrow{\mathcal S}^{0\diam}(M)$ provides the real closure of ${\mathcal S}^{\infty\diam}(M)$. Then for each $x\in M$ the germ $\cl(M)_x$ is a Nash germ.
\end{thm}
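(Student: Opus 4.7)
The plan is to localize at each point $x\in M$, transfer the real-closure hypothesis to rings of germs, and then argue that the real closure of a Nash quotient ring is governed by its underlying Nash set. Fix $x\in M$ and let $\gtm_x\subset{\mathcal S}^{0\diam}(M)$ denote the maximal ideal of evaluation at $x$; by Theorem \ref{main1}, $\gtm_x^{\infty}:=\gtm_x\cap{\mathcal S}^{\infty\diam}(M)$ is the corresponding maximal ideal of ${\mathcal S}^{\infty\diam}(M)$. Since the real-closure reflector commutes with localization at a prime ideal (left adjoints preserve colimits), the localized inclusion ${\tt j}$ is again a real-closure inclusion. Using the direct-limit description \eqref{dlim} of Section \ref{s2}, Theorem \ref{main1} and Lemma \ref{ext}, these localizations identify naturally with the ring $A$ of continuous semialgebraic germs on $\cl(M)_x$ and a subring $B\subseteq A$ of ${\mathcal C}^{\infty}$-semialgebraic germs on $M$ at $x$, respectively, and therefore $A$ is the real closure of $B$.

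Next I would identify $B$ as a quotient of the local Nash ring. By the semialgebraic Whitney extension theorem \cite[Thm.1.2]{at}, every ${\mathcal C}^{\infty}$-semialgebraic germ on $M$ at $x$ extends to a ${\mathcal C}^{\infty}$-semialgebraic germ on $\R^m$ at $x$, and the classical fact that a ${\mathcal C}^{\infty}$-semialgebraic function on an open semialgebraic subset of $\R^m$ is automatically Nash shows that such an ambient germ lies in the local Nash ring ${\mathcal N}_x(\R^m)$. Hence restriction induces a surjection ${\mathcal N}_x(\R^m)\twoheadrightarrow B$, whose kernel $\gta$ equals, by continuity, the ideal of Nash germs vanishing on $\cl(M)_x$. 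The zero germ of $\gta$ is the \emph{Nash closure} $Y_x$ of $\cl(M)_x$, namely the smallest Nash germ at $x$ containing $\cl(M)_x$, and $B\cong{\mathcal N}_x(\R^m)/I(Y_x)$.

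Finally, on the Nash germ $Y_x$ the real closure of ${\mathcal N}_x(\R^m)/I(Y_x)$ is the ring ${\mathcal S}(Y_x)$ of continuous semialgebraic germs on $Y_x$, which is a classical consequence of \cite{s4} coupled with \L ojasiewicz's Nullstellensatz on the Nash germ $Y_x$. Combining with the identification $A=\operatorname{rcl}(B)$ from the first paragraph and the uniqueness of the real closure over the base ring, one obtains a canonical isomorphism ${\mathcal S}(Y_x)\cong A$ which, by the universal property, must coincide with the natural restriction induced by the inclusion $\cl(M)_x\subseteq Y_x$. This restriction is injective only when $\cl(M)_x=Y_x$, since otherwise a semialgebraic Tietze-type construction produces a non-zero continuous semialgebraic germ on $Y_x$ supported away from $\cl(M)_x$; consequently $\cl(M)_x=Y_x$ is a Nash germ.

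The main obstacle lies in the first paragraph, where the identification of the localized rings ${\mathcal S}^{0\diam}(M)_{\gtm_x}$ and ${\mathcal S}^{\infty\diam}(M)_{\gtm_x^{\infty}}$ with the appropriate germ rings requires real care when $M$ is not locally compact at $x$; there one must weave together the direct-limit description \eqref{dlim}, the spectral homeomorphism of Theorem \ref{main1}, \L ojasiewicz's Nullstellensatz (Theorem \ref{null2}) applied after embedding into a closed set, and Lemma \ref{ext}. A secondary but delicate point is the verification that the real closure of ${\mathcal N}_x(\R^m)/I(Y_x)$ is the full ring ${\mathcal S}(Y_x)$ even when the Nash germ $Y_x$ is reducible.
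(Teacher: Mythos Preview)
Your overall strategy---localize at $x$, recognize the $\mathcal{S}^\infty$ side as a quotient of the local Nash ring by the ideal of the Nash closure $Y_x$, take real closures, and compare---is natural, but it has a genuine gap in the first step. The identification of $\mathcal{S}^{\infty\diam}(M)_{\gtm_x^{\infty}}$ with the ring of $\mathcal{S}^\infty$-germs at $x$ is \emph{false} in general. The standard passage ``localization at the maximal ideal of a point $=$ germs'' relies on bump functions: to kill a function $f$ that vanishes near $x$ in the localization one needs $s\in\mathcal{S}^{\infty\diam}(M)$ with $s(x)\neq 0$ and $sf=0$. But by Lemma~\ref{local} every $\mathcal{S}^\infty$-function is locally the restriction of a Nash function, and Nash germs satisfy the identity principle; consequently there are in general no nontrivial $\mathcal{S}^\infty$ bump functions. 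Concretely, for the compact set $M$ of Example~\ref{cex:nash}(i) and $x=(0,0)$, the function $f(x,y)=y\sqrt{x^2+y^2}$ vanishes on a neighbourhood of $x$ in $M$, yet any $s\in\mathcal{S}^{\infty}(M)$ with $sf\equiv 0$ must vanish on the two squares, hence (by analytic continuation along the segment, using Lemma~\ref{local} at the junction points) must vanish at $x$ as well. Thus the natural map $\mathcal{S}^{\infty\diam}(M)_{\gtm_x^{\infty}}\to\mathcal{N}_x(\R^m)/I(Y_x)$ has nontrivial kernel, and your later comparison of real closures is comparing the wrong rings. You flag this identification as ``the main obstacle'', but it is not merely delicate---it is incorrect as stated. (A minor point: Theorem~\ref{main1} is stated for finite $r$; the spectral bijection for $r=\infty$ is not proved there, though under your hypothesis it does follow from the real-closure assumption itself.)

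The paper sidesteps this entirely. Rather than localizing, it argues by contradiction: if $\cl(M)_x\subsetneq Y_x$, the curve selection lemma produces a Nash arc $\gamma$ with image germ in $Y_x\setminus\cl(M)_x$. Lemma~\ref{local} then gives a well-defined ring homomorphism $\varphi:\mathcal{S}^{\infty\diam}(M)\to\mathcal{S}^{\infty\diam}(\operatorname{im}(\gamma)_x)$ by ``take the unique local Nash extension and restrict to $\gamma$''. By the hypothesis and Proposition~\ref{nash2}, the universal property of the real closure extends this to $\overline{\varphi}:\mathcal{S}^{0\diam}(M)\to\mathcal{S}^{0\diam}(\operatorname{im}(\gamma)_x)$, and functoriality through an ambient open neighbourhood $U\supset\cl(M)$ forces $\overline{\varphi}$ to be the ordinary restriction map; but then an $\mathcal{S}^0$-equation for $\cl(M)$ restricts to zero while not vanishing on $\gamma$, a contradiction. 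The moral: instead of trying to describe the entire local ring of $\mathcal{S}^\infty$-functions (which, for the reason above, is intractable), one only needs a single test homomorphism that distinguishes $\cl(M)_x$ from $Y_x$, and the Nash arc provides exactly that.
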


The previous results stress that it is not true in general that the real closure of an inverse limit is the inverse limit of the real closures. It seems difficult to deal with semialgebraic sets such that `for each $x\in M$ the germ $\cl(M)_x$ is a Nash germ', as it is a property of infinitesimal nature. However, when we approach such property as a global one, we obtain the following positive result.

\begin{prop}\label{nash2}
Let $M\subset\R^m$ be a semialgebraic set and assume that there exists an open semialgebraic set $U\subset\R^m$ such that $\cl(M)\cap U$ is a Nash set. Then the inclusion ${\tt j}:{\mathcal S}^{\infty\diam}(M)\hookrightarrow{\mathcal S}^{0\diam}(M)$ provides the real closure of ${\mathcal S}^{\infty\diam}(M)$.
\end{prop}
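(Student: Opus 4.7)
The plan is to combine the universal property of the real closure with Proposition~\ref{f:rcl1}, which gives ${\mathcal S}^{0\diam}(M)={\rm rcl}({\mathcal S}^{1\diam}(M))$. The chain
$$
{\mathcal S}^{\infty\diam}(M)\hookrightarrow{\mathcal S}^{1\diam}(M)\hookrightarrow{\mathcal S}^{0\diam}(M)
$$
together with the real-closedness of ${\mathcal S}^{0\diam}(M)$ yields, via the universal property of ${\rm rcl}({\mathcal S}^{\infty\diam}(M))$, a canonical ring morphism
$$
\rho\colon{\rm rcl}({\mathcal S}^{\infty\diam}(M))\to{\mathcal S}^{0\diam}(M)
$$
which restricts on ${\mathcal S}^{\infty\diam}(M)$ to the given inclusion ${\tt j}$. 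The statement of Proposition~\ref{nash2} is equivalent to $\rho$ being an isomorphism, and by the spectral characterization of real closure this reduces to proving analogues of Theorems~\ref{main1} and~\ref{main3} at level $r=\infty$: the contraction map $\varphi\colon\Spec^{0\diam}(M)\to\Spec({\mathcal S}^{\infty\diam}(M))$, $\gtp\mapsto\gtp\cap{\mathcal S}^{\infty\diam}(M)$, is a homeomorphism, and each field of fractions $\kappa(\gtp\cap{\mathcal S}^{\infty\diam}(M))$ coincides with the real closed field $\kappa(\gtp)$.

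To establish these, the Nash hypothesis enters decisively. Set $N:=\cl(M)\cap U$, which by assumption is a Nash set. Since every Nash function is ${\mathcal C}^\infty$ and semialgebraic, restriction to $M$ yields a subring $R\subset{\mathcal S}^{\infty\diam}(M)$ coming from ${\mathcal N}(N)$ (in the bounded case $\diam=*$ we replace each Nash function $h$ by $h(1+h^2)^{-1/2}$, which is again Nash and bounded, so that $R\subset{\mathcal S}^{\infty*}(M)$). The key technical ingredient we need is a \L ojasiewicz-type Nullstellensatz for $R$ that mimics Theorem~\ref{null2}: given $f,g\in R$ with $Z(f)\subset Z(g)$, there must exist $\ell\geq 1$ and $h\in R$ with $g^\ell=hf$ and $Z(g)=Z(h)$. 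Such a Nullstellensatz is available via the Nullstellensatz on the Nash set $N$ combined with the arguments already developed in Section~\ref{s3} for the locally compact case.

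Granted this Nullstellensatz for $R$, the proofs of Theorems~\ref{main1} and~\ref{main3} transfer mutatis mutandis to yield the required spectral homeomorphism and the identification of residue fields for the chain $R\subseteq{\mathcal S}^{\infty\diam}(M)\subseteq{\mathcal S}^{0\diam}(M)$. In particular the outer inclusion $R\hookrightarrow{\mathcal S}^{0\diam}(M)$ already exhibits ${\mathcal S}^{0\diam}(M)$ as ${\rm rcl}(R)$. A standard functoriality property of the real closure (if $A\subset B\subset{\rm rcl}(A)$ then ${\rm rcl}(B)={\rm rcl}(A)$, obtained by a routine zigzag using the universal property) then forces any intermediate ring to share the same real closure; applied to ${\mathcal S}^{\infty\diam}(M)$, this yields the isomorphism ${\rm rcl}({\mathcal S}^{\infty\diam}(M))={\mathcal S}^{0\diam}(M)$ and completes the proof.

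The main obstacle is supplying the \L ojasiewicz Nullstellensatz for the ring $R$ of Nash restrictions to $M$; this is precisely where the \emph{global} Nash structure of $\cl(M)\cap U$ is used in an essential way. One must produce ${\mathcal C}^\infty$ semialgebraic functions on $M$ with prescribed zero sets, uniformly on all of $M$, a task that is unavailable under the pointwise germ-level Nash hypothesis that appears (merely as a necessary condition) in Theorem~\ref{nash}.
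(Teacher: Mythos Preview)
Your final ``sandwich'' step---if $A\subset B\subset{\rm rcl}(A)$ then ${\rm rcl}(B)={\rm rcl}(A)$, applied with $A={\mathcal N}^{\diam}(M)$ and $B={\mathcal S}^{\infty\diam}(M)$---is exactly what the paper does at the end. The divergence, and the gap, is in how you try to identify ${\rm rcl}(R)$ with ${\mathcal S}^{0\diam}(M)$.

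Your ring $R$ is the image of ${\mathcal N}^{\diam}(N)$ under restriction to $M$, where $N=\cl(M)\cap U$ is fixed. Since $M$ is dense in $N$, this restriction map is injective, so $R\cong{\mathcal N}^{\diam}(N)$ as rings. By Fact~\ref{f:rcl2} one then has ${\rm rcl}(R)\cong{\mathcal S}^{0\diam}(N)$, and in general ${\mathcal S}^{0\diam}(N)\subsetneq{\mathcal S}^{0\diam}(M)$. A concrete instance: take $M=\R^2\setminus\{0\}$, $U=\R^2$, so $N=\R^2$. The function $(x^2+y^2)|_M$ is a unit in ${\mathcal S}^{0}(M)$ but a non-unit in $R\cong{\mathcal N}(\R^2)$; hence the maximal ideal of $R$ at the origin is not hit by the contraction map $\Spec^{0}(M)\to\Spec(R)$, which is therefore not surjective. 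The same example kills the \L ojasiewicz Nullstellensatz you postulate for $R$: one has $Z_M(x^2+y^2)=\varnothing\subset Z_M(x)$, yet no power of $x$ is a multiple of $x^2+y^2$ in ${\mathcal N}(\R^2)$. So neither the spectral homeomorphism nor the Nullstellensatz for $R$ is available, and the ``mutatis mutandis'' transfer of Theorems~\ref{main1} and~\ref{main3} cannot go through as stated.

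The paper repairs this by replacing the single ring ${\mathcal N}^{\diam}(N)$ with the full ring ${\mathcal N}^{\diam}(M)$, realized as the direct limit $\displaystyle\lim_{\longrightarrow}{\mathcal N}^{\diam}(\cl(M)\cap V)$ over the open semialgebraic neighborhoods $V\subset U$ of $M$. Each $\cl(M)\cap V$ is again a Nash set, so Fact~\ref{f:rcl2} gives ${\rm rcl}({\mathcal N}^{\diam}(\cl(M)\cap V))={\mathcal S}^{0\diam}(\cl(M)\cap V)$; since real closure commutes with direct limits (Fact~\ref{directlimit}), one gets ${\rm rcl}({\mathcal N}^{\diam}(M))=\displaystyle\lim_{\longrightarrow}{\mathcal S}^{0\diam}(\cl(M)\cap V)={\mathcal S}^{0\diam}(M)$. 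Only then does the sandwich ${\mathcal N}^{\diam}(M)\subset{\mathcal S}^{\infty\diam}(M)\subset{\mathcal S}^{0\diam}(M)$ yield the conclusion. In short: your architecture is right, but you must pass to the direct limit over shrinking neighborhoods rather than work on the fixed $N$; once you do, Fact~\ref{f:rcl2} does all the work and there is no need to rerun the proofs of Theorems~\ref{main1} and~\ref{main3}.
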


It is natural to wonder if the `local property at each point' implies `the global property'. But this is false because `to be locally a Nash set' does not imply `to be globally a Nash set'. For an evidence consider for instance the semialgebraic set $M:=\{x^2-(z^2-1)y^2=0,z>0\}\cup\{x=0,y=0\}$. It holds that the Nash closure in each open semialgebraic neighborhood $U$ of $M$ is the connected component of $X:=\{x^2-(z^2-1)y^2=0\}\cap U$ that contains $M$. Clearly, $M\subsetneq X$.

\vspace{1mm}
\noindent{\bf Structure of the article.}
The article is organized as follows. In Section \ref{s2} we introduce and analyze with care the concept of ${\mathcal S}^r$-function on a semialgebraic set $M\subset\R^m$ and we prove equalities \eqref{dlim} and \eqref{dlim2}. We also prove Theorem \ref{main2}. The main purpose of Section \ref{s3} is to prove Theorem \ref{main1}. The proofs for the ${\mathcal S}^r$ case and ${\mathcal S}^{r*}$ case are of different nature. The first one is based on a ${\mathcal S}^r$ version of \L ojasiewicz Nullstellensatz (Proposition \ref{null20}) valid only in its full generality for the locally compact case. In Proposition \ref{crucial} we modify the statement in order to approach the general case. The proof of Theorem \ref{main1} for the ${\mathcal S}^{r*}$ case requires other type of techniques (like Nash approximation) combined with Proposition \ref{null20}. In Section \ref{s4} we prove Theorem \ref{main3} whereas in Section \ref{s5} we prove Theorem \ref{nash} and Proposition \ref{nash2}. In Corollary \ref{a} we show that the sum of two radical ideals of a ring of ${\mathcal S}^{r\diam}$ functions needs not to be a radical ideal. In view of Definition \ref{def:rcr} this reveals to be `the missing property'.

\vspace{1mm}
\noindent{\bf Acknowledgements.} 
The authors would like to thank Professor Marcus Tressl for sharing with them his deep knowledge in the theory of real closed rings.

\section{Rings of ${\mathcal S}^r$-functions}\label{s2}

If $U\subset\R^m$ is an open semialgebraic set and $r\geq0$ is an integer, we say that a semialgebraic function on $U$ is an ${\mathcal S}^r$ \em function \em if it is a function of class ${\mathcal C}^r$ in the classical sense. Our purpose next is to extend the previous definition for functions on an arbitrary semialgebraic set. In the literature the ${\mathcal S}^r$-functions are considered over closed semialgebraic subsets of $\R^m$ and are introduced as the restriction of an ${\mathcal S}^r$-function defined on an open semialgebraic neighborhood of the original semialgebraic set. The existence of a (continuous) semialgebraic extension is never an obstacle: \em any locally compact semialgebraic set $M\subset\R^m$ is the semialgebraic retract of a small open semialgebraic neighborhood $V\subset\R^m$ of $M$\em, see \cite[Thm.1]{dk1}. For non-locally compact semialgebraic sets even the existence of a semialgebraic extension map is a delicate point \cite{fe2}. As we see below, if $f:M\to\R$ is a semialgebraic function, there exists a semialgebraic embedding ${\tt j}:M\hookrightarrow\R^p$ and a semialgebraic function $F:W\to\R$ on an open semialgebraic neighborhood $W\subset\R^p$ of ${\tt j}(M)$ such that $F\circ{\tt j}=f$.

\subsection{Differentiability for semialgebraic functions}
Our purpose is to work in the differentiable case as in the continuous one, where we always deal with the classical intrinsic notion of continuity that avoids the intricate extension aspect. This is a classical problem that goes back to Whitney's extension theorem. Inspired by the latter and based on jets of semialgebraic functions, we recall and develop a notion of differentiable semialgebraic function of intrinsic nature proposed in \cite{at} motivated by the proof of Whitney's extension theorem collected in \cite[\S1]{m}. We denote $\N:=\{0,1,2,\ldots\}$ the set of natural numbers including $0$.

\begin{defns}\label{def:jets}
Fix a semialgebraic set $M\subset\R^m$ and denote $\x:=(\x_1,\ldots,\x_m)$. A \em semialgebraic jet on $M$ of order $r\geq0$ \em is a collection of semialgebraic functions $F:=(f_{\alpha})_{|\alpha|\leq r}$ on $M$. Here we denote $\alpha:=(\alpha_1,\ldots,\alpha_m)\in\N^m$ and $|\alpha|:=\alpha_1+\cdots+\alpha_m$. For each $a\in M$ write
$$
T_a^rF:=\sum_{|\alpha|\leq r}\frac{f_\alpha(a)}{\alpha!}(\x-a)^\alpha\quad\text{and}\quad R_a^rF:=f_0-T_a^rF.
$$
Denote the set of all semialgebraic jets on $M$ of order $r$ with ${\mathcal J}^r(M)$, which has a natural structure of $\R$-vector space. For each $\beta\in\N^m$ such that $|\beta|\leq r$ we denote 
$$
D^\beta:{\mathcal J}^r(M)\to{\mathcal J}^{r-|\beta|}(M),\ F:=((f_{\alpha})_{|\alpha|\leq r})\mapsto F_\beta:=(f_{\gamma+\beta})_{|\gamma|\leq r-|\beta|},
$$
that is a linear map.
\end{defns}

\begin{defn}\label{defsr}
Let $f\in{\mathcal S}(M)$ be a semialgebraic function. We say that $f$ is a \em ${\mathcal S}^r$-function \em if there exists a semialgebraic jet $F:=(f_\alpha)_{|\alpha|\leq r}$ on $M$ of order $r$ such that 

\paragraph{}\label{diff}\em $f_0=f$ and for each $\beta$ with $|\beta|\leq r$ and each point $a\in M$ it holds $|R_x^{r-|\beta|}F_\beta(y)|=o(\|x-y\|^{r-|\beta|})$ for $x,y\in M$ when $x,y\to a$,\em

\noindent that is, the function $\tau:(0,+\infty)\to\R$ defined by
$
\tau(t):=\sup_{\substack{x,y\in M,\ x\neq y\\ \|x-a\|\leq t,\ \|y-a\|\leq t}}\frac{|R_x^{r-|\beta|}F_\beta(y)|}{\|x-y\|^{r-|\beta|}}
$
is an increasing function that can be continuously extended to $t=0$ by $\tau(0)=0$.

We denote by ${\mathcal S}^r(M)$ the collection of all the ${\mathcal S}^r$-functions on $M$.
\end{defn}
\begin{remarks}\label{Rest}
(i) Condition \ref{diff} is equivalent by \cite[I.Thm.2.2]{m} to the following one:

\paragraph{}\label{diff2}\em $f_0=f$ and for each point $a\in M$ there exists an increasing, continuous and concave function $\alpha_a:[0,+\infty)\to[0,+\infty)$ such that $\alpha_a(0)=0$ and for each $z\in\R^m$ it holds
$$
|T_x^rF(z)-T_y^rF(z)|\leq\alpha_a(\|x-y\|)\cdot(\|z-x\|^r+\|z-y\|^r)
$$ 
for $x,y\in M$ when $x,y\to a$. \em

(ii) If $M\subset\R^m$ is an open semialgebraic subset, Definition \ref{defsr} coincides with the expected one. That is, \em $f$ is an ${\mathcal S}^r$-function on $M$ if and only if $f$ is a ${\mathcal C}^r$ function and a semialgebraic function\em. Indeed, if $f\in{\mathcal S}(M)$ is a ${\mathcal C}^r$ function the semialgebraic jet $(f_\alpha:=\frac{\partial^{|\alpha|}f}{\partial\x^\alpha})_{|\alpha|\leq r}$ of order $r$ on $M$ satisfies by Taylor's Theorem condition \ref{diff}. Conversely, condition \ref{diff} straightforwardly implies that $f_\alpha=\frac{\partial^{|\alpha|}f}{\partial\x^\alpha}$ if $|\alpha|\leq r$, so $f$ is a ${\mathcal C}^r$ function. In addition, $f=f_0$ is semialgebraic.

(iii) It follows directly from the definition that if $M_1\subset M_2\subset\R^m$ are semialgebraic sets and $f$ is an ${\mathcal S}^r$-function on $M_2$, then $f|_{M_1}$ is an ${\mathcal S}^r$-function on $M_1$.
\end{remarks}

Whitney already proved in his pioneer work that jets on closed subsets satisfying condition \ref{diff} are restrictions of global differentiable functions (Whitney's extension theorem). The first result in a tame category (the subanalytic one) can be found in \cite{kp1}. Recently, this has been generalized to o-minimal structures in \cite{kp2,th}. Specifically, in our setting we have:

\begin{fact}[\cite{th}]\label{thfact}
Let $M\subset\R^m$ be a closed semialgebraic set and let $f$ be an ${\mathcal S}^r$-function on $M$. Let $(f_\alpha)_{|\alpha|\leq r}$ be a semialgebraic jet of order $r$ on $M$ satisfying condition \em\ref{diff}\em. Then there exists an ${\mathcal S}^r$-function $F$ on $\R^m$ such that $F|_M=f$ and $\frac{\partial^{|\alpha|}F}{\partial\x^\alpha}|_M=f_\alpha$ for $|\alpha|\leq r$.
\end{fact}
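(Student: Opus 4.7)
The plan is to adapt Whitney's classical extension theorem, as presented in Malgrange \cite[\S 1]{m}, to the semialgebraic category, following the strategy developed in \cite{kp2,th}. The overall idea is to set $F:=f$ on $M$ and to define $F$ on $\R^m\setminus M$ as a weighted average of Taylor polynomials $T_y^r F$ of the jet $F=(f_\alpha)_{|\alpha|\leq r}$ based at carefully selected points $y\in M$ close to each point of $\R^m\setminus M$.

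The first step is the construction of a semialgebraic Whitney-type cover of $\R^m\setminus M$: a locally finite semialgebraic family $\{U_i\}_{i\in I}$ of open semialgebraic subsets of $\R^m\setminus M$ whose diameters are comparable to $\dist(U_i,M)$, together with a semialgebraic map $\pi:\R^m\setminus M\to M$ such that $\|x-\pi(x)\|$ realizes, up to a uniform multiplicative constant, the distance from $x$ to $M$. The second step is to choose a semialgebraic ${\mathcal C}^r$ partition of unity $\{\varphi_i\}_{i\in I}$ subordinate to $\{U_i\}$ that satisfies the standard size estimates $|\partial^\alpha\varphi_i(x)|\leq C_\alpha\dist(x,M)^{-|\alpha|}$ for $|\alpha|\leq r$. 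Pick $x_i\in U_i$ and set
\[ F(x):=\sum_{i\in I}\varphi_i(x)\,T_{\pi(x_i)}^r F(x)\quad\text{for }x\in\R^m\setminus M, \qquad F(x):=f(x)\quad\text{for }x\in M. \]
By local finiteness and the fact that the whole construction is uniformly semialgebraic (finitely many pieces up to a parameter), the resulting function $F$ is semialgebraic on $\R^m$.

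On the open set $\R^m\setminus M$ the function $F$ is automatically an ${\mathcal S}^r$-function, so the delicate step is to prove that $F$ is ${\mathcal C}^r$ across each $a\in M$ and that $\frac{\partial^{|\alpha|}F}{\partial\x^\alpha}(a)=f_\alpha(a)$ for every $|\alpha|\leq r$. The essential input here is condition \ref{diff2} of Remarks \ref{Rest}(i): the Taylor polynomials at nearby points of $M$ differ by quantities controlled by $\alpha_a(\|x-y\|)(\|z-x\|^r+\|z-y\|^r)$, and therefore their convex combinations weighted by the $\varphi_i$ inherit the same quantitative closeness. The classical Malgrange computation then carries over verbatim and yields continuity of all derivatives up to order $r$ at the boundary points $a\in M$, together with the prescribed values $f_\alpha(a)$.

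The hard part is producing the semialgebraic Whitney cover itself: Whitney's original argument uses infinitely many dyadic cubes of arbitrarily small size, which is incompatible with the finite complexity inherent to the semialgebraic category. This is precisely the technical core handled in \cite{kp2,th} via a semialgebraic cell decomposition of an open semialgebraic neighborhood of $M$ together with a semialgebraic Lipschitz-type control on the distance-to-$M$ function; once this semialgebraic replacement of the classical Whitney cover is in place, the remainder of the argument is a direct transcription of the classical proof.
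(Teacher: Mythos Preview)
The paper does not prove this statement at all: it is recorded as a \emph{Fact} with a citation to \cite{th} (see also \cite{kp2}) and is used as a black box throughout. Your sketch is a faithful outline of the strategy actually carried out in those references---a semialgebraic/o-minimal substitute for the classical Whitney cube decomposition, a compatible ${\mathcal C}^r$ partition of unity with the standard derivative bounds, and then the Malgrange-style verification of ${\mathcal C}^r$-regularity at boundary points using condition~\ref{diff2}. So there is no discrepancy to report: your proposal and the cited proof follow the same route, while the present paper simply invokes the result.
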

\begin{remark}
Observe that an ${\mathcal S}^r$-function $f$ on a closed semialgebraic set $M\subset\R^m$ admits many semialgebraic jets of order $r$. In fact, the different ${\mathcal S}^r$-extensions $F$ of $f$ to $\R^m$ define different semialgebraic jets $(\frac{\partial^{|\alpha|}F}{\partial\x^\alpha}|_M)_{|\alpha|\leq r}$ of order $r$ on $M$. Consider for instance $M:=\{\y=0\}\subset\R^2$ and the ${\mathcal S}^2$-functions $F_1:=\x+\y^2$ and $F_2:=\x-\y-2\y^2$. We have $f:=F_1|_M=F_2|_M$, so $f$ is an ${\mathcal S}^2$-function on $M$. However, the semialgebraic jets 
\begin{align*}
&\Big(\frac{\partial^{|\alpha|}F_1}{\partial\x^\alpha}\Big|_M\Big)_{|\alpha|\leq 2}=(\x;1,0;0,0,2),\\
&\Big(\frac{\partial^{|\alpha|}F_2}{\partial\x^\alpha}\Big|_M\Big)_{|\alpha|\leq 2}=(\x;1,-1;0,0,-4)
\end{align*} 
of order $2$ are different.
\end{remark}

Our definition of an ${\mathcal S}^r$-function on a closed subset coincides with the classical one concerning extensions that is frequently used in the literature. However, as we point out in the Introduction we will not focus our study only on closed or locally compact semialgebraic subsets of $\R^m$. We prove next the following extension result. As usual $B(a,\veps)$ denotes the ball of $\R^m$ of center $a\in\R^n$ and radius $\veps>0$.

\begin{lem}\label{lips}
Let $f:M\to\R$ be an ${\mathcal S}^r$-function where $r\geq 1$. Let $(f_\alpha)_{|\alpha|\leq r}$ be a semialgebraic jet associated to $f$. Then 
\begin{itemize}
\item[(i)] $f_\alpha$ is locally Lipschitz on $M$ for each $\alpha$ with $|\alpha|<r$.
\item[(ii)] There exists an open semialgebraic neighborhood $N\subset\cl(M)$ of $M$ such that $f_\alpha$ admits a continuous extension $F_\alpha$ to $N$, which is locally Lipschitz on $N$ for each $\alpha$ with $|\alpha|<r$. In particular, $f_\alpha\in{\mathcal S}^0(M)$ for each $\alpha$ with $|\alpha|<r$.
\item[(iii)] After shrinking $N$ if necessary, we may assume $F_0$ is an ${\mathcal S}^{r-1}$ extension of $f$.
\end{itemize}
\end{lem}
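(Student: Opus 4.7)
The three parts form a cascade: (i) is a direct computation from the jet condition, (ii) bundles it into a continuous extension across $\cl(M)$, and (iii) checks that the order $r-1$ truncation of the extended jet still verifies condition \ref{diff}.

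\textbf{For (i),} I would fix $\beta$ with $s:=r-|\beta|\ge 1$ and any $a\in M$, and exploit condition \ref{diff} on the subjet $F_\beta$ to rewrite
\[
f_\beta(y)-f_\beta(x)=\sum_{1\le|\gamma|\le s}\tfrac{f_{\gamma+\beta}(x)}{\gamma!}(y-x)^\gamma+R_x^{s}F_\beta(y),
\]
with $R_x^{s}F_\beta(y)=o(\|x-y\|^s)$ as $x,y\to a$. Because each $f_{\gamma+\beta}$ is continuous (hence locally bounded) near $a$, the polynomial part is dominated by $C\|x-y\|$ on a small ball around $a$, while the remainder is bounded by $\|x-y\|$ once $\|x-y\|\le 1$. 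Combining both gives a local Lipschitz constant for $f_\beta$, which is (i).

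\textbf{For (ii),} at each $a\in M$ I would pick $\veps_a>0$ so that every $f_\alpha$ with $|\alpha|<r$ (finitely many) is Lipschitz on the bounded set $V_a:=M\cap B(a,\veps_a)$. Uniform continuity on $V_a$ then yields a unique continuous extension of each $f_\alpha|_{V_a}$ to $\cl(V_a)\supseteq \cl(M)\cap B(a,\veps_a)=:N_a$. On overlaps the extensions coincide by density of $M$ in $\cl(M)$ and by continuity, so they glue to a continuous $F_\alpha$ on $N:=\bigcup_{a\in M}N_a$, which is open in $\cl(M)$. Choosing the radii $\veps_a$ as a semialgebraic function of $a$ via definable choice makes $N$ semialgebraic, and the graph of $F_\alpha$ then coincides with the closure of the graph of $f_\alpha$ inside $N\times\R$, hence is semialgebraic. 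Passing to the limit in the Lipschitz inequality transports local Lipschitzness from $V_a$ to $N_a$, and the definition of $\mathcal{S}^0(M)$ yields $f_\alpha\in\mathcal{S}^0(M)$. The delicate point is precisely the semialgebraic bookkeeping that ensures $N$ is simultaneously open in $\cl(M)$ and semialgebraic.

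\textbf{For (iii),} I would verify that $(F_\alpha)_{|\alpha|\le r-1}$ is a semialgebraic jet on $N$ satisfying condition \ref{diff} of order $r-1$. The identity
\[
R_x^{s-1}F_\beta(y)=R_x^{s}F_\beta(y)+\sum_{|\gamma|=s}\tfrac{f_{\gamma+\beta}(x)}{\gamma!}(y-x)^\gamma
\]
shows that both summands are $O(\|x-y\|^s)$ on a small ball around $a$, so factoring out $\|x-y\|^{s-1}$ leaves a factor of $\|x-y\|\to 0$ and gives $R_x^{s-1}F_\beta(y)=o(\|x-y\|^{s-1})$. For $a\in M$ this follows at once from the $\tau$-uniform form of condition \ref{diff} in Definition \ref{defsr}; for $a\in N\setminus M$ I would pick $a'\in M$ close enough that a ball around $a$ sits inside the uniform ball around $a'$, and transport the estimate from $x,y\in M$ to $x,y\in N$ by density of $M$ in $\cl(M)$ together with continuity of the $F_\alpha$, possibly after shrinking $N$. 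The only subtle step in (iii) is this last propagation from $M$ to $N\setminus M$, which density plus continuity make routine once the uniform bound on $M$ is in hand.
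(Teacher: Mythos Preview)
Your proposal is correct and follows essentially the same route as the paper. The only presentational differences are that in (ii) the paper defines $N$ intrinsically as the first-order definable set of points of $\cl(M)$ at which the local Lipschitz condition holds (rather than via definable choice of radii), and in (iii) it makes your ``possibly after shrinking $N$'' explicit by redefining $N$ as the set where the uniform $K$-bound $|R_x^{r-1-|\beta|}F_\beta(y)|\le K\|x-y\|^{r-|\beta|}$ holds for $x,y\in M$, then carries out the density-plus-continuity propagation to $u,v\in N$ via explicit approximants $x,y\in M$ with controlled errors; your abstract limit argument is a cleaner phrasing of the same computation.
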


\begin{proof}
 (i) Pick a point $a\in M$. As $f$ is a ${\mathcal S}^1$ function, there exists $\veps>0$ such that
\begin{align*}
&|f(x)-f(y)-\sum_{i=1}^nf_{{\tt e}_i}(y)(x_i-y_i)|\leq\|x-y\|,\\
&|f_{{\tt e}_i}(y)-f_{{\tt e}_i}(a)|<1
\end{align*}
for each $x,y\in M\cap B(a,\veps)$. Thus,
$$
|f(x)-f(y)|\leq\Big(n+1+\sum_{i=1}^n|f_{{\tt e}_i}(a)|\Big)\|x-y\|
$$
for each $x,y\in M\cap B(a,\veps)$, so $f$ is locally Lipschitz at $a\in M$. Analogously, each $f_\alpha$ is locally Lipschitz at $a\in M$ for each $\alpha$ with $|\alpha|<r$.

(ii) Let $N\subset\cl(M)$ be the set of points $x\in\cl(M)$ such that $f$ is locally Lipschitz in the intersection with $M$ of a neighborhood of $x$, that is, there exist $\veps>0$ and $K>0$ (depending on $x$) satisfying
$$
|f(y_1)-f(y_2)|<K\|y_1-y_2\|
$$
for each $y_1,y_2\in M\cap B(x,\veps)$. By its very definition and (i) it holds that $N$ is an open semialgebraic subset of $\cl(M)$ that contains $M$. For each point $x\in N$ there exists an open semialgebraic neighborhood $V^x\subset N\subset\cl(M)$ of $x$ such that $f|_{V^x\cap M}$ is Lipschitz, so in particular $f|_{V^x\cap M}$ is uniformly continuous. Thus, $f|_{V^x\cap M}$ admits a unique continuous extension to $\cl(V^x\cap M)$. For each $x\in N$ we have that $V^x\subset\cl(V^x\cap M)$, so there exists a (unique) continuous extension $F$ of $f$ to $N$. The function $F$ is semialgebraic because its graph $\Gamma(F)=\cl(\Gamma(f))\cap(N\times\R)$. 

Next, let us show that $F$ is locally Lipschitz. Fix point $a\in N$ and let $K>0$ and $\veps>0$ be such that $N\cap B(a,\veps)=\cl(M)\cap B(a,\veps)$ and
$$
|f(x)-f(y)|\leq K\|x-y\|
$$
for each $x,y\in M\cap B(a,\veps)$. Pick two points $u,v\in\cl(M)\cap B(a,\veps)$ with $u\neq v$. As $F$ is a continuous extension of $f$, there exists $x\in M$ such that $\max\{|f(x)-F(u)|,\|x-u\|\}<\|u-v\|$. Similarly, there exists $y\in M$ such that $\max\{|f(y)-F(v)|,\|y-v\|\}<\|u-v\|$. Therefore,
\begin{equation*}
\begin{split}
|F(u)-F(v)|&\leq|F(u)-f(x)|+|f(x)-f(y)|+|f(y)-F(v)|\\
&\leq 2\|u-v\|+K\|x-y\|\leq 2\|u-v\|+K(\|x-u\|+\|u-v\|+\|v-y\|)\\
&=(3K+2)\|u-v\|
\end{split}
\end{equation*}
and $F:N\to\R$ is locally Lipschitz at $a$. The same happens for each $\alpha$ with $|\alpha|<r$ and statement (ii) holds.

(iii) Pick a point $a\in M$. As $f$ is an ${\mathcal S}^r$-function, there exists $\veps>0$ such that
\begin{align*}
&\Big|f(x)-\sum_{|\alpha|\leq r}\frac{f_{\alpha}(y)}{\alpha!}(x-y)^\alpha\Big|\leq\|x-y\|^r\\
&|f_{\beta}(y)-f_{\beta}(a)|<1
\end{align*}
for each $x,y\in M\cap B(a,\veps)$ and each $\beta\in\N^n$ with $|\beta|=r$. Thus,
\begin{equation}\label{formula}
\begin{split}
\Big|f(x)-\sum_{|\alpha|\leq r-1}\frac{f_{\alpha}(y)}{\alpha!}(x-y)^\alpha\Big|&\leq\Big|f(x)-\sum_{|\alpha|\leq r}\frac{f_{\alpha}(y)}{\alpha!}(x-y)^\alpha\Big|\\
&+\sum_{|\beta|=r}\frac{|f_{\beta}(y)-f_{\beta}(a)|}{\beta!}|x-y|^\beta+\sum_{|\beta|=r}\frac{|f_{\beta}(a)|}{\beta!}|x-y|^\beta\\
&\leq\Big(1+\sum_{|\beta|=r}\frac{1+|f_{\beta}(a)|}{\beta!}\Big)\|x-y\|^r
\end{split}
\end{equation}
for each $x,y\in M\cap B(a,\veps)$. 

Let $N_0$ be an open semialgebraic subset of $\cl(M)$ containing $M$ such that each $f_\alpha$ with $|\alpha|<r$ admits a locally Lipschitz (continuous) semialgebraic extension $F_\alpha$ to $N_0$. Its existence is guaranteed by (ii). Let $N$ be the set of points $x\in N_0$ such that there exists $\veps>0$ and $K>0$ (depending on $x$) satisfying
$$
\Big|f(y_1)-\sum_{|\alpha|\leq r-1}\frac{f_{\alpha}(y_2)}{\alpha!}(y_1-y_2)^\alpha\Big|<K\|y_1-y_2\|^r
$$
for each $y_1,y_2\in M\cap B(x,\veps)$. By \eqref{formula} and its very definition $N$ is an open semialgebraic subset of $\cl(M)$ that contains $M$. Let us show that $F:=F_0$ is ${\mathcal S}^{r-1}$ on $N$. Fix a point $a\in N$ and let $\veps>0$ and $K>0$ be such that $N\cap B(a,\veps)=\cl(M)\cap B(a,\veps)$,
\begin{align*}
&\Big|f(x)-\sum_{|\alpha|\leq r-1}\frac{f_{\alpha}(y)}{\alpha!}(x-y)^\alpha\Big|<K\|x-y\|^r,\\
&|f_{\gamma}(y)-f_{\gamma}(a)|<1
\end{align*}
for each $x,y\in M\cap B(a,\veps)$ and each $\gamma\in\N^n$ with $|\gamma|<r$. Note that 
$$
|f_\gamma(y)|<1+|f_\gamma(a)|
$$
for each $\gamma\in\N^n$ with $|\gamma|<r$.

We must show that there exists a constant $K'>0$ such that for each two fixed points $u,v\in\cl(M)\cap B(a,\veps)$ with $u\neq v$, we have
\begin{equation}\label{r-1diff}
\frac{\Big|F(u)-\sum_{|\alpha|\leq r-1}\frac{F_{\alpha}(v)}{\alpha!}(u-v)^\alpha\Big|}{\|u-v\|^{r-1}}\leq K'\|u-v\|.
\end{equation}
To that aim, let $x,y\in M\cap B(a,\veps)$ be such that
\begin{align*}
&|F_\alpha(u)-F_\alpha(x)|<\|u-v\|^r,\\
&|F_\alpha(v)-F_\alpha(y)|<\|u-v\|^{r-|\alpha|},\\
&|(x-y)^\alpha-(u-v)^\alpha|<\|u-v\|^r,\\
&\|u-x\|<\|u-v\|,\\
&\|v-y\|<\|u-v\|
\end{align*}
for each $\alpha\in\N^n$ with $|\alpha|<r$ (to prove the existence of $x,y$ it is enough to consider the continuous map $N^2\mapsto\R^5$ whose function coordinates are the left hand side expressions of each of the above inequalities and to note that the image of $(u,v)$ is $0\in\R^5$). In particular, 
$$
\|x-y\|\leq\|x-u\|+\|u-v\|+\|v-y\|\leq 3\|u-v\|,
$$
so $\|x-y\|^r\leq 3^r\|u-v\|^r$. We deduce
\begin{equation*}
\begin{split}
\Big|F(u)&-\sum_{|\alpha|\leq r-1}\frac{F_{\alpha}(v)}{\alpha!}(u-v)^\alpha\Big|\leq|F(u)-f(x)|+\Big|f(x)-\sum_{|\alpha|\leq r-1}\frac{f_{\alpha}(y)}{\alpha!}(x-y)^\alpha\Big|\\
&+\sum_{0<|\alpha|\leq r-1}\Big|\frac{f_{\alpha}(y)}{\alpha!}\Big||(x-y)^\alpha-(u-v)^\alpha|+\sum_{|\alpha|\leq r-1}\Big|\frac{f_{\alpha}(y)-F_{\alpha}(v)}{\alpha!}\Big|\|u-v\|^{|\alpha|}\\
&\leq\|u-v\|^r+K\|x-y\|^r+\sum_{|\alpha|\leq r-1}\frac{1+|f_{\alpha}(a)|}{\alpha!}\|u-v\|^r+\sum_{|\alpha|\leq r-1}\frac{1}{\alpha!}\|u-v\|^r\\
&\leq\Big(1+3^rK+\sum_{|\alpha|\leq r-1}\frac{2+|f_{\alpha}(a)|}{\alpha!}\Big)\|u-v\|^r,
\end{split}
\end{equation*}
and therefore to get inequality (\ref{r-1diff}) it is enough to define $K':=(1+3^rK+\sum_{|\alpha|\leq r-1}\frac{2+|f_{\alpha}(a)|}{\alpha!})$, as required.
\end{proof}

\subsection{${\mathcal S}^r$-maps and embeddings}
We deal next with maps instead of functions. This allows to work with locally compact semialgebraic sets as closed subsets (see Corollary \ref{ext}).

\begin{defn}
Let $M\subset\R^m$ be a semialgebraic set. A map 
$$
\varphi:=(\varphi_1,\ldots,\varphi_n):M\to\R^n
$$ 
is a \em ${\mathcal S}^r$-map \em if each component $\varphi_i$ is an ${\mathcal S}^r$-function. In addition, $\varphi$ is a \em ${\mathcal S}^r$-embedding \em if it is injective and $\varphi^{-1}:\varphi(M)\to\R^m$ is an ${\mathcal S}^r$-map. In such case we say that $\varphi:M\to\varphi(M)$ is a \em ${\mathcal S}^r$ diffeomorphism, \em and that \em $M$ and $\varphi(M)$ are ${\mathcal S}^r$ diffeomorphic. \em 
\end{defn}

The following composition result is crucial.

\begin{thm}\label{comp} 
Let $M\subset\R^m$ be a semialgebraic set, $\varphi:=(\varphi_1,\ldots,\varphi_n):M\to\R^n$ an ${\mathcal S}^r$-map and let $N:=\varphi(M)$. Let $f$ be an ${\mathcal S}^r$-function on $N$. Then $g:=f\circ\varphi$ is an ${\mathcal S}^r$-function on $M$. 
\end{thm}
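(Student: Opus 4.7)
The plan is to exhibit an explicit semialgebraic jet of order $r$ associated with $g$ on $M$ and verify that it satisfies the Whitney-type condition \ref{diff}. Fix jets $\Phi_i:=(\varphi_{i,\alpha})_{|\alpha|\leq r}$ witnessing that each $\varphi_i\in{\mathcal S}^r(M)$ and a jet $F:=(f_\beta)_{|\beta|\leq r}$ witnessing that $f\in{\mathcal S}^r(N)$. For each multi-index $\gamma\in\N^m$ with $|\gamma|\leq r$ set
$$
g_\gamma:=\sum_{\beta,\pi}c_{\gamma,\beta,\pi}\,(f_\beta\circ\varphi)\prod_{(i,\alpha)\in\pi}\varphi_{i,\alpha},
$$
with coefficients $c_{\gamma,\beta,\pi}$ dictated by the Fa\`a di Bruno formula for the $\gamma$-th partial derivative of a classical composition $f\circ\varphi$; in particular $g_0=f\circ\varphi=g$. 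Each $g_\gamma$ is a polynomial in functions from ${\mathcal S}(M)$ (note $f_\beta\circ\varphi\in{\mathcal S}(M)$ because the composition of semialgebraic functions is semialgebraic), so $G:=(g_\gamma)_{|\gamma|\leq r}$ is a semialgebraic jet of order $r$ on $M$.

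The defining virtue of this choice is a purely algebraic chain rule for truncated Taylor polynomials: writing $\Phi:=(\Phi_1,\ldots,\Phi_n)$ and $T_x^r\Phi(z):=(T_x^r\Phi_1(z),\ldots,T_x^r\Phi_n(z))$, a direct expansion shows
$$
T_x^rG(z)=T_{\varphi(x)}^rF\bigl(T_x^r\Phi(z)\bigr)+E_x(z),
$$
where $E_x(z)$ collects only monomials in $z-x$ of total degree strictly greater than $r$; it is precisely this identity that forces the Fa\`a di Bruno coefficients in the definition of $g_\gamma$. Fix $a\in M$ and set $b:=\varphi(a)\in N$. By Lemma \ref{lips}(i), $\varphi$ is locally Lipschitz at $a$ and every $\varphi_{i,\alpha}$ with $|\alpha|<r$ is locally Lipschitz at $a$; likewise each $f_\beta$ with $|\beta|<r$ is locally Lipschitz at $b$. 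Hence there exist constants $C,\veps>0$ with $\|\varphi(x)-\varphi(y)\|\leq C\|x-y\|$ for all $x,y\in M\cap B(a,\veps)$, and every jet coefficient appearing in $E_x$ or in the Fa\`a di Bruno expansion is uniformly bounded on $M\cap B(a,\veps)$ and on $N\cap B(b,C\veps)$.

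To conclude, I check the equivalent form \ref{diff2}: produce an increasing concave modulus $\alpha_a$ with $\alpha_a(0)=0$ bounding $|T_x^rG(z)-T_y^rG(z)|$ by $\alpha_a(\|x-y\|)(\|z-x\|^r+\|z-y\|^r)$ for $x,y\in M\cap B(a,\veps)$ and $z\in\R^m$. Using the identity above, the dominant part of the difference splits as
$$
\bigl[T_{\varphi(x)}^rF(T_x^r\Phi(z))-T_{\varphi(y)}^rF(T_x^r\Phi(z))\bigr]+\bigl[T_{\varphi(y)}^rF(T_x^r\Phi(z))-T_{\varphi(y)}^rF(T_y^r\Phi(z))\bigr].
$$
The first bracket is bounded via condition \ref{diff2} for $F$ at $b$ together with the Lipschitz estimate $\|\varphi(x)-\varphi(y)\|\leq C\|x-y\|$; the second is handled by expanding $T_{\varphi(y)}^rF$ as an $\R$-polynomial in its $n$ vector arguments with coefficients bounded near $b$ and applying condition \ref{diff2} separately to each $\Phi_i$ at $a$. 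The residual error $E_x(z)-E_y(z)$ contains only monomials of degree $>r$ in $z-x$ or $z-y$ and is absorbed into the right-hand side after one further use of the Lipschitz estimate on $\varphi$. Taking the concave majorant of the resulting sum produces the required modulus $\alpha_a$. The main obstacle is not conceptual but the combinatorial bookkeeping of the Fa\`a di Bruno substitution and the uniform control of the degree-$>r$ remainder $E_x$; this parallels the classical chain rule for Whitney differentiable functions carried out in \cite[I.\S2]{m}, and all steps there remain valid because semialgebraicity is preserved under the ring operations and compositions involved.
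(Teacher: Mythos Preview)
Your strategy matches the paper's: define the jet $G$ so that $T_x^rG(z)$ is the degree-$\leq r$ truncation of $T_{\varphi(x)}^rF(T_x^r\Phi(z))$, then estimate $|T_x^rG(z)-T_y^rG(z)|$ via condition \ref{diff2} for $F$ combined with the local Lipschitz bound on $\varphi$ from Lemma \ref{lips}(i). The construction of $G$ and the use of the identity $T_x^rG(z)=T_{\varphi(x)}^rF(T_x^r\Phi(z))+E_x(z)$ are correct and coincide with what the paper does.

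There is, however, a real gap in your verification. All three pieces of your decomposition are bounded by quantities of degree up to $r^2$ in $\|z-x\|$: the first bracket gives $\alpha_b(C\|x-y\|)\bigl(\|T_x^r\Phi(z)-\varphi(x)\|^r+\|T_x^r\Phi(z)-\varphi(y)\|^r\bigr)$, which is degree $r^2$ in $z$; the second bracket, after telescoping the polynomial $T_{\varphi(y)}^rF$, picks up factors $\|T_x^r\Phi(z)\|^{r-1}$, again degree up to $r^2$; and $E_x(z),E_y(z)$ are explicitly of degree between $r+1$ and $r^2$. Condition \ref{diff2} requires a bound of the form $\alpha_a(\|x-y\|)(\|z-x\|^r+\|z-y\|^r)$ for \emph{every} $z\in\R^m$, so terms of degree $>r$ in $z$ cannot be ``absorbed'' into the right-hand side for large $z$; your claim here is simply false. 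The paper does not verify \ref{diff2} directly. Instead, after obtaining an estimate of the form $|T_x^rG(z)-T_y^rG(z)|\leq(\alpha(B\|x-y\|)+\|x-y\|)\cdot C\cdot(1+\|z-x\|^{r^2}+\|z-y\|^{r^2})$, it exploits the fact that the left-hand side is a polynomial of degree $\leq r$ in $z-x$: setting $\lambda=\|x-y\|$ and $z=x+\lambda z'$, the inequality becomes a bound on $\sum_{|\beta|\leq r}\frac{\lambda^{|\beta|}}{\beta!}(z'-x)^\beta R_y^{r-|\beta|}G_\beta(x)$, and evaluating at finitely many fixed $z'$-values lets one solve a linear system for the individual coefficients. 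This yields $|R_y^{r-|\beta|}G_\beta(x)|\leq C'(\alpha(B\|x-y\|)+\|x-y\|)\,\|x-y\|^{r-|\beta|}$, which is exactly condition \ref{diff}. You need to insert this rescaling and linear-algebra step; it is the non-obvious heart of the argument and is not covered by any Lipschitz estimate on $\varphi$ alone.
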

\begin{proof}
We deal with the case $r\geq 2$. The case $r=1$ follows with the obvious modifications. As $\varphi$ is an ${\mathcal S}^r$-map, for $i=1,\ldots,n$ there exists a semialgebraic jet $\Phi_i:=(\varphi_{i,\alpha})_{|\alpha|\leq r}$ on $M$ of order $r$ such that $\varphi_{i,0}=\varphi_i$ and for each $a\in M$ it holds $|R_x^r\Phi_i(y)|=o(\|x-y\|^r)$ for $x,y\in M$ when $x,y\to a$. Denote $\varphi_\alpha:=(\varphi_{1,\alpha},\ldots,\varphi_{n,\alpha})$ for $|\alpha|\leq r$ and $\Phi:=(\varphi_\alpha)_{|\alpha|\leq r}=(\Phi_1,\ldots,\Phi_n)$. We also write $T_x^r\Phi:=(T_x^r\Phi_1,\ldots,T_x^r\Phi_n)$ and $R_x^r\Phi:=(R_x^r\Phi_1,\ldots,R_x^r\Phi_n)$.

As $f$ is an ${\mathcal S}^r$-function on $N$, there exists a semialgebraic jet $F:=(f_\alpha)_{|\alpha|\leq r}$ on $N$ of order $r$ such that $f_0=f$ and for each $b\in N$ there exists an increasing, continuous, concave function $\alpha_b:[0,+\infty)\to[0,+\infty)$ such that $\alpha_b(0)=0$ and for each $w\in\R^n$ and $u,v\in N$ with $u,v\to b$, 
$$
|T_u^rF(w)-T_v^rF(w)|\leq\alpha_b(\|u-v\|)\cdot(\|w-u\|^r+\|w-v\|^r),
$$ 
see Remark \ref{Rest}(i). In particular, for each $a\in M$ the increasing, continuous and concave function $\alpha_{\varphi(a)}:[0,+\infty)\to[0,+\infty)$ satisfies $\alpha_{\varphi(a)}(0)=0$ and for each $z\in\R^m$ and $x,y\in M$ with $x,y\to a$,
\begin{multline}\label{ineq}
|T_{\varphi(x)}^rF(T_x^r\Phi(z))-T_{\varphi(y)}^rF(T_x^r\Phi(z))|\\
\leq\alpha_{\varphi(a)}(\|\varphi(x)-\varphi(y)\|)\cdot(\|T_x^r\Phi(z)-\varphi(x)\|^r+\|T_x^r\Phi(z)-\varphi(y)\|^r).
\end{multline} 

Let us construct a suitable semialgebraic jet $G:=(g_\alpha)_{|\alpha|\leq r}$ on $M$ of order $r$ such that $g_0=g$. Write $\y:=(\y_1,\ldots,\y_m)$ and $\z:=(\z_1,\ldots,\z_m)$. Let $Q(x,\y)\in{\mathcal S}(M)[\y]$ be a polynomial of degree $\leq r^2$ such that for each $x\in M$ we have
$$
T_{\varphi(x)}^rF(T_x^r\Phi(\z))=Q(x,\z-x).
$$
Let $g_\alpha\in{\mathcal S}(M)$ be $\alpha!$ times the coefficient of $Q$ corresponding to $\y^\alpha$. Note that $g_\alpha$ is a finite sum of finite products of some of the semialgebraic functions $f_\alpha\circ\varphi$ and $\varphi_{i,\alpha}$. Thus, $G:=(g_\alpha)_{|\alpha|\leq r}$ is a semialgebraic jet on $M$ of order $r$. Observe that $g_0=f_0\circ\varphi=f\circ\varphi=g$. By definition
$$
T_x^rG(\z)=\sum_{|\alpha|\leq r}\frac{g_\alpha(x)}{\alpha!}(\z-x)^\alpha\in\R[\z-x].
$$ 
Let $S(x,\y)\in{\mathcal S}(M)[\y]$ be such that $T_x^rG(\z)=S(x,\z-x)$ and define $P:=Q-S$. For each $x\in M$ the non-zero monomials of the polynomial $P_x(\z):=P(x,\z-x)\in\R[\z-x]$ have degree between $r+1$ and $r^2$. As the coefficients of $P$ are continuous functions, for each $a\in M$ there exists a constant $K_a>0$ such that 
\begin{equation}\label{pasito1}
|P_x(z)|<K_a\sum_{k=r+1}^{r^2}\|z-x\|^k
\end{equation} 
for $x$ close to $a$ and $z\in\R^m$. Consequently, there exists a constant $K_a'>0$ such that
\begin{equation}\label{pasito2}
|P_x(z)|<K_a'\cdot\big(\|z-x\|^{r+1}+\|z-x\|^{r^2}\big).
\end{equation}
For $z\in\R^m$ and $x,y\in M$ with $x,y\to a$ we have by \eqref{ineq} and \eqref{pasito2}
\begin{equation}\label{ineq2}
\begin{split}
|T_x^rG(z)-T_y^rG(z)|&\leq|T_{\varphi(x)}^rF(T_x^r\Phi(z))-T_{\varphi(y)}^rF(T_y^r\Phi(z))|+|P_x(z)|+|P_y(z)|\\
&\leq\alpha_{\varphi(a)}(\|\varphi(x)-\varphi(y)\|)\cdot(\|T_x^r\Phi(z)-\varphi(x)\|^r+\|T_x^r\Phi(z)-\varphi(y)\|^r)\\
&+K'_a\cdot(\|z-x\|^{r+1}+\|z-x\|^{r^2}+\|z-y\|^{r+1}+\|z-y\|^{r^2}). 
\end{split}
\end{equation}
As $\varphi=\varphi_0$, the equality
$$
T_x^r\Phi(\z)-\varphi(x)=\sum_{1\leq|\beta|\leq r}\frac{\varphi_\alpha(x)}{\alpha!}(\z-x)^\beta
$$ 
holds for each $a\in M$. Thus, there exists a constant $L_a>0$ such that for each $z\in\R^m$ and $x$ close to $a$,
\begin{equation}\label{paso2}
\|T_x^r\Phi(z)-\varphi(x)\|\leq\sum_{1\leq|\beta|\leq r}\frac{\|\varphi_\alpha(x)\|}{\alpha!}\cdot|(z-x)^\beta|\leq L_a\sum_{1\leq j\leq r}\|z-x\|^j.
\end{equation}
Hence, there exists a constant $L_a'>0$ depending on $a$ such that
\begin{equation}\label{paso3}
\|T_x^r\Phi(z)-\varphi(x)\|^r\leq L_a'\cdot\big(\|z-x\|^r+\|z-x\|^{r^2}\big).
\end{equation}
In addition, as $\varphi$ is an ${\mathcal S}^r$-map, for each $a\in M$ there exist by Proposition \ref{lips}(i) a constant $B_a>0$ such that $|\varphi(x)-\varphi(y)|\leq B_a\cdot\|x-y\|$ for $x,y\in M$ close to $a$. By inequalities \eqref{ineq2}, \eqref{paso2} and \eqref{paso3}
\begin{multline}\label{ineq3}
|T_x^rG(z)-T_y^rG(z)|\leq\alpha_a(B_a\|x-y\|)\cdot L_a'\cdot\big(\|z-x\|^r+\|z-x\|^{r^2}+\|z-y\|^r+\|z-y\|^{r^2}\big)\\
+K'_a\cdot\big(\|z-x\|^{r+1}+\|z-x\|^{r^2}+\|z-y\|^{r+1}+\|z-y\|^{r^2}\big).
\end{multline}

Let us prove: \em For each point $a\in M$ and each $\beta\in\N^m$ with $|\beta|\leq r$ it holds $|R_x^{r-|\beta|}G_\beta(y)|=o(\|x-y\|^{r-|\beta|})$ for $x,y\in M$ when $x,y\to a$\em.

We follow the strategy employed in the proof of implication (2.2.3) $\Longrightarrow$ (2.2.2) of \cite[I.Thm. 2.2]{m} with the suitable modifications. Using \cite[I.(1.5)\&(1.6)]{m} we have
$$
T_x^rG(z)-T_y^rG(z)=\sum_{|\beta|\leq r}\frac{(z-x)^\beta}{\beta!}R_y^rG_\beta(x).
$$
By \eqref{ineq3}
\begin{equation}\label{ineq4}
\begin{split}
|T_x^rG(z)-T_y^rG(z)|&=\Big|\sum_{|\beta|\leq r}\frac{(z-x)^\beta}{\beta!}R_y^rG_\beta(x)\Big|\\
&\leq\alpha_a(B_a\cdot\|x-y\|)\cdot L_a'\cdot\big(\|z-x\|^r+\|z-x\|^{r^2}+\|z-y\|^r+\|z-y\|^{r^2}\big)\\
&+K'_a\cdot\big(\|z-x\|^{r+1}+\|z-x\|^{r^2}+\|z-y\|^{r+1}+\|z-y\|^{r^2}\big).
\end{split}
\end{equation}
Assume $x\neq y$ and write $\lambda:=\|x-y\|$. Define 
$$
z':=x+\frac{z-x}{\lambda},
$$
which satisfies $z-x=\lambda(z'-x)$. Consequently, 
\begin{equation}\label{lambda}
\|z-y\|\leq\|z-x\|+\lambda=\lambda(\|z'-x\|+1).
\end{equation} 
It is straightforward to check by induction that for each positive integer $\ell\geq 1$ and each real number $\varepsilon>0$ we have $(1+\varepsilon)^\ell \leq 2^\ell(1+\varepsilon^\ell)$. Thus, if $\lambda<1$, there exists a constant $C_1>0$ such that
\begin{equation}\label{constant2}
\begin{split}
\|z-x\|^r&+\|z-x\|^{r^2}+\|z-y\|^r+\|z-y\|^{r^2}\\
&=\lambda^r\|z'-x\|^r+\lambda^{r^2}\|z'-x\|^{r^2}+\lambda^r(1+\|z'-x\|)^r+\lambda^{r^2}(1+\|z'-x\|)^{r^2}\\
&\leq\lambda^r(\|z'-x\|^r+\|z'-x\|^{r^2}+(1+\|z'-x\|)^r+(1+\|z'-x\|)^{r^2})\\&\leq4\lambda^r(1+\|z'-x\|)^{r^2}\leq C_1\lambda^r(1+\|z'-x\|^{r^2}).
\end{split}
\end{equation}
Analogously, there exists a constant $C_2>0$ such that
\begin{equation}\label{constant3}
\|z-x\|^{r+1}+\|z-x\|^{r^2}+\|z-y\|^{r+1}+\|z-y\|^{r^2}\leq C_2\lambda^{r+1}(1+\|z'-x\|^{r^2}). 
\end{equation}
As $x$ is close to $y$, we may assume $\lambda<1$ and from \eqref{ineq4} to \eqref{constant3} there exists $C_a>0$ such that
\begin{equation}\label{ineq5}
\begin{split}
\Big|\sum_{|\beta|\leq r}\frac{\lambda^{|\beta|}}{\beta!}(z'&-x)^\beta R_y^rG_\beta(x)\Big|\\
&\leq C_a\cdot(\alpha_a(B_a\lambda)\lambda^r\big(1+\|z'-x\|^{r^2}\big)+\lambda^{r+1}\big(1+\|z'-x\|^{r^2}\big))\\
&=C_a\cdot(\alpha_a(B_a\lambda)+\lambda)\lambda^r\big(1+\|z'-x\|^{r^2}\big).
\end{split}
\end{equation}

We claim: \em There exists a constant $C_a'>0$ such that
$\Big|\frac{\lambda^{|\beta|}}{\beta!}R_y^rG_\beta(x)\Big|\leq C_a'\cdot(\alpha_a(B_a\lambda)+\lambda)\lambda^r$\em. 

Assume the claim proved for a while. We have
$$
|R_y^rG_\beta(x)|\leq\beta!\cdot C_a'\cdot(\alpha_a(B_a\|x-y\|)+\|x-y\|)\|x-y\|^{r-|\beta|}
$$
and consequently $|R_y^rG_\beta(x)|=o(\|x-y\|^{r-|\beta|})$ if $|\beta|\leq r$, $x,y\in M$ and $x,y\to a$. Thus, $g$ is an ${\mathcal S}^r$-function, which proves the statement. Let us finish the proof by showing the remaining claim. 

Fix $x$ and $y$ and treat the sum on the left of \eqref{ineq5} as a polynomial $H$ in $z'-x$. Denote 
$$
H(z'-x)=\sum_{|\beta|\leq r} a_\beta(z'-x)^\beta
$$
where $a_\beta=\frac{\lambda^{|\beta|}}{\beta!} R_y^rG_\beta(x)$. Fix enough points $p_1,\ldots,p_\ell\in\R^m$ such that the linear system
$$
\begin{cases}
H(p_1)=b_1,\\
\hspace*{5mm}\vdots\\
H(p_\ell)=b_\ell,
\end{cases}
$$
allows us to compute the coefficients of $P$ as a solution of a square compatible system (same number of equations and variables) with unique solution. The points $p_1,\ldots,p_\ell$ are fixed and do not depend on $x$, whereas the values $b_1,\ldots,b_\ell$ depend on $x$. Let $M\in{\mathfrak M}_\ell(\R)$ be the inverse of the matrix of coefficients of that linear system. We have
$$
(a_\beta)_{|\beta|\leq r}^t=M\begin{pmatrix}
b_1\\
\vdots\\
b_\ell
\end{pmatrix}.
$$
Thus, each $a_\beta=\sum_{k=1}^\ell m_{ik}b_k$ for some $i=1,\ldots,\ell$. Consequently, by \eqref{ineq5}
$$
|a_\beta|\leq\sum_{k=1}^\ell|m_{ik}||b_k|=\sum_{k=1}^\ell|m_{ik}||H(p_k)|\leq (\alpha_a(B_a\lambda)+\lambda)\lambda^r \sum_{k=1}^\ell|m_{ik}|(1+\|p_k\|^{r^2}).
$$
To finish it is enough to take $C'_a:=\max\{\sum_{k=1}^\ell|m_{ik}|(1+\|p_k\|^{r^2}):\ i=1,\ldots,\ell\}$.
\end{proof}

Straightforward consequences of the theorem above are the following.

\begin{remarks}\label{potabs}
(i) Let $f\in{\mathcal S}^r(M)$ with empty zero set. Then $|f|\in{\mathcal S}^r(M)$ because $f(M)\subset\R\setminus\{0\}$ and $g:\R\setminus\{0\}\to\R,\,x\mapsto |x|$ is an ${\mathcal S}^r$-function. 

(ii) Let $f\in{\mathcal S}^r(M)$ and ${\ell}\geq r+1$. Then $|f|^{\ell}\in{\mathcal S}^r(M)$ because $\R\to\R,\,t\mapsto |t|^{\ell}$ is an ${\mathcal S}^r$-function.

(iii) Let $M\subset\R^m$ and $N\subset\R^n$ be semialgebraic sets and let $\varphi:M\to\R^n$ and $\psi:N\to\R^p$ be ${\mathcal S}^r$-maps such that $\varphi(M)\subset N$. Then $\psi\circ\varphi:M\to\R^p$ is an ${\mathcal S}^r$-map.

(iv) We may assume always that $M\subset\R^m$ is bounded when dealing with rings of ${\mathcal S}^{r\diam}$-functions. 

Indeed, let $N:=\varphi^{-1}(M)$, where $B$ is the open ball of radius $1$ in $\R^m$ centered at the origin and $\varphi$ is the Nash diffeomorphism
\begin{equation}\label{nshdiff}
\varphi:B\to\R^m,\,x\mapsto\frac{x}{\sqrt{1-\|x\|^2}}.
\end{equation}
It induces an $\R$-algebra isomorphism ${\mathcal S}^{r\diam}(M)\to{\mathcal S}^{r\diam}(N),\,f\mapsto f\circ\varphi$.\qed
\end{remarks}

Next, let us show that we can represent closed semialgebraic subsets of a semialgebraic set $M\subset\R^m$ as zero-sets of ${\mathcal S}^r$-functions on $M$.

\begin{lem}\label{zero} 
If $Z$ is a closed semialgebraic subset of $M$ there exists $g\in{\mathcal S}^{r*}(M)$ such that $Z=Z(g)$.
\end{lem}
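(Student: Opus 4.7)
The strategy is to reduce to the closed semialgebraic set $Z':=\cl(Z)^{\R^m}$ in the ambient space $\R^m$, build an $\mathcal{S}^r$-function on $\R^m$ with zero set exactly $Z'$ by a standard truncation-and-power construction applied to a representation of $Z'$ as a finite union of basic closed sets, restrict to $M$, and finally post-compose with a Nash function to make it bounded. Since $Z$ is closed in $M$, one has $Z'\cap M=Z$, so any $\mathcal{S}^r$-function on $M$ with zero set $Z'\cap M$ meets our requirement.

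\textbf{Main step (explicit construction on $\R^m$).} By the Finiteness Theorem for semialgebraic sets, write
$$
Z'=\bigcup_{i=1}^{s}\bigcap_{j=1}^{k_i}\{P_{ij}\geq 0\}
$$
for polynomials $P_{ij}\in\R[\x_1,\dots,\x_m]$. The univariate function $\phi:\R\to\R$ defined by $\phi(t):=\max(0,t)^{r+1}$ is semialgebraic and of class $\mathcal{C}^r$ on $\R$ (its one-sided derivatives of orders $0,\dots,r$ match at $0$), hence an $\mathcal{S}^r$-function on $\R$ by Remark \ref{Rest}(ii). Because each polynomial $-P_{ij}$ is an $\mathcal{S}^r$-map $\R^m\to\R$, Theorem \ref{comp} yields $\phi(-P_{ij})\in\mathcal{S}^r(\R^m)$, and so
$$
h_i:=\sum_{j=1}^{k_i}\phi(-P_{ij})\in\mathcal{S}^r(\R^m),\qquad h:=\prod_{i=1}^{s}h_i\in\mathcal{S}^r(\R^m).
$$
Each $h_i$ is nonnegative and vanishes exactly on $\bigcap_j\{P_{ij}\geq 0\}$, so $Z(h)=Z'$. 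By Remark \ref{Rest}(iii) the restriction $h|_M$ lies in $\mathcal{S}^r(M)$ and satisfies $Z(h|_M)=Z'\cap M=Z$.

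\textbf{Making it bounded.} The Nash function $\psi(t):=t/(1+t^2)$ is $\mathcal{S}^\infty$ on $\R$, in particular $\mathcal{S}^r$, bounded by $1/2$, and satisfies $\psi(t)=0$ iff $t=0$. Since $h|_M\geq 0$, applying Theorem \ref{comp} with $\psi$ in place of the outer function gives
$$
g:=\psi\circ(h|_M)=\frac{h|_M}{1+(h|_M)^2}\in\mathcal{S}^{r*}(M),
$$
which is bounded, has the same zero set as $h|_M$, and therefore satisfies $Z(g)=Z$, as required.

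\textbf{Expected obstacle.} The only non-routine point is verifying that the truncated power $\phi(t)=\max(0,t)^{r+1}$ is genuinely an $\mathcal{S}^r$-function so that Theorem \ref{comp} applies to $\phi(-P_{ij})$; this is where the exponent $r+1$ is crucial, and everything else is a formal manipulation (Finiteness Theorem, composition, finite sums and products, and one final composition with a Nash function to bound the output).
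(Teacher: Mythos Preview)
Your proof is correct and follows the same overall scheme as the paper: pass to the closure $Z'=\cl(Z)\subset\R^m$, produce an $\mathcal{S}^r$-function on $\R^m$ with zero set $Z'$, restrict to $M$, and then compose with $t\mapsto t/(1+t^2)$ to land in $\mathcal{S}^{r*}(M)$. The only difference is that the paper obtains the function on $\R^m$ by citing \cite[Prop.~I.4.5]{sh} or \cite[Thm.~C.11]{vdD}, whereas you give an explicit and self-contained construction via the Finiteness Theorem and the truncated power $\phi(t)=\max(0,t)^{r+1}$; your version has the advantage of being elementary and not appealing to outside sources, at the cost of a few extra lines.
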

\begin{proof}
As $N:=\cl(Z)$ is a closed semialgebraic subset of $\R^m$ and $Z=M\cap N$, there exists by \cite[Prop.I.4.5]{sh} or \cite[Thm C.11]{vdD} an ${\mathcal S}^r$-function $f\in{\mathcal S}^r(\R^m)$ such that $N=Z(f)$. Thus, $Z=Z(f|_M)$ and $f|_M\in{\mathcal S}^r(M)$. Consequently, $g:=\frac{f|_M}{1+(f|_M)^2}\in{\mathcal S}^{r*}(M)$ and $Z=Z(g)$.
\end{proof}

The following is a direct consequence.

\begin{cor}[Urysohn's separation]\label{separation}
Let $M_1,M_2\subset M$ be closed and disjoint semialgebraic subsets of $M$. Then there exists $f\in{\mathcal S}^{r*}(M)$ such that $f|_{M_1}\equiv0$ and $f|_{M_2}\equiv1$.
\end{cor}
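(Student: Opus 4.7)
The plan is to build $f$ as a standard Urysohn-type rational combination of two ${\mathcal S}^{r*}$-functions that cut out $M_1$ and $M_2$ respectively, then verify that the combination lies in ${\mathcal S}^{r*}(M)$ by appealing to the composition theorem.

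First, I would invoke Lemma \ref{zero} twice to produce bounded functions $f_1,f_2\in{\mathcal S}^{r*}(M)$ with $M_1=Z(f_1)$ and $M_2=Z(f_2)$. Since $M_1\cap M_2=\varnothing$, the function $h:=f_1^2+f_2^2\in{\mathcal S}^{r*}(M)$ has empty zero set on $M$. Now the elementary function $t\mapsto 1/t$ is ${\mathcal S}^r$ (in fact Nash) on the open semialgebraic set $\R\setminus\{0\}$, so by Theorem \ref{comp} applied to the ${\mathcal S}^r$-map $h:M\to\R\setminus\{0\}$ followed by $1/t$, we get $1/h\in{\mathcal S}^r(M)$. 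Since $h$ is bounded below away from $0$ on every point (pointwise) but not necessarily globally, boundedness of $1/h$ on $M$ is not automatic—however, we do not need it: the product
\[
f:=\frac{f_1^2}{f_1^2+f_2^2}=f_1^2\cdot\frac{1}{h}
\]
satisfies $0\le f\le 1$ pointwise on $M$, so it is automatically bounded, and it belongs to ${\mathcal S}^r(M)$ as a product of elements of ${\mathcal S}^r(M)$. Hence $f\in{\mathcal S}^{r*}(M)$.

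Finally I would check the boundary behaviour. On $M_1$ we have $f_1\equiv 0$, so the numerator vanishes and $f|_{M_1}\equiv 0$. On $M_2$ we have $f_2\equiv 0$, and since $M_1\cap M_2=\varnothing$ the function $f_1$ is nowhere zero on $M_2$, so $f_1^2+f_2^2=f_1^2$ there and $f|_{M_2}\equiv 1$. This yields the desired separator.

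I do not anticipate a serious obstacle: the only subtle point is ensuring $1/h\in{\mathcal S}^r(M)$, which is exactly what Theorem \ref{comp} together with Remark \ref{potabs} is designed to handle (the latter remark already records the analogous fact that $|f|\in{\mathcal S}^r(M)$ whenever $f$ has empty zero set). Boundedness is for free because $f$ is manifestly a convex-combination-type expression taking values in $[0,1]$, so we never need to control the potentially unbounded factor $1/h$ separately.
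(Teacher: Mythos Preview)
Your argument is correct and is essentially identical to the paper's own proof: the paper also applies Lemma~\ref{zero} to obtain $g,h\in{\mathcal S}^{r*}(M)$ with $Z(g)=M_1$, $Z(h)=M_2$, and then takes $f:=g^2/(g^2+h^2)$. The only difference is that you spell out in more detail why $1/(f_1^2+f_2^2)\in{\mathcal S}^r(M)$ and why the quotient is bounded, whereas the paper leaves these observations implicit.
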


\begin{proof}
By Lemma \ref{zero} there exist $g,h\in{\mathcal S}^{r*}(M)$ with $M_1=Z(g)$ and $M_2=Z(h)$. As $M_1\cap M_2=\varnothing$, the sum $g^2+h^2$ never vanishes on $M$ and $f:=g^2/(g^2+h^2)\in{\mathcal S}^{r*}(M)$ satisfies the statement.
\end{proof}

As mentioned above, it follows from Theorem \ref{comp} that the ${\mathcal S}^r$-functions on a locally compact semialgebraic set $M$ are the restrictions to $M$ of ${\mathcal S}^r$-functions on open semialgebraic neighborhoods of $M$. 

\begin{lem}\label{ext} 
Let $M\subset\R^m$ be a locally compact semialgebraic set. Then $M$ is ${\mathcal S}^r$-diffeomorphic to a closed semialgebraic subset of $\R^{m+1}$. In addition, if $f$ is an ${\mathcal S}^{r\diam}$-function on $M$ and $(f_\alpha)_{|\alpha|\leq r}$ is a semialgebraic jet associated to $f$, then there exists an ${\mathcal S}^{r\diam}$-function $F$ on the open semialgebraic neighborhood $U:=\R^m\setminus(\cl(M)\setminus M)$ of $M$ such that $F|_M=f$ and $\frac{\partial^{|\alpha|}F}{\partial\x^\alpha}|_M=f_\alpha$ for $|\alpha|\leq r$.
\end{lem}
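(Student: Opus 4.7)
The overall strategy is to reduce to Fact \ref{thfact} by embedding $M$ as a closed semialgebraic subset $N$ of $\R^{m+1}$ via the graph of the reciprocal of a function vanishing precisely on $C:=\cl(M)\setminus M$, transferring the jet of $f$ to $N$, applying the Whitney-type extension there, and pulling the result back to $U$.

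For the embedding, recall that local compactness of $M$ is equivalent to $C$ being closed in $\R^m$, which forces $M=\cl(M)\cap U$ to be closed in $U$. If $C=\varnothing$ the inclusion $x\mapsto(x,0)$ already works; otherwise, by the reference used in the proof of Lemma \ref{zero} (e.g., \cite[Prop.I.4.5]{sh}) fix $g\in{\mathcal S}^r(\R^m)$ with $Z(g)=C$. Then $1/g\in{\mathcal S}^r(U)$ by Theorem \ref{comp}, and
$$
\varphi:U\to\R^{m+1},\quad x\mapsto(x,1/g(x))
$$
is an ${\mathcal S}^r$-embedding with left inverse the projection onto the first $m$ coordinates. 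Its image $N:=\varphi(M)$ is closed in $\R^{m+1}$: if $\varphi(x_k)\to(y,t)$ then $x_k\to y$, and $y\in C$ would force $|1/g(x_k)|\to\infty$; hence $y\in U\cap\cl(M)=M$, $t=1/g(y)$, and $(y,t)\in N$. This proves the first claim.

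To extend $f$, transfer its jet to $N$ by setting
$$
\tilde f_\beta(\varphi(x)):=\begin{cases}f_{\beta'}(x)&\text{if }\beta=(\beta',0)\in\N^m\times\N,\\ 0&\text{if }\beta_{m+1}>0.\end{cases}
$$
Condition \ref{diff} on $N$ is immediate when $\beta_{m+1}>0$ (the shifted jet $D^\beta\tilde F$ is identically zero); for $\beta=(\beta',0)$ the Taylor polynomial $T_{\varphi(x)}^{r-|\beta|}\tilde F_\beta$ involves only the first $m$ coordinates, hence $R_{\varphi(x)}^{r-|\beta|}\tilde F_\beta(\varphi(y))=R_x^{r-|\beta|}F_{\beta'}(y)$, and the elementary bound $\|\varphi(x)-\varphi(y)\|\geq\|x-y\|$ transfers the $o$-condition from $M$ to $N$. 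Fact \ref{thfact} then produces $\tilde F\in{\mathcal S}^r(\R^{m+1})$ with $\frac{\partial^{|\beta|}\tilde F}{\partial\x^\beta}\big|_N=\tilde f_\beta$, and $F:=\tilde F\circ\varphi\in{\mathcal S}^r(U)$ by Theorem \ref{comp}, with $F|_M=f$ by construction.

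The identity $\frac{\partial^{|\alpha|}F}{\partial\x^\alpha}\big|_M=f_\alpha$ is a chain-rule computation: expanding via the multivariate Fa\`a di Bruno formula, $\frac{\partial^{|\alpha|}F}{\partial\x^\alpha}(x)$ is a sum of products of partial derivatives of $\tilde F$ at $\varphi(x)$ with partials of the components of $\varphi$; at points of $M$ every summand containing at least one differentiation of $\tilde F$ with respect to the last coordinate vanishes (since $\tilde f_\beta=0$ for $\beta_{m+1}>0$), leaving only the pure $\x$-partial, which yields $f_\alpha(x)$. For the bounded case post-compose $F$ with a bounded semialgebraic ${\mathcal S}^r$-truncation $\rho:\R\to\R$ equal to the identity on an open interval containing $[-M_0,M_0]$, where $M_0:=\sup_M|f|$; such a $\rho$ is readily produced by Hermite interpolation on a short transition interval. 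Then $\rho\circ F\in{\mathcal S}^{r*}(U)$ still extends $f$ and has the same jet on $M$ as $F$, since $\rho^{(k)}(f(x))=\delta_{k,1}$ for $k\geq1$. The most delicate step is the verification of condition \ref{diff} for the transferred jet on $N$; once in place, the pull-back and matching of partial derivatives follow routine patterns.
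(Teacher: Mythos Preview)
Your proof is correct and follows essentially the same route as the paper: embed $M$ into $\R^{m+1}$ via the graph of $1/g$ for a function $g$ vanishing exactly on $\cl(M)\setminus M$, push the jet forward (padding with zeros in the new coordinate), apply the Whitney-type extension Fact~\ref{thfact} on the closed image, and pull back via the chain rule. The only cosmetic differences are that the paper takes $g$ to be the strictly positive Nash function supplied by \cite[Prop.~2.7.5]{bcr} (so that $\varphi(U)$ is visibly the algebraic hypersurface $\{yh(x)=1\}$), and handles the bounded case by an appeal to Urysohn separation (Corollary~\ref{separation}) rather than your one-variable $\mathcal S^r$ truncation; both devices achieve the same end.
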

\begin{proof}
By \cite[Prop.2.7.5]{bcr} there exists $h\in{\mathcal S}(\R^m)$ such that $h|_U$ is a strictly positive Nash function and $Z(h)=\R^m\setminus U=\cl(M)\setminus M$. The image of the Nash map $\varphi:U\to\R^{m+1},\ x\mapsto\big(x,\frac{1}{h(x)}\big)$ is the closed semialgebraic set $C:=\varphi(U)=\{(x,y)\in\R^{m+1}:yh(x)=1\}$ and the projection 
$$
\pi:\R^{m+1}\to\R^m,\ (x,x_{m+1}):=(x_1,\ldots,x_m,x_{m+1})\to x
$$ 
induces by restriction a Nash map $\rho:=\pi|_{C}:C\to U$. Observe that $\varphi:U\to C$ and $\rho:C\to U$ are mutually inverse homeomorphisms. Thus, as $M$ is closed in $U$, it holds that $N:=\varphi(M)$ is closed in $C$. 

Let $f\in{\mathcal S}^{r\diam}(M)$ and note that by Theorem \ref{comp} the function $g:=f\circ\rho|_{N}$ is ${\mathcal S}^{r\diam}$ on $N$. For each $\beta:=(\beta',\beta_{m+1})\in\N^{m+1}$ with $|\beta|\leq r$ define 
\begin{equation}\label{deriv0}
g_\beta:=\begin{cases}
f_{\beta'}\circ\rho|_{N}&\text{if $\beta_{m+1}=0$,}\\
0&\text{if $\beta_{m+1}>0$.}
\end{cases}
\end{equation} 
As $\rho(N)=M$ and $(f_\alpha)_{|\alpha|\leq r}$ is a semialgebraic jet that satisfies condition \ref{diff} for $f$, one deduces $(g_\beta)_{|\beta|\leq r}$ is a semialgebraic jet that satisfies condition \ref{diff} for $g$. By Fact \ref{thfact} (combined with Corollary \ref{separation} in the bounded case) there exists an ${\mathcal S}^{r\diam}$-function $G$ on $\R^{m+1}$ such that $\frac{\partial^{|\beta|}G}{\partial\y^\beta}|_{N}=g_\beta$ if $|\beta|\leq r$ and $\y:=(\y_1,\ldots,\y_{m+1})$. The function $F:=G\circ\varphi\in{\mathcal S}^{r\diam}(U)$ satisfies $F|_M=f$. We claim: \em $\frac{\partial^{|\alpha|}F}{\partial\x^\alpha}|_{M}=f_\alpha$ if $|\alpha|\leq r$ and $\x:=(\x_1,\ldots,\x_m)$\em.

Fix $\alpha\in\N^m$ with $|\alpha|\leq r$ and denote $\xi:=\frac{1}{h}$. The reader can check inductively (using the chain rule) that
\begin{equation}\label{deriv}
\frac{\partial^{|\alpha|}F}{\partial\x^\alpha}=\frac{\partial^{|\alpha|}G}{\partial\y^{(\alpha,0)}}\circ\varphi+L,
\end{equation}
where $L$ is a (finite) linear combination of partial derivatives of $G$ of the type $\frac{\partial^{|\gamma|+k}G}{\partial\y^\gamma\partial\y_{m+1}^k}\circ\varphi$ (for some $\gamma\in\N^m$ and $k\geq1$ such that $|\gamma|+k\leq|\alpha|$) multiplied by (finite) products of partial derivatives $\frac{\partial^{|\eta|}\xi}{\partial\x^\eta}$ of $\xi$ (for some $\eta\in\N^m$ with $|\eta|\leq|\alpha|$). By \eqref{deriv0} we have $L|_{M}=0$ (because $\frac{\partial^{|\gamma|+k}G}{\partial\y^\gamma\partial\y_{m+1}^k}\circ\varphi|_M=0$ if $\gamma\in\N^m$ and $k\geq1$) and 
$$
\frac{\partial^{|\alpha|}F}{\partial\x^\alpha}|_M=\frac{\partial^{|\alpha|}G}{\partial\y^{(\alpha,0)}}\circ\varphi|_M=f_{\alpha}\circ\rho|_{N}\circ\varphi|_M=f_\alpha,
$$
as required.
\end{proof}
\begin{remark}
If $M\subset\R^m$ is a Nash manifold, then ${\mathcal S}^r(M)$ coincides with the usual ring of ${\mathcal S}^r$-functions on $M$, which are (via Nash tubular neighborhoods) the restrictions to $M$ of ${\mathcal S}^r$-functions on neighborhoods of $M$ in $\R^m$.
\end{remark}

\subsection{Rings of ${\mathcal S}^{r\diam}$-functions as direct limits}\setcounter{paragraph}{0}\label{LK}
Let $M\subset\R^m$ be a semialgebraic set an let $r\geq0$ an integer. The collection ${\mathcal S}^r(M)$ of all the ${\mathcal S}^r$-functions on $M$ is a subring of ${\mathcal S}(M)$ \em whose units are those $f\in{\mathcal S}^r(M)$ with empty zero-set\em.
 
Indeed, given $f,g\in{\mathcal S}^r(M)$ we must prove that $fg,f+g\in{\mathcal S}^r(M)$ and that $\frac{1}{f}\in{\mathcal S}^r(M)$ if $Z(f)=\varnothing$. Consider the ${\mathcal S}^r$-functions
\begin{align*}
h_1&:\R^2\to\R,\ (x,y)\mapsto xy,\\
h_2&:\R^2\to\R,\,(x,y)\mapsto x+y\\
h_3&:\R\setminus\{0\}\to\R\setminus\{0\},\ x\mapsto\tfrac{1}{x}, 
\end{align*}
and the ${\mathcal S}^r$-map $\varphi:M\to\R^2,\ x\mapsto(f(x),g(x))$. By Theorem \ref{comp}, $fg=h_1\circ\varphi$, $f+g=h_2\circ\varphi$ and $\frac{1}{f}=h_3\circ f$ are ${\mathcal S}^r$-functions (the latter if $Z(f)=\varnothing$).

The set ${\mathcal S}^{r*}(M):={\mathcal S}^r(M)\cap{\mathcal S}^{*}(M)$ of bounded ${\mathcal S}^r$-functions on $M$ is a subalgebra of ${\mathcal S}^r(M)$, that coincides with ${\mathcal S}^r(M)$ if $M$ is compact. The multiplicatively closed subset 
$$
{\mathcal W}^r(M):=\{f\in{\mathcal S}^r{^*}(M):Z(f)=\varnothing\}
$$ 
of ${\mathcal S}^r{^*}(M)$ contains $1$ but not $0$, so ${\mathcal S}^r{^*}(M)_{{\mathcal W}^r(M)}=\{f/g:\,f\in{\mathcal S}^r{^*}(M)\ \&\ g\in{\mathcal W}^r(M)\}$ is an $\R$-algebra and it coincides with ${\mathcal S}^r(M)$. The inclusion ${\mathcal S}^r{^*}(M)_{{\mathcal W}^r(M)}\subset{\mathcal S}^r(M)$ follows from the properties above, where we showed that each $g\in{\mathcal S}^r(M)$ with empty zero-set is a unit in ${\mathcal S}^r(M)$. Conversely, each $h\in{\mathcal S}^r(M)$ can be written as $h=f/g$, where $f:=h/(1+h^2)\in{\mathcal S}^r{^*}(M)$ and $g:=1/(1+h^2)\in{\mathcal W}^r(M)$.

Let us present the rings ${\mathcal S}^{r*}(M)$ and ${\mathcal S}^r(M)$ as direct limits of rings of ${\mathcal S}^r$-functions on compact and locally compact semialgebraic sets, respectively. In the sequel we prove some nice properties of the ring of ${\mathcal S}^r$-functions on a locally compact semialgebraic set and we show how some of them transfer through the direct limit (with some limitations, as we already pointed out in the Introduction).

An \emph{${\mathcal S}^r$-completion of $M$} is a pair $(E,{\tt j})$ where ${\tt j}:M\to\R^n$ is an ${\mathcal S}^r$-embedding such that $E=\cl({\tt j}(M))$. Note that $E$ is a closed semialgebraic subset of $\R^n$. If in addition each coordinate function of the map ${\tt j}$ is bounded, then $(E,{\tt j})$ is a \emph{${\mathcal S}^{r*}$-completion of $M$} (note that in this case $E$ is a \em compact \em semialgebraic set). We point out that $\mathcal{S}^{0*}$-completions were called semialgebraic compactifications in \cite{fg3}. We denote ${\mathfrak C}^r(M)$ and ${\mathfrak C}^{r*}(M)$ the collections of all ${\mathcal S}^r$ and ${\mathcal S}^{r*}$-completions of $M$ respectively. To ease notations we write ${\mathfrak C}^r$ and ${\mathfrak C}^{r*}$ hopefully without ambiguity. We will also write ${\mathfrak C}^{r\diam}$ to refer indistinctly to the previous two collections of pairs, when a result is valid for both of them.

For the elements of ${\mathfrak C}^{r\diam}$ we define $(E_1,{\tt j}_1)\preceq(E_2,{\tt j}_2)$ if there exists an ${\mathcal S}^r$-map $\rho_{21}:E_2\to E_1$ with ${\tt j}_1=\rho_{21}\circ{\tt j}_2$. As ${\tt j}_i(M)$ is dense in $E_i$, the map $\rho_{21}$ is determined by $\rho_{21}|_{{\tt j}_2(M)}={\tt j}_1\circ{\tt j}_2^{-1}:{\tt j}_2(M)\to{\tt j}_1(M)$ and $\rho_{21}(E_2)$ is dense in $E_1$. The $\R$-homomorphisms $\rho_{21}^*:{\mathcal S}^r(E_1)\to{\mathcal S}^r(E_2),\,f\mapsto f\circ\rho_{21}$ and ${\tt j}^*:{\mathcal S}^r(E)\to{\mathcal S}^r(M),\,f\mapsto f\circ{\tt j}$ are always injective. In addition, if $(E_1,{\tt j}_1)\preceq(E_2,{\tt j}_2)\preceq(E_3,{\tt j}_3)$, then $\rho_{31}=\rho_{21}\circ\rho_{32}$. We are ready to prove equation \eqref{dlim}.

\begin{lem}\label{LD}
The family $({\mathfrak C}^{r\diam},\preceq)$ is a directed set and ${\mathcal S}^{r\diam}(M)$ coincides with the direct limit $\displaystyle\lim_{\longrightarrow}{\mathcal S}^{r\diam}(E)$ where $(E,{\tt j})\in{{\mathfrak C}^{r\diam}}$.
\end{lem}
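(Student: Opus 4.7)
The proof splits into two parts: first, establish directedness of $({\mathfrak C}^{r\diam},\preceq)$; second, identify ${\mathcal S}^{r\diam}(M)$ with the direct limit through the canonical family of pullbacks ${\tt j}^*$. The recurring technical device will be the graph embedding, coupled with the Nash diffeomorphism from Remark \ref{potabs}(iv) to handle the bounded case.

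For non-emptiness of ${\mathfrak C}^{r\diam}$, the identity inclusion $M\hookrightarrow\R^m$ yields $(\cl(M),\mathrm{id}_M)\in{\mathfrak C}^r$ in the unbounded case; in the bounded case take ${\tt j}:=\varphi|_M$ with $\varphi:\R^m\to B$ the Nash diffeomorphism of \eqref{nshdiff} onto the open unit ball, which has bounded coordinates and yields a compact $E:=\cl(\varphi(M))$. For directedness, given $(E_1,{\tt j}_1)$ and $(E_2,{\tt j}_2)$ with ${\tt j}_i:M\to\R^{n_i}$, I form the product ${\tt j}_3:=({\tt j}_1,{\tt j}_2):M\to\R^{n_1+n_2}$ and set $E_3:=\cl({\tt j}_3(M))$. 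The map ${\tt j}_3$ is an ${\mathcal S}^r$-embedding: its inverse on ${\tt j}_3(M)$ factors as ${\tt j}_1^{-1}\circ\pi_1|_{{\tt j}_3(M)}$ with $\pi_1$ the polynomial projection onto the first $n_1$ coordinates, so is ${\mathcal S}^r$ by Theorem \ref{comp}. In the starred case its coordinates remain bounded. Finally, the restrictions $\rho_{3i}:=\pi_i|_{E_3}:E_3\to E_i$ are ${\mathcal S}^r$-maps satisfying ${\tt j}_i=\rho_{3i}\circ{\tt j}_3$, because $\pi_i(E_3)\subset\cl(\pi_i({\tt j}_3(M)))=\cl({\tt j}_i(M))=E_i$. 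Hence $(E_i,{\tt j}_i)\preceq(E_3,{\tt j}_3)$ for $i=1,2$.

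Since each ${\tt j}_i^*:{\mathcal S}^r(E_i)\to{\mathcal S}^{r\diam}(M)$ is well defined (values in ${\mathcal S}^{r\diam}(M)$ by Theorem \ref{comp}, using that $F\in{\mathcal S}^r(E)$ is automatically bounded on the compact $E$ in the starred case) and this family is compatible with the transition maps $\rho_{ji}^*$, the universal property of direct limits produces a canonical ring homomorphism $\Phi:\lim_{\longrightarrow}{\mathcal S}^r(E)\to{\mathcal S}^{r\diam}(M)$. To prove surjectivity of $\Phi$, I will use the graph embedding: given $f\in{\mathcal S}^{r\diam}(M)$, define ${\tt j}_f:M\to\R^{m+1}$ by ${\tt j}_f(x):=(x,f(x))$ in the ${\mathcal S}^r$ case, or ${\tt j}_f(x):=(\varphi(x),f(x))$ with $\varphi$ from \eqref{nshdiff} in the ${\mathcal S}^{r*}$ case. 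In both cases ${\tt j}_f$ has ${\mathcal S}^r$ components; its inverse is the projection onto the first $m$ coordinates, possibly composed with the Nash $\varphi^{-1}$, which is ${\mathcal S}^r$ by Theorem \ref{comp}; and in the bounded case all coordinates are bounded because $f$ is bounded and $\varphi(M)\subset B$. Setting $E_f:=\cl({\tt j}_f(M))$ we get $(E_f,{\tt j}_f)\in{\mathfrak C}^{r\diam}$, and the last-coordinate projection $\pi_{m+1}:E_f\to\R$ is a polynomial restriction, hence an ${\mathcal S}^r$-function on $E_f$, satisfying $\pi_{m+1}\circ{\tt j}_f=f$. Therefore $f={\tt j}_f^*(\pi_{m+1})$ lies in the image of $\Phi$.

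For injectivity, suppose $F_1\in{\mathcal S}^r(E_1)$ and $F_2\in{\mathcal S}^r(E_2)$ satisfy ${\tt j}_1^*(F_1)={\tt j}_2^*(F_2)$ in ${\mathcal S}^{r\diam}(M)$. Passing to the join $(E_3,{\tt j}_3)$ constructed above, the pullbacks $\rho_{31}^*(F_1)$ and $\rho_{32}^*(F_2)$ agree on the dense subset ${\tt j}_3(M)\subset E_3$, since $\rho_{3i}^*(F_i)({\tt j}_3(x))=F_i({\tt j}_i(x))={\tt j}_i^*(F_i)(x)$ for $i=1,2$. By continuity they coincide on $E_3$, so $F_1$ and $F_2$ represent the same class in the colimit. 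The main obstacle I foresee is the careful verification that the graph-embedding ${\tt j}_f$ and the product embedding ${\tt j}_3$ really are ${\mathcal S}^r$-embeddings, particularly that their inverses are genuinely ${\mathcal S}^r$ (a point where Theorem \ref{comp} does essential work), together with bookkeeping the boundedness of coordinates in the starred case.
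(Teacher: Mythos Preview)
Your proof is correct and follows essentially the same approach as the paper: directedness via the product embedding $({\tt j}_1,{\tt j}_2)$ with projections as transition maps, and surjectivity of $\Phi$ via the graph embedding, with injectivity coming from density (which the paper records just before the lemma). One cosmetic slip: in \eqref{nshdiff} the map $\varphi$ goes $B\to\R^m$, so when you embed $M$ into the unit ball you should write $\varphi^{-1}$ rather than $\varphi$; otherwise your treatment of the bounded case is the explicit unwinding of the paper's appeal to Remark~\ref{potabs}(iv).
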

\begin{proof}
Let $(E_1,{\tt j}_1),(E_2,{\tt j}_2)\in{\mathfrak C}^{r\diam}$ with $E_i\subset\R^{n_i}$, $n:=n_1+n_2$, the ${\mathcal S}^{r\diam}$-embedding $({\tt j}_1,{\tt j}_2):M\to\R^n$ and the projections $\pi_i:E_1\times E_2\to E_i$. Define $E_3:=\cl(({\tt j}_1,{\tt j}_2)(M))\cap(E_1\times E_2)$ and let ${\tt j}_3:M\to E_3,\ x\mapsto({\tt j}_1(x),{\tt j}_2(x))$, which is an ${\mathcal S}^{r\diam}$ embedding. Then the restriction $\rho_{3i}:=\pi_i|_{E_3}$ is an ${\mathcal S}^r$-map and $\rho_{3i}\circ{\tt j}_3={\tt j}_i$. Thus $(E_3,{\tt j}_3)\in{\mathfrak C}^{r\diam}$ and $(E_i,{\tt j}_i)\preceq(E_3,{\tt j}_3)$ for $i=1,2$. Consequently, $({\mathfrak C}^{r\diam},\preceq)$ is a directed set.

To prove that ${\mathcal S}^{r\diam}(M)=\displaystyle\lim_{\longrightarrow}{\mathcal S}^{r\diam}(E)$ it is enough to show: \em For each $f\in{\mathcal S}^{r\diam}(M)$, there exist $(E,{\tt j})\in{\mathfrak C}^{r\diam}$ and $F\in{\mathcal S}^{r\diam}(E)$ such that $f=F\circ{\tt j}$\em. By Remark \ref{potabs}(iv) we may assume $M$ is bounded. Let $f\in{\mathcal S}^{r\diam}(M)$ and consider the closure $E$ in $\R^{m+1}$ of the graph of $f$. Notice that ${\tt j}:M\to\R^{m+1},\ x\mapsto(x,f(x))$ is an ${\mathcal S}^r$-embedding with $E=\cl({\tt j}(M))$. The function $F:=\pi|_E$, where $\pi:\R^{m+1}\to\R,\ (x,x_{m+1})\mapsto x_{m+1}$, belongs to ${\mathcal S}^{r\diam}(E)$ and $f=F\circ{\tt j}$, as required.
\end{proof}

\subsection{Comparison between ${\mathcal S}^0(M)$ and ${\mathcal S}(M)$}\label{s2:1}

In \cite[Cor.6]{fe2} it is proved that if $M$ is a $2$-dimensional semialgebraic set such that the germ $M_x$ is connected for each $x\in\cl(M)$, then ${\mathcal S}(M)={\mathcal S}^0(M)$. The following result is the $2$-dimensional version of Theorem \ref{main2} and generalizes \cite[Cor.6]{fe2}.

\begin{prop}\label{main2:2dim}
Let $M\subset\R^m$ be a $2$-dimensional semialgebraic set. If $M$ is non-problematic then ${\mathcal S}^\diam(M)={\mathcal S}^{0\diam}(M)$. Conversely, if the map $\varphi:\Spec^\diam(M)\to\Spec^{0\diam}(M),\ \gtp\mapsto\gtp\cap{\mathcal S}^{0\diam}(M)$ is injective then $M$ is non-problematic. 
\end{prop}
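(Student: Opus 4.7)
The plan is to prove the two implications separately: the first will reduce to \cite[Cor.6]{fe2} after restricting to a suitable open semialgebraic neighborhood of $M$ inside $\cl(M)$, while the second will be proved by contraposition.

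For the forward implication, I set $D:=\{x\in\cl(M):M_x\text{ is disconnected}\}$, which is semialgebraic because the disconnectedness of a semialgebraic germ at a point is a definable condition in the theory of real closed fields. The non-problematic hypothesis translates exactly into $M\cap\cl(D)=\varnothing$, so $V:=\cl(M)\setminus\cl(D)$ is an open semialgebraic neighborhood of $M$ inside $\cl(M)$ with the property that $M_y$ is connected for every $y\in V$ (and stays connected for $y$ in a whole neighborhood within $\cl(M)$). I would then apply a localized variant of \cite[Cor.6]{fe2} to the pair $(M,V)$: since $V$ is $2$-dimensional, contains $M$ as a dense subset, and has connected $M$-germs at every point, every $f\in\mathcal{S}^{\diam}(M)$ admits a continuous semialgebraic extension to $V$, which places $f$ in $\mathcal{S}^{0\diam}(M)$. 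In the bounded case the extension is automatically bounded near $M$ and can be truncated on the rest of $V$.

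For the converse I argue by contraposition. Assume $M$ is problematic at some $x_0\in M$ and pick a sequence $\{x_k\}_k\subset\cl(M)$ with $x_k\to x_0$ and $M_{x_k}$ disconnected. For each sufficiently large $k$ fix a small open ball $B_k$ around $x_k$ so that $M\cap B_k$ contains two non-empty semialgebraic pieces $A_k,C_k$ lying in distinct connected components. Using Urysohn separation (Corollary \ref{separation}) inside a convenient semialgebraic completion of $M$ (cf.\ Lemma \ref{LD}), I construct $f\in\mathcal{S}^{\diam}(M)$ that takes different limiting values along the $A_k$- and $C_k$-sides of each $x_k$, while remaining continuous at $x_0$ itself. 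From such an $f$ I produce two distinct primes $\gtp_A,\gtp_C\in\Spec^{\diam}(M)$, separated by $f$, as supports of real-spectral points obtained from ultrafilters of semialgebraic sets concentrated respectively on the $A_k$-side and the $C_k$-side of the accumulation. The crucial observation is that, because $x_0\in M$, every open semialgebraic neighborhood of $M$ in $\cl(M)$ eventually contains all $x_k$, and hence every $g\in\mathcal{S}^{0\diam}(M)$ is continuous at each $x_k$ and takes the same value from both branches; this forces $\gtp_A\cap\mathcal{S}^{0\diam}(M)=\gtp_C\cap\mathcal{S}^{0\diam}(M)$, so $\varphi$ is not injective.

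The main technical obstacle is the reverse direction: the separating function $f$ must be globally continuous on $M$ (in particular at the accumulation point $x_0$, which forces the two branches to match at $x_0$) yet locally inseparable by continuous extensions at each $x_k$, and the resulting primes $\gtp_A,\gtp_C$ must be genuinely distinct in $\Spec^{\diam}(M)$. The correct framework is the real-spectral picture combined with the direct-limit description of $\mathcal{S}^{\diam}(M)$ from Subsection \ref{LK}, which lets one carry out the separation inside a compact ambient semialgebraic completion of $M$ and then transfer the result back.
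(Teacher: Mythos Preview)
Your forward implication is close to the paper's, but you lean on a ``localized variant'' of \cite[Cor.~6]{fe2} that is not exactly the cited statement: \cite[Cor.~6]{fe2} requires $M_y$ connected for every $y\in\cl(M)$, not merely for $y\in V$. The paper handles this by applying \cite[Cor.~6]{fe2} genuinely locally (to $M\cap B(x,\veps)$ for each $x\in M$), noting that the local continuous extension is \emph{unique} by density, and then using first-order definability to choose $\veps$ semialgebraically uniformly in $x$, so that the local extensions glue to a global $F$ on $\bigcup_x N^x$. Your global $V$ is morally the same target set, but you should say how the extension to $V$ is actually produced.

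The converse is where the real gap lies. Your contrapositive strategy matches the paper's, but the execution is missing the two key ingredients. First, the sequence $\{x_k\}$ is not semialgebraic, so ``ultrafilters of semialgebraic sets concentrated on the $A_k$-side'' has no obvious meaning, and your separating function $f$---which must be continuous at $x_0$ yet have distinct branch-limits at each $x_k$---is not constructed. The paper replaces the sequence by something semialgebraic: it triangulates $\cl(M)$ compatibly with $M$ and $\{p\}$, and from the problematic hypothesis extracts two $2$-simplices $\sigma_1,\sigma_2$ with $\sigma_j^0\subset M$ sharing a $1$-face $\tau$ with $\tau^0\subset\cl(M)\setminus M$ and $p$ a vertex of $\tau$. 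This makes the ``two sides'' into the concrete closed sets $T_j:=M\cap\sigma_j$ and the ``accumulation'' into the semialgebraic curve $\tau$.

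Second, the primes are not built from ultrafilters but as kernels of explicit evaluation homomorphisms: pick a non-archimedean real closed extension $R\supset\R$ and a point $p_0\in\tau_R$ with $\|p-p_0\|$ infinitesimal; since each $T_j$ satisfies the hypothesis of \cite[Cor.~6]{fe2}, every $f|_{T_j}$ has a unique continuous extension to a neighborhood of $p$ in $\tau$, and evaluating this extension at $p_0$ gives $\phi_j:\mathcal S(M)\to R$ with $\gtp_j=\ker\phi_j$. The separating function is then written down explicitly using barycentric coordinates on $\sigma_1$ (so that its $T_1$-extension to $\tau$ is $1-t_0\neq 0$ near $p$) and Urysohn on $T_2$ (so that its $T_2$-extension vanishes near $p$); this $f$ is continuous at $p$ because both pieces vanish there. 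Finally, $\gtp_1\cap\mathcal S^{0\diam}(M)=\gtp_2\cap\mathcal S^{0\diam}(M)$ because any $g\in\mathcal S^{0\diam}(M)$ has a \emph{single} continuous extension to a neighborhood of $M$ in $\cl(M)$, which for large enough neighborhoods contains a segment of $\tau$ near $p$, so both $\phi_j$ read off the same value. Your outline gestures at this last step, but without the triangulation and the non-standard evaluation you have no candidate primes to compare.
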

\begin{proof}
We give the proof for the ${\mathcal S}$ case, the bounded one is analogous. Suppose that $M$ is not problematic. Then for each $x\in M$ there exists an open ball $B$ of $\R^m$ such that for each $y\in\cl(M)\cap B$ the germ $M_y$ is connected. Let $f\in{\mathcal S}(M)$. By \cite[Cor.6]{fe2} there exists $\veps>0$ such that for $N^x:=\cl(M)\cap B(x,\veps)$ there exists an ${\mathcal S}$-function $F^x:N^x\to\R$ that extends $f|_{M\cap B(x,\veps)}$. As $B(x,\veps)$ is an open semialgebraic set, $M\cap B(x,\veps)$ is dense in $N^x$. Therefore, the extension $F^x$ is unique. Hence, for each $x\in M$ there exists $\varepsilon>0$ such that for each $z\in N^x:=\cl(M)\cap B(x,\veps)$ there exists a unique $t_z\in\R$ such that the tuple $(z,t_z)$ belongs to the closure of the graph of $f|_{M\cap B(x,\veps)}$ and the function $F^x:N^x\rightarrow \R,\ z\mapsto t_z$ that defines this assignment is continuous. As the latter is a first order statement, we can choose $\veps$ semialgebraically uniform on $x$. Thus, the set $N:=\bigcup_{x\in M} N^x$ is an open semialgebraic neighborhood of $M$ in $\cl(M)$ and the function $F:N\to\R$ given by $F(y)=F^x(y)$ if $y\in N^x$ is an ${\mathcal S}$-extension of $f$. 

Conversely, assume $\varphi$ is injective and suppose that $M$ is problematic at a point $p\in M$. In particular, $M$ is non-locally compact at $p$. Without loss we may assume $M$ is bounded. Denote $X:=\cl(M)$, which is a compact set. Let $(K,\Phi)$ be a semialgebraic triangulation of $X$ compatible with $M$ and $\{p\}$, that is, $K$ is a finite simplicial complex and $\Phi:|K|\to X$ is a semialgebraic homeomorphism such that both $M$ and $\{p\}$ are union of images of open simplices. To ease notation we identify $X$ with $|K|$ and the involved objects $M$ and $\{p\}$ with their inverse images under $\Phi$. 

As $M$ is problematic at $p$, there exists a sequence $\{x_k\}_{k\geq1}\subset X\setminus M$ converging to $p$ such that the germ $M_{x_k}$ is disconnected. As $M$ is not locally compact at $p$, we deduce $\dim(X\setminus M)=1$. Thus, there exist two $2$-simplices $\sigma_1,\sigma_2\in K$ such that $\sigma_j^0\subset M$ for $j=1,2$, they have a common face $\tau$, which is a $1$-simplex and satisfies $\tau^0\subset X\setminus M$ and $p$ is a vertex of $\tau$.

Consider the closed semialgebraic set $T_j:=M\cap\sigma_j\subset M$ for $j=1,2$. Define $\gtp_j$ as the collection of all $f\in{\mathcal S}(M)$ such that there exists a semialgebraic neighborhood $U$ of $p$ in $\tau$ and an ${\mathcal S}$-extension $F:T_j\cup U\to\R$ of $f|_{T_j}$ satisfying $F|_U=0$.

We claim: \em$\gtp_j\in\Spec (M)$ for $j=1,2$\em.

Fix $j=1,2$. Let $R$ be a real closed field extension of $\R$ such that there exists a positive element $\epsilon\in R$ with $\epsilon<x$ for each positive $x\in\R$. Let us construct a homomorphism $\phi_j:{\mathcal S}(M)\to R$ whose kernel is $\gtp_j$. Denote $\tau_R$ the $1$-simplex in $R^m$ defined by the vertices of $\tau$. Pick a point $p_0\in\tau_R$ such that $\|p-p_0\|<\epsilon$. By \cite[Cor.6]{fe2} ${\mathcal S}(T_j)={\mathcal S}^0(T_j)$. Thus, for each ${\mathcal S}$-function $f:T_j\to\R$ there exists an open semialgebraic neighborhood $N$ of $T_j$ in $\cl(T_j)$ and an ${\mathcal S}$-extension $F:N\to\R$ of $f$. Consider the realization of $N$ and $F$ in $R$, which we denote by $N_R$ and $F_R:N_R\to R$ respectively. As $p_0\in N_R$, we can consider the evaluation homomorphism
$$
\psi_j:{\mathcal S}(T_j)\to R,\ f\mapsto F_R(p_0).
$$
The homomorphism $\psi_j$ is well-defined. If we pick another ${\mathcal S}$-extension of $f$, then both ${\mathcal S}$-extensions coincide in a neighborhood of $p$ because $M$ is dense in $\cl(M)$. As $T_j$ is closed in $M$, the restriction map ${\mathcal S}(M)\to{\mathcal S}(T_j)$ is surjective \cite{dk1}, so the kernel of the homomorphism 
$$
\phi_j:{\mathcal S}(M)\to R,\ f\mapsto\psi_j(f|_{T_j})
$$ 
is exactly $\gtp_j$. This is so because if an ${\mathcal S}$-extension $F$ of $f|_{T_j}$ does not vanish on a semialgebraic neighborhood of $p$ in $\tau$, then (by semialgebraicity) there exists a real $\varepsilon>0$ such that for each $z\in\tau$ with $\|p-z\|<\varepsilon$ we have that $F(z)\neq 0$. As this is a first order statement we get that $F_R(p_0)\neq 0$.

We claim: $\gtp_1\neq\gtp_2$. 

Denote $v_0:=p,v_1,v_2\in\sigma_1$ the vertices of $\sigma_1$. Each point $x\in T_1$ can be written as $x=t_0v_0+t_1v_1+t_2v_2$ for some $t_0,t_1,t_2\in\R$ such that $t_0+t_1+t_2=1$. Consider the ${\mathcal S}$-function
$$
f_1:T_1\to\R, x\mapsto 1-t_0.
$$
By Urysohn's separation Lemma \ref{separation} there exists an ${\mathcal S}$-function $f_2:T_2\to\R$ such that $f_2=0$ on an open semialgebraic neighborhood of $p$ and $f_2=1$ on an open semialgebraic neighborhood of the other vertex $v$ of $\tau$. As $T_1\cap T_2\subseteq\{p,v\}$, the function $f_0: T_1\cup T_2\to\R$ given by
$$
f_0(x):=\begin{cases}
f_1(x)&\text{if $x\in T_1$},\\
f_2(x)&\text{if $x\in T_2$}
\end{cases}
$$
is an ${\mathcal S}$-function. As $T_1\cup T_2$ is closed in $M$, there exists an ${\mathcal S}$-function $f:M\to\R$ such that $f|_{T_1\cup T_2}=f_0$. We have $f\in\gtp_2\setminus\gtp_1$.

Finally, note that $g\in\gtp_j\cap{\mathcal S}^0(M)$ if and only if there exists an open semialgebraic neighborhood $N$ of $M$ in $\cl(M)$ and an extension $G\in{\mathcal S}^0(N)$ of $g$ such that $g|_{\tau\cap N}=0$. Thus, $\gtp_1\cap{\mathcal S}^0(M)=\gtp_2\cap{\mathcal S}^0(M)$, so $\varphi$ is not injective, which is a contradiction. Consequently, $M$ is non-problematic, as required.
\end{proof}

The idea to prove Theorem \ref{main2} is to reduce the general statement to the $2$-dimensional case. To that aim, we recall from \cite[Prop. 3.2]{fe1} the following decomposition of $M$ as an irredundant finite union of closed pure dimensional semialgebraic subsets of $M$ as well as some of its main properties. There exists a unique finite family $\{M_1,\ldots,M_r\}$ of semialgebraic subsets of $M$ satisfying the following properties: 
\begin{itemize}
\item[(i)] Each $M_i$ is the closure in $M$ of the set of points of $M$ whose local dimension is equal to some fixed value. In particular, $M_i$ is pure dimensional and closed in $M$. 
\item[(ii)] $M=\bigcup_{i=1}^rM_i$.
\item[(iii)] $M_i\setminus\bigcup_{j\neq i}M_j$ is dense in $M_i$.
\item[(iv)] $\dim(M_i)>\dim(M_{i+1})$ for $i=1,\ldots,r-1$. In particular, $\dim(M_1)=\dim(M)$.
\end{itemize}

\noindent We call the sets $M_i$ the \em bricks \em of $M$ and $\{M_1,\ldots,M_r\}$ constitute the \em family of bricks \em of $M$.

\begin{proof}[Proof of Theorem \em\ref{main2}] 
We provide the proof for the ${\mathcal S}$ case. The ${\mathcal S}^*$ case is analogous and we omit the proof.

To show that (iii) implies (i), pick $f\in{\mathcal S}(M)$ and let us show: $f\in{\mathcal S}^0(M)$. 

If $M$ is locally compact, there is nothing to prove as $M$ is open in its closure. Assume $M$ is non-locally compact. Denote $M_i$ the $2$-dimensional brick of $M$ and $M':=\bigcup_{j\neq i}M_j$. As $M_i$ and $M'$ are closed in $M$, we have $\cl(M_i)\cap M'=M_i\cap M'$ and $M_i\cap\cl(M')=M_i\cap M'$. By hypothesis $M_i$ is non-locally compact and it is not problematic. By Proposition \ref{main2:2dim} there exists an open semialgebraic neighborhood $N_i$ of $M_i$ in $\cl(M_i)$ and an extension $F_i\in{\mathcal S}^0(N_i)$ of $f|_{M_i}$. We may assume: \em $N_i$ satisfies $N_i\cap\cl(M')=M_i\cap M'$\em. 

Indeed, it is enough to replace $N_i$ by $\widetilde{N}_i:=N_i\setminus (\cl(M')\setminus M')$. As $M'$ is locally compact, the set $\cl(M')\setminus M'$ is closed in $\R^m$, so $\widetilde{N}_i$ is open in $\cl(M_i)$. 

In addition, $\widetilde{N}_i$ contains $M_i$. If $x\in M_i$ belongs to $\cl(M')\setminus M'$, then $x\in\cl(M')\cap M_i=M'\cap M_i\subset M'$,
which is a contradiction.

On the other hand,
$$
M_i\cap M'\subset\widetilde{N}_i\cap\cl(M')=\widetilde{N}_i\cap M'\subset\cl(M_i)\cap M'=M_i\cap M',
$$
so $\widetilde{N}_i\cap\cl(M')=M_i\cap M'$.

Next, consider the semialgebraic set $N:=M'\cup N_i$. We have
$$
\cl(N_i)\cap N=(\cl(N_i)\cap M')\cup N_i\subset(\cl(M_i)\cap M')\cup N_i=(M_i\cap M')\cup N_i=N_i,
$$
so $N_i$ is closed in $N$. In addition, 
$$
\cl(M')\cap N=M'\cup (\cl(M')\cap N_i)=M'\cup (M'\cap M_i)=M',
$$
so $M'$ is also closed in $N$. Hence,
$$
F:N\to\R,\ x\mapsto\begin{cases}
f(x)&\text{if $x\in M'$,}\\
F_i(x)&\text{if $x\in N_i$}
\end{cases}
$$
is an ${\mathcal S}$-extension of $f$ to the locally compact semialgebraic set $N$, so $f\in{\mathcal S}^0(M)$.

To finish we prove (ii) implies (iii), because (i) implies (ii) is clear. Suppose first that there exists a brick $M_\ell$ of dimension $k\geq 3$ that is non-locally compact. The canonical diagram 
$$
\xymatrix{\Spec(M_\ell)\hspace{3mm}\ar@{^{}->}[r]\ar@{^{}->}^{\varphi_\ell}[d]&\Spec(M)\ar@{^{}->}^{\varphi}[d]\\
\Spec^0(M_\ell)\ar@{^{}->}[r]&\Spec^0(M)}
$$
commutes. The map $\Spec(M_\ell)\to\Spec(M)$ is injective, because $M_\ell$ is closed in $M$ and the restriction homomorphism ${\mathcal S}(M)\to{\mathcal S}(M_\ell)$ is surjective \cite{dk1}. By hypothesis $\varphi$ is injective, so $\varphi_\ell$ is also injective. Thus, we may assume $M=M_\ell$ is pure dimensional and $\dim(M)=k\geq 3$.

For simplicity, we can suppose $M$ is bounded. Pick a point $p\in M$ such that $M$ is non-locally compact at $p$. Let $(K,\Phi)$ be a semialgebraic triangulation of $X:=\cl(M)$ compatible with $M$ and $\{p\}$. We identify $X$ with $|K|$ and the involved objects $M$ and $\{p\}$ with their inverse images under $\Phi$. Let $\tau$ be a simplex of $K$ such that $p\in\tau$ and $\tau^0\subset X\setminus M$. The existence of $\tau$ is guaranteed because $M$ is non-locally compact at $p$ (otherwise, the star of $p$ in $X$ would be contained in $M$). As $M$ is pure dimensional of dimension $k$, there exists a $k$-simplex $\sigma\subset M$ such that $\tau$ is a face of $\sigma$. Denote $v_0:=p$ and take a point $v_1\in\tau^0$. Next, pick a point $v_2\in\sigma^0$ and consider the $2$-simplex $\sigma_1$ spanned by the affinely independent points $v_0,v_1,v_2$. As $\dim(\sigma)\geq 3$, there exists a point $v_3\in\sigma^0\subset M$ such that $v_3$ is not contained in the plane spanned by $v_0,v_1,v_2$. Consider the $2$-simplex $\sigma_2$ spanned by $v_0,v_1,v_3$ and note that $\sigma_1\cap\sigma_2$ is the $1$-simplex spanned by $v_0$ and $v_1$. Let us define the closed semialgebraic subset $T:=(\sigma_1\cup\sigma_2)\cap M$ of $M$, which is problematic at $p$. As $T$ is closed in $M$, the canonical map $\Spec(T)\to\Spec^0(T)$ is injective (use the same argument already used for $M_\ell$). By Proposition \ref{main2:2dim} the $2$-dimensional set $T$ is not problematic, which is a contradiction.

Hence, we may assume that the only non-locally compact brick of $M$ is the $2$-dimensional $M_{i_0}$. As before, since $M_{i_0}$ is closed in $M$, we deduce that $\varphi_{i_0}:\Spec(M_{i_0})\to\Spec^0(M_{i_0})$ is injective. By Proposition \ref{main2:2dim} $M_{i_0}$ is non-problematic, as required.
\end{proof}

\section{Zariski spectra of rings of ${\mathcal S}^{r\diam}$-functions}\label{s3}

In this chapter we compare the Zariski and maximal spectra of the ring of the ${\mathcal S}^r$-functions with that of the ${\mathcal S}^0$-functions of a semialgebraic set $M$. In Subsection \ref{LNull} we first prove the \L ojasiewicz's Nullstellensatz in the locally compact case and a weak version of the latter in the non-locally compact case. Both results will be crucial to prove Theorem \ref{main1} in Subsection \ref{s3:1} for the ${\mathcal S}^r$ case and in Subsection \ref{s3:2} for the ${\mathcal S}^{r*}$ case. 

\subsection{\L ojasiewicz's Nullstellensatz.}\label{LNull} 
First, we analyze \L ojasiewicz's Nullstellensatz for ${\mathcal S}^r$-functions. We need some preliminary results. Along this section we fix integers $r,m$ with $r\geq0$ and $m\geq 1$ and a semialgebraic set $M\subset\R^m$. For our convenience we introduce more notation. For each $f\in{\mathcal S}^r(M)$ we denote $D(f):=M\setminus Z(f)$. The following result is a generalization of \cite[Prop.2.6.4]{bcr}.

\begin{prop}\label{suboclase} 
Let $G\in{\mathcal S}^r(\R^m)$ and let $F\in{\mathcal S}^r(D(G))$. Then there exists an integer $\ell\geq 1$ such that the function
$$
H_{\ell}:\R^m\to\R,\,x\mapsto
\begin{cases}
G^{\ell}(x)F(x)&\text{if $G(x)\neq0$,}\\
0&\text{if $G(x)=0$}\\
\end{cases}
$$
belongs to ${\mathcal S}^r(\R^m)$.
\end{prop}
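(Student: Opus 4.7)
The plan is induction on $r\geq 0$. The base case $r=0$ is the classical continuous {\L}ojasiewicz-type result \cite[Prop.2.6.4]{bcr}: a sufficiently high power $G^\ell$ tames the semialgebraic blow-up of any continuous semialgebraic $F$ on $D(G)$, so that $G^\ell F$ extends by zero to a continuous semialgebraic function on $\R^m$.

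For the inductive step, I would assume the statement in degree $r-1$ and let $G\in{\mathcal S}^r(\R^m)$, $F\in{\mathcal S}^r(D(G))$. Since $D(G)$ is open, Remark \ref{Rest}(ii) says $F$ is classically ${\mathcal C}^r$ there, and Leibniz's rule on $D(G)$ yields
\[
\partial_i(G^\ell F)=\ell\,G^{\ell-1}(\partial_i G)F+G^\ell\,\partial_i F,
\]
with $\partial_i G\in{\mathcal S}^{r-1}(\R^m)$ and $(\partial_i G)F,\partial_i F\in{\mathcal S}^{r-1}(D(G))$. Applying the inductive hypothesis to the three semialgebraic ${\mathcal C}^{r-1}$ functions $F$, $(\partial_i G)F$ and $\partial_i F$ on $D(G)$, one can pick $\ell$ large enough that each of $G^\ell F$, $G^{\ell-1}(\partial_i G)F$ and $G^\ell\,\partial_i F$ extends by zero to an element of ${\mathcal S}^{r-1}(\R^m)$. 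Write $\widetilde{H}$ for the extension of $G^\ell F$ and $\widetilde{K}_i$ for the extension of $\partial_i(G^\ell F)$; both are ${\mathcal C}^{r-1}$ semialgebraic functions on $\R^m$ vanishing on $Z(G)$, and $\widetilde{K}_i$ equals the classical partial derivative $\partial_i\widetilde{H}$ on the open set $D(G)$.

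The last step is to verify that $\widetilde{K}_i$ is the classical partial derivative of $\widetilde{H}$ on \emph{all} of $\R^m$, for then $\widetilde{H}$ is ${\mathcal C}^1$ with ${\mathcal C}^{r-1}$ partial derivatives, hence ${\mathcal C}^r$, and Remark \ref{Rest}(ii) gives $\widetilde{H}\in{\mathcal S}^r(\R^m)$. Fix $a\in Z(G)$ and a direction $e_i$, set $H(t):=\widetilde{H}(a+te_i)$, $K(t):=\widetilde{K}_i(a+te_i)$, and $I(t):=\int_0^tK(s)\,ds$. Let $U:=\{t:a+te_i\in D(G)\}$, a semialgebraic open subset of $\R$ whose complement decomposes as a finite disjoint union of points and closed intervals. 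On $U$ we have $H'=K=I'$, so $H-I$ is constant on each connected component of $U$; on each interval inside $\R\setminus U$ both $H$ and $K$ vanish, so $H-I$ is constant there too. Gluing these constants across common endpoints by continuity of $H-I$, together with the initial condition $H(0)=I(0)=0$, forces $H\equiv I$ on $\R$. The fundamental theorem of calculus then yields $H'(0)=K(0)=\widetilde{K}_i(a)=0$, as required. The main technical subtlety lies precisely in this gluing step: the line $\{a+te_i\}$ can meet $Z(G)$ in a complicated semialgebraic set where a direct mean value argument does not apply, and the auxiliary function $H-I$ combined with the vanishing of $\widetilde{H}$ and $\widetilde{K}_i$ on $Z(G)$ is what bypasses the obstruction.
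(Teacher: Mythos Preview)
Your proof is correct and follows the same inductive scaffolding as the paper, but the key step---showing that $\partial_i\widetilde{H}$ exists and vanishes at each $a\in Z(G)$---is handled differently. The paper avoids your integral argument by taking $\ell$ one unit larger than is needed to make the extension-by-zero of $G^{\ell-1}F$ continuous: writing $H_\ell(a+te_i)=G(a+te_i)\cdot H_{\ell-1}(a+te_i)$ (valid whether or not $G(a+te_i)=0$), the difference quotient at $a$ becomes
\[
\frac{H_\ell(a+te_i)}{t}=\frac{G(a+te_i)-G(a)}{t}\cdot H_{\ell-1}(a+te_i)\longrightarrow\partial_iG(a)\cdot H_{\ell-1}(a)=0
\]
directly. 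Your FTC argument on the line $t\mapsto a+te_i$ is a legitimate alternative that does not require this extra factor of $G$, at the cost of the piecewise-constant gluing of $H-I$ you describe. Both routes are short and sound, but the paper's ``split off one $G$'' trick is the standard device in this \L ojasiewicz-type setting and sidesteps entirely any discussion of how the line meets $Z(G)$.
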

\begin{proof} 
Using a recursive argument it is enough to study the case $r=1$. For simplicity we will prove the existence and continuity of the partial derivative $\partialder_{{\tt e}_1}H_{\ell}$ of $H_{\ell}$ with respect to the first variable at every point $a\in Z(G)$. The case $r=0$ was already proved in \cite[Prop.2.6.4]{bcr}. We apply this to 
$$
F|_{D(G)}:D(G)\to\R\quad\text{and}\quad\partialder_{{\tt e}_1}F|_{D(G)}:D(G)\to\R.
$$
and find a positive integer $k$ such that the function $H_k$ in the statement and the function
$$
{\widehat H}_k:\R^m\to\R,\,x\mapsto
\begin{cases}
G^k(x)\partialder_{{\tt e}_1}F|_{D(G)}(x)&\text{if $G(x)\neq0$,}\\
0&\text{if $G(x)=0$}
\end{cases}
$$ 
are continuous. We claim: \em the semialgebraic function $H_{k+1}\in{\mathcal S}^1(\R^m)$\em. 

Let us show first that $\partialder_{{\tt e}_1}H_{k+1}(a)=0$ for each $a\in Z(G)$. We have 
\begin{equation*}
\begin{split}
\partialder_{{\tt e}_1}H_{k+1}(a)&=\lim_{t\to0}\frac{H_{k+1}(a+t{\tt e}_1)-H_{k+1}(a)}{t}=\lim_{t\to0}\frac{G^{k+1}(a+t{\tt e}_1)F(a+t{\tt e}_1)}{t}\\
&=\Big(\lim_{t\to0}\frac{G(a+t{\tt e}_1)-G(a)}{t}\Big)\cdot\big(\lim_{t\to0}G^k(a+t{\tt e}_1)F(a+t{\tt e}_1)\big)\\
&=\partialder_{{\tt e}_1}G(a)\cdot\big(\lim_{t\to0}H_{k}(a+t{\tt e}_1)\big)=\partialder_{{\tt e}_1}G(a)\cdot H_{k}(a)=0.
\end{split}
\end{equation*}
If $a\in D(G)$, we have
\begin{equation*}
\begin{split}
\partialder_{{\tt e}_1}H_{k+1}(a)&=(k+1)G^k(a)\partialder_{{\tt e}_1}G(a)F(a)+G^{k+1}(a)\partialder_{{\tt e}_1}F(a)\\
&=(k+1)H_{k}(a)\partialder_{{\tt e}_1}G(a)+G(a){\widehat H}_k(a),
\end{split}
\end{equation*}
so $H_{k+1}\in{\mathcal S}^1(D(G))$ because $H_k,{\widehat H}_k,d_{{\tt e}_1}G$ and $G$ are continuous on $D(G)$. As both $H_k$ and ${\widehat H}_k$ vanish on $Z(G)$, we conclude $H_{k+1}\in{\mathcal S}^1(\R^n)$, as required.
\end{proof}

We are ready to prove \L ojasiewicz's Nullstellensatz in the locally compact case for ${\mathcal S}^r$-functions.

\begin{thm}\label{null20}
Let $M\subset\R^m$ be a locally compact semialgebraic set and let $f_1,f_2\in{\mathcal S}^r(M)$ be such that $Z(f_1)\subset Z(f_2)$. Then there exist an integer $\ell\geq 1	$ and $g\in{\mathcal S}^r(M)$ such that $f_2^{\ell}=gf_1$ and $Z(f_2)=Z(g)$.
\end{thm}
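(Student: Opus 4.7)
The plan is to reduce to the case where $M$ is closed in some ambient Euclidean space, construct extensions of $f_1,f_2$ that preserve the zero-set inclusion \emph{globally}, and then invoke Proposition \ref{suboclase} directly.

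First, by Lemma \ref{ext} we may assume that $M$ is a closed semialgebraic subset of $\R^n$ for some $n$. Using Fact \ref{thfact}, extend $f_1,f_2$ to functions $F_1,F_2\in\mathcal{S}^r(\R^n)$. Apply Lemma \ref{zero} to the closed semialgebraic subset $M$ of the ambient $\R^n$ (so $M$ plays the role of both the `ambient set' and the `closed subset') to obtain $\phi\in\mathcal{S}^{r*}(\R^n)$ with $Z(\phi)=M$.

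The key construction is to set $G_i:=F_i^2+\phi^2\in\mathcal{S}^r(\R^n)$ for $i=1,2$. Since $\phi$ vanishes exactly on $M$, one has
$$
Z(G_i)=Z(F_i)\cap Z(\phi)=Z(F_i)\cap M=Z(f_i),\qquad G_i|_M=f_i^2.
$$
Hence the hypothesis $Z(f_1)\subseteq Z(f_2)$ lifts to a \emph{global} inclusion $Z(G_1)\subseteq Z(G_2)$ in $\R^n$; in particular $D(G_2)\subseteq D(G_1)$, so the reciprocal $1/G_1$ (which is $\mathcal{S}^r$ on $D(G_1)$ by Theorem \ref{comp}) restricts to an $\mathcal{S}^r$-function on $D(G_2)$.

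Now apply Proposition \ref{suboclase} with $G:=G_2$ and $F:=(1/G_1)|_{D(G_2)}$: it yields an integer $\ell'\geq 1$ and a function $H\in\mathcal{S}^r(\R^n)$ with $H=G_2^{\ell'}/G_1$ on $D(G_2)$ and $H=0$ on $Z(G_2)$. Setting $g_0:=H|_M\in\mathcal{S}^r(M)$ (Remark \ref{Rest}(iii)) and using the identities $G_i|_M=f_i^2$, a direct case analysis on $D(f_2)$ and on $Z(f_2)$ (where both sides vanish) shows that the identity $g_0 f_1^2=f_2^{2\ell'}$ holds pointwise on $M$, and that $Z(g_0)=Z(f_2)$.

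Finally, put $g:=g_0 f_1\in\mathcal{S}^r(M)$ and $\ell:=2\ell'$. Then
$$
gf_1=g_0f_1^2=f_2^{2\ell'}=f_2^{\ell},\qquad Z(g)=Z(g_0)\cup Z(f_1)=Z(f_2)\cup Z(f_1)=Z(f_2),
$$
the last equality since $Z(f_1)\subseteq Z(f_2)$. The main obstacle lies in arranging the zero-set inclusion globally: arbitrary Whitney-type extensions $F_1,F_2$ of $f_1,f_2$ need not satisfy $Z(F_1)\subseteq Z(F_2)$ off $M$, so one cannot apply Proposition \ref{suboclase} to $G=F_2,F=1/F_1$ directly. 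The trick of adding the penalty term $\phi^2$ forces the zero-sets of $G_1,G_2$ to collapse back onto $Z(f_1),Z(f_2)$, bringing the situation into the hypothesis of Proposition \ref{suboclase}; the mild price paid is that one obtains the divisibility at exponent $2\ell'$ rather than $\ell'$, which is harmless for the Nullstellensatz.
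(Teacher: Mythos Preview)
Your proof is correct and follows essentially the same route as the paper's: reduce to $M$ closed, take Whitney extensions $F_i$, add the square of an equation $\phi$ (the paper's $L$) of $M$ to force $Z(F_i^2+\phi^2)=Z(f_i)$ globally, and then apply Proposition~\ref{suboclase} to $G_2^{\ell'}/G_1$ to produce the $\mathcal{S}^r$ multiplier; your $(G_i,H,g_0,\ell')$ are exactly the paper's $(L^2+F_i^2,\Psi,\Psi|_M,k)$. One cosmetic slip: in your parenthetical about Lemma~\ref{zero}, $M$ plays the role of the \emph{closed subset} $Z$ in that lemma while $\R^n$ plays the role of the ambient set, not ``both''.
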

\begin{proof}
By Corollary \ref{ext} we may assume $M$ is closed. By Fact \ref{thfact} there exist $F_i\in{\mathcal S}^r(\R^m)$ with $F_i|_M=f_i$ for $i=1,2$. By Lemma \ref{zero} there exists $L\in{\mathcal S}^r(\R^m)$ such that $M=Z(L)$. Then 
$$
Z(L^2+F_1^2)=Z(L^2)\cap Z(F_1^2)=M\cap Z(F_1)=Z(f_1)\subset Z(f_2)=Z(L^2+F_2^2).
$$
Therefore 
$$
\Phi:D(L^2+F_2^2)\to\R,\,x\mapsto\frac{1}{L^2(x)+F_1^2(x)}
$$
is an ${\mathcal S}^r$-function and by Proposition \ref{suboclase} there exists a positive integer $k$ such that
$$
\Psi:\R^m\to\R,\,x\mapsto
\begin{cases}
\big(L^2(x)+F_2^2(x)\big)^k\Phi(x)&\text{if $L^2(x)+F_2^2(x)\neq0$,}\\
0&\text{if $L^2(x)+F_2^2(x)=0$}\\
\end{cases}
$$
belongs to ${\mathcal S}^r(\R^m)$. As $(L^2+F_2^2)^k=\Psi\cdot(L^2+F_1^2)$ and $L|_M\equiv0$, 
$$
f_2^{2k}=(\Psi|_M)f_1^2=((\Psi|_M)f_1)f_1.
$$
Therefore, ${\ell}:=2k$ and $g:=(\Psi|_M)f_1\in{\mathcal S}^r(M)$ do the job because 
$$
Z(g)=\big(Z(L^2+F_2^2)\cap M\big)\cup Z(f_1)=Z(f_2)\cup Z(f_1)=Z(f_2),
$$
as required.
\end{proof}

\begin{remark}\label{counterloja}
The local compactness of $M$ is essential in \L ojasiewicz's Nullstellensatz (see \cite[Rmk.1.2]{fg6}).
\end{remark}

\L ojasiewicz's Nullstellensatz has a useful consequence when studying the structure of radical ideals of rings of ${{\mathcal S}^r}$ functions. A classical concept to analyze this structure is the following:	

\begin{defn}[Ideal of zeros or $z$-ideal]\label{primez}
Let $M\subset\R^m$ be a semialgebraic set. An ideal $\gta$ in ${\mathcal S}^r(M)$ is a \em $z$-ideal if the following condition holds: \em for each $g\in{\mathcal S}^r(M)$ and each $f\in\gta$ such that $Z(f)\subset Z(g)$ we have $g\in\gta$\em. 
\end{defn}

Each $z$-ideal of ${\mathcal S}^r(M)$ is a real radical ideal. In the locally compact case the converse also holds true:

\begin{lem}\label{zloc} 
Let $M\subset\R^m$ be a locally compact semialgebraic set. Each radical ideal $\gta$ in ${\mathcal S}^r(M)$ is a $z$-ideal. In particular, each prime ideal of ${\mathcal S}^r(M)$ is a $z$-ideal. 
\end{lem}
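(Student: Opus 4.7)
The proof is essentially a direct application of Łojasiewicz's Nullstellensatz (Theorem \ref{null20}), so this is more a matter of assembling the right pieces than overcoming any genuine obstacle. The plan is as follows.

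First I would handle the ``in particular'' statement as an immediate consequence of the first assertion, since every prime ideal is radical. For the main claim, let $\gta \subset {\mathcal S}^r(M)$ be a radical ideal and pick $f \in \gta$ together with some $g \in {\mathcal S}^r(M)$ satisfying $Z(f) \subset Z(g)$. The goal is to show $g \in \gta$.

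Next, because $M$ is locally compact, Łojasiewicz's Nullstellensatz (Theorem \ref{null20}) applies to the pair $(f,g)$ and produces an integer $\ell \geq 1$ together with a function $h \in {\mathcal S}^r(M)$ such that
\[
g^\ell = h f \quad \text{and} \quad Z(g) = Z(h).
\]
(The second conclusion is not needed here, but it is part of the statement that I would cite.) Since $f \in \gta$ and $\gta$ is an ideal, the equality $g^\ell = hf$ gives $g^\ell \in \gta$. Because $\gta$ is radical, this forces $g \in \gta$, which is exactly the $z$-ideal condition from Definition \ref{primez}.

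The only thing worth double-checking is that the initial assertion ``each $z$-ideal is real radical'' alluded to in the paragraph before the lemma is not needed for the proof itself; what the lemma asserts is the converse direction (radical $\Rightarrow$ $z$-ideal), and that converse is precisely where local compactness enters, via Theorem \ref{null20}. No genuine difficulty arises: the heavy lifting is done by the Nullstellensatz, and Remark \ref{counterloja} already warns that this is the step where local compactness is indispensable.
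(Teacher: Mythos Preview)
Your proof is correct and follows exactly the same approach as the paper: apply Theorem \ref{null20} to get $g^\ell = hf \in \gta$, then use that $\gta$ is radical to conclude $g \in \gta$. The paper's proof is just the same two lines, without the additional commentary.
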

\begin{proof}
Let $g\in{\mathcal S}^r(M)$ and let $f\in\gta$ be such that $Z(f)\subset Z(g)$. By Theorem \ref{null20} there exists a positive integer $\ell$ and $h\in{\mathcal S}^r(M)$ such that $g^{\ell}=hf\in\gta$, so $g\in\gta$, as required.
\end{proof}

\begin{remark}\label{nzifd}
Again local compactness is crucial in Lemma \ref{zloc}, see \cite[(3.4.1)]{fg5}.
\end{remark}

We present next a weak version of \L ojasiewicz's Nullstellensatz in the non-locally compact case that will be crucial in the following sections.

\begin{prop}\label{crucial}
Let $M\subset\R^m$ be a semialgebraic set and $f_1,f_2\in{\mathcal S}^r(M)$. Consider the semialgebraic set $S:=\{x\in\R^m:\ (x,0)\in\cl(\Gamma(f_1))\setminus\cl(\Gamma(f_2))\}$. The following assertions are equivalent:
\begin{itemize}
\item[(i)] $M\cap\cl(S)=\varnothing$.
\item[(ii)] There exists an open semialgebraic neighborhood $N$ of $M$ in $\cl(M)$ and extensions $F_1,F_2\in{\mathcal S}^0(N)$ of $f_1,f_2$ such that $Z(F_1)\subset Z(F_2)$.
\item[(iii)] There exist an integer $\ell\geq 1$ and $G\in{\mathcal S}^0(N)$ such that $g:=G|_M\in{\mathcal S}^r(M)$, $Z(f_2)=Z(g)$ and $f_2^\ell=f_1g$.
\end{itemize}
\end{prop}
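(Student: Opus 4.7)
The plan is a cyclic proof (iii)~$\Rightarrow$~(ii)~$\Rightarrow$~(i)~$\Rightarrow$~(ii)~$\Rightarrow$~(iii). The first three links are essentially topological: they follow from the extension statement of Lemma~\ref{lips} combined with the density of $M$ in $\cl(M)$. The real content is concentrated in (ii)~$\Rightarrow$~(iii), where Theorem~\ref{null20} supplies only an ${\mathcal S}^0$ solution that has to be upgraded to ${\mathcal S}^r$ by taking a sufficiently large exponent~$\ell$.

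For (iii)~$\Rightarrow$~(ii), Lemma~\ref{lips}(ii) gives continuous extensions $F_1,F_2$ of $f_1,f_2$ to some open semialgebraic neighborhood $N'\subset\cl(M)$ of $M$; replacing $N$ by $N\cap N'$ and using density of $M$ in this intersection, the identity $f_2^\ell=f_1g$ propagates to $F_2^\ell=F_1G$, which immediately forces $Z(F_1)\subset Z(F_2)$. For (ii)~$\Rightarrow$~(i), I argue by contradiction: if $x_n\in S$ converges to some $x_0\in M$, then each $x_n$ lies in $\cl(M)$ (since $\Gamma(f_1)\subset M\times\R$) and hence eventually in $N$; continuity of $F_1$ together with $(x_n,0)\in\cl(\Gamma(f_1))$ forces $F_1(x_n)=0$, hence $F_2(x_n)=0$ by (ii), and continuity of $F_2$ applied to any sequence in $M$ converging to $x_n$ places $(x_n,0)$ in $\cl(\Gamma(f_2))$, contradicting $x_n\in S$. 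For (i)~$\Rightarrow$~(ii), I apply Lemma~\ref{lips}(ii) to obtain extensions $F_1,F_2$ on some $N_0\supset M$ open in $\cl(M)$, form the semialgebraic bad set $T:=\{y\in N_0:F_1(y)=0,\,F_2(y)\neq 0\}$, and run the previous argument in reverse to verify that $M\cap\cl(T)=\varnothing$; then $N:=N_0\setminus\cl(T)$ satisfies (ii).

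The core of the proof is (ii)~$\Rightarrow$~(iii). Since $N$ is open in the locally compact set $\cl(M)$ it is itself locally compact, so Theorem~\ref{null20} applied with $r=0$ to $F_1,F_2\in{\mathcal S}^0(N)$ yields $\ell_0\geq 1$ and $G_0\in{\mathcal S}^0(N)$ with $F_2^{\ell_0}=F_1G_0$ and $Z(G_0)=Z(F_2)$. For an exponent $\ell\geq\ell_0$ to be chosen later, set $G:=F_2^{\ell-\ell_0}G_0\in{\mathcal S}^0(N)$ and $g:=G|_M$, so that $f_2^\ell=f_1g$ on $M$, $Z(g)=Z(f_2)$, and on $D(f_1)\cap M$ we have $g=f_2^\ell/f_1$, which is ${\mathcal S}^r$ as a quotient of ${\mathcal S}^r$-functions with non-vanishing denominator, while on $Z(f_1)\cap M=Z(f_2)\cap M$ one has $g=0$. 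It remains to show that, for $\ell$ large, the natural semialgebraic jet $g_\alpha:=D^\alpha g$ on $D(f_1)\cap M$, extended by $g_\alpha:=0$ on $Z(f_1)\cap M$, satisfies condition~\ref{diff} globally on $M$.

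The main obstacle is verifying this jet condition at points $a\in Z(f_1)\cap M$. The \L ojasiewicz inequality, applicable on compact neighborhoods inside $N$ because $Z(F_1)\subset Z(F_2)$, supplies $|F_2|^p\leq c|F_1|$ for some $p,c>0$; expanding $D^\alpha(f_2^\ell/f_1)$ through the Leibniz--quotient formula then produces a finite sum of terms of the form $C|f_2|^{\ell-j-p(k+1)}$ with $j+k\leq|\alpha|\leq r$, so every $|g_\alpha|$ is bounded on $D(f_1)\cap M$ by $C'|f_2|^{\ell-r(p+1)-p}$. Combining this with the Lipschitz estimate $|f_2(y)|\leq L\|y-a\|$ from Lemma~\ref{lips}(i) (valid since $f_2(a)=0$) yields $|g_\alpha(y)|\leq C''\|y-a\|^{\ell-r(p+1)-p}$, and choosing $\ell$ so that this exponent exceeds $r$ is, by routine Taylor bookkeeping in the three cases ($x,y\in Z(f_1)$, one endpoint in each stratum, $x,y\in D(f_1)$), enough to derive the $o(\|x-y\|^{r-|\beta|})$ bounds demanded by~\ref{diff}; the truly tedious part is uniformly handling the regime $\|x-y\|\ll\max(\|x-a\|,\|y-a\|)$, where one must additionally invoke the ${\mathcal S}^r$-differentiability of $g$ on the open stratum $D(f_1)\cap M$.
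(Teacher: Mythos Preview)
Your treatment of the links (iii)~$\Rightarrow$~(ii), (ii)~$\Rightarrow$~(i) and (i)~$\Rightarrow$~(ii) is fine and matches the paper's argument up to cosmetic differences (the paper does (i)~$\Rightarrow$~(ii)~$\Rightarrow$~(iii)~$\Rightarrow$~(i), but the content is the same: extend via Lemma~\ref{lips}, use density to propagate identities, and shrink $N$ to remove the bad set).

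The real divergence is in (ii)~$\Rightarrow$~(iii), and here your argument has a genuine gap. You apply Theorem~\ref{null20} only with $r=0$ and then try to \emph{upgrade} $g=f_2^\ell/f_1$ to ${\mathcal S}^r(M)$ by choosing $\ell$ large and verifying condition~\ref{diff} by hand. The case you flag as ``tedious''---$x,y\in D(f_1)\cap M$ close to $a\in Z(f_1)\cap M$ with $\|x-y\|\ll\max(\|x-a\|,\|y-a\|)$---is not merely bookkeeping. Invoking ``${\mathcal S}^r$-differentiability of $g$ on $D(f_1)\cap M$'' gives you, for each fixed $x$, a modulus $\tau_x$ with $\tau_x(0)=0$; but as $x\to a$ the denominator $f_1(x)$ tends to $0$, so the coefficients in the formal jet of $g$ at $x$ blow up like $|f_1(x)|^{-(r+1)}$, and nothing you have written controls $\tau_x$ uniformly in $x$. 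To close this you would need a \emph{quantitative} composition/quotient estimate (a uniform version of the chain-rule step behind Theorem~\ref{comp}) tracking how the large power of $f_2$ compensates the blow-up of $1/f_1$. That may be feasible, but it is a substantial piece of analysis you have not supplied; calling it routine Taylor bookkeeping undersells the difficulty. A secondary issue is that your \L ojasiewicz exponent $p$ (and hence your choice of $\ell$) is obtained on compact neighborhoods and therefore \emph{a priori} depends on $a$; one must argue (via the semialgebraic \L ojasiewicz inequality on the locally compact set $N$) that a single $p$ works globally.

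The paper avoids all of this with a one-line trick: embed $N$ into $\R^{m+2}$ via ${\tt j}(x)=(x,F_1(x),F_2(x))$. On the locally compact image ${\tt j}(N)$ the functions $A_i(x,t_1,t_2)=t_i$ are restrictions of coordinate projections, hence ${\mathcal S}^r$, with $Z(A_1)\subset Z(A_2)$. Theorem~\ref{null20} applied \emph{at regularity $r$} on ${\tt j}(N)$ yields $G_0\in{\mathcal S}^r({\tt j}(N))$ with $A_2^\ell=A_1G_0$. Since ${\tt j}|_M$ is an ${\mathcal S}^r$-map (its components are $x_1,\ldots,x_m,f_1,f_2\in{\mathcal S}^r(M)$), Theorem~\ref{comp} gives $g:=G_0\circ{\tt j}|_M\in{\mathcal S}^r(M)$ with $f_2^\ell=f_1g$ immediately. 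The whole analytic upgrade you attempt is replaced by a change of ambient space that turns $F_1,F_2$ into smooth projections and lets Theorem~\ref{null20} do the ${\mathcal S}^r$ work for you.
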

\begin{proof}
(i) $\Longrightarrow$ (ii) By Lemma \ref{lips} there exists an open semialgebraic neighborhood $N\subset\cl(M)$ of $M$ in $\cl(M)$ such that $f_i$ extends to semialgebraic functions $F_i\in{\mathcal S}^{r-1}(N)$ for $i=1,2$. As $F_i$ is a continuous extension of $f_i$ and $M\subset N\subset\cl(M)$, we have $\Gamma(F_i)\subset\cl(\Gamma(f_i))$, so $Z(F_i)\subset\{x\in\R^m:\ (x,0)\in\cl(\Gamma(f_i))\}$. We claim: $Z(F_1)\setminus Z(F_2)\subset S$.

Pick a point $x\in Z(F_1)\setminus Z(F_2)$. Assume $c:=F_2(x)>0$ and let $U\subset\R^m$ be an open semialgebraic neighborhood of $x$ such that $F_2(y)>\frac{c}{2}$ for each $y\in U\cap N$. Thus, 
$$
\Gamma(F_2)\cap\Big(U\times\Big(-\frac{c}{3},\frac{c}{3}\Big)\Big)=\varnothing,
$$
so $(x,0)=(x,F_1(x))\in\cl(\Gamma(f_1))\setminus\cl(\Gamma(f_2))$, as claimed.

By hypothesis $M\cap\cl(Z(F_1)\setminus Z(F_2))=\varnothing$, so substituting $N$ by $N\setminus\cl(Z(F_1)\setminus Z(F_2))$, $F_1$ by $F_1|_N$ and $F_2$ by $F_2|_N$, we have $Z(F_1)\subset Z(F_2)$. 

(ii) $\Longrightarrow$ (iii) Consider the ${\mathcal S}^0$-map ${\tt j}:N\to\R^{m+2},\ x\mapsto(x,F_1(x),F_2(x))$. Observe that ${\tt j}(N)$ is locally compact because ${\tt j}:N\to{\tt j}(N)$ is a semialgebraic homeomorphism. Consider the ${\mathcal S}^r$-functions
\begin{align*}
A_1:{\tt j}(N)\to\R,\ (x,x_{n+1},x_{n+2})\to x_{n+1},\\
A_2:{\tt j}(N)\to\R,\ (x,x_{n+1},x_{n+2})\to x_{n+2}.
\end{align*}
We have 
$$
Z(A_1)={\tt j}(Z(F_1))\subset{\tt j}(Z(F_2))=Z(A_2).
$$
By Theorem \ref{null20} there exists $\ell\geq1$ and $G_0\in{\mathcal S}^r({\tt j}(N))$ such that $A_2^\ell=A_1G_0$ and $Z(A_2)=Z(G_0)$. As ${\tt j}|_M:M\to\R^{m+2}$ is an ${\mathcal S}^r$-map, $g:=G_0\circ{\tt j}|_M\in{\mathcal S}^r(M)$ and
$$
f_2^\ell=(A_2\circ{\tt j}|_M)^\ell=(A_1\circ{\tt j}|_M)(G_0\circ{\tt j}|_M)=f_1g.
$$
Observe that $Z(f_2)=Z(g)$ and $G:=G_0\circ{\tt j}\in{\mathcal S}^0(N)$.

(iii) $\Longrightarrow$ (i) By Lemma \ref{lips} we may assume that there exists an open semialgebraic neighborhood $N$ of $M$ in $\cl(M)$ such that $f_1,f_2,g$ have extensions $F_1,F_2,G\in{\mathcal S}^{r-1}(N)$. As $M$ is dense in $N$, we have by continuity $F_2^\ell=F_1G$, so $Z(F_1)\subset Z(F_2)$. Consider the closed semialgebraic set $C:=\cl(M)\setminus N$ and let us show: $S\subset C$. This finishes the proof because $M\cap\cl(S)\subset M\cap C=\varnothing$. 

Pick $x\in S\subset\cl(M)$ and suppose that $x\in N$. As $(x,0)\in\cl(\Gamma(f_1))\setminus\cl(\Gamma(f_2))$ and $x\in N$, we have $F_1(x)=0$ and $F_2(x)\neq0$, which is a contradiction because $Z(F_1)\subset Z(F_2)$.
\end{proof}

\subsection{Zariski and maximal spectra of rings of ${\mathcal S}^r$-functions}\label{s3:1}
 
In this section we prove Theorem \ref{main1} in the ${\mathcal S}^r$ case and provide some relevant consequences. We would like to stress that for the locally compact case, the result is a straightforward consequence of \cite{cc}. Indeed, by the \L ojasiewicz's Nullstellensatz and Lemma \ref{zero}, the distributive lattice of quasi-compact open subsets of $\Spec^r(M)$ is isomorphic to the distributive lattice of open semialgebraic subsets of $M$. As the distributive lattice of quasi-compact open subsets of $\Spec^0(M)$ is also isomorphic to the latter, we get that $\Sper^r(M)$ and $\Sper^0(M)$ are homeomorphic. 

The previous argument cannot be carried out in the general case (neither in the locally compact case when dealing with bounded functions).

\begin{proof}[Proof of Theorem \em\ref{main1} \em in the ${\mathcal S}^r$ case] 
Let us prove first that the map
$$
\varphi:\Spec^0(M)\to\Spec^r(M),\ \gtp\mapsto\gtp\cap{\mathcal S}^r(M)
$$ 
is bijective. Let $\gtp_1,\gtp_2\in\Spec^0(M)$ be distinct prime ideals. We may assume that there exists $f\in\gtp_2\setminus\gtp_1$. Let $N\subset\cl(M)$ be an open semialgebraic neighborhood of $M$ such that $f$ extends to a semialgebraic function $F$ on $N$. Note that $N$ is locally compact. By Lemma \ref{zero} there exists $G\in{\mathcal S}^r(N)$ such that $Z(F)=Z(G)$ and denote $g:=G|_M$. By Proposition \ref{crucial} there exist $\ell_1,\ell_2\geq 1$ and $h_1,h_2\in{\mathcal S}(M)$ such that $g^{\ell_1}=f h_1$ and $f^{\ell_2}=gh_2$. Consequently, $g\in(\gtp_2\cap{\mathcal S}^r(M))\setminus(\gtp_1\cap{\mathcal S}^r(M))$ and $\varphi(\gtp_1)\neq\varphi(\gtp_2)$.

Let $\gtq$ be a prime ideal of ${\mathcal S}^r(M)$. Let $\gtp$ be the set of all $f\in{\mathcal S}^0(M)$ satisfying the following property: \em there exist $g\in\gtq$, an open semialgebraic neighborhood $N$ of $M$ in $\cl(M)$ and extensions $F,G\in{\mathcal S}^0(N)$ of $f,g$ such that $Z(G)\subset Z(F)$\em. We claim: \em $\gtp$ is a prime ideal of ${\mathcal S}^0(M)$ equal to $\sqrt{\gtq{\mathcal S}^0(M)}$ and $\gtp\cap{\mathcal S}^r(M)=\gtq$\em.

It is straightforward to check that $\gtp$ is an ideal, so let us show that it is prime. Let $f_1,f_2\in{\mathcal S}^0(M)$ be such that $f_1f_2\in\gtp$ and let us check that either $f_1\in\gtp$ or $f_2\in\gtp$. Let $N\subset\cl(M)$ be an open semialgebraic neighborhood of $M$ such that there exists extensions $F_1,F_2\in{\mathcal S}^{r-1}(N)$ of $f_1,f_2$ (see Lemma \ref{lips}). By the definition of $\gtp$ we may assume that there exists $H\in{\mathcal S}^0(N)$ such that $Z(H)\subset Z(F_1F_2)$ and $h:=H|_M\in\gtq\subset{\mathcal S}^r(M)$. Let $G_1,G_2\in{\mathcal S}^r(N)$ be such that $Z(F_i)=Z(G_i)$ for $i=1,2$. Thus,
$$
Z(H)\subset Z(F_1F_2)=Z(F_1)\cup Z(F_2)=Z(G_1)\cup Z(G_2)=Z(G_1G_2).
$$
By Proposition \ref{crucial} there exists $\ell\geq1$ and $a\in{\mathcal S}^r(M)$ such that $((G_1G_2)|_M)^\ell=ha\in\gtq$. As $\gtq$ is prime, we may assume $g_1:=G_1|_M\in\gtq$. As $Z(F_1)=Z(G_1)$, we conclude $f_1\in\gtp$. 

Now, let us prove: $\gtp=\sqrt{\gtq{\mathcal S}^0(M)}$. 

Pick $f\in\sqrt{\gtq{\mathcal S}^0(M)}$. Then there exist $\ell\geq 1$, $g_1,\ldots,g_s\in\gtq$ and $h_1,\ldots,h_s\in{\mathcal S}^0(M)$ such that $f^\ell=g_1h_1+\cdots+g_sh_s$. Let $N$ be an open semialgebraic neighborhood of $M$ in $\cl(M)$ such that there exist extensions $F,G_i,H_i\in{\mathcal S}^0(N)$ of $f,g_i,h_i$ respectively for each $i=1,\ldots,s$. As $M$ is dense in $N$, it holds $F^\ell=G_1H_1+\cdots+G_sH_s$ and define $G:=G^2_1+\cdots+G_s^2$. We have $G|_M\in\gtq$ and $Z(G)\subset Z(F^\ell)$, so $f^\ell\in\gtp$ and $f\in\gtp$. To prove the converse inclusion pick $f\in\gtp$. There exist $g\in\gtq$ and an open semialgebraic neighborhood $N\subset\cl(M)$ of $M$ and extensions $F,G\in{\mathcal S}^0(N)$ of $f,g$ such that $Z(G)\subset Z(F)$. By Theorem \ref{null20} there exist $\ell\geq1$ and $H\in{\mathcal S}^0(N)$ such that $F^\ell=GH$. Denote $h:=H|_M\in{\mathcal S}^0(M)$ and observe that $f^\ell=gh\in\gtq{\mathcal S}^0(M)$, so $f\in\sqrt{\gtq{\mathcal S}^0(M)}$.

To finish let us check: $\gtp\cap{\mathcal S}^r(M)=\gtq$. 

The inclusion right to left is clear, so let us prove the converse one. Pick $f\in\gtp\cap{\mathcal S}^r(M)$. Then there exists $g\in\gtq$, an open semialgebraic neighborhood $N\subset\cl(M)$ of $M$ and extensions $F,G\in {{\mathcal S}}^0(N)$ of $f,g$ such that $Z(G)\subset Z(F)$. By Proposition \ref{crucial} there exist $\ell\geq1$ and $h\in{\mathcal S}^r(M)$ such that $f^\ell=gh\in\gtq{\mathcal S}^r(M)=\gtq$, so $f\in\gtq$.

The map $\varphi$ is continuous because it is induced by the ring inclusion ${\tt j}:{\mathcal S}^r(M)\hookrightarrow{\mathcal S}^0(M)$. To prove that $\varphi$ is a homeomorphism it only remains to show: \em$\varphi$ is an open map\em. 

Let $\Dd:=\{\gtp\in\Spec^0(M):\ f\notin\gtp\}$ for some $f\in{\mathcal S}^0(M)$. Let $N\subset\cl(M)$ be an open semialgebraic neighborhood of $M$ such that there exists an extension $F\in{\mathcal S}^0(N)$ of $f$. By Lemma \ref{zero} there exists $G\in{\mathcal S}^r(N)$ such that $Z(F)=Z(G)$. Define $g:=G|_M\in{\mathcal S}^r(M)$ and let us prove that 
$$
\varphi(\Dd)=\{\gtq\in\Spec^r(M):g\notin\gtq\}.
$$
Note that there exist $\ell_1,\ell_2\geq1$ and $h_1,h_2\in{\mathcal S}^0(M)$ such that $f^{\ell_1}=gh_1$ and $g^{\ell_2}=fh_2$. Thus, if $\gtp\in \Dd$ then $g\notin\gtp\cap{\mathcal S}^r(M)=\varphi(\gtp)$. Conversely, let $\gtq\in\Spec^r(M)$ be such that $g\notin\gtq$ and let us show that $f\notin\sqrt{\gtq{\mathcal S}^0(M)}$. Note that $g\notin\sqrt{\gtq{\mathcal S}^0(M)}$ because $\sqrt{\gtq{\mathcal S}^0(M)}\cap{\mathcal S}^r(M)=\gtq$. Thus, $fh_2=g^{\ell_2}\not\in\sqrt{\gtq{\mathcal S}^0(M)}$, so $f\notin\sqrt{\gtq{\mathcal S}^0(M)}$, as required.
\end{proof}

\begin{remark}\label{radONE}
In the previous proof we have shown that for each semialgebraic set $M$ and $\gtq\in\Spec^r(M)$, the prime ideal $\sqrt{\gtq{\mathcal S}^0(M)}$ equals the set of functions $f\in{\mathcal S}^0(M)$ for which there exists an integer $\ell\geq 1$, $g\in\gtq$ and $h\in{\mathcal S}^0(M)$ such that $f^\ell=gh$. In addition, $\sqrt{\gtq{\mathcal S}^0(M)}$ is the set of all $f\in{\mathcal S}^0(M)$ for which there exist $g\in\gtq$ and extensions $F,G\in{\mathcal S}^0(N)$ of $f,g$ to some open semialgebraic neighborhood $N$ of $M$ in $\cl(M)$ such that $Z(G)\subset Z(F)$. 

In particular, if $\gtq$ is a $z$-ideal, then $\sqrt{\gtq{\mathcal S}^0(M)}$ is a $z$-ideal. 

Indeed, let $f_1,f_2\in{\mathcal S}^0(M)$ be such that $f_1\in\sqrt{\gtq{\mathcal S}^0(M)}$ and $Z(f_1)\subset Z(f_2)$. Then, there exist $g_1\in\gtq$ and extensions $F_1,G_1\in{\mathcal S}^0(N)$ of $f_1,g_1$ to some open semialgebraic neighborhood $N$ of $M$ in $\cl(M)$ such that $Z(G_1)\subset Z(F_1)$. We may assume that there exists an extension $F_2\in{\mathcal S}^0(N)$ of $f_2$. By Lemma \ref{zero} we can pick a function $G_2\in{\mathcal S}^r(N)$ such that $Z(G_2)=Z(F_2)$. Let us denote $g_2:=G_2|_M$. We have 
$$
Z(g_1)\subset Z(f_1)\subset Z(f_2)=Z(g_2),
$$ 
so $g_2$ belongs to the $z$-ideal $\gtq$ and $f_2\in\sqrt{\gtq{\mathcal S}^0(M)}$, as required.
\end{remark}

\subsection{Zariski and maximal spectra of rings of ${\mathcal S}^{r*}$ functions}\label{s3:2} 
\L ojasiewicz's Nullstellensatz \ref{null20} has played a crucial role in the proof of Theorem \ref{main1} in the ${\mathcal S}^r$ case. As Theorem \ref{null20} does not have a bounded counterpart, we need to develop another tool in order to give a proof of Theorem \ref{main1} in the ${\mathcal S}^{r*}$ case.

\begin{lem}\label{gorro}
Let $M\subset\R^m$ be a semialgebraic set and let $\gta$ be a (proper) ideal of ${\mathcal S}^{r*}(M)$. Then the set
$$
\widehat{\gta}:=\{f\in{\mathcal S}^{0*}(M):\ \forall\veps\in{\mathcal S}^{0*}(M),\ \veps>0\ \exists g\in\gta\ \text{such that}\ |f-g|<\veps\}
$$
is a (proper) ideal of ${\mathcal S}^{0*}(M)$ that contains $\gta$. In addition, the following properties hold:
\begin{itemize}
\item[(i)] Let $\veps\in{\mathcal S}^{0*}(M)$ be strictly positive and let $f\in{\mathcal S}^{0*}(M)\setminus\widehat{\gta}$. Then there exists $g\in{\mathcal S}^{r*}(M)\setminus\gta$ such that $|f-g|<\veps$.
\item[(ii)] If $\gta$ is a radical ideal of ${\mathcal S}^{r*}(M)$ that contains $\veps\in{\mathcal S}^{r*}(M)$ with empty zero-set, $\gta=\widehat{\gta}\cap{\mathcal S}^{r*}(M)$.
\item[(iii)] If $\gtb$ is a radical ideal of ${\mathcal S}^{0*}(M)$ that contains $\delta\in{\mathcal S}^{0*}(M)$ with empty zero-set and $\gta:=\gtb\cap{\mathcal S}^{r*}(M)$, then $\widehat{\gta}=\gtb$ and $\gta$ contains $\veps\in{\mathcal S}^{r*}(M)$ with empty zero-set.
\item[(iv)] If $\gtq$ is a prime ideal of ${\mathcal S}^{r*}(M)$ that contains $\veps\in{\mathcal S}^{r*}(M)$ with empty zero-set, $\widehat{\gtq}$ is a prime ideal of ${\mathcal S}^{0*}(M)$ and $\widehat{\gtq}=\sqrt{\gtq{\mathcal S}^{0*}(M)}$.
\end{itemize} 
\end{lem}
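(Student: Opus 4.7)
The plan is to derive every part of the lemma from the same ``scaling trick'': whenever an error $e$ is controlled pointwise by $\veps$, we can write $e=\veps\cdot(e/\veps)$ with the quotient bounded, and absorb $e$ into the ideal through $\veps$. I first check that $\widehat{\gta}$ is a (proper) ideal of ${\mathcal S}^{0*}(M)$ containing $\gta$. Additivity is routine (take half-tolerances). For closure under multiplication by $h\in{\mathcal S}^{0*}(M)$, I use Whitney-type density of ${\mathcal S}^{r*}(M)$ in ${\mathcal S}^{0*}(M)$ (Nash approximation with continuous positive tolerance): pick $\tilde h\in{\mathcal S}^{r*}(M)$ close to $h$ and multiply with an approximant $g\in\gta$ of $f$, controlling the error pointwise via the tolerance functions $\eta/(2(1+|f|))$ and $\eta/(2(1+|\tilde h|))$. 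Containment $\gta\subset\widehat{\gta}$ is trivial. Properness: if $1\in\widehat{\gta}$, some $g\in\gta$ satisfies $|1-g|<1/2$, so $g\geq 1/2$; by Theorem~\ref{comp}, $1/g\in{\mathcal S}^r(M)$ and is bounded by $2$, so $g$ is a unit of ${\mathcal S}^{r*}(M)$, contradicting properness of $\gta$. For (i), the hypothesis $f\notin\widehat{\gta}$ provides $\veps_0\in{\mathcal S}^{0*}(M)$, $\veps_0>0$, for which no $g\in\gta$ satisfies $|f-g|<\veps_0$ everywhere; choosing $g\in{\mathcal S}^{r*}(M)$ with $|f-g|<\min(\veps,\veps_0)/2$ by density forces $g\notin\gta$ while keeping $|f-g|<\veps$.

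For (ii), take $f\in\widehat{\gta}\cap{\mathcal S}^{r*}(M)$ and pick $g\in\gta$ with $|f-g|<\veps$, using $\veps\in\gta$ itself as the tolerance. Set $h:=f^2-g^2=(f-g)(f+g)\in{\mathcal S}^{r*}(M)$, so $|h|\leq C\veps$ for $C:=2\|f\|_\infty+\|\veps\|_\infty$. Since $\veps$ has no zeros, Theorem~\ref{comp} applied to $x\mapsto 1/x$ on $\R\setminus\{0\}$ gives $1/\veps\in{\mathcal S}^r(M)$, hence $h/\veps\in{\mathcal S}^r(M)$; the pointwise bound $|h/\veps|\leq C$ places $h/\veps$ in ${\mathcal S}^{r*}(M)$. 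Therefore $h=\veps\cdot(h/\veps)\in\gta$ and $f^2=g^2+h\in\gta$, so radicality of $\gta$ gives $f\in\gta$. For (iii), I first upgrade $\delta$ to $\veps\in\gta$ with empty zero-set: approximate $\delta$ (WLOG $\delta>0$) by $\veps\in{\mathcal S}^{r*}(M)$ with $|\veps-\delta|<\delta/2$, so $\veps\geq\delta/2>0$; the bound $|\veps^2-\delta^2|\leq(3/4)\delta^2$ makes $(\veps^2-\delta^2)/\delta^2$ a bounded continuous semialgebraic function, whence $\veps^2\in\gtb$ and $\veps\in\sqrt{\gtb}\cap{\mathcal S}^{r*}(M)=\gta$. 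The containment $\widehat{\gta}\subset\gtb$ is the scaling trick of (ii) performed in ${\mathcal S}^{0*}(M)$: for $f\in\widehat{\gta}$, pick $g\in\gta\subset\gtb$ with $|f-g|<\veps$; then $(f-g)/\veps$ is a bounded continuous semialgebraic function, so $f-g=\veps\cdot[(f-g)/\veps]\in\gtb$ and $f\in\gtb$. The reverse $\gtb\subset\widehat{\gta}$ uses Whitney density directly: given $f\in\gtb$ and $\eta>0$, pick $\tilde f\in{\mathcal S}^{r*}(M)$ with $|\tilde f-f|<\min(\veps,\eta)$; the error $e=\tilde f-f$ satisfies $|e|<\veps$, so $e\in\gtb$ by the same scaling, whence $\tilde f=f+e\in\gtb\cap{\mathcal S}^{r*}(M)=\gta$ approximates $f$ within $\eta$.

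For (iv), (ii) applied to the prime (hence radical) ideal $\gtq$ gives $\widehat{\gtq}\cap{\mathcal S}^{r*}(M)=\gtq$; the scaling identity of (iii) yields $\widehat{\gtq}=\gtq\cdot{\mathcal S}^{0*}(M)$. To see $\widehat{\gtq}$ is prime, suppose $f_1f_2\in\widehat{\gtq}$; by Whitney density take $\tilde f_i\in{\mathcal S}^{r*}(M)$ with $|\tilde f_i-f_i|<\veps$, so $e_i:=\tilde f_i-f_i\in\widehat{\gtq}$. Expanding $\tilde f_1\tilde f_2=f_1f_2+f_1e_2+e_1f_2+e_1e_2$, every extra term lies in $\widehat{\gtq}$, so $\tilde f_1\tilde f_2\in\widehat{\gtq}\cap{\mathcal S}^{r*}(M)=\gtq$. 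Primality of $\gtq$ forces $\tilde f_i\in\gtq\subset\widehat{\gtq}$ for some $i$, and $f_i=\tilde f_i-e_i\in\widehat{\gtq}$. Finally, $\widehat{\gtq}=\sqrt{\gtq\cdot{\mathcal S}^{0*}(M)}$ follows from $\widehat{\gtq}=\gtq\cdot{\mathcal S}^{0*}(M)$ together with the fact that $\widehat{\gtq}$, now known to be prime, is its own radical.

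The main obstacle is the availability of the Whitney-type approximation of ${\mathcal S}^{0*}(M)$ by ${\mathcal S}^{r*}(M)$ with arbitrary continuous positive tolerance: this density is used for the ideal property, for (i), for the direction $\gtb\subset\widehat{\gta}$ in (iii), and for the primality step in (iv). Once this density is in place, all the assertions collapse onto the scaling identity $e=\veps\cdot(e/\veps)$ together with radicality or primality of the reference ideal.
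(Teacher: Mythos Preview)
Your proposal is correct and follows essentially the same route as the paper: both rest on (a) density of ${\mathcal S}^{r*}(M)$ in ${\mathcal S}^{0*}(M)$ for any strictly positive ${\mathcal S}^{0*}$-tolerance, obtained by extending to an open semialgebraic set and invoking Nash approximation, and (b) the ``scaling trick'' $e=\veps\cdot(e/\veps)$ with $e/\veps$ bounded. The paper carries out the extension step explicitly each time it is needed, while you invoke it as a black box; both are fine.

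There are two minor points and one genuine improvement worth noting. First, in (ii) you tacitly use $\veps>0$ as a tolerance; since the hypothesis only gives $Z(\veps)=\varnothing$, you should say (as the paper does) that one may replace $\veps$ by $\veps^2$ or by $|\veps|$, which lies in ${\mathcal S}^{r*}(M)$ by Remark~\ref{potabs}(i). Second, your numerical constant in (iii) is off (the bound is $|\veps^2-\delta^2|\leq\tfrac{5}{4}\delta^2$, not $\tfrac{3}{4}$), but this is irrelevant since any finite bound suffices. The improvement is in (iv): your scaling argument actually gives the sharper equality $\widehat{\gtq}=\gtq\,{\mathcal S}^{0*}(M)$ (not merely its radical), because for $f\in\widehat{\gtq}$ and $g\in\gtq$ with $|f-g|<\veps$ one has $f-g=\veps\cdot\tfrac{f-g}{\veps}\in\gtq\,{\mathcal S}^{0*}(M)$ directly. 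The paper instead proves primality of $\widehat{\gtq}$ by contradiction via (i) and then sandwiches $\sqrt{\gtq\,{\mathcal S}^{0*}(M)}$ between $\gtq$ and $\widehat{\gtq}$ using (ii) and (iii); your direct expansion $\tilde f_1\tilde f_2=f_1f_2+f_1e_2+e_1f_2+e_1e_2$ is cleaner and shows at once that the extended ideal is already prime.
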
 
\begin{proof}
It is clear that $\gta\subset\widehat{\gta}$. We prove next that $\widehat{\gta}$ is an ideal of ${\mathcal S}^{0*}(M)$. 

Let $f_1,f_2\in\widehat{\gta}$ and let $\veps\in{\mathcal S}^{0*}(M)$ be strictly positive. There exist $g_1,g_2\in\gta$ with $|f_i-g_i|<\veps/2$ for $i=1,2$, so $g:=g_1+g_2\in\gta$ and $|(f_1+f_2)-g|\leq|f_1-g_1|+|f_2-g_2|<\veps$. Thus, $f_1+f_2\in\widehat{\gta}$.

Let $f\in\widehat{\gta}$ and $a\in{\mathcal S}^{0*}(M)$. Fix $\veps\in{\mathcal S}^{0*}(M)$ strictly positive. There exist an open semialgebraic neighborhood $N\subset\cl(M)$ of $M$ and extensions $F,A,E\in{\mathcal S}^{0*}(N)$ of $f,a,\veps$. As $M\subset\{E>0\}$, we may assume after shrinking $N$ if necessary that $E$ is strictly positive on $N$. By Lemma \ref{ext} there exist an open subset $U$ of $\R^n$ such that $N$ is closed in $U$ and extensions $F',A',E'\in{\mathcal S}^{0*}(U)$ of $F,A,E$. We may assume in addition that $E'$ is strictly positive on $U$. Let $L>0$ be such that $|F'|,|A'|,|E'|<L$. By \cite[Thm.8.8.4]{bcr} there exists $H\in{\mathcal N}(U)$ such that $|H-A'|<\frac{E'}{2L}$. As $H$ is bounded by $L+\frac{1}{2}$, we deduce $h:=H|_M\in{\mathcal S}^{r*}(M)$. Since $f\in\widehat{\gta}$, there exists $g\in\gta$ such that $|f-g|<\frac{\veps}{2L+1}$. Consequently, $gh\in\gta$ and
$$
|fa-gh|\leq|f||a-h|+|h||f-g|<L\frac{\veps}{2L}+\Big(L+\frac{1}{2}\Big)\frac{\veps}{2L+1}=\veps.
$$
We conclude $fa\in\widehat{\gta}$.

If $1\in\widehat{\gta}$, there exists $g\in\gta$ such that $|g-1|<1/2$, so $1/2<g<3/2$. Thus, $g$ is a unit in ${\mathcal S}^{r*}(M)$, which is a contradiction because $\gta$ is a proper ideal and contains no unit.

We prove next the properties in the statement.

(i) As $f\in{\mathcal S}^{0*}(M)\setminus\widehat{\gta}$, there exists $\veps_0\in{\mathcal S}^{0*}(M)$ strictly positive such that for each $h\in\gta$ there exists $x_0\in M$ satisfying $|f(x_0)-h(x_0)|\geq\veps_0(x_0)$. Let $N\subset\cl(M)$ be an open semialgebraic neighborhood of $M$ and extensions $F,E_0,E\in{\mathcal S}^{0*}(N)$ of $f,\veps_0,\veps$ such that $E_0,E$ are strictly positive on $N$. By Lemma \ref{ext} there exist an open semialgebraic set $U\subset\R^m$ such that $N$ is closed in $U$ and extensions $F',E_0',E'\in{\mathcal S}^*(U)$ of $F,E_0,E$ such that $E_0'$ and $E'$ are strictly positive on $U$. Consider the strictly positive function $E_1':=\min\{E',E_0'\}/2\in{\mathcal S}^*(U)$. By \cite[Thm.8.8.4]{bcr} there exists $G\in{\mathcal N}(U)$ such that $|F'-G|<E_1'$, so in particular $G$ is bounded. Note that $g:=G|_M\in{\mathcal S}^{r*}(M)\setminus\gta$ because $0<\veps_1:=E_1|_M<\veps_0$ and $|f-g|<\veps_1$. In addition, $|f-g|<\veps_1<\veps$.

(ii) The inclusion $\gta\subset\widehat{\gta}\cap{\mathcal S}^{r*}(M)$ is clear. Taking $\veps^2$ instead of $\veps$ if necessary, we may assume $\veps>0$. Pick $h\in\widehat{\gta}\cap{\mathcal S}^{r*}(M)$ and let $g\in\gta$ be such that $|h-g|<\veps$. Thus, $(h-g)^2<\veps^2\in\gta$, so $a:=(h-g)^2/\veps^2\in{\mathcal S}^{r*}(M)$ and $h^2=g(2h-g)+\veps^2 a\in\gta$. Hence, $h\in\gta$.

(iii) We may assume $\delta>0$. We prove first $\widehat{\gta}\subset\gtb$. Given $f\in\widehat{\gta}$ there exists $g\in\gta\subset\gtb$ with $|f-g|<\delta$, so $h:=|f-g|/\delta\in{\mathcal S}^{0*}(M)$. As 
$$
f^2=(f-g)^2+g(2f-g)=h^2\delta^2+g(2f-g)\in\gtb 
$$
and $\gtb$ is a radical ideal, $f\in\gtb$. For the converse inclusion suppose that there exists $f\in\gtb\setminus\widehat{\gta}$. By part (i) there exists $g\in{\mathcal S}^{r*}(M)\setminus\gta$ with $|f-g|<\delta$. Thus, $h:=|f-g|/\delta\in{\mathcal S}^{0*}(M)$, so $|f-g|=h\delta\in\gtb$. Consequently, $g^2=|f-g|^2+2fg-f^2\in\gtb$. As $\gtb$ is a radical ideal, $g\in\gtb\cap{\mathcal S}^{r*}(M)=\gta$, which is a contradiction.

As $\delta\in\gtb=\widehat{\gta}$, there exists $g\in\gta$ with $|\delta-g|<\delta$, so $g>0$ and $Z(g)=\varnothing$.

(iv) We may assume $\veps>0$. Suppose there exist $f_1,f_2\in{\mathcal S}^{0*}(M)\setminus\widehat{\gtq}$ such that $f_1f_2\in\widehat{\gtq}$. There exist $g_1,g_2\in{\mathcal S}^{r*}(M)\setminus\gtq$ such that $|f_i-g_i|<\veps$ for $i=1,2$. Let $L>0$ be such that $|\veps|,|f_i|<L$. We have 
\begin{equation*}
|f_1f_2-g_1g_2|\leq|f_1||f_2-g_2|+|g_2||f_1-g_1|\leq L\veps+(L+\veps)\veps=(2L+\veps)\veps.
\end{equation*}
As $f_1f_2\in\widehat{\gtq}$, there exists $g\in\gtq$ with $|f_1f_2-g|<\veps$. Thus,
\begin{equation*}
|g-g_1g_2|\leq|g-f_1f_2|+|f_1f_2-g_1g_2|<(1+2L+\veps)\veps.
\end{equation*}
Define $h:=g_1g_2\in{\mathcal S}^{r*}(M)\setminus\gtq$. We have $|g-h|<(1+3L)\veps$, so 
$$
a:=\frac{h-g}{\veps}\in{\mathcal S}^{r*}(M)\quad\text{and}\quad h-g=\veps a\in\gtq.
$$
Thus, $h=g+\veps a\in\gtq$, which is a contradiction.

As $\widehat{\gtq}$ is a prime ideal that contains $\gtq$, it follows that $\sqrt{\gtq{\mathcal S}^{0*}(M)}\subset\widehat{\gtq}$. By (ii) we have 
$$
\gtq\subset\sqrt{\gtq{\mathcal S}^{0*}(M)}\cap{\mathcal S}^{r*}(M)\subset\widehat{\gtq}\cap{\mathcal S}^{r*}(M)=\gtq.
$$
By (iii) we deduce $\widehat{\gtq}=\sqrt{\gtq{\mathcal S}^{0*}(M)}$, as required.
\end{proof}

\begin{cor}\label{cor:Xbounded}
Let $M\subset\R^m$ be a semialgebraic subset. Denote ${\mathcal W}^r(M):=\{f\in{\mathcal S}^{r*}(M):\ Z(f)=\varnothing\}$ and consider the space $\mathfrak{X}^r(M):=\{\gtp\in\Spec^{r*}(M):\ \gtp\cap{\mathcal W}^r(M)\neq\varnothing\}$ for each integer $r\geq0$. Then the maps
\begin{align*}
\Psi_M&:\mathfrak{X}^0(M)\to\mathfrak{X}^r(M),\,\gtp\mapsto\gtp\cap{\mathcal S}^{r*}(M)\\
\Phi_M&:\mathfrak{X}^r(M)\to\mathfrak{X}^0(M),\,\gtq\mapsto\widehat{\gtq}=\sqrt{\gtq{\mathcal S}^{0*}(M)}
\end{align*}
are mutually inverse.
\end{cor}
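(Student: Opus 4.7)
The plan is to show that Corollary \ref{cor:Xbounded} is essentially a packaging of parts (ii), (iii) and (iv) of Lemma \ref{gorro}, so the proof will be short and structural rather than computational.

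First, I would verify that the two maps are well-defined. For $\Psi_M$: given $\gtp\in\mathfrak{X}^0(M)$, the contraction $\gta:=\gtp\cap{\mathcal S}^{r*}(M)$ is automatically prime, so we only need to check that it meets ${\mathcal W}^r(M)$. Pick $\delta\in\gtp\cap{\mathcal W}^0(M)$; then Lemma \ref{gorro}(iii) applied to the radical (in fact prime) ideal $\gtp$ produces some $\veps\in\gta\cap{\mathcal W}^r(M)$, so $\Psi_M(\gtp)\in\mathfrak{X}^r(M)$. For $\Phi_M$: given $\gtq\in\mathfrak{X}^r(M)$, pick $\veps\in\gtq\cap{\mathcal W}^r(M)$. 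Lemma \ref{gorro}(iv) says $\widehat{\gtq}=\sqrt{\gtq\,{\mathcal S}^{0*}(M)}$ is a prime ideal of ${\mathcal S}^{0*}(M)$, and it obviously still contains $\veps\in{\mathcal W}^r(M)\subseteq{\mathcal W}^0(M)$, so $\Phi_M(\gtq)\in\mathfrak{X}^0(M)$.

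Next, I would check the two composition identities. For $\Phi_M\circ\Psi_M=\id$: given $\gtp\in\mathfrak{X}^0(M)$, set $\gta:=\gtp\cap{\mathcal S}^{r*}(M)$. Since $\gtp$ is a radical ideal of ${\mathcal S}^{0*}(M)$ containing an element of ${\mathcal W}^0(M)$, Lemma \ref{gorro}(iii) gives exactly $\widehat{\gta}=\gtp$, i.e.\ $\Phi_M(\Psi_M(\gtp))=\gtp$. For $\Psi_M\circ\Phi_M=\id$: given $\gtq\in\mathfrak{X}^r(M)$, the ideal $\gtq$ is prime (hence radical) and contains an element $\veps\in{\mathcal W}^r(M)$. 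Lemma \ref{gorro}(ii) then yields $\widehat{\gtq}\cap{\mathcal S}^{r*}(M)=\gtq$, and combining with Lemma \ref{gorro}(iv) one has $\Psi_M(\Phi_M(\gtq))=\widehat{\gtq}\cap{\mathcal S}^{r*}(M)=\gtq$.

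There is no real obstacle here: once one accepts the four-part Lemma \ref{gorro}, the corollary is a bookkeeping exercise. The only point that requires a moment's attention is the well-definedness of $\Psi_M$, where one must genuinely invoke part (iii) to guarantee that the contracted ideal still meets ${\mathcal W}^r(M)$ (it is not enough that $\gtp$ contained a bounded non-vanishing ${\mathcal S}^0$-function, since such a function need not be an ${\mathcal S}^r$-function). Everything else is direct substitution into the statements of Lemma \ref{gorro}(ii)--(iv).
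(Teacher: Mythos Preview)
Your proof is correct and follows exactly the same approach as the paper, which also derives the corollary directly from parts (ii), (iii) and (iv) of Lemma~\ref{gorro}. The paper's version is terser and leaves the well-definedness of $\Psi_M$ and $\Phi_M$ implicit, whereas you spell it out; your extra remark that part~(iii) is genuinely needed to guarantee $\Psi_M(\gtp)\cap{\mathcal W}^r(M)\neq\varnothing$ is a nice clarification.
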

\begin{proof}
By Lemma \ref{gorro} (iii) and (iv) we deduce $\Phi_M\circ\Psi_M(\gtp)=\gtp$ for each $\gtp\in\mathfrak{X}^0(M)$, whereas $\Psi_M\circ\Phi_M(\gtq)=\gtq$ for each $\gtq\in\mathfrak{X}^r(M)$ by Lemma \ref{gorro} (ii).
\end{proof}

We are ready to complete the proof of Theorem \ref{main1}.

\begin{proof}[Proof of Theorem \em\ref{main1} \em in the ${\mathcal S}^{r*}$ case]
By Corollary \ref{cor:Xbounded} to prove that the maps $\varphi$ and $\psi$ are mutually inverse it is enough to show that the maps
\begin{align*}
\Phi:&\Spec^{0*}(M)\setminus\mathfrak{X}^0(M)\to\Spec^{r*}(M)\setminus\mathfrak{X}^r(M),\ \gtp\mapsto\gtp\cap{\mathcal S}^{r*}(M)\\
\Psi:&\Spec^{r*}(M)\setminus\mathfrak{X}^r(M)\to\Spec^{0*}(M)\setminus\mathfrak{X}^0(M),\ \gtq\mapsto\sqrt{\gtq{\mathcal S}^{0*}(M)}
\end{align*}
are mutually inverse. As ${\mathcal S}^0(M)={\mathcal S}^{0*}(M)_{{\mathcal W}^0(M)}$ and ${\mathcal S}^r(M)={\mathcal S}^{r*}(M)_{{\mathcal W}^r(M)}$, by \cite[Prop.3.11 \& Ch.3, Ex. 21]{am} the map
$$
\gamma^r:\Spec^r(M)\to\Spec^{r*}(M)\setminus\mathfrak{X}^r(M),\,\gtp\mapsto\gtp\cap{\mathcal S}^{r*}(M)
$$
is a homeomorphism whose inverse is given by 
$$
(\gamma^r)^{-1}:\Spec^{r*}(M)\setminus\mathfrak{X}^r(M)\to\Spec^r(M),\,\gtq\mapsto\gtq{{\mathcal S}^r}(M). 
$$
By the version of Theorem \ref{main1} for ${\mathcal S}^r$-functions the maps
\begin{align*}
\rho&:\Spec^0(M)\to\Spec^r(M),\,\gtp\mapsto\gtp\cap{\mathcal S}^r(M)\\ 
{\tt i}&:\Spec^r(M)\to\Spec^0(M),\,\gtq\mapsto\sqrt{\gtq{\mathcal S}^0(M)}
\end{align*}
are mutually inverse homeomorphisms. Thus, the map
$$
\Phi=\gamma^r\circ{\rho}\circ(\gamma^0)^{-1}:\Spec^{0*}(M)\setminus\mathfrak{X}^0(M)\to\Spec^{r*}(M)\setminus\mathfrak{X}^r(M),\,\gtp\mapsto\gtp\cap{\mathcal S}^{r*}(M)
$$
is a homeomorphism and the diagram
$$
\xymatrix{
\Spec^r(M)\ar@{->}[rr]^(0.4){\gamma^r}&&\Spec^{r*}(M)\setminus\mathfrak{X}^r(M)\\
\Spec^0(M)\ar@{->}[rr]^(0.4){\gamma^0}\ar@{->}[u]^{\rho}&&\Spec^{0*}(M)\setminus\mathfrak{X}^0(M)\ar[u]_{\Phi}
}
$$
commutes. For each $\gtq\in\Spec^{r*}(M)\setminus\mathfrak{X}^r(M)$ we have $\Phi^{-1}(\gtq)=\sqrt{\gtq{\mathcal S}^0(M)}\cap{\mathcal S}^{0*}(M)$. Thus, to prove that $\varphi$ and $\psi$ are mutually inverse it only remains to show the equality
$$
\sqrt{\gtq{\mathcal S}^{0*}(M)}=\sqrt{\gtq{\mathcal S}^0(M)}\cap{\mathcal S}^{0*}(M).
$$
By Theorem \ref{main1} for ${\mathcal S}^r$-functions and since $\gtq{\mathcal S}^r(M)\in\Spec^r(M)$,
$$
\sqrt{\gtq{\mathcal S}^0(M)}\cap{\mathcal S}^r(M)=\gtq{\mathcal S}^r(M)\quad\leadsto\quad\sqrt{\gtq{\mathcal S}^0(M)}\cap{\mathcal S}^{r*}(M)=\gtq.
$$
Pick $f\in\sqrt{\gtq{\mathcal S}^0(M)}\cap{\mathcal S}^{0*}(M)$. Let $N\subset\cl(M)$ be an open semialgebraic neighborhood of $M$ such that there exists an extension $F\in{\mathcal S}^0(N)$ of $f$. We show: \em there exist $Q\in{\mathcal S}^{r*}(N)$ and $k\geq 1$ such that $f^{2k+2}\in\gtq{\mathcal S}^0(M)$, $F^{2k+2}\leq Q$ and $Z(Q)=Z(F)$\em. 

Indeed, by Lemma \ref{zero} and Theorem \ref{null20} there exist $k\geq1$, $G\in{{\mathcal S}^{r*}(N)}$ and $H\in{\mathcal S}^0(N)$ such that $F^k=GH$ and $Z(G)=Z(F)$. Without loss of generality we assume $f^k\in\gtq{\mathcal S}^0(M)$. Consider the open semialgebraic set $W:=\{|FH|<1\}$ of $N$ that contains $Z(F)$. As $F,G$ are bounded, there exists $L>0$ such that $F^{2k+2}<L$ and $G^2<L$. Let $\{\sigma,1-\sigma\}$ be an ${\mathcal S}^r$ partition of unity subordinated to $\{W,N\setminus Z(F)\}$ and define 
$$
Q:=G^2\sigma+L(1-\sigma)\in{\mathcal S}^{r*}(M).
$$
If $x\in W$, then $F^{2k+2}(x)=G^2(x)F^2(x)H^2(x)\leq G^2(x)$ (because $F^2(x)H^2(x)<1$), so 
$$
F^{2k+2}(x)\leq G^2(x)+(L-G^2(x))(1-\sigma(x))=Q(x).
$$
If $x\in N\setminus W$, then $\sigma(x)=0$, so $Q(x)=L\geq F(x)^{2k+2}$. In addition, $Z(Q)=Z(1-\sigma)\cap Z(G)=Z(G)=Z(F)$.

Again by Theorem \ref{null20} there exist $\ell\geq 1$ and $A\in{\mathcal S}^0(N)$ such that $Q^\ell=F^{2k+2}A$, so $q:=Q|_M$ satisfies $q^\ell\in\gtq{\mathcal S}^0(M)$. Thus, $q\in\sqrt{\gtq{\mathcal S}^0(M)}\cap{\mathcal S}^{r*}(M)=\gtq$. As $F^{2k+2}\leq Q$ and $Z(F)=Z(Q)$, we have $B:=\frac{F^{2k+3}}{Q}\in{\mathcal S}^{0*}(N)$. Consequently, $f^{2k+3}=qB|_M\in\gtq{\mathcal S}^{0*}(M)$ and $f\in\sqrt{\gtq{\mathcal S}^{0*}(M)}$. We have proved $\sqrt{\gtq{\mathcal S}^0(M)}\cap{\mathcal S}^{0*}(M)\subset\sqrt{\gtq{\mathcal S}^{0*}(M)}$ and the converse inclusion is straightforward. Thus, $\sqrt{\gtq{\mathcal S}^0(M)}\cap{\mathcal S}^{0*}(M)=\sqrt{\gtq{\mathcal S}^{0*}(M)}$, as stated.

As in the ${\mathcal S}^r$ case, the map $\varphi$ is continuous because it is induced by the ring inclusion ${\tt j}:{\mathcal S}^{r*}(M)\hookrightarrow{\mathcal S}^{0*}(M)$. To prove that $\varphi$ is a homeomorphism it only remains to show: \em$\varphi$ is an open map\em.

Let $f\in{\mathcal S}^{0*}(M)$ and $\Dd^{0*}:=\{\gtp\in\Spec^{0*}(M):\ f\not\in\gtp\}$. Let us prove that $\varphi(\Dd^{0*})$ is an open subset of $\Spec^{r*}(M)$. Pick $\gtq_0\in\varphi(\Dd^{0*})$ and let $\widehat{\gtq}_0=\varphi^{-1}(\gtq_0)\in\Dd^{0*}$. We claim: \em there exists $g\in{\mathcal S}^{r*}(M)\setminus\gtq_0$ such that $Z(g)=Z(f)$ and $|g|\leq f^2+f^{2k}$ for some $k\geq1$\em.

Assume the previous claim proved for a while and let us check that 
$$
\gtq_0\in\Dd^{r*}:=\{\gtq\in\Spec^{r*}(M):\ g\not\in\gtq\}\subset\varphi(\Dd^{0*}).
$$
As $g\in{\mathcal S}^{r*}(M)\setminus\gtq_0$, we have $\gtq_0\in\Dd^{r*}$. Let $\gtq\in\Dd^{r*}$ and assume $\widehat{\gtq}\not\in\Dd^{0*}$, that is, $f\in\widehat{\gtq}$. As $|g|\leq f^2+f^{2k}$ and $Z(g)=Z(f)$, the function $h:=\frac{g}{f^2+f^{2k}}g\in{\mathcal S}^{0*}(M)$, so $g^2=h(f^2+f^{2k})\in\widehat{\gtq}\cap{\mathcal S}^{r*}(M)=\gtq$ and $g\in\gtq$, which is a contradiction. Thus, $\Dd^{r*}\subset\varphi(\Dd^{0*})$ and $\varphi(\Dd^{0*})$ is open in $\Spec^{0*}(M)$.

So let us prove our claim. Let $N\subset\cl(M)$ be an open semialgebraic neighborhood of $M$ and let $F\in{\mathcal S}^{0*}(N)$ be an extension of $f$. By Lemma \ref{zero} there exists $G_0\in{\mathcal S}^{r*}(N)$ such that $Z(F)=Z(G_0)$. By Theorem \ref{null20} there exist an even integer $\ell\geq 1$ and $H_0\in{\mathcal S}^0(N)$ such that $G_0^{2\ell}=F^2H_0\leq F^2(1+H_0^2)$. Note that $H_0$ needs not to be bounded. Take $H_1\in{\mathcal S}^r(N)$ such that $H_0^2\leq H_1^2$ and define $G_1:=\frac{G_0^\ell}{\sqrt{1+H_1^2}}\in{\mathcal S}^{r*}(N)$. We have $G_1^2\leq F^2$.

As $N$ is locally compact, it is closed in the open semialgebraic neighborhood $U:=\R^m\setminus(\cl(U)\setminus U)$. By Lemma \ref{ext} there exist extensions $F_2\in{\mathcal S}^{0*}(U)$ of $F$ and $G_2'\in{\mathcal S}^{r*}(U)$ of $\frac{G_0}{\sqrt[2\ell]{1+H_1^2}}\in{\mathcal S}^{r*}(N)$. As $N$ is a closed semialgebraic subset of $U$, there exists by Lemma \ref{zero} $G_2''\in{\mathcal S}^{r*}(U)$ whose zero set is $N$. Define $G_2:=(G_2')^\ell+(G_2'')^2\in{\mathcal S}^{r*}(U)$ and observe that it is an extension of $G_1$ such that $Z(G_2)=Z(G_1)\subset Z(F)\subset Z(F_2)$. 

By Theorem \ref{null20} there exist $k\geq1$ and $A\in{\mathcal S}^0(U)$ such that $F_2^{2k}=G_2^2A$. As $f\notin \widehat{\gtq}_0$, there is $\veps\in{\mathcal S}^{0*}(M)$ such that $0<\veps<1$ and for each $g\in\gtq_0$ there exists $x_0\in M$ satisfying $|f^{2k}(x_0)-g(x_0)|\geq\veps(x_0)$. We may assume shrinking $U$ that $\veps$ extends to a strictly positive $E\in{\mathcal S}^{0*}(U)$.

Let $A^\bullet\in{\mathcal N}(U)$ be a Nash function such that $|A-A^\bullet|<\frac{E}{L}$ where $L>1$ satisfies $G_2^2<L$. Define $G:=G_2^2A^\bullet\in{\mathcal S}^r(U)$. We have
$$
|G|\leq|G_2^2(A-A^\bullet)|+|G_2^2A|<\frac{E}{L}G_2^2+F_2^{2k}<E+F_2^{2k},
$$
so $G\in{\mathcal S}^{r*}(U)$. Denote $g_2:=G_2|_M,g:=G|_M\in{\mathcal S}^{r*}(M)$ and $a:=A|_M,a^\bullet:=A^\bullet\in{\mathcal S}^r(M)$. We have
$$
|f^{2k}-g|=|g_2^2(a-a^\bullet)|<L\frac{\veps}{L}=\veps,
$$
so $g\not\in\gtq_0$. In addition, since $g_2=G_1|_M$ and $G^2_1\leq F^2$,
$$
|f^{2k}-g|\leq g_2^2\frac{\veps}{L}\leq f^2.
$$
Consequently, 
$$
-f^2-f^{2k}\leq-f^2+f^{2k}\leq g\leq f^2+f^{2k}, 
$$
so $|g|\leq f^2+f^{2k}$ and $Z(f)=Z(G_0|_M)\subset Z(g)\subset Z(f)$, that is, $Z(g)=Z(f)$, as required.
\end{proof}

\begin{cor}\label{dim}
Let $M\subset\R^m$ be a semialgebraic set. Then
\begin{itemize}
\item[(i)] $\dim({\mathcal S}^{r\diam} (M))=\dim(M)$.
\item[(ii)] ${\mathcal S}^{r\diam}(M)$ is a Gelfand ring.
\end{itemize}
\end{cor}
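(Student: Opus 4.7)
The plan is to deduce both assertions from Theorem \ref{main1}, which provides a homeomorphism $\varphi:\Spec^{0\diam}(M)\to\Spec^{r\diam}(M)$. The key observation is that both the Krull dimension and the Gelfand property are intrinsic invariants of the Zariski spectrum viewed as a topological space with its specialization order. Indeed, the Krull dimension equals the supremum of lengths of chains of prime ideals, that is, the combinatorial dimension of $\Spec$; and a commutative ring $A$ is Gelfand precisely when every prime ideal of $A$ is contained in a \emph{unique} maximal ideal, which again is phrased in terms of the specialization order on $\Spec(A)$. Since $\varphi$ is a homeomorphism that preserves maximality (it carries closed points to closed points), both properties transfer between ${\mathcal S}^{r\diam}(M)$ and ${\mathcal S}^{0\diam}(M)$.

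For (i), it then suffices to check $\dim {\mathcal S}^{0\diam}(M)=\dim M$. When $M$ is locally compact we have ${\mathcal S}^{0\diam}(M)={\mathcal S}^{\diam}(M)$, and the equality is the classical dimension formula for rings of (bounded) continuous semialgebraic functions on a locally compact semialgebraic set. In the general case I would invoke the direct limit presentation \eqref{dlim}--\eqref{dlim2}: ${\mathcal S}^{0\diam}(M)\cong\lim_{\longrightarrow}{\mathcal S}^{0\diam}(E)$ where $(E,{\tt j})$ ranges over (compact/locally compact) ${\mathcal S}^0$-completions of $M$, and every such $E$ satisfies $\dim E=\dim {\tt j}(M)=\dim M$ by density. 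Combining this with the locally compact case (applied to each $E$) and the fact that maximal chains of primes in the direct limit correspond to compatible chains in the index system yields $\dim {\mathcal S}^{0\diam}(M)=\dim M$.

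For (ii), it remains to observe that ${\mathcal S}^{0\diam}(M)$ is a Gelfand ring. This is part of the general theory of real closed rings: as noted in the introduction, ${\mathcal S}^{0\diam}(M)$ is real closed in the sense of Schwartz, and real closed rings are Gelfand (every prime specializes to a unique maximal ideal, since the residue fields are real closed and the maximal specialization can be read off from the convex hull of the prime in the appropriate valuation picture). An alternative, more self-contained route is to argue directly: given $\gtp\in\Spec^{0\diam}(M)$, the set of maximal ideals containing $\gtp$ corresponds via $\varphi^{-1}$ to those containing $\gtp\cap{\mathcal S}^{r\diam}(M)$, and one shows uniqueness by exhibiting, for any two distinct maximal ideals, an element of one that is invertible modulo the other, using Urysohn's separation (Corollary \ref{separation}).

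The main obstacle is the careful justification that Krull dimension and the Gelfand property are genuine invariants of the Zariski spectrum as a spectral space, together with the reduction of the dimension equality in the non-locally-compact case to the locally compact one; both are largely formal once Theorem \ref{main1} is at hand.
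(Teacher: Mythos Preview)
Your reduction to $r=0$ via Theorem \ref{main1}, observing that Krull dimension and the Gelfand property are invariants of $\Spec$ as an ordered space, is exactly how the paper opens the proof. The differences appear in how you then handle ${\mathcal S}^{0\diam}(M)$ for a general (not necessarily locally compact) $M$.

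For (i), your direct-limit argument cleanly yields the upper bound $\dim{\mathcal S}^{0\diam}(M)\le\dim M$: given a strict chain $\gtp_0\subsetneq\cdots\subsetneq\gtp_k$ in ${\mathcal S}^{0\diam}(M)$, pick $f_i\in\gtp_i\setminus\gtp_{i-1}$; all $f_i$ live in a single ${\mathcal S}^{0\diam}(E)$, and contracting the chain to $E$ keeps it strict. (The paper does the same thing, using an open locally compact neighborhood $N\subset\cl(M)$ in place of a completion $E$.) But your lower bound is a genuine gap. The claim that ``maximal chains in the direct limit correspond to compatible chains in the index system'' does not give you a chain of length $\dim M$ in ${\mathcal S}^{0\diam}(M)$: you would need lying-over for the injections ${\mathcal S}^{0\diam}(E)\hookrightarrow{\mathcal S}^{0\diam}(M)$, which does not follow from injectivity alone. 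The paper's fix is different and simple: by cylindrical decomposition one finds a closed ball $B$ with $M\cap B=\cl(M)\cap B$ compact of dimension $\dim M$; then the restriction ${\mathcal S}^{0\diam}(M)\to{\mathcal S}^{0\diam}(M\cap B)$ is \emph{surjective}, so $\dim{\mathcal S}^{0\diam}(M)\ge\dim{\mathcal S}^{0\diam}(M\cap B)=\dim(M\cap B)=\dim M$.

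For (ii), your first route (``${\mathcal S}^{0\diam}(M)$ is real closed, hence Gelfand'') is correct in principle, but in the paper's logical order the real-closedness of ${\mathcal S}^{0\diam}(M)$ for arbitrary $M$ is only established later (Proposition \ref{f:rcl1}), and the justification you offer is vague. The paper instead argues directly and self-containedly: given $\gtp\in\Spec^{0\diam}(M)$ and two primes $\gtp_1,\gtp_2\supset\gtp$ that are incomparable, pick $f_i\in\gtp_i\setminus\gtp_j$, extend $f_1,f_2$ to a common locally compact neighborhood $N\subset\cl(M)$, pull the three primes back to $\gtq,\gtq_1,\gtq_2$ in ${\mathcal S}^{0\diam}(N)$, and contradict the known fact that primes above a fixed prime in ${\mathcal S}^\diam(N)$ form a chain. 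Your alternative via Urysohn's separation is not set up correctly: Corollary \ref{separation} separates closed \emph{semialgebraic} subsets of $M$, not arbitrary maximal ideals of ${\mathcal S}^{0\diam}(M)$ (which need not correspond to points of $M$).
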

\begin{proof} 
By Theorem \ref{main1} it is enough to prove the result for $r=0$. 

(i) Let $\gtp_0\subset\cdots\subset\gtp_s$ be a chain of prime ideals in ${\mathcal S}^{0\diam}(M)$. Let $f_i\in\gtp_i\setminus\gtp_{i-1}$ for $i=1,\ldots,s$. Let $N\subset\cl(M)$ be an open semialgebraic neighborhood of $M$ and let $F_i\in{\mathcal S}^{0\diam}(N)$ be an extension of $f_i$. Let $\gtq_i:=\{F\in \mathcal{S}^{0\diam}(N): F|_M\in \gtp_i\}\in \Spec^{0\diam}(N)$ and observe that $F_i\in\gtq_i\setminus\gtq_{i-1}$ for $i=1,\ldots,s$. The equality $\dim({\mathcal S}^{0\diam}(N))=\dim(N)=\dim(M)$ was proved in \cite{cc,s4,s6,fg6}. Therefore $\dim({\mathcal S}^{0\diam}(M))\leq\dim(M)$. For the converse inequality, by cylindrical decomposition of semialgebraic sets there exists a closed ball $B$ such that
$$
M\cap B=\cl(M)\cap B
$$
is compact and $\dim(M)=\dim(M\cap B)$. As $M\cap B$ is closed in $\R^m$, the restriction map ${{\mathcal S}^{0\diam}}(M)\to {{\mathcal S}^{0\diam}}(M\cap B)$ is surjective. Thus, 
$$
\dim({\mathcal S}^{0\diam}(M))\geq\dim({\mathcal S}^{0\diam}(M\cap B))=\dim({\mathcal S}^\diam(M\cap B))=\dim(M\cap B)=\dim(M).
$$

(ii) Let $\gtp$ be a prime ideal in ${\mathcal S}^{0\diam}(M)$. We show that the set of prime ideals of ${\mathcal S}^{0\diam}(M)$ that contain $\gtp$ is a spear, that is, is totally ordered by the inclusion. This implies that $\gtp$ is contained in a unique maximal ideal, so ${\mathcal S}^{r\diam}(M)$ is a Gelfand ring. 

Assume there exists prime ideals $\gtp_1$ and $\gtp_2$ containing $\gtp$ for which there exist functions $f_1\in\gtp_1\setminus\gtp_2$ and $f_2\in\gtp_2\setminus\gtp_1$. Let $N\subset\cl(M)$ be an open semialgebraic neighborhood of $M$ such that there exist extensions $F_1,F_2\in{\mathcal S}^{0\diam}(N)$ of $f_1,f_2$. Consider the prime ideals $\gtq:=\{F\in \mathcal{S}^{0\diam}(N):F|_M\in\gtp\}\in \Spec^{0\diam}(N)$ and $\gtq_i:=\{F\in \mathcal{S}^{0\diam}(N):F|_M\in\gtp_i\}\in \Spec^{0\diam}(N)$ for $i=1,2$. Observe that $\gtq\subset\gtq_1\cap\gtq_2$, $F_1\in\gtp_1\setminus\gtp_2$ and $F_2\in\gtp_2\setminus\gtp_1$. But this is a contradiction because $N$ is locally compact and therefore the set of prime ideals of ${\mathcal S}^0(N)={\mathcal S}(N)$ that contain $\gtq$ is a spear, as required.
\end{proof}

\begin{cor}\label{spcmaxh} 
Let $\Specmax^{0\diam} M$ and $\Specmax^{r\diam} M$ be the subspaces of $\Spec^{0\diam}(M)$ and $\Spec^{r\diam}(M)$ consisting of the maximal ideals of ${\mathcal S}^{0\diam}(M)$ and ${\mathcal S}^{r\diam}(M)$. Then the map
$$
\Specmax^{0\diam} M\to\Specmax^{r\diam} M,\,\gtm\mapsto\gtm\cap{\mathcal S}^{r\diam}(M)
$$
is a homeomorphism.
\end{cor}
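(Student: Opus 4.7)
The plan is to deduce this corollary directly from Theorem \ref{main1} by invoking a general topological principle: homeomorphisms preserve closed points. First I would recall that in the Zariski topology on $\Spec(A)$ of any commutative ring $A$, the closure of a point $\gtp$ is $V(\gtp):=\{\gtq\in\Spec(A):\gtp\subset\gtq\}$. Consequently, $\{\gtp\}$ is closed in $\Spec(A)$ if and only if there is no prime ideal strictly containing $\gtp$, which happens precisely when $\gtp$ is maximal. Hence the sets of closed points of $\Spec^{0\diam}(M)$ and $\Spec^{r\diam}(M)$ are exactly $\Specmax^{0\diam}M$ and $\Specmax^{r\diam}M$ respectively.

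Next I would apply Theorem \ref{main1}: the map $\varphi:\Spec^{0\diam}(M)\to\Spec^{r\diam}(M)$, $\gtp\mapsto\gtp\cap{\mathcal S}^{r\diam}(M)$, is a homeomorphism. Since any homeomorphism induces a bijection between the sets of closed points of its source and target, $\varphi$ restricts to a bijection $\Specmax^{0\diam}M\to\Specmax^{r\diam}M$. Endowing both maximal spectra with the subspace topologies inherited from the respective Zariski spectra, the restriction of a homeomorphism to a subspace and its image is itself a homeomorphism, giving the desired conclusion.

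There is really no obstacle here: the corollary is a formal consequence of Theorem \ref{main1} once one identifies maximal ideals with closed points of the Zariski spectrum. The only point worth double-checking is that the topology used on $\Specmax^{r\diam}M$ (and on $\Specmax^{0\diam}M$) is indeed the subspace topology from $\Spec^{r\diam}(M)$ (respectively from $\Spec^{0\diam}(M)$), which is the standing convention fixed in Section \ref{s3}. Thus the proof amounts to writing ``by Theorem \ref{main1} and the identification of maximal ideals with closed points of the Zariski spectrum, the restriction $\varphi|_{\Specmax^{0\diam}M}$ is a homeomorphism onto $\Specmax^{r\diam}M$'', with inverse given by the restriction of the map $\psi$ from Theorem \ref{main1}, namely $\gtm\mapsto\sqrt{\gtm{\mathcal S}^{0\diam}(M)}$.
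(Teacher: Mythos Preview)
Your proof is correct and follows exactly the paper's approach: the corollary is deduced from Theorem \ref{main1} by noting that $\Specmax^{0\diam} M$ and $\Specmax^{r\diam} M$ are the subsets of closed points of the respective Zariski spectra, and that homeomorphisms preserve closed points. The paper's own proof is the one-line version of what you wrote.
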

\begin{proof} 
This is a straightforward consequence of Theorem \ref{main1} because $\Specmax^{0\diam} M$ and ${\Specmax^{r\diam} M}$ are respectively the subsets of closed points of $\Spec^{0\diam}(M)$ and $\Spec^{r\diam}(M)$.
\end{proof}

\begin{remark}\label{tilde}
It was proved in \cite[Cor.4.4]{fg4} that $\Specmax M$ is the semialgebraic Stone--\v{C}ech compactification of $M$, so Corollary \ref{spcmaxh} states that the same holds true for the space $\Specmax^r M$ of maximal ideals in ${\mathcal S}^r(M)$. This can be seen as the semialgebraic counterpart of the analogous result by Bkouche \cite{b} for the ring of differentiable functions on a differentiable manifold.
\end{remark}

\subsection{Consequences} 
If $M$ is locally compact, all properties and statements proved in \cite{fe1,fe2,fe3,fg1,fg3,fg4,fg5,fg6,fg7} concerning Zariski and maximal spectra of ${\mathcal S}^{0\diam}(M)$ that involve ${\mathcal S}^{r\diam}$-maps between semialgebraic sets (for instance, inclusions of semialgebraic sets) also hold for those of ${\mathcal S}^{r\diam}(M)$ for each $r\geq1$ as a straightforward application of Lemma \ref{reduc} below. In the general case, one has to adapt the corresponding results for rings of semialgebraic functions to the ring ${\mathcal S}^{0\diam}(M)$, which is the direct limit ${\mathcal S}^{0\diam}(M)=\displaystyle\lim_{\longrightarrow}{\mathcal S}^{\diam}(N)$. As an example on how to proceed, see for instance the proof of Corollary \ref{dim}.

\begin{lem}\label{reduc}
Let $f:=(f_1,\ldots,f_n):M\to N$ be a semialgebraic map between semialgebraic sets $M\subset\R^m$ and $N\subset\R^n$ such that each component $f_i:M\to\R$ is an ${\mathcal S}^{r\diam}$-function. Consider the homomorphisms of rings 
\begin{align*}
f^*&:{\mathcal S}^{r\diam}(N)\to{\mathcal S}^{r\diam}(M),\ g\mapsto g\circ f,\\
f^*&:{\mathcal S}^{0\diam}(N)\to{\mathcal S}^{0\diam}(M),\ h\mapsto h\circ f
\end{align*}
induced by $f$. Then the following diagram between spectral spaces is commutative.
{\small
$$
\xymatrix{
\Spec^{0\diam}(M)\ar[rr]^{\Spec^{0\diam}(f^*)}\ar[d]^{\varphi_M}&&\Spec^{0\diam}(N)\ar[d]^{\varphi_N}&\gtp\ar@{|->}[r]\ar@{|->}[d]&{f^*}^{-1}(\gtp)\ar@{|->}[d]\\
\Spec^{r\diam}(M)\ar[rr]^{\Spec^{r\diam}(f^*)}&&\Spec^{r\diam}(N)&\gtp\cap{\mathcal S}^{r\diam}(M)\ar@{|->}[r]&{f^*}^{-1}(\gtp\cap{\mathcal S}^{r\diam}(M))={f^*}^{-1}(\gtp)\cap{\mathcal S}^{r\diam}(N)
}
$$}
\end{lem}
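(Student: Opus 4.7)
The statement asserts commutativity of a square of spectral maps, so the plan is to verify this on elements. First I would check that the two ring homomorphisms $f^{*}$ appearing in the statement are genuinely well defined. For $r=0$ this is routine (composition of continuous semialgebraic functions with a continuous semialgebraic map is continuous semialgebraic, and composition with a bounded-coordinate map sends ${\mathcal S}^{0*}$ to ${\mathcal S}^{0*}$). For $r\ge 1$ the crucial input is the composition Theorem~\ref{comp}: since each component $f_i$ is in ${\mathcal S}^{r}(M)$, the map $f\colon M\to\R^n$ is an ${\mathcal S}^{r}$-map in our sense, and hence $g\circ f\in{\mathcal S}^r(M)$ for every $g\in{\mathcal S}^r(N)$. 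Boundedness of $g$ passes trivially to $g\circ f$, so $f^{*}$ preserves ${\mathcal S}^{r*}$ as well.

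Next I would unwind the commutativity condition on a prime $\gtp\in\Spec^{0\diam}(M)$. Following the diagram both ways gives
\[
\varphi_N\bigl(\Spec^{0\diam}(f^*)(\gtp)\bigr)=(f^{*})^{-1}(\gtp)\cap{\mathcal S}^{r\diam}(N),
\]
\[
\Spec^{r\diam}(f^*)\bigl(\varphi_M(\gtp)\bigr)=(f^{*})^{-1}\bigl(\gtp\cap{\mathcal S}^{r\diam}(M)\bigr),
\]
so commutativity amounts to the equality of these two subsets of ${\mathcal S}^{r\diam}(N)$. For any $g\in{\mathcal S}^{r\diam}(N)$, the first well-definedness step guarantees $g\circ f\in{\mathcal S}^{r\diam}(M)$. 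Consequently the conditions $g\circ f\in\gtp$ and $g\circ f\in\gtp\cap{\mathcal S}^{r\diam}(M)$ are equivalent, and both sides reduce to $\{g\in{\mathcal S}^{r\diam}(N):\ g\circ f\in\gtp\}$. This yields the desired equality.

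Finally, the spectral-space statement is automatic: the horizontal maps are the Zariski-continuous maps attached to ring homomorphisms, and the vertical maps $\varphi_M,\varphi_N$ are homeomorphisms by Theorem~\ref{main1}, so commutativity at the level of underlying sets is already commutativity of spectral spaces. I do not anticipate a serious obstacle here; the only nontrivial ingredient is Theorem~\ref{comp}, which is precisely what makes the definition of ${\mathcal S}^r$-functions on arbitrary semialgebraic sets behave functorially under ${\mathcal S}^r$-maps.
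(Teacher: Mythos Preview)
Your proposal is correct and follows essentially the same approach as the paper: both arguments reduce the commutativity of the square to the set equality $(f^*)^{-1}(\gtp)\cap{\mathcal S}^{r\diam}(N)=(f^*)^{-1}\bigl(\gtp\cap{\mathcal S}^{r\diam}(M)\bigr)$ and verify it directly using that $g\circ f\in{\mathcal S}^{r\diam}(M)$ whenever $g\in{\mathcal S}^{r\diam}(N)$. The paper's proof is terser (it takes the well-definedness of the two $f^*$ as given by the statement and omits your spectral-space remarks), but the core verification is identical.
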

\begin{proof}
Let $g\in f^{*-1}(\gtp\cap{\mathcal S}^{r\diam}(M))$. Then $g\in{\mathcal S}^{r\diam}(N)$ and $g\circ f\in\gtp\cap{\mathcal S}^{r\diam}(M)$, so $g\in f^{*-1}(\gtp)\cap{\mathcal S}^{r\diam}(N)$. Conversely, let $g\in f^{*-1}(\gtp)\cap{\mathcal S}^{r\diam}(N)$, then $g\in{\mathcal S}^{r\diam}(N)$ and $g\circ f\in\gtp$, so $g\circ f\in\gtp\cap{\mathcal S}^{r\diam}(M)$, that is, $g\in f^{*-1}(\gtp\cap{\mathcal S}^{r\diam}(M))$, as required.
\end{proof}

\section{Residue fields of rings of ${\mathcal S}^{r\diam}$-functions}\label{s4}

Our goal in this section is to prove Theorem \ref{main3}, which says that if $M$ is a semialgebraic set and $\gtp$ is a prime ideal of ${\mathcal S}^{r\diam}(M)$ the field of fractions $\kappa(\gtp)$ of ${\mathcal S}^{r\diam}(M)/\gtp$ is a real closed field. In Subsection \ref{main3:locallcompact} we prove this for the ring ${\mathcal S}^r(M)$ with $M$ locally compact and in Subsection \ref{main3:generalnonbd} we show this for the ring ${\mathcal S}^{r\diam}(M)$ for an arbitrary semialgebraic set $M$.

\subsection{The locally compact case}\label{main3:locallcompact}\setcounter{paragraph}{0} We will use the notion of semialgebraic depth of an ideal of a ring of semialgebraic functions firstly introduced in \cite{fg6}. Let $M\subset\R^m$ be a semialgebraic set. The \em semialgebraic depth \em of $\gtq\in\Spec^r(M)$ is the nonnegative integer
$$
\depth(\gtq):=\min\{\dim(Z(f)):f\in\gtq\}.
$$

\begin{lem}\label{ht00}
Let $M\subset\R^m$ be a semialgebraic set and let $\gtq$ be a prime ideal in ${\mathcal S}^r(M)$. 
\begin{itemize}
\item[(i)] $\depth(\gtq)=\depth(\sqrt{\gtq{\mathcal S}^0(M)})$.
\item[(ii)] If $\gtq_1\in\Spec^r(M)$ is a $z$-ideal such that $\gtq_1\subsetneq\gtq$ then $\depth(\gtq)<\depth(\gtq_1)$.
\item[(iii)] If $M$ is locally compact then $\depth(\gtq)+\hgt(\gtq)\leq\dim(M)$.
\end{itemize}
\end{lem}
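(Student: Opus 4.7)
The plan is to prove (i) via the explicit description of $\sqrt{\gtq\mathcal{S}^0(M)}$ given in Remark \ref{radONE}, to prove (ii) by a semialgebraic triangulation argument that exploits primality of $\gtq_1$ and the $z$-ideal property, and to deduce (iii) by iterating (ii) along a saturated chain of prime ideals, using Lemma \ref{zloc} to guarantee that every prime below $\gtq$ is a $z$-ideal.

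For (i), the inclusion $\mathcal{S}^r(M)\hookrightarrow\mathcal{S}^0(M)$ sends $\gtq$ into $\sqrt{\gtq\mathcal{S}^0(M)}$ with zero sets in $M$ unchanged, so immediately $\depth(\sqrt{\gtq\mathcal{S}^0(M)})\leq\depth(\gtq)$. For the reverse inequality I would pick $f\in\sqrt{\gtq\mathcal{S}^0(M)}$ realizing the depth and apply Remark \ref{radONE} to obtain $g\in\gtq$ whose extensions $G,F$ to an open semialgebraic neighborhood $N\subset\cl(M)$ of $M$ satisfy $Z(G)\subset Z(F)$; restricting to $M$ yields $Z(g)\subset Z(f)$, hence $\dim Z(g)\leq\dim Z(f)$, which gives $\depth(\gtq)\leq\depth(\sqrt{\gtq\mathcal{S}^0(M)})$.

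For (ii), set $d:=\depth(\gtq_1)$ and fix $f\in\gtq_1$ with $\dim Z(f)=d$ together with $h\in\gtq\setminus\gtq_1$. By the Nash diffeomorphism of Remark \ref{potabs}(iv) I may assume $M$ bounded, so $\cl(M)$ is compact; I then triangulate $\cl(M)$ compatibly with the three semialgebraic subsets $M$, $\cl(Z(f))$ and $\cl(Z(h))$. Since $Z(f)=\cl(Z(f))\cap M$ and $Z(h)=\cl(Z(h))\cap M$, both are disjoint unions of those open simplices of the triangulation that lie in $M$. Let $\mathcal{F}$ be the family of closed $d$-simplices $\sigma$ with $\sigma^0\subset Z(f)$, which is nonempty because $\dim Z(f)=d$. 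For each $\sigma\in\mathcal{F}$ use Lemma \ref{zero} to pick $u_\sigma\in\mathcal{S}^r(M)$ with $Z(u_\sigma)=\sigma\cap M$, set $Y:=\bigcup_{\sigma\in\mathcal{F}}(\sigma\cap M)$ and let $W$ be the closure in $M$ of $Z(f)\setminus Y$; the construction forces $\dim W<d$, and Lemma \ref{zero} produces $v\in\mathcal{S}^r(M)$ with $Z(v)=W$. Since $Z(f)\subset Y\cup W=Z\bigl(v\prod_{\sigma\in\mathcal{F}}u_\sigma\bigr)$ and $\gtq_1$ is a $z$-ideal, the product $v\prod_\sigma u_\sigma$ lies in $\gtq_1$; but $\dim Z(v)<d=\depth(\gtq_1)$ forces $v\notin\gtq_1$, so primality yields $u_{\sigma_0}\in\gtq_1$ for some $\sigma_0\in\mathcal{F}$. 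The $z$-ideal property prevents $\sigma_0\cap M\subset Z(h)$ (otherwise $h\in\gtq_1$), so $\sigma_0^0\not\subset Z(h)$; compatibility of the triangulation then forces $\sigma_0^0\cap Z(h)=\varnothing$, and therefore $(\sigma_0\cap M)\cap Z(h)\subset\partial\sigma_0\cap M$ has dimension at most $d-1$. The element $g:=u_{\sigma_0}^2+h^2\in\gtq$ thus satisfies $\dim Z(g)<d$, giving $\depth(\gtq)<d=\depth(\gtq_1)$. I expect the main difficulty to lie in isolating inside the top-dimensional part of $Z(f)$ an elementary ``piece'' whose defining function lies in $\gtq_1$ while $h$ is not identically zero on its open part; this is precisely what the triangulation buys when combined with primality of $\gtq_1$ and the $z$-ideal hypothesis.

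For (iii), Lemma \ref{zloc} makes every prime of $\mathcal{S}^r(M)$ a $z$-ideal under the hypothesis that $M$ is locally compact. I would then take any chain $\gtq_0\subsetneq\gtq_1\subsetneq\cdots\subsetneq\gtq_s=\gtq$ of length $s=\hgt(\gtq)$ and apply (ii) to each consecutive pair, producing a strictly decreasing sequence $\depth(\gtq_0)>\depth(\gtq_1)>\cdots>\depth(\gtq)$ of nonnegative integers; combined with the trivial bound $\depth(\gtq_0)\leq\dim(M)$ this yields $\depth(\gtq)+\hgt(\gtq)\leq\dim(M)$, as required.
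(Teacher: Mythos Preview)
Your proof is correct. Part (i) is identical to the paper's argument. For (ii) and (iii), however, the paper takes a different route: once (i) is established, it uses Theorem~\ref{main1} (the homeomorphism $\Spec^0(M)\to\Spec^r(M)$) and Remark~\ref{radONE} (which shows that if $\gtq$ is a $z$-ideal then so is $\sqrt{\gtq\mathcal{S}^0(M)}$) to reduce both statements to the case $r=0$, and then simply cites \cite[Lem.~2.1 \& Thm.~2.2]{fg6}.

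Your approach is more self-contained. The triangulation argument you give for (ii) is essentially the kind of argument that lies behind the cited result in \cite{fg6}, but you carry it out directly in ${\mathcal S}^r(M)$ rather than first passing to ${\mathcal S}^0(M)$; this is possible precisely because Lemma~\ref{zero} already lets you realize closed semialgebraic subsets as zero sets of ${\mathcal S}^r$-functions, and the $z$-ideal hypothesis on $\gtq_1$ is all that is needed to push the combinatorial step through. Your derivation of (iii) from (ii) via Lemma~\ref{zloc} is likewise a standard chain-length argument. The paper's reduction is shorter in print and exploits the bridge $\Spec^0(M)\cong\Spec^r(M)$ as a general transfer principle, while your version avoids the dependence on \cite{fg6} and makes the mechanism behind (ii) explicit.
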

\begin{proof}
(i) It is clear that $\depth(\gtq)\geq\depth(\sqrt{\gtq{\mathcal S}^0(M)})$. Let $f\in\sqrt{\gtq{\mathcal S}^0(M)}$ be such that $\dim(Z(f))=\depth(\sqrt{\gtq{\mathcal S}^0(M)})$. By Remark \ref{radONE} there exists $g\in\gtq$ such that $Z(g)\subset Z(f)$, so $\depth(\gtq)\leq\dim(Z(g))\leq\dim(Z(f))=\depth(\sqrt{\gtq{\mathcal S}^0(M)})$, as required.

Once we have proved (i), by Theorem \ref{main1} and Remark \ref{radONE} it is enough to prove (ii) and (iii) for $r=0$. This is the content of \cite[Lem. 2.1 \& Thm. 2.2]{fg6}.
\end{proof}

We point out that Lemma \ref{ht00}(ii) is false if $\gtq_1$ is not a $z$-ideal \cite[Rmk.2.3]{fg6}. We will reduce Theorem \ref{main3} for the general locally compact case to the pure dimensional locally compact case via the following technical result:

\begin{lem}\label{ExTDiFF} 
Let $M\subset\R^m$ be a pure dimensional semialgebraic set and $f\in{\mathcal S}(M)$. Then there exists $g\in{\mathcal S}^{r*}(M)$ such that $\dim(Z(g))<\dim(M)$ and $fg$ extends by zero to an ${\mathcal S}^r$-function on $\R^m$. 
\end{lem}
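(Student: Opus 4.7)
The plan is to apply Proposition~\ref{suboclase} inside $\R^m$ to a carefully chosen pair $(G,F)$ and then take $g$ to be a high power of $G$ restricted to $M$. The point of the construction is that $Z(G)$ will be designed to contain both the (closed) locus where $f$ fails to be $\mathcal{S}^r$ and the complement of $\Int_{\R^m}(M)$, so that the $\mathcal{S}^r$-function produced by Proposition~\ref{suboclase} automatically agrees with the zero extension of $fg$ across $\R^m\setminus M$.

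First I would invoke semialgebraic stratification (Nash stratification of $f$) to find a semialgebraic set $V\subset\Int_{\R^m}(M)$ with $\dim V<\dim M$ such that $f$ is $\mathcal{S}^r$ on the open semialgebraic set $\Int_{\R^m}(M)\setminus\cl_{\R^m}(V)$. I then set
\[
C:=\cl_{\R^m}(V)\cup\bigl(\R^m\setminus\Int_{\R^m}(M)\bigr),
\]
a closed semialgebraic subset of $\R^m$, and apply Lemma~\ref{zero} in $\R^m$ to obtain $G\in\mathcal{S}^{r*}(\R^m)$ with $Z(G)=C$. The open set $D(G)=\R^m\setminus C$ is contained in $\Int_{\R^m}(M)\setminus\cl_{\R^m}(V)$, so $F:=f|_{D(G)}$ lies in $\mathcal{S}^r(D(G))$. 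Proposition~\ref{suboclase} then yields an integer $\ell\geq 1$ for which the function $H_\ell\colon\R^m\to\R$ defined by $H_\ell:=G^\ell F$ on $D(G)$ and $H_\ell:=0$ on $Z(G)$ belongs to $\mathcal{S}^r(\R^m)$. Setting $g:=G^\ell|_M\in\mathcal{S}^{r*}(M)$, the inclusion $\R^m\setminus M\subset\R^m\setminus\Int_{\R^m}(M)\subset Z(G)$ shows that $H_\ell$ vanishes off $M$, while $H_\ell|_M=fg$ by construction; hence $H_\ell$ realises the zero extension of $fg$ as an element of $\mathcal{S}^r(\R^m)$.

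The step I would have to check carefully is the bound $\dim Z(g)<\dim M$. Since $Z(g)=C\cap M=(\cl_{\R^m}(V)\cap M)\cup(M\setminus\Int_{\R^m}(M))$, the first summand has dimension $\leq\dim V<\dim M$, and the second has dimension strictly less than $\dim M$ by pure dimensionality of $M$ together with $\Int_{\R^m}(M)\neq\varnothing$. The main obstacle, and the implicit hypothesis behind the lemma, is precisely that $M$ must have top dimension in its ambient affine space ($\dim M=m$): if $M$ has lower dimension in $\R^m$, then $\Int_{\R^m}(M)=\varnothing$ and the zero-extension clause forces $fg\equiv 0$ pointwise on $M$, which is incompatible with $\dim Z(g)<\dim M$. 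Accordingly, in the applications to Theorem~\ref{main3} I expect one first reduces to this case, for instance by working with each pure-dimensional brick inside its own natural ambient affine space via Corollary~\ref{ext}.
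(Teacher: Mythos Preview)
Your argument is correct when $\dim M=m$, but the lemma is stated (and used) for arbitrary pure dimensional $M\subset\R^m$, and in that generality your approach breaks down exactly where you feared: if $\dim M<m$ then $\Int_{\R^m}(M)=\varnothing$, your set $C$ equals $\R^m$, $G\equiv0$, and you end up with $g\equiv0$, contradicting $\dim Z(g)<\dim M$. Your proposed escape route via Corollary~\ref{ext} does not help: that result embeds a locally compact $M$ as a \emph{closed} subset of $\R^{m+1}$, which increases the codimension rather than decreasing it, and there is no way to realise an arbitrary pure $d$-dimensional semialgebraic set as a subset of $\R^d$ with nonempty interior (think of a circle). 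In the application to Theorem~\ref{main3} the paper reduces to $M$ pure dimensional, not to $\dim M=m$, so the full strength of the lemma is needed.

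The missing idea is to manufacture an open set in $\R^m$ on which $f$ has an $\mathcal S^r$ (in fact Nash) extension even when $M$ itself has empty interior. The paper does this as follows: stratify $M$ so that $f$ is Nash on a dense open Nash submanifold $N\subset M$ with $\dim N=\dim M$; take a Nash tubular neighborhood $(V,\rho)$ of $N$ in $\R^m$ (so $V\subset\R^m$ is open and $\rho:V\to N$ is a Nash retraction); and set $F:=f\circ\rho\in\mathcal S(V)$. Now choose $G\in\mathcal S^{r*}(\R^m)$ with $Z(G)=\R^m\setminus V$ and apply Proposition~\ref{suboclase} exactly as you did. After shrinking $V$ so that $V\cap M=N$ (possible because $N$ is open in $M$), one has $F|_{M\cap V}=f|_{M\cap V}$ and $G|_{M\setminus V}=0$, hence the resulting $\mathcal S^r$-function on $\R^m$ restricts to $fg$ on $M$ with $g=G^\ell|_M$, and $Z(g)=M\setminus V$ has dimension $<\dim M$. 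The tubular neighborhood is what replaces your use of $\Int_{\R^m}(M)$ and makes the argument dimension-independent.
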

\begin{proof}
By \cite[(2.4.2)]{fgr} $M$ is the disjoint union of finitely many (affine) Nash manifolds $N_1,\dots, N_s$ such that each restriction $f|_{N_i}$ is a Nash function and each $N_i$ with $\dim(N_i)=\dim(M)$ is open in $M$. We may assume that $\dim(N_i)=\dim(M)$ if $1\leq i\leq k$ and $\dim(N_i)<\dim(M)$ if $k+1\leq i\leq s$. As each $N_i$ for $1\leq i\leq k$ is open in the pure dimensional semialgebraic set $M$, the union $N:=\bigcup_{i=1}^kN_i$ is a Nash manifold and a dense open semialgebraic subset of $M$. In addition, the restriction $f|_N$ is a Nash function.

By \cite[Cor.8.9.5]{bcr} there exists a Nash tubular neighborhood $(V,\rho)$ of $N$ in $\R^m$. Let us define $F:=f\circ\rho\in{\mathcal S}(V)$. As $V$ is an open semialgebraic subset of $\R^m$, there exists $G\in{\mathcal S}^{r*}(\R^m)$ with $Z(G)=\R^m\setminus V$. By Proposition \ref{suboclase} there exists $\ell\geq 1$ such that $G^\ell F$ extends by zero to an ${\mathcal S}^r$-function on $\R^m$. As $Z(G^\ell)\cap M=M\setminus V$ has dimension $\leq\dim(M)-1$ and $F|_{M\setminus V}=f|_{M\setminus V}$, the function $g:=G^\ell|_M\in{\mathcal S}^{r*}(M)$ does the job.
\end{proof}

\begin{proof}[Proof of Theorem \em\ref{main3} \em for ${\mathcal S}^r(M)$ with $M$ locally compact] 
Let us prove Theorem \ref{main3} for a prime ideal $\gtp\in\Spec^0(M)$ where $M$ is locally compact. Denote $\gtq:=\gtp\cap{\mathcal S}^r(M)$. Let $\kappa(\gtp)$ and $\kappa(\gtq)$ be the fields of fractions of ${\mathcal S}^0(M)/\gtp$ and ${\mathcal S}^r(M)/\gtq$. We have to show that the inclusion 
$$
{\tt j}:{\mathcal S}^r(M)/\gtq\to{\mathcal S}^0(M)/\gtp.
$$
induces an isomorphism between $\kappa(\gtq)$ and $\kappa(\gtp)$. We will make several reductions:	

\noindent\em First reduction\em. We may assume that $\gtp$ is a minimal prime ideal of ${\mathcal S}^0(M)$. 

Pick $f\in\gtp$ with $\depth(\gtp)=\dim(N)$, where $N:=Z(f)$ is locally compact because it is closed in the locally compact semialgebraic set $M$. By Fact \ref{thfact} the epimorphism $\varphi_0:{\mathcal S}^0(M)\to{\mathcal S}^0(N),\,g\mapsto g|_N$ induces an isomorphism $\varphi_0':{\mathcal S}^0(M)/\ker(\varphi_0)\to{\mathcal S}^0(N)$. As $\gtp$ is a $z$-ideal, $\ker(\varphi_0)\subset\gtp$, so the image $\gtp_1$ of $\gtp/\ker(\varphi_0)$ under $\varphi_0'$ is a prime ideal of ${\mathcal S}^0(N)$. We have an isomorphism
$$
\ol{\varphi}_0:{\mathcal S}^0(M)/\gtp\to{\mathcal S}^0(N)/\gtp_1.
$$

The restriction map $\varphi_r:{\mathcal S}^r(M)\to{\mathcal S}^r(N)$ induces an isomorphism $\varphi_r':{\mathcal S}^r(M)/\ker(\varphi_r)\to{\mathcal S}^r(N)$. As $\gtq$ is a $z$-ideal, we can consider the image $\gtq_1\in\Spec^r(N)$ of $\gtq/\ker(\varphi_r)$ under $\varphi_r'$. Then we have an isomorphism
$$
\ol{\varphi}_r:{\mathcal S}^r(M)/\gtq\to{\mathcal S}^r(N)/\gtq_1.
$$
As the following diagram
$$
\xymatrix{
{\mathcal S}^r(M)/\gtq\hspace{3mm}\ar@{^{}->}[r]^{{\tt j}}\ar@{^{}->}^{\ol{\varphi}_r}[d]&{\mathcal S}^0(M)/\gtp\ar@{^{}->}^{\ol{\varphi}_0}[d]\\
{\mathcal S}^r(N)/\gtq_1\ar@{^{}->}[r]^{{\tt j}_N}&{\mathcal S}^0(N)/\gtp_1
}
$$
is commutative (where ${\tt j}_N(f+\gtq_1)=f+\gtp_1$), to finish this reduction it is enough to check that $\gtp_1$ is a minimal prime ideal of ${\mathcal S}^0(N)$. Otherwise, there exists a prime ideal $\gtp'_1\subsetneq\gtp_1$ in ${\mathcal S}^0(N)$, so $\gtp':=\varphi_0^{-1}(\gtp'_1)$ is a prime ideal of ${\mathcal S}^0(M)$ such that $\ker(\varphi_0)\subset\gtp'\subsetneq\gtp$. As $M$ is locally compact, $\gtp'$ is a $z$-ideal, so $\depth(\gtp)<\depth(\gtp')$ by Lemma \ref{ht00}. However, as $f\in\ker(\varphi_0)\subset\gtp'$, we have $\depth(\gtp')\leq\dim(Z(f))=\depth(\gtp)$, which is a contradiction.

\noindent\em Second reduction\em. We may assume $M$ is pure dimensional.

Let $M\subset\R^m$ be a semialgebraic case and let $\{M_1,\dots,M_s\}$ be the bricks of $M$. By Lemma \ref{zero} there exist $f_1,\dots,f_s\in{\mathcal S}^{r*}(M)$ such that $M_i=Z(f_i)$ for $1\leq i\leq s$. As $f:=\prod_{i=1}^sf_i$ is the zero function on $M$, there exists an index $i\in\{1,\dots,s\}$ such that $f_i\in\gtp$. The epimorphism $\varphi_0:{\mathcal S}^0(M)\to{\mathcal S}^0(M_i),\,g\mapsto g|_{M_i}$ induces an isomorphism $\varphi_{0i}':{\mathcal S}^0(M)/\ker(\varphi_0)\to{\mathcal S}^0(M_i)$. As $\gtp$ is a $z$-ideal, $\ker(\varphi_0)\subset\gtp$. Denote by $\gtp_i\in\Spec^0(M_i)$ the image of $\gtp/\ker(\varphi_0)$ under $\varphi_{0i}'$. Arguing as in the first reduction, it is enough to prove the statement for the prime $\gtp_i$ of the ring ${\mathcal S}^0(M_i)$, where $M_i$ is pure dimensional.

\noindent\em Final step\em. We assume from the beginning that $M$ is pure dimensional and $\gtp$ is a minimal prime ideal of ${\mathcal S}^0(M)$. 

The homomorphism ${\tt j}:{\mathcal S}^r(M)/\gtq\to{\mathcal S}^0(M)/\gtp
$ induces an injective homomorphism ${\tt j}:\kappa(\gtq)\to\kappa(\gtp)$ (we keep the same notation ${\tt j}$ for the sake of simplicity) and we claim that it is in fact an isomorphism. We have to show: \em ${\tt j}$ is surjective\em.

Pick $f\in{\mathcal S}^0(M)$. By Lemma \ref{ExTDiFF} there exist $g\in{\mathcal S}^{r*}(M)$ and $H\in{\mathcal S}^r(\R^m)$ such that $\dim(Z(g))<\dim(M)$ and $h:=H|_M=fg$. Let us check: $g\not\in\gtp$. 

Otherwise, as $\gtp$ is a minimal prime ideal, there exists by \cite[Lem.1.1]{hj} $v\in{\mathcal S}^r(M)\setminus\gtp$ such that $gv=0$. Thus, $D(v)\subset Z(g)$, so
$$
\dim(D(v))\leq\dim(Z(g))<\dim(M).
$$
As $M$ is pure dimensional, $v=0$, which is a contradiction because $v\notin\gtp$. Thus, $g\not\in\gtp$.

Hence, $g+\gtp\neq0$, so $(h+\gtq)/(g+\gtq)\in\kappa(\gtq)$ and $(h+\gtp)/(g+\gtp)\in\kappa(\gtp)$. Consequently, ${\tt j}((h+\gtq)/(g+\gtq))=(h+\gtp)/(g+\gtp)={f+\gtp}$ and ${\tt j}$ is surjective.

It was proved in \cite[\S1.Cor.3.26]{s4} (see also Proposition \ref{f:rcl1} below) that $\kappa(\gtp)$ is a real closed field, so $\kappa(\gtq)$ is as well a real closed field, as required.
\end{proof}

\begin{cor}\label{trdeg} 
Let $M\subset\R^m$ be a locally compact semialgebraic set and let $\gtq\in\Spec^r(M)$. Then:
\begin{itemize}
\item[(i)] $\tr\deg_{\R}(\kappa(\gtq))=\depth(\gtq)$.
\item[(ii)] Let $\rho:M\hookrightarrow M'$ be a ${\mathcal S}^r$-embedding such that $M'\subset\R^n$ is a locally compact semialgebraic set and $\rho(M)$ is dense in $M'$. Let $\gtq':=(\rho^*)^{-1}(\gtq)$ where $\rho^*:{\mathcal S}^r(M')\to{\mathcal S}^r(M)$ is the injective homomorphism induced by $\rho$. The inclusion ${\mathcal S}^r(M')/\gtq'\hookrightarrow{\mathcal S}^r(M)/\gtq$ induces an isomorphism between the corresponding fields of fractions $\kappa(\gtq')$ and $\kappa(\gtq)$.
\end{itemize}
\end{cor}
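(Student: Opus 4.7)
The plan is to reduce both parts to the $r=0$ case via Theorem \ref{main3} (just proved in the locally compact setting) and then invoke the corresponding continuous results. Throughout set $\gtp:=\sqrt{\gtq\mathcal{S}^0(M)}$, so that by Theorem \ref{main1} $\gtp\cap\mathcal{S}^r(M)=\gtq$, and by Theorem \ref{main3} the inclusion $\mathcal{S}^r(M)/\gtq\hookrightarrow\mathcal{S}^0(M)/\gtp$ induces the isomorphism $\kappa(\gtq)\cong\kappa(\gtp)$.

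For (i), this isomorphism immediately gives $\tr\deg_\R\kappa(\gtq)=\tr\deg_\R\kappa(\gtp)$, while Lemma \ref{ht00}(i) gives $\depth(\gtq)=\depth(\gtp)$. It thus suffices to establish the equality $\tr\deg_\R\kappa(\gtp)=\depth(\gtp)$ in the continuous case $r=0$ for locally compact sets, a known fact on rings of semialgebraic functions on locally compact sets already invoked in the proof of Lemma \ref{ht00}.

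For (ii), the first step is to let $\rho_0^*:\mathcal{S}^0(M')\to\mathcal{S}^0(M)$ denote the continuous counterpart of $\rho^*$ and set $\gtp':=(\rho_0^*)^{-1}(\gtp)\in\Spec^0(M')$. A direct check gives $\gtp'\cap\mathcal{S}^r(M')=\gtq'$, so the uniqueness in Theorem \ref{main1} yields $\gtp'=\sqrt{\gtq'\mathcal{S}^0(M')}$. The commutative square of inclusions
\begin{equation*}
\begin{array}{ccc}
\mathcal{S}^r(M')/\gtq'&\hookrightarrow&\mathcal{S}^r(M)/\gtq\\
\downarrow & & \downarrow\\
\mathcal{S}^0(M')/\gtp'&\hookrightarrow&\mathcal{S}^0(M)/\gtp
\end{array}
\end{equation*}
has vertical arrows inducing isomorphisms on fraction fields by Theorem \ref{main3}, so the problem reduces to showing that $\mathcal{S}^0(M')/\gtp'\hookrightarrow\mathcal{S}^0(M)/\gtp$ induces an isomorphism $\kappa(\gtp')\cong\kappa(\gtp)$.

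To settle this last surjectivity, I would identify $M$ with $\rho(M)\subset M'$; since a locally compact dense subspace of a Hausdorff space is open, $C:=M'\setminus M$ is closed semialgebraic in $M'$, and by Lemma \ref{zero} there is $g\in\mathcal{S}^{0*}(M')$ with $Z(g)=C$. Given $f\in\mathcal{S}^0(M)$, the plan is to use Lemma \ref{ext} to realize $M'$ as a closed semialgebraic subset of some $\R^N$ and then apply the $r=0$ case of Proposition \ref{suboclase} to obtain $\ell\geq 1$ such that the function equal to $g^\ell\cdot f$ on $M$ and to $0$ on $C$ defines an element $F\in\mathcal{S}^0(M')$. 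Setting $G:=g^\ell\in\mathcal{S}^0(M')$, one checks that $\rho_0^*(G)=g^\ell|_M$ has empty zero-set, hence is a unit and so $G\notin\gtp'$; the identity $Gf=F|_M$ on $M$ then exhibits $f+\gtp$ as $(\rho_0^*F+\gtp)/(\rho_0^*G+\gtp)$, the image in $\kappa(\gtp)$ of $F/G+\gtp'\in\kappa(\gtp')$. The main obstacle will be precisely the justification of this ``extension by zero'' step: transferring the conclusion of Proposition \ref{suboclase} from $\R^N$ to the locally compact $M'$, which will require combining the embedding supplied by Lemma \ref{ext} with the polynomial growth control near $C$ that the proposition provides.
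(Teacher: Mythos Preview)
Your proof of (i) matches the paper's exactly: reduce to $r=0$ via Theorems \ref{main1} and \ref{main3} and Lemma \ref{ht00}(i), then invoke the known continuous case from \cite{fg6}.

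For (ii) you take a genuinely different route. The paper's argument is a one-liner once (i) is in hand: since $\kappa(\gtq')$ and $\kappa(\gtq)$ are both real closed (Theorem \ref{main3}), the inclusion $\kappa(\gtq')\hookrightarrow\kappa(\gtq)$ is an isomorphism as soon as $\tr\deg_\R\kappa(\gtq')\geq\tr\deg_\R\kappa(\gtq)$, i.e.\ $\depth(\gtq')\geq\depth(\gtq)$; and this last inequality is immediate because $\rho$ is injective, so $\dim Z(f)\geq\dim Z(f\circ\rho)\geq\depth(\gtq)$ for every $f\in\gtq'$. No extension-by-zero, no passage to $r=0$, no Proposition \ref{suboclase}.

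Your approach --- reducing to $r=0$ and then exhibiting each $f+\gtp$ explicitly as a quotient coming from $M'$ via an extension-by-zero --- is correct and can be completed. The ``obstacle'' you flag is not serious: after realizing $M'$ as closed in $\R^N$ (Lemma \ref{ext}), the set $C=M'\setminus M$ is closed in $\R^N$, so by Lemma \ref{zero} there is $G\in\mathcal{S}^0(\R^N)$ with $Z(G)=C$; since $M$ is closed in the open set $D(G)$, your $f$ extends to $F\in\mathcal{S}^0(D(G))$, and Proposition \ref{suboclase} applies verbatim. What your argument buys is an explicit representation of elements of $\kappa(\gtp)$; what the paper's argument buys is brevity and a clean use of the real-closedness already established.
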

\begin{proof} 
(i) By Theorem \ref{main1} $\gtp:=\sqrt{\gtq{\mathcal S}^0(M)}\in\Spec^0(M)$ and $\gtq=\gtp\cap{\mathcal S}^r(M)$. Thus, by Theorem \ref{main3} the fields $\kappa(\gtq)$ and $\kappa(\gtp)$ are $\R$-isomorphic. As $\depth(\gtp)=\depth(\gtq)$ by Lemma \ref{ht00}(i), it only remains to prove the statement for $\gtp$, which follows from \cite[Thm. 1.1]{fg6}.
 
(ii) It is enough to show that the inclusion $\kappa(\gtq')\to\kappa(\gtq)$ is surjective. As both $\kappa(\gtq')$ and $\kappa(\gtq)$ are real closed fields, it is enough if we prove that $\tr\deg_{\R}(\kappa(\gtq'))\geq\tr\deg_{\R}(\kappa(\gtq))$, or equivalently, that $\depth(\gtq')\geq\depth(\gtq)$. As $\rho$ is injective, for each $f\in\gtq'$ it holds $\dim(Z(f))\geq\dim(Z(f\circ\rho))\geq\depth(\gtq)$, so $\depth(\gtq')\geq\depth(\gtq)$, as required.
\end{proof}

\subsection{The general case}\label{main3:generalnonbd}
Fix a semialgebraic set $M\subset\R^m$ and an integer $r\geq0$. Denote ${\mathfrak C}^{r\diam}$ as in \ref{LK} the collection of pairs $(E,{\tt j})$ where ${\tt j}:M\to\R^n$ is an ${\mathcal S}^{r\diam}$ embedding such that $E= \cl({\tt j}(M))$ is a closed semialgebraic set (even compact in the bounded case). By Theorem \ref{comp} we have a homomorphism ${\tt j}^*:{\mathcal S}^{r\diam}(E)\to{\mathcal S}^{r\diam}(M),\,f\mapsto f\circ{\tt j}$ and for each ideal $\gta$ in ${\mathcal S}^{r\diam}(M)$ we denote $\gta\cap{\mathcal S}^{r\diam}(E):=({\tt j}^*)^{-1}(\gta)$ and say \emph{$\gta$ lies over $\gta\cap{\mathcal S}^{r\diam}(E)$}.

\begin{prop}\label{citalc1} 
Let $M\subset\R^m$ be a semialgebraic set.
\begin{itemize}
\item[(i)] Given a subset $\Ff:=\{f_1,\ldots,f_k\}\subset{\mathcal S}^{r\diam}(M)$ there exist $(E_{\Ff},{\tt j}_{\Ff})\in{\mathfrak C}^{r\diam}$ and $F_1,\ldots,F_k\in{\mathcal S}^{r\diam}(E_{\Ff})$ such that $F_i\circ{\tt j}_{\Ff}=f_i$. 
\item[(ii)] Given a chain of prime ideals $\gtq_0\subsetneq\cdots\subsetneq\gtq_k$ in ${\mathcal S}^{r\diam}(M)$ there exists $(E,{\tt j})\in{\mathfrak C}^{r\diam}$ such that the prime ideals $\gtQ_i:=\gtq_i\cap{\mathcal S}^{r\diam}(E)$ constitute a chain $\gtQ_0\subsetneq\cdots\subsetneq\gtQ_k$ in ${\mathcal S}^{r\diam}(E)$. 
\item[(iii)] There exists $(E,{\tt j})\in{\mathfrak C}^{r\diam}$ such that $\dim({\mathcal S}^{r\diam}(M))\leq\dim({\mathcal S}^{r\diam}(E))=\dim(M)$.
\end{itemize}
\end{prop}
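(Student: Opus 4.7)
The plan is to prove (i), (ii), (iii) in order, each part using the previous one.

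For (i), I mimic the graph-of-function argument from the proof of Lemma \ref{LD}. By Remark \ref{potabs}(iv), I may assume $M\subset\R^m$ is bounded (essential in the bounded case, harmless in the unbounded one). Define the ${\mathcal S}^{r\diam}$-map
\[
{\tt j}_{\Ff}:M\to\R^{m+k},\quad x\mapsto(x,f_1(x),\ldots,f_k(x)).
\]
It is an ${\mathcal S}^{r\diam}$-embedding because its inverse on the image is the restriction of the polynomial projection onto the first $m$ coordinates. All components are bounded in the bounded case (since $M$ is bounded and each $f_i\in{\mathcal S}^{r*}(M)$), so $E_{\Ff}:=\cl({\tt j}_{\Ff}(M))$ is a closed semialgebraic subset of $\R^{m+k}$, compact in the bounded case. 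Hence $(E_{\Ff},{\tt j}_{\Ff})\in{\mathfrak C}^{r\diam}$. Letting $F_i:E_{\Ff}\to\R$ be the restriction of the projection onto the $(m+i)$-th coordinate, one has $F_i\in{\mathcal S}^{r\diam}(E_{\Ff})$ and $F_i\circ{\tt j}_{\Ff}=f_i$.

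For (ii), for each $i=1,\ldots,k$ use strictness of the given chain to pick $h_i\in\gtq_i\setminus\gtq_{i-1}$. Apply (i) to $\Ff:=\{h_1,\ldots,h_k\}$ to obtain $(E,{\tt j})\in{\mathfrak C}^{r\diam}$ and $H_1,\ldots,H_k\in{\mathcal S}^{r\diam}(E)$ with ${\tt j}^*(H_i)=H_i\circ{\tt j}=h_i$. Writing $\gtQ_i:=({\tt j}^*)^{-1}(\gtq_i)=\gtq_i\cap{\mathcal S}^{r\diam}(E)$, the inclusions $\gtQ_0\subset\cdots\subset\gtQ_k$ are automatic, and strictness follows from $H_i\in\gtQ_i\setminus\gtQ_{i-1}$: namely, ${\tt j}^*(H_i)=h_i\in\gtq_i$ gives $H_i\in\gtQ_i$, whereas ${\tt j}^*(H_i)=h_i\notin\gtq_{i-1}$ gives $H_i\notin\gtQ_{i-1}$.

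For (iii), by Corollary \ref{dim}(i) the integer $s:=\dim({\mathcal S}^{r\diam}(M))=\dim(M)$ is finite. Pick a strict chain of prime ideals $\gtq_0\subsetneq\cdots\subsetneq\gtq_s$ in ${\mathcal S}^{r\diam}(M)$ and apply (ii) to obtain $(E,{\tt j})\in{\mathfrak C}^{r\diam}$ yielding a strict chain of length $s$ in $\Spec^{r\diam}(E)$, so $\dim({\mathcal S}^{r\diam}(E))\geq s$. Since ${\tt j}:M\to{\tt j}(M)$ is a semialgebraic homeomorphism, $\dim({\tt j}(M))=\dim(M)$; since semialgebraic closure preserves dimension, $\dim(E)=\dim({\tt j}(M))=\dim(M)$. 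As $E$ is closed (hence locally compact) in its ambient space, Corollary \ref{dim}(i) applied to $E$ gives $\dim({\mathcal S}^{r\diam}(E))=\dim(E)=\dim(M)=s$, producing the required chain of (in)equalities. I do not foresee any serious obstacle: the only delicate check is in the bounded case of (i), where one must ensure both compactness of $E_{\Ff}$ and that ${\tt j}_{\Ff}$ remains an ${\mathcal S}^{r*}$-embedding; this is arranged by Remark \ref{potabs}(iv), which allows one to replace $M$ by a Nash diffeomorphic bounded copy without loss.
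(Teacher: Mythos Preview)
Your proof is correct and follows essentially the same approach as the paper: the graph embedding ${\tt j}_{\Ff}(x)=(x,f_1(x),\ldots,f_k(x))$ for (i), choosing witnesses $h_i\in\gtq_i\setminus\gtq_{i-1}$ and applying (i) for (ii), and combining (ii) with Corollary~\ref{dim} and the fact that semialgebraic closure preserves dimension for (iii). The only cosmetic difference is that you invoke Corollary~\ref{dim}(i) for $M$ directly to get $s=\dim(M)$, whereas the paper phrases (iii) so that the inequality $\dim({\mathcal S}^{r\diam}(M))\leq\dim({\mathcal S}^{r\diam}(E))$ comes first and the equality $\dim({\mathcal S}^{r\diam}(E))=\dim(M)$ is read off afterwards; both are legitimate since Corollary~\ref{dim} is already available at this point.
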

\begin{proof} 
(i) Consider the ${\mathcal S}^{r\diam}$ embedding ${\tt j}_{\Ff}:M\to\R^{m+k},\ x\mapsto(x,f_1(x),\ldots,f_k(x))$ and define $E_{\Ff}:=\cl({\tt j}_{\Ff})$. In case we are dealing with bounded functions, assume $M$ is bounded. Define also $F_i:=\pi_{m+i}|_{E_{\Ff}}$ where 
$$
\pi_{m+i}:\R^{m+k}\to\R,\ x:=(x_1,\ldots,x_{m+k})\mapsto x_{m+i}
$$ 
for $i=1,\ldots,k$. Each $F_i\in{\mathcal S}^{r\diam}(E_{\Ff})$ and $F_i\circ{\tt j}_{\Ff}=f_i$.

(ii) For $1\leq i\leq k$ pick a function $f_i\in\gtq_i\setminus\gtq_{i-1}$ and consider the pair $(E_{\Ff},{\tt j}_{\Ff})\in{\mathfrak C}^{r\diam}$ provided by part (i) for the family $\Ff:=\{f_1,\ldots,f_k\}$. Clearly, $\gtQ_{i-1}\subset\gtQ_i$ and each $F_i\in\gtQ_i\setminus\gtQ_{i-1}$.

(iii) By part (ii) there exists $(E,{\tt j})\in{\mathfrak C}^{r\diam}$ such that $\dim({\mathcal S}^{r\diam}(M))\leq\dim({\mathcal S}^{r\diam}(E))$. Finally, the equalities $\dim({\mathcal S}^{r\diam}(E))=\dim(E)=\dim(M)$ 
follow from Corollary \ref{dim} and \cite[Prop.2.8.2]{bcr}. 
\end{proof}

\begin{cor}\label{RCf} 
Let $\gtq\in\Spec^{r\diam}(M)$. Then $\kappa(\gtq):=\qf({\mathcal S}^{r\diam}(M)/\gtq)$ is a real closed field.
\end{cor}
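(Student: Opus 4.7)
The plan is to realize $\kappa(\gtq)$ as a directed colimit of residue fields $\kappa(\gtQ_E)$ coming from the ${\mathcal S}^{r\diam}$-completions of $M$, and then to invoke the locally compact case of Theorem \ref{main3} proved in Subsection \ref{main3:locallcompact} together with the stability of real closedness under directed colimits of ordered field embeddings.

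First I would use Lemma \ref{LD}, which presents ${\mathcal S}^{r\diam}(M)=\lim_{\longrightarrow}{\mathcal S}^{r\diam}(E)$ along $({\mathfrak C}^{r\diam},\preceq)$, with injective structure maps ${\tt j}^*$. For each $(E,{\tt j})\in{\mathfrak C}^{r\diam}$ set $\gtQ_E:=({\tt j}^*)^{-1}(\gtq)\in\Spec^{r\diam}(E)$. The identity ${\tt j}_1^*={\tt j}_2^*\circ\rho_{21}^*$ for $(E_1,{\tt j}_1)\preceq(E_2,{\tt j}_2)$ shows that the contractions $\gtQ_E$ are compatible with the transition maps, while Proposition \ref{citalc1}(i) applied to any single element of ${\mathcal S}^{r\diam}(M)$ shows that $\gtq=\bigcup_E{\tt j}^*(\gtQ_E)$. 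Consequently,
\[
{\mathcal S}^{r\diam}(M)/\gtq\cong\lim_{\longrightarrow}{\mathcal S}^{r\diam}(E)/\gtQ_E
\]
is a directed colimit of integral domains along injective homomorphisms, and since passing to fraction fields commutes with such colimits we obtain
\[
\kappa(\gtq)\cong\lim_{\longrightarrow}\kappa(\gtQ_E).
\]

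Next I would verify that each $\kappa(\gtQ_E)$ is real closed. In the ${\mathcal S}^r$ case, $E$ is a closed semialgebraic subset of some $\R^n$, hence locally compact; in the ${\mathcal S}^{r*}$ case, $E$ is compact, so ${\mathcal S}^{r*}(E)={\mathcal S}^r(E)$ and again $E$ is locally compact. Thus in both cases $\gtQ_E$ is a prime ideal of the ring ${\mathcal S}^r(E)$ of a locally compact semialgebraic set, and the already-proved locally compact form of Theorem \ref{main3} from Subsection \ref{main3:locallcompact} yields that $\kappa(\gtQ_E)$ is a real closed field.

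Finally, a directed colimit of real closed fields along ordered-field embeddings is real closed: any monic polynomial of odd degree in $\kappa(\gtq)[x]$ has only finitely many coefficients which must all lie in some $\kappa(\gtQ_E)$, where real closedness provides a root; likewise, any positive element admits a square root at some level. Equivalently, $\kappa(\gtq)(\sqrt{-1})$ is the directed colimit of the algebraically closed fields $\kappa(\gtQ_E)(\sqrt{-1})$ and is therefore algebraically closed, while $\kappa(\gtq)$ remains formally real since this property is inherited from any one of the $\kappa(\gtQ_E)$. The only genuinely non-formal step is the colimit identification $\kappa(\gtq)\cong\lim_{\longrightarrow}\kappa(\gtQ_E)$, which relies decisively on the lifting statement in Proposition \ref{citalc1}(i); everything else is essentially bookkeeping.
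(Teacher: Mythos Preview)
Your proof is correct and follows essentially the same route as the paper's: realize $\kappa(\gtq)$ as the directed colimit $\displaystyle\lim_{\longrightarrow}\kappa(\gtQ_E)$ via Lemma \ref{LD}, invoke the locally compact case from Subsection \ref{main3:locallcompact} to see that each $\kappa(\gtQ_E)$ is real closed, and conclude by stability of real closedness under directed colimits. The only cosmetic difference is that the paper appeals to \cite[\S1 Thm.\ 4.8]{s4} (direct limits of real closed rings are real closed) for the last step, whereas you give a self-contained first-principles argument.
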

\begin{proof} 
As ${\tt j}(M)$ is dense in $E$, the homomorphism ${\tt j}^*:{\mathcal S}^{r\diam}(E)\to{\mathcal S}^{r\diam}(M)$ is injective for each $(E,{\tt j})\in {\mathfrak C}^{r\diam}$. Define $\gtQ_{\tt j}:=({\tt j}^*)^{-1}(\gtq)\in\Spec^{r\diam}(E)$ and note that ${\ol{{\tt j}^*}:{\mathcal S}^{r\diam}(E)/\gtQ_{\tt j}\to{\mathcal S}^{r\diam}(M)/\gtq}$ is also an injective homomorphism. Thus, by Lemma \ref{LD} it follows that $\kappa(\gtq)=\displaystyle\lim_{\longrightarrow}\kappa(\gtQ_{\tt j})$. As each field $\kappa(\gtQ_{\tt j}):=\qf({\mathcal S}^{r\diam}(E)/\gtQ_{\tt j})$ is a real closed ring, the field $\kappa(\gtq)$ is by \cite[\S1 Thm. 4.8]{s4} also a real closed ring, so it is a real closed field, as required.
\end{proof}

\begin{remark}
If $\gtn$ is a maximal ideal of ${\mathcal S}^{r*}(M)$, then the homomorphism of fields ${\tt j}:\R\hookrightarrow\kappa(\gtn):={\mathcal S}^{r*}(M)/\gtn,\,r\mapsto r+\gtn$ is an isomorphism. The extension of real closed fields $\kappa(\gtn)|\R$ is archimedean, because each element of ${\mathcal S}^{r*}(M)$ is bounded by a real number. This proves the surjectivity of the embedding ${\tt j}$ because $\R$ does not admit proper archimedean extensions.	 
\end{remark}

\begin{lem}\label{brimming1} 
Let $\gtq\in\Spec^{r\diam}(M)$. Then there exist $(E,{\tt j})\in{\mathfrak C}^{r\diam}$ and $\gtq_E\in\Spec^{r\diam}(E)$ such that $\kappa(\gtq)=\kappa(\gtq_E)$ and $\gtq$ lies over $\gtq_E$. We refer to $(E,{\tt j})$ as a \em brimming ${\mathcal S}^{r\diam}$-completion \em of $M$ for $\gtq$. 
\end{lem}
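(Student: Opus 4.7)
My plan is to exploit the direct limit presentation $\kappa(\gtq)=\displaystyle\lim_{\longrightarrow}\kappa(\gtQ_{\tt j})$ over $(E,{\tt j})\in{\mathfrak C}^{r\diam}$ (with $\gtQ_{\tt j}:=({\tt j}^*)^{-1}(\gtq)$), which was already established in the proof of Corollary \ref{RCf}, and to show that the limit is already attained at a sufficiently large stage. Once I know that $\tr\deg_\R(\kappa(\gtq))$ is finite, I can capture an entire transcendence basis inside a single $\kappa(\gtQ_{\tt j})$; the remaining algebraic extension then has to be trivial because both fields are real closed.

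First I would establish finiteness of the transcendence degree. For every $(E,{\tt j})\in{\mathfrak C}^{r\diam}$ the closed semialgebraic set $E=\cl({\tt j}(M))$ is locally compact with $\dim(E)=\dim({\tt j}(M))=\dim(M)$, so Corollary \ref{trdeg}(i) gives $\tr\deg_\R(\kappa(\gtQ_{\tt j}))=\depth(\gtQ_{\tt j})\leq\dim(M)$. Since $({\mathfrak C}^{r\diam},\preceq)$ is directed (Lemma \ref{LD}), any finite subset of $\kappa(\gtq)$ is contained in some single $\kappa(\gtQ_{\tt j})$, and consequently $\tr\deg_\R(\kappa(\gtq))\leq\dim(M)<\infty$.

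Next I would pick a transcendence basis $\alpha_1,\ldots,\alpha_n$ of $\kappa(\gtq)$ over $\R$ and write $\alpha_i=(a_i+\gtq)/(b_i+\gtq)$ with $a_i,b_i\in{\mathcal S}^{r\diam}(M)$ and $b_i\notin\gtq$. Applying Proposition \ref{citalc1}(i) to the family $\{a_1,\ldots,a_n,b_1,\ldots,b_n\}$ I obtain $(E,{\tt j})\in{\mathfrak C}^{r\diam}$ together with $A_i,B_i\in{\mathcal S}^{r\diam}(E)$ such that $A_i\circ{\tt j}=a_i$ and $B_i\circ{\tt j}=b_i$. Setting $\gtq_E:=\gtq\cap{\mathcal S}^{r\diam}(E)=({\tt j}^*)^{-1}(\gtq)$, each $B_i\notin\gtq_E$ (because $b_i\notin\gtq$), and the injective homomorphism ${\mathcal S}^{r\diam}(E)/\gtq_E\hookrightarrow{\mathcal S}^{r\diam}(M)/\gtq$ induced by ${\tt j}^*$ extends to an embedding $\kappa(\gtq_E)\hookrightarrow\kappa(\gtq)$ which sends $(A_i+\gtq_E)/(B_i+\gtq_E)$ to $\alpha_i$. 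In particular, every $\alpha_i$ already lies in the image of $\kappa(\gtq_E)$, and by construction $\gtq$ lies over $\gtq_E$.

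It then remains to conclude that the inclusion $\kappa(\gtq_E)\subseteq\kappa(\gtq)$ is an equality. Since $\kappa(\gtq_E)$ already contains a transcendence basis of $\kappa(\gtq)$ over $\R$, the extension $\kappa(\gtq)|\kappa(\gtq_E)$ is algebraic. By Corollary \ref{RCf} the field $\kappa(\gtq)$ is real closed, and by Theorem \ref{main3} applied to the locally compact set $E$ so is $\kappa(\gtq_E)$; a real closed field admits no proper algebraic extension that is itself formally real, so $\kappa(\gtq)=\kappa(\gtq_E)$, as required. The main obstacle is the first step: one must bound $\tr\deg_\R(\kappa(\gtq))$ uniformly over the direct limit, and this is where both the locally compact case of Corollary \ref{trdeg} and the equality $\dim(E)=\dim(M)$ for every completion are essential; once finiteness is in hand, the rest of the argument is a routine use of Proposition \ref{citalc1}(i) plus the algebraic rigidity of real closed fields.
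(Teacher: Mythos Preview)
Your proof is correct and follows essentially the same approach as the paper. The paper organizes the argument slightly differently: instead of first bounding $\tr\deg_\R(\kappa(\gtq))$ and then capturing a transcendence basis in one completion, it maximizes the semialgebraic depth ${\tt d}(\gtq_{\Ff})$ over all finite $\Ff\subset{\mathcal S}^{r\diam}(M)$ (the maximum exists since all depths lie in $\{0,\ldots,\dim(M)\}$), takes $(E_{\Ff_0},{\tt j}_{\Ff_0})$ realizing this maximum, and then shows each $f+\gtq$ is algebraic over $\kappa(\gtq_{\Ff_0})$ by enlarging $\Ff_0$ to $\Ff_0\cup\{f\}$ and noting the depth (hence transcendence degree, via Corollary~\ref{trdeg}) cannot increase. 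Both routes rely on the same ingredients: Corollary~\ref{trdeg}(i) for the locally compact completions, the directed structure of ${\mathfrak C}^{r\diam}$, and the fact that a real closed field admits no proper formally real algebraic extension.
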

\begin{proof}
Consider the chain of homomorphisms $\varphi:{\mathcal S}^{r\diam}(M)\to{\mathcal S}^{r\diam}(M)/\gtq\hookrightarrow\kappa(\gtq)$. For each finite subset $\Ff$ of ${\mathcal S}^{r\diam}(M)$ let $(E_{\Ff},{\tt j}_{\Ff})\in{\mathfrak C}^{r\diam}$ be as in Proposition \ref{citalc1}. Denote 
$$
{\tt j}_{\Ff}^*:{\mathcal S}^{r\diam}(E_{\Ff})\to{\mathcal S}^{r\diam}(M),\,F\mapsto F\circ{\tt j}_{\Ff}\quad\text{and}\quad\varphi_{\Ff}:=\varphi\circ{\tt j}_{\Ff}^*:{\mathcal S}^{r\diam}(E_{\Ff})\to\kappa(\gtq).
$$
Define $\gtq_{\Ff}:=\ker(\varphi_{\Ff})=\gtq\cap{\mathcal S}^{r\diam}(E_{\Ff})$ and ${\tt d}:=\max_{\Ff}\{{\tt d}(\gtq_{\Ff})\}$ where ${\Ff}$ runs over all finite subsets of ${\mathcal S}^{r\diam}(M)$. Fix a finite subset ${\Ff}_0$ of ${\mathcal S}^{r\diam}(M)$ such that ${\tt d}(\gtq_{\Ff_0})={\tt d}$. Denote 
$$
\gtq_{{\Ff}_0}:=\ker(\varphi_{\Ff_0})={\mathcal S}^{r\diam}(E_{\Ff_0})\cap\gtq\quad\text{and}\quad\kappa_0:=\qf({\mathcal S}^{r\diam}(E_{\Ff_0})/\gtq_{{\Ff}_0})\subset\kappa(\gtq).
$$ 
Let us prove: $\kappa(\gtq)=\kappa_0$, so $(E_{\Ff_0},{\tt j}_{{\Ff}_0})$ satisfies the required conditions. 

As both are real closed fields, it is enough to show: \em $\kappa(\gtq)$ is an algebraic extension of $\kappa_0$\em. To that end, it is enough to see: \em $f+\gtq$ is algebraic over $\kappa_0$ for each $f\in{\mathcal S}^{r\diam}(M)\setminus\gtq$\em.

Let $\Ff_1:=\Ff_0\cup\{f\}$ and $(E_{\Ff_1},{\tt j}_{\Ff_1})\in{\mathfrak C}^{r\diam}$. The projection onto all the coordinates except for the last one induces an ${\mathcal S}^{r\diam}$-map $\rho:E_{\Ff_1}\to E_{\Ff_0}$ such that $\rho \circ {\tt j}_{\Ff_1}={\tt j}_{\Ff_0}$. Consider the $\R$-homomorphism ${\mathcal S}^{r\diam}(E_{\Ff_0})\to{\mathcal S}^{r\diam}(E_{\Ff_1}),\,h\mapsto h\circ\rho$. Denote $\gtq_{{\Ff}_1}:=\ker(\varphi_{{\Ff}_1})={\mathcal S}^{r\diam}(E_{\Ff_1})\cap\gtq$ and the real closed field $\kappa_1:=\qf({\mathcal S}^{r\diam}(E_{\Ff_1})/\gtq_{{\Ff}_1})$. We have the following commutative diagrams of homomorphisms
$$
\xymatrix{
{\mathcal S}^{r\diam}(E_{\Ff_0}) \hspace{4mm} \ar@{^{(}->}[r]\ar@{^{(}->}[rd]& {\mathcal S}^{r\diam}(E_{\Ff_1})\ar@{^{(}->}[d]\\
&{\mathcal S}^{r\diam}(M)}
\qquad
\xymatrix{
{\mathcal S}^{r\diam}(E_{\Ff_0})/\gtq_{{\Ff}_0} \ar@{^{(}->}[r]\ar@{^{(}->}[rd]^{\vspace{1cm}}&{\mathcal S}^{r\diam}(E_{\Ff_1})/\gtq_{{\Ff}_1}\ar@{^{(}->}[d]\\
&{\mathcal S}^{r\diam}(M)/\gtq}
$$
so $\kappa_0\subset\kappa_1\subset\kappa(\gtq)$. As $f\in{\Ff_1}$, there exists $F\in{\mathcal S}^{r\diam}(E_{\Ff_1})$ such that $F\circ{\tt j}_{\Ff_1}=f$. To see that $f+\gtq$ is algebraic over $\kappa_0$ it is enough to prove: \em $F+\gtq_{{\Ff}_1}$ is algebraic over $\kappa_0$\em. For that, it is enough to check: \em the transcendence degrees over $\R$ of $\kappa_0$ and $\kappa_1$ are finite and coincide\em. 

This follows from Corollary \ref{trdeg} because
$$
\tr\deg_{\R}(\kappa_0)\leq\tr\deg_{\R}(\kappa_1)={\tt d}(\gtq_{{\Ff}_1})\leq{\tt d}={\tt d}(\gtq_{{\Ff}_0})=\tr\deg_\R(\kappa_0).
$$
Hence, $\tr\deg_{\R}(\kappa_0)=\tr\deg_{\R}(\kappa_1)$, as required.
\end{proof}
\begin{example}
Not all pairs $(E,{\tt j})\in{\mathfrak C}^{r\diam}$ are brimming ${\mathcal S}^{r\diam}$-completions of $M$ for a given $\gtq\in\Spec^r(M)$, see \cite[Rmk.3.1]{fg6}.
\end{example}

\begin{cor}\label{cor:brimming} 
Let $\gtp\in\Spec^{0\diam}(M)$ and $\gtq:=\gtp\cap{\mathcal S}^{r\diam}(M)\in\Spec^{r\diam}(M)$. Then there exist brimming completions $(E_1,{\tt j_1})\in{\mathfrak C}^{0\diam}$ and $(E_2,{\tt j_2})\in{\mathfrak C}^{r\diam}$ of $\gtp$ and $\gtq$ respectively and a ${\mathcal S}^{r\diam}$-map $\rho:E_1\to E_2$ such that ${\tt j_2}=\rho\circ {\tt j_1}$.
\end{cor}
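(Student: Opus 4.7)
The plan is to start from two separately-chosen brimming completions, one for $\gtp$ in ${\mathfrak C}^{0\diam}$ and one for $\gtq$ in ${\mathfrak C}^{r\diam}$, and then amalgamate them using the directed structure of $({\mathfrak C}^{0\diam},\preceq)$. Concretely, by Lemma \ref{brimming1} pick a brimming ${\mathcal S}^{0\diam}$-completion $(E_1',{\tt j}_1')$ of $M$ for $\gtp$ and a brimming ${\mathcal S}^{r\diam}$-completion $(E_2,{\tt j}_2)$ of $M$ for $\gtq$. Since every ${\mathcal S}^r$-embedding is in particular an ${\mathcal S}^0$-embedding, the pair $(E_2,{\tt j}_2)$ also belongs to ${\mathfrak C}^{0\diam}$, so Lemma \ref{LD} produces a common refinement in ${\mathfrak C}^{0\diam}$. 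Explicitly, set ${\tt j}_1:=({\tt j}_1',{\tt j}_2):M\to\R^{n_1+n_2}$ and $E_1:=\cl({\tt j}_1(M))$. The two coordinate projections restrict to polynomial maps $\rho':E_1\to E_1'$ and $\rho:E_1\to E_2$, both of which are in particular ${\mathcal S}^r$-maps, and by construction $\rho'\circ{\tt j}_1={\tt j}_1'$ and $\rho\circ{\tt j}_1={\tt j}_2$. So $(E_2,{\tt j}_2)$ already serves as the brimming ${\mathcal S}^{r\diam}$-completion of the statement, and $\rho$ is the desired ${\mathcal S}^{r\diam}$-map relating the two.

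The only nontrivial point is verifying that the enlarged pair $(E_1,{\tt j}_1)$ is still brimming for $\gtp$. Set $\gtp_1':=({\tt j}_1')^{-1}(\gtp)$ and $\gtp_1:=({\tt j}_1)^{-1}(\gtp)$. The factorization ${\tt j}_1'=\rho'\circ{\tt j}_1$ produces a tower of injective $\R$-algebra homomorphisms
$$
{\mathcal S}^{0\diam}(E_1')/\gtp_1'\hookrightarrow{\mathcal S}^{0\diam}(E_1)/\gtp_1\hookrightarrow{\mathcal S}^{0\diam}(M)/\gtp,
$$
and passing to fraction fields a tower of real closed fields $\kappa(\gtp_1')\subseteq\kappa(\gtp_1)\subseteq\kappa(\gtp)$. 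Since $(E_1',{\tt j}_1')$ is brimming, the composite extension is an equality, and therefore so is each intermediate one; hence $(E_1,{\tt j}_1)$ is brimming for $\gtp$. I do not foresee a serious obstacle beyond this small observation: once one sees that the amalgamation must be performed inside ${\mathfrak C}^{0\diam}$ (where both chosen completions live, whereas $(E_1',{\tt j}_1')$ need not belong to ${\mathfrak C}^{r\diam}$), the sandwich above shows that the brimming property is preserved under enlargement in the directed system, which is all that is required.
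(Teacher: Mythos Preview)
Your proof is correct and follows essentially the same strategy as the paper: amalgamate the two brimming completions inside ${\mathfrak C}^{0\diam}$ via the product embedding ${\tt j}_1=({\tt j}_1',{\tt j}_2)$, take the projection onto the second factor as $\rho$, and check that the enlarged pair stays brimming for $\gtp$. The only difference is in how that last point is justified: the paper refers back to the transcendence-degree argument inside the proof of Lemma~\ref{brimming1} (enlarging the finite family $\Ff_1$ to $\Ff_1\cup\Ff_2$ can only increase ${\tt d}(\gtp_{\Ff})$, which is already maximal), whereas you give the cleaner sandwich $\kappa(\gtp_1')\subseteq\kappa(\gtp_1)\subseteq\kappa(\gtp)$ with equal endpoints. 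Your argument is self-contained and avoids re-examining that proof; the paper's is marginally shorter because it simply points back.
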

\begin{proof}
We have shown in the proof of Lemma \ref{brimming1} that there exists a finite subset $\Ff_1$ of ${\mathcal S}^{0\diam}(M)$ such that $(E_{\Ff_1},{\tt j}_{\Ff_1})$ is a brimming ${\mathcal S}^{0\diam}$-completion of $\gtp$. Analogously, there exists a finite subset $\Ff_2$ of ${\mathcal S}^{r\diam}(M)$ such that $(E_{\Ff_2},{\tt j}_{\Ff_2})$ is a brimming ${\mathcal S}^{r\diam}$-completion of $\gtq$. Define $\Ff'_1:=\Ff_1\cup\Ff_2$ and note that as we have seen in the proof of Lemma \ref{brimming1} the pair $(E_{\Ff'_1},{\tt j}_{\Ff'_1})$ is also a brimming ${\mathcal S}^{0\diam}$-completion of $\gtp$. In addition, the projection onto all the coordinates except the last $|\Ff_2|$ coordinates induces a ${\mathcal S}^{r\diam}$-map $\rho:E_{\Ff'_1}\to E_{\Ff_2}$ such that ${\tt j}_{\Ff_2}=\rho\circ {\tt j}_{\Ff'_1}$, as required.
\end{proof}

We have all the ingredients to prove Theorem \ref{main3} in the general case. 

\begin{proof}[Proof of Theorem \em \ref{main3} \em in the general case] 
Let $M\subset\R^m$ be an arbitrary semialgebraic set. Pick $\gtp\in\Spec^{0\diam}(M)$ and denote $\gtq:=\gtp\cap{\mathcal S}^{r\diam}(M)\in\Spec^{r\diam}(M)$. We claim: \em the homomorphism ${\mathcal S}^{r\diam}(M)/\gtq\to{\mathcal S}^{0\diam}(M)/\gtp$ induces an isomorphism between the fields of fractions $\kappa(\gtq)$ and $\kappa(\gtp)$\em.

Pick brimming ${\mathcal S}^{r\diam}$-completions $(E_1,{\tt j_1})\in {\mathfrak C}^0$ and $(E_2,{\tt j_2})\in {\mathfrak C}^r$ of $M$ for $\gtp$ and $\gtq$ as in Corollary \ref{cor:brimming} and let $\rho:E_1\to E_2$ be an ${\mathcal S}^{r\diam}$-map such that ${\tt j_2}=\rho\circ {\tt j_1}$. Denote $\gtp_{E_1}:=\gtp\cap{\mathcal S}^{0\diam}(E_1)$ and $\gtq_{E_2}:=\gtq\cap{\mathcal S}^{r\diam}(E_2)$. We point out that if ${\mathcal S}^{r\diam}={\mathcal S}^{r*}$, then $E_2$ is a compact semialgebraic set and it holds ${\mathcal S}^{r*}(E_2)={\mathcal S}^r(E_2)$ and ${\mathcal S}^{0*}(E_2)={\mathcal S}^0(E_2)$. 

The homomorphisms ${\mathcal S}^0(E_1)/\gtp_{E_1}\to{\mathcal S}^{0\diam}(M)/\gtp$ and ${\mathcal S}^r(E_2)/\gtq_{E_2}\to{\mathcal S}^{r\diam}(M)/\gtq$ induce isomorphisms between the corresponding fields of fractions. Define $\gtp_{E_2}:=\gtp\cap{\mathcal S}^0(E_2)$ and note that $\gtq_{E_2}=\gtp_{E_2}\cap{\mathcal S}^r(E_2)$. By the locally compact version of Theorem \ref{main3} proved in Subsection \ref{main3:locallcompact} the homomorphism ${\mathcal S}^r(E_2)/\gtq_{E_2}\to{\mathcal S}^0(E_2)/\gtp_{E_2}$ induces an isomorphism between the corresponding fields of fractions. 

It holds $\gtp_{E_2}=(\rho^*)^{-1}(\gtp_{E_1})$ where $\rho^*:{\mathcal S}^0(E_2)={\mathcal S}^{0\diam}(E_2)\to{\mathcal S}^{0\diam}(E_1),\ f\mapsto f\circ\rho$ is the homomorphism induced by $\rho$. By Corollary \ref{trdeg}(ii) the homomorphism ${\mathcal S}^0(E_2)/\gtp_{E_2}\to{\mathcal S}^{0\diam}(E_1)/\gtp_{E_1}$ induces an isomorphism between the corresponding fields of fractions. Consequently, we have the following commutative diagrams
$$
\xymatrix{
&{\mathcal S}^r(E_2)/\gtq_{E_2}\ar[r]^{\doteq}\ar[dl]_{\doteq}\ar[d]&{\mathcal S}^{r\diam}(M)/\gtq\ar[d]\\
{\mathcal S}^0(E_2)/\gtp_{E_2}\ar[r]^{\doteq}&{\mathcal S}^{0\diam}(E_1)/\gtp_{E_1}\ar[r]^{\doteq}&{\mathcal S}^{0\diam}(M)/\gtp}\quad
\raisebox{-0.75cm}{$\leadsto$}
\xymatrix{
&\kappa(\gtq_{E_2})\ar[r]^{\cong}\ar[dl]_{\cong}\ar[d]&\kappa(\gtq)\ar[d]\\
\kappa(\gtp_{E_2})\ar[r]^{\cong}&\kappa(\gtp_{E_1})\ar[r]^{\cong}&\kappa(\gtp)}
$$
where the symbol $\doteq$ means that the corresponding homomorphism induces an isomorphism between the corresponding fields of fractions. We conclude that the homomorphism ${\mathcal S}^{r\diam}(M)/\gtq\to{\mathcal S}^{0\diam}(M)/\gtp$ induces an isomorphism between $\kappa(\gtq)$ and $\kappa(\gtp)$, as required.
\end{proof}

There exist examples of non-locally compact semialgebraic sets $M$ for which there exists $\gtq\in\Spec^r(M)$ with ${\tt d}(\gtq)<\tr\deg_{\R}(\kappa(\gtq))$, see a counterexample in \cite[(3.4.1)]{fg5} for the case $r=0$. In general, we have the following result:

\begin{cor}\label{trdgz} 
Let $\gtq$ be a prime ideal of ${\mathcal S}^r(M)$. Then ${\tt d}(\gtq)\leq\tr\deg_{\R}(\kappa(\gtq))$. If in addition $\gtq$ is a $z$-ideal, then the equality holds.
\end{cor}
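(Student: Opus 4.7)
The strategy is to compare $\gtq$ with its contraction $\gtq_E := \gtq\cap\mathcal{S}^r(E)$ to a brimming $\mathcal{S}^r$-completion $(E,{\tt j})\in\mathfrak{C}^r$ of $M$ for $\gtq$, as provided by Lemma \ref{brimming1}. Since $E$ is locally compact, Corollary \ref{trdeg}(i) yields $\tr\deg_\R(\kappa(\gtq_E)) = {\tt d}(\gtq_E)$, and the brimming property gives $\kappa(\gtq_E) = \kappa(\gtq)$. It therefore suffices to compare ${\tt d}(\gtq)$ with ${\tt d}(\gtq_E)$.

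For the general inequality ${\tt d}(\gtq)\le\tr\deg_\R(\kappa(\gtq))$, pick $F\in\gtq_E$ with $\dim(Z(F))={\tt d}(\gtq_E)$ and set $f:=F\circ{\tt j}\in\gtq$. Since ${\tt j}$ is injective, ${\tt j}|_{Z(f)}:Z(f)\to {\tt j}(M)\cap Z(F)$ is a semialgebraic homeomorphism, whence $\dim(Z(f)) = \dim({\tt j}(M)\cap Z(F)) \le \dim(Z(F))$. Consequently ${\tt d}(\gtq)\le\dim(Z(f))\le{\tt d}(\gtq_E)=\tr\deg_\R(\kappa(\gtq))$, which is the first assertion of the corollary.

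Assume now that $\gtq$ is a $z$-ideal; I want the reverse inequality ${\tt d}(\gtq_E)\le{\tt d}(\gtq)$. Pick $f\in\gtq$ realizing $\dim(Z(f))={\tt d}(\gtq)$. Inspection of the proof of Lemma \ref{brimming1} shows that enlarging the generating family $\Ff_0$ of the brimming completion to $\Ff_0\cup\{f\}$ still produces a brimming $\mathcal{S}^r$-completion (the transcendence degree saturates at $\Ff_0$), so without loss of generality $f=F\circ{\tt j}$ for some $F\in\gtq_E$. Define $A := \cl_E({\tt j}(Z(f)))\subseteq Z(F)$; since ${\tt j}$ is an embedding and closure preserves dimension for semialgebraic sets, $\dim(A) = \dim(Z(f)) = {\tt d}(\gtq)$. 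By Lemma \ref{zero} pick $G\in\mathcal{S}^r(E)$ with $Z(G)=A$. Then $Z(G\circ{\tt j}) = {\tt j}^{-1}(A)\supseteq Z(f)$ and the $z$-ideal hypothesis on $\gtq$ forces $G\circ{\tt j}\in\gtq$, i.e., $G\in\gtq_E$. Hence ${\tt d}(\gtq_E)\le\dim(Z(G))=\dim(A)={\tt d}(\gtq)$, and combined with the first inequality we obtain ${\tt d}(\gtq) = {\tt d}(\gtq_E) = \tr\deg_\R(\kappa(\gtq))$.

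The main delicate point is the enlargement step in the $z$-ideal case: one must justify that appending the chosen $f$ to the family of generators of the brimming completion keeps it brimming, which is exactly what the saturation argument in Lemma \ref{brimming1} provides. Once that is in hand, the $z$-ideal property does the rest by transporting the small zero-set in $M$ to a small zero-set in the locally compact completion $E$, where Corollary \ref{trdeg}(i) closes the loop.
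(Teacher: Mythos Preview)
Your proof is correct and follows essentially the same route as the paper's. One simplification: in the $z$-ideal case the enlargement step (arranging $f=F\circ{\tt j}$ for some $F\in\gtq_E$) is unnecessary, since the containment $A\subseteq Z(F)$ is never used afterward---the paper simply takes $A:=\cl_E({\tt j}(Z(f)))$ directly for an arbitrary $f\in\gtq$ realizing ${\tt d}(\gtq)$, applies Lemma~\ref{zero} to get $G$ with $Z(G)=A$, and then the $z$-ideal hypothesis gives $G\in\gtq_E$ exactly as you argue.
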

\begin{proof} 
Pick a brimming ${\mathcal S}^r$-completion $(E,{\tt j})$ of $M$ for $\gtq$ and denote $\gtq_E:=\gtq\cap{\mathcal S}^r(E)$. By Corollary \ref{trdeg} and Lemma \ref{brimming1} ${\tt d}(\gtq_E)=\tr\deg_{\R}(\kappa(\gtq_E))=\tr\deg_{\R}(\kappa(\gtq))$, so for the first part of the statement it is enough to check: ${\tt d}(\gtq)\leq{\tt d}(\gtq_E)$. 

Let $F\in\gtq_E$ with ${\tt d}(\gtq_E)=\dim(Z(F))$. Then $f:=F\circ{\tt j}\in\gtq$ and 
$$
{\tt d}(\gtq)\leq\dim(Z(f))\leq\dim(Z(F))={\tt d}(\gtq_E).
$$
Assume next, $\gtq$ is a $z$-ideal. For the converse inequality ${\tt d}(\gtq_E)\leq{\tt d}(\gtq)$, let $h\in\gtq$ be such that ${\tt d}(\gtq)=\dim(Z(h))$. By Lemma \ref{zero} there exists $g\in{\mathcal S}^r(E)$ such that $\cl_E({\tt j}(Z(h)))=Z(g)$, so $Z(h)\subset Z(g\circ{\tt j})$. As $\gtq$ is a $z$-ideal, we get $g\in\gtq_E$. Consequently,
$$
{\tt d}(\gtq_E)\leq\dim(Z(g))=\dim(\cl_E({\tt j}(Z(h))))=\dim(Z(h))={\tt d}(\gtq), 
$$
as required.
\end{proof}

We finish this section providing two important classes of ideals that are $z$-ideals.

\begin{prop}\label{zdl} 
Let $M\subset\R^m$ be a semialgebraic set.
\begin{itemize}
\item[(i)] Let $\gta$ be a $z$-ideal of ${\mathcal S}^r(M)$. Every prime ideal of ${\mathcal S}^r(M)$ that is minimal among the prime ideals containing $\gta$ is a $z$-ideal. In particular, minimal prime ideals of ${\mathcal S}^r(M)$ are $z$-ideals.
\item[(ii)] Let $\gtb$ be a proper ideal in ${\mathcal S}^r(M)$. Then there exists a proper $z$-ideal $\gtb^z$ in ${\mathcal S}^r(M)$ that contains $\gtb$. Every proper prime ideal which is maximal among the proper ideals containing $\gtb$ is a $z$-ideal. In particular, maximal ideals of ${\mathcal S}^r(M)$ are $z$-ideals.
\end{itemize}
\end{prop}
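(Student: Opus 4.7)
The plan is to prove (i) by means of the standard commutative-algebra characterization of minimal primes, and (ii) by explicitly constructing the smallest $z$-ideal containing $\gtb$.

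For (i), I will use the following classical fact: if $\gtq$ is minimal among the primes of ${\mathcal S}^r(M)$ containing $\gta$, then for each $f\in\gtq$ there exist an integer $n\geq 1$ and $s\in{\mathcal S}^r(M)\setminus\gtq$ with $sf^n\in\gta$. This follows by localizing at $\gtq$: in the local ring $({\mathcal S}^r(M)/\gta)_{\gtq/\gta}$ the image of $\gtq$ is the only prime, hence coincides with the nilradical, so every element of $\gtq$ becomes nilpotent after localization. Armed with this, fix $f\in\gtq$ and $g\in{\mathcal S}^r(M)$ with $Z(f)\subseteq Z(g)$, and choose $n$ and $s$ as above. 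Then
$$
Z(sf^n)=Z(s)\cup Z(f)\subseteq Z(s)\cup Z(g)=Z(sg),
$$
and since $\gta$ is a $z$-ideal and $sf^n\in\gta$, this forces $sg\in\gta\subseteq\gtq$. Primality of $\gtq$ together with $s\notin\gtq$ then yields $g\in\gtq$, so $\gtq$ is a $z$-ideal. The final assertion follows by taking $\gta=(0)$, which is trivially a $z$-ideal (as $Z(0)=M\subseteq Z(g)$ forces $g=0$), since every minimal prime of ${\mathcal S}^r(M)$ is minimal over $(0)$.

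For (ii), given a proper ideal $\gtb$ I will define
$$
\gtb^z:=\{g\in{\mathcal S}^r(M):\,\exists\, f\in\gtb\text{ with }Z(f)\subseteq Z(g)\}.
$$
The inclusion $\gtb\subseteq\gtb^z$ is clear, as are closure under multiplication by elements of ${\mathcal S}^r(M)$ and the $z$-ideal property (both by transitivity of zero-set inclusions). For additive closure I will use the square trick: given $g_1,g_2\in\gtb^z$ with witnesses $f_1,f_2\in\gtb$, the function $f_1^2+f_2^2\in\gtb$ satisfies
$$
Z(f_1^2+f_2^2)=Z(f_1)\cap Z(f_2)\subseteq Z(g_1)\cap Z(g_2)\subseteq Z(g_1+g_2),
$$
so $g_1+g_2\in\gtb^z$. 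The one point requiring genuine input is properness: if $1\in\gtb^z$ then some $f\in\gtb$ has empty zero-set, but the description of units in ${\mathcal S}^r(M)$ recalled in Subsection \ref{LK} forces such an $f$ to be a unit, contradicting that $\gtb$ is proper.

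With $\gtb^z$ in hand, the remaining claims of (ii) are formal. For a prime $\gtp$ maximal among proper ideals containing $\gtb$, I will apply the construction to $\gtp$ itself: $\gtp^z$ is then a proper $z$-ideal with $\gtb\subseteq\gtp\subseteq\gtp^z$, so maximality of $\gtp$ forces $\gtp^z=\gtp$, exhibiting $\gtp$ as a $z$-ideal. The ``in particular'' is the specialization $\gtb=(0)$, since any maximal ideal is proper, prime, and automatically maximal among proper ideals containing $(0)$. The only nontrivial ingredients entering the whole argument are the classical characterization of minimal primes and the unit criterion for ${\mathcal S}^r(M)$; I expect no serious obstacle beyond these.
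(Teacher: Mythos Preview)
Your proof is correct and follows essentially the same approach as the paper's. The only cosmetic difference is in part (i): you invoke the standard characterization of minimal primes (for $f\in\gtq$ there exist $s\notin\gtq$ and $n\geq 1$ with $sf^n\in\gta$) directly, whereas the paper packages the same idea as a contradiction argument via the multiplicative set $\{hf^\ell:h\notin\gtq,\ \ell\geq 0\}$; part (ii), including the definition of $\gtb^z$, the square trick for additive closure, and the properness/maximality arguments, matches the paper verbatim.
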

\begin{proof} 
(i) Let $\gtq\in\Spec^r(M)$ be minimal among the prime ideals containing $\gta$. Suppose there exist $f,g\in{\mathcal S}^r(M)$ with $Z(f)\subset Z(g)$ and $f\in\gtq$ but $g\notin\gtq$. Let ${\mathcal T}_0:={\mathcal S}^r(M)\setminus\gtq$ and consider the multiplicatively closed subset ${\mathcal T}:=\{hf^\ell:\ h\in {\mathcal T}_0, \,\ell\in\Z,\ell\geq 0\}$ of ${\mathcal S}^r(M)$. We claim: ${\mathcal T}\cap\gta=\varnothing$. 

Otherwise, there exist $h\in{\mathcal T}_0$ and $\ell\geq 0$ such that $hf^\ell\in\gta$. But $\gta$ is a $z$-ideal and 
$$
Z(hf^\ell)=Z(h)\cup Z(f)\subset Z(h)\cup Z(g)=Z(hg).
$$
Thus, $hg\in\gta\subset\gtq$, which is a contradiction. 

Therefore, ${\mathcal T}^{-1}\gta$ is a proper ideal of the localization ${\mathcal T}^{-1}{\mathcal S}^r(M)$, so there exists an ideal $\gtq_0$ of ${\mathcal S}^r(M)$ such that ${\mathcal T}^{-1}\gtq_0$ is maximal and contains ${\mathcal T}^{-1}\gta$. Consequently, $\gtq_0$ is a prime ideal such that ${\mathcal T}\cap\gtq_0=\varnothing$ and $\gta\subset\gtq_0$. As ${\mathcal T}\cap\gtq_0=\varnothing$ and $f\in{\mathcal T}$, we have $\gtq_0\subsetneq\gtq$, which contradicts the minimality of $\gtq$.

(ii) Let us check: 
$$
\gtb^z:=\{f\in{\mathcal S}^r(M):\,\exists\,g\in\gtb\ \text{such that}\ Z(g)\subset Z(f)\}
$$
\em is a $z$-ideal of ${\mathcal S}^r(M)$ containing $\gtb$\em. 

Given $f_1,f_2\in\gtb^z$ and $h\in{\mathcal S}^r(M)$, there exist $g_1,g_2\in\gtb$ such that $Z(g_i)\subset Z(f_i)$ for $i=1,2$. As $g_1^2+g_2^2\in\gtb$ and $hg_1\in\gtb$, we have 
$$
Z(g_1^2+g_2^2)\subset Z(f_1+f_2)\quad\text{and}\quad Z(hg_1)\subset Z(hf_1).
$$
This proves that $\gtb^z$ is an ideal and it is a $z$-ideal containing $\gtb$. 

If $\gtb^z={\mathcal S}^r(M)$ there exists $g\in\gtb$ such that $Z(g)\subset Z(1)=\varnothing$. Thus $g$ is a unit in ${\mathcal S}^r(M)$, which is a contradiction because $\gtb$ is proper.

Finally, if $\gtm$ is a proper prime ideal which is maximal among the proper ideals containing $\gtb$, then $\gtm^z=\gtm$, as required.
\end{proof}

\section{The ring of ${\mathcal S}^{\infty}$ functions}\label{s5}

The main purpose of this section is to study the ring ${\mathcal S}^{\infty}(M)$ where $M\subset\R^m$ is a semialgebraic set. To that aim, we need to recall the machinery of real closed rings.

\subsection{Real closed rings and real closure} 
Let $A$ be a commutative ring with unity. The set $\Sper(A)$ is the collection of \emph{prime cones} of $A$, that is, subsets $\alpha$ of $A$ such that $\gtp_\alpha:=\alpha\cap-\alpha$ is a prime ideal of $A$ and $\alpha/\gtp_\alpha$ is the positive cone of a total order of $A/\gtp_\alpha$. We denote by $\rho(\alpha)$ the real closure of the field of fractions of $A/\gtp_\alpha$. Let $\rho_\alpha:A\to\rho(\alpha)$ be the natural homomorphism and denote $a(\alpha):=\rho_\alpha(a)$ for each $a\in A$. The subsets $\Uu(a):=\{\alpha\in\Sper(A):\ a(\alpha)>0\}$ for $a\in A$, which are called \em basic open subsets\em, constitute a basis of the \emph{spectral topology} of $\Sper(A)$. A boolean combination of basic open subsets is called a \emph{constructible} set. We refer the reader to \cite[\S7.1]{bcr} for further details concerning the real spectrum of a ring $A$ and its constructible subsets. 

\begin{defn}[{\cite{s2}}]\label{def:rcr}
A commutative ring with unity $A$ is \em real closed \em if it satisfies the following conditions:
\begin{itemize}
\item[(i)] $A$ is a reduced ring,
\item[(ii)] The support map $\supp:\Sper(A)\to\Spec(A),\ \alpha\mapsto\gtp_{\alpha}=\alpha\cap(-\alpha)$ is \em identifying\em, that is, it is a homeomorphism, which induces a bijection between the constructible subsets of $\Sper(A)$ and those of $\Spec(A)$,
\item[(iii)] For each $\gtp\in\Spec(A)$ we have:
\begin{itemize}
\item[(a)] The field of fractions $R:=\qf(A/\gtp)$ is a real closed field and $A/\gtp$ is integrally closed in $R$ and 
\item[(b)] Each $\gtQ\in\Spec(A/\gtp)$ is convex with respect to the unique ordering of $A/\gtp$,
\end{itemize}
\item[(iv)] A finite sum of radical ideals of $A$ is a radical ideal of $A$.
\end{itemize}
\end{defn}
\begin{remark}
We show in Corollary \ref{a} that if $r\geq1$, rings of ${\mathcal S}^{r\diam}$-functions are not real closed rings because they not satisfy condition (iv). On the contrary all these rings satisfy conditions (i), (ii) and (iii).
\end{remark}

We will need a number of well-known facts by the experts in the theory of real closed rings. For the sake of completeness we give the proof of those which do not appear clearly stated and proved in the literature. 

\begin{fact}\label{directlimit}
If $\{(A_i,\varphi_i)\}_{i\in I}$ is a direct system of rings, then the real closure of $A:=\displaystyle\lim_{\substack{\longrightarrow}}A_i$ is $\displaystyle\lim_{\substack{\longrightarrow}}{\rm rcl}(A_i)$ where ${\rm rcl}(A_i)$ denotes the real closure of $A_i$. 
\end{fact}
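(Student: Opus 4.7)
The plan is to verify that the ring $B:=\displaystyle\lim_{\substack{\longrightarrow}}{\rm rcl}(A_i)$, equipped with the canonical homomorphism ${\tt i}:A\to B$ induced by the transition maps $A_i\to{\rm rcl}(A_i)$, satisfies the universal property characterizing the real closure of $A$. The uniqueness (up to unique isomorphism over $A$) of the real closure then identifies $B$ with ${\rm rcl}(A)$.

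First I would show that $B$ is a real closed ring. Since real closedness is characterized by a first-order axiomatization in the language of rings (this is a central theorem of Schwartz--Tressl, e.g.\ \cite[\S12]{s2} or \cite{scht}), and direct limits commute with first-order statements whose axioms are expressible by $\forall\exists$-sentences that are preserved along directed colimits, the property is inherited by $B$. Concretely, one verifies each item of Definition \ref{def:rcr}: (i) reducedness is preserved by direct limits; (ii) the support map is identifying because real spectra and Zariski spectra of a direct limit are the inverse limits of the corresponding spectra of the ${\rm rcl}(A_i)$, and an inverse limit of homeomorphisms inducing bijections on constructibles retains that property; (iii) every prime $\gtp\subset B$ is the direct limit of its contractions $\gtp_i:=\gtp\cap{\rm rcl}(A_i)$, so $B/\gtp=\displaystyle\lim_{\to}{\rm rcl}(A_i)/\gtp_i$ and $\qf(B/\gtp)=\displaystyle\lim_{\to}\qf({\rm rcl}(A_i)/\gtp_i)$, a directed union of real closed fields with integrally closed subrings and convex prime spectra---hence itself real closed with the required properties; (iv) the sum of two radical ideals in $B$ is radical since in each ${\rm rcl}(A_i)$ the sum of the contractions is radical and radicality transfers to the colimit.

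Next I would construct the canonical homomorphism ${\tt i}:A\to B$. For each $i\in I$ let ${\tt i}_i:A_i\to{\rm rcl}(A_i)$ be the structural homomorphism, and for each transition $\varphi_{ij}:A_i\to A_j$ the universal property of ${\rm rcl}(A_i)$ applied to ${\tt i}_j\circ\varphi_{ij}:A_i\to{\rm rcl}(A_j)$ produces a unique ring homomorphism $\ol{\varphi}_{ij}:{\rm rcl}(A_i)\to{\rm rcl}(A_j)$ with $\ol{\varphi}_{ij}\circ{\tt i}_i={\tt i}_j\circ\varphi_{ij}$. Uniqueness in the universal property ensures $\ol{\varphi}_{jk}\circ\ol{\varphi}_{ij}=\ol{\varphi}_{ik}$, so $\{({\rm rcl}(A_i),\ol{\varphi}_{ij})\}$ is a directed system whose colimit is $B$, and the maps ${\tt i}_i$ assemble into ${\tt i}:A\to B$.

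Finally I would check the universal property. Let $f:A\to C$ be a homomorphism to a real closed ring $C$. Precomposing with the structural maps $A_i\to A$ yields a compatible family $f_i:A_i\to C$. By the universal property of each ${\rm rcl}(A_i)$ we obtain unique $\ol{f}_i:{\rm rcl}(A_i)\to C$ with $\ol{f}_i\circ{\tt i}_i=f_i$; the uniqueness clause forces compatibility $\ol{f}_j\circ\ol{\varphi}_{ij}=\ol{f}_i$, so the $\ol{f}_i$ descend to a unique homomorphism $\ol{f}:B\to C$ with $\ol{f}\circ{\tt i}=f$. The main obstacle is the verification that $B$ is itself real closed---particularly the preservation of axiom (iv) and of the identifying property of the support map---since this is not a purely categorical formality but genuinely relies on the first-order axiomatizability of the theory of real closed rings due to Schwartz and Tressl.
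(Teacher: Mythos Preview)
Your proposal is correct and follows essentially the same route as the paper: both arguments reduce to the fact that a direct limit of real closed rings is real closed (the paper simply cites \cite[\S I.Thm.4.8]{s4} for this, while you sketch a direct verification of the axioms and also mention the first-order axiomatizability), and then both conclude via the universal properties of the real closure and of the direct limit. The only cosmetic difference is that the paper builds mutually inverse maps between ${\rm rcl}(A)$ and $\displaystyle\lim_{\longrightarrow}{\rm rcl}(A_i)$, whereas you verify the universal property of the real closure for $B$ directly; these are equivalent. Your hands-on check of axiom (ii) (the identifying property of the support map under inverse limits of spectra) is the sketchiest part and would need more care to make rigorous, so in practice the citation to Schwartz is the cleaner way to close that step.
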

\begin{proof} 
Let $\varphi_i:A_i\to A$ be the canonical homomorphism for each $i\in I$. Let $\theta:A\to{\rm rcl}(A)$ and $\theta_i:A_i\to{\rm rcl}(A_i)$ be the homomorphism provided by the real closure for each $i\in I$. By the universal property of real closure there exists a unique homomorphism $\psi_i:{\rm rcl}(A_i)\to{\rm rcl}(A)$ such that $\psi_i\circ\theta_i=\theta\circ\varphi_i$ for each $i\in I$. 
$$
\xymatrix{
A\ar[r]^\theta&{\rm rcl}(A)\\
A_i\ar[u]^{\varphi_i}\ar[r]^{\theta_i}&{\rm rcl}(A_i)\ar[u]^{\psi_i}
}
$$
In addition, $\{{\rm rcl}(A_i)\}_{i\in I}$ is a direct system of real closed rings. By the universal property of direct limits there exists a unique homomorphism $\displaystyle\lim_{\substack{\longrightarrow}}{\rm rcl}(A_i)\to{\rm rcl}(A)$. By the universal property of direct limits we also have a unique homomorphism $A\to\displaystyle\lim_{\substack{\longrightarrow }}{\rm rcl}(A_i)$. By \cite[\S I.Thm.4.8]{s4} $\displaystyle\lim_{\substack{\longrightarrow }}{\rm rcl}(A_i)$ is a real closed ring, so by the universal property of real closure we obtain a unique homomorphism ${\rm rcl}(A)\to\displaystyle\lim_{\substack{\longrightarrow }}{\rm rcl}(A_i)$. Using once more universal properties the reader can check that the previous homomorphism is an isomorphism, as required. 
\end{proof}

Recall that given an open semialgebraic set $U\subset\R^m$ a \em Nash function \em $f:U\to\R$ is a $\mathcal{C}^\infty$ semialgebraic function. If $M$ is a semialgebraic subset of $\R^m$, we denote by ${\mathcal N}(M)$ the collection of all functions $f:M\to\R$ that admit a Nash extension to an open semialgebraic neighborhood of $M$ in $\R^m$. Let ${\mathcal N}^*(M)$ be the subring of ${\mathcal N}(M)$ of bounded Nash functions and use ${\mathcal N}^\diam(M)$ to denote both rings indistinctly when a statement holds for both rings. Recall that $M$ is a \em Nash subset \em of a semialgebraic open set $U\subset \R^m$ if there exists a Nash function $f$ on $U$ such that $M=Z(f)$. We say that $M\subset \R^m$ is a \em Nash set \em if it is a Nash subset of some semialgebraic open set. By \cite[(2.12)]{fg2} $M$ is a Nash set if and only if it is a Nash subset of $\R^m\setminus(\cl(M)\setminus M)$, or equivalently, it is a Nash subset of any open semialgebraic neighborhood on which it is closed.
 
\begin{fact}\label{f:rcl2}
Let $X\subset \R^m$ be a Nash set. Then ${\mathcal S}^{0\diam}(X)$ is the real closure of ${\mathcal N}^\diam(X)$.
\end{fact}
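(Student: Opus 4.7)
The strategy is to use the standard characterization of the real closure in Schwartz's theory: for a reduced ring $A$, an extension $A\hookrightarrow B$ with $B$ real closed is the real closure of $A$ if and only if the induced map $\Sper(B)\to\Sper(A)$ is a homeomorphism and the residue fields at corresponding prime cones coincide. I therefore need to check three things for the inclusion ${\mathcal N}^\diam(X)\hookrightarrow{\mathcal S}^{0\diam}(X)$: (i) ${\mathcal S}^{0\diam}(X)$ is real closed; (ii) the spectral map is a bijection; (iii) the residue fields agree.

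For (i), since $X$ is a Nash set it is locally compact, hence ${\mathcal S}^{0\diam}(X)={\mathcal S}^{\diam}(X)$, and this ring is known to be real closed by Schwartz's foundational work cited at the start of Section~\ref{s5} (rings of continuous semialgebraic functions on locally compact semialgebraic sets are among the canonical examples of real closed rings). To set up the comparison, I would use that $X$ is a Nash subset of the open semialgebraic set $U:=\R^m\setminus(\cl(X)\setminus X)$, so there exists $g\in{\mathcal N}(U)$ with $Z(g)=X$. For the Nash manifold $U$, the equality $\mathrm{rcl}({\mathcal N}^\diam(U))={\mathcal S}^{0\diam}(U)={\mathcal S}^\diam(U)$ is standard (for open semialgebraic sets, which are Nash manifolds, this follows from Schwartz--Tressl together with the polynomial case quoted in the Introduction, since ${\mathcal N}(U)$ is a localization of a polynomial ring up to algebraic/analytic closure operations respecting real closure by Fact~\ref{directlimit}). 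This gives a commutative diagram of restriction maps in which the outer square of real closures is understood.

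For (ii) and (iii), I would transfer the result from $U$ to $X$. The key geometric tool is Efroymson's Nash approximation theorem \cite[Thm.~8.8.4]{bcr}, already used in the proof of Theorem~\ref{main1} in the ${\mathcal S}^{r*}$ case above: every continuous semialgebraic function on $U$ can be uniformly approximated by Nash functions (up to a strictly positive semialgebraic modulus). Using approximation together with Lemma~\ref{zero} applied in the Nash category (Nash functions with prescribed zero set), one checks that every prime cone $\alpha\in\Sper({\mathcal S}^{0\diam}(X))$ is completely determined by its trace on ${\mathcal N}^\diam(X)$, giving injectivity of the spectral map; surjectivity follows from the fact that prime cones of ${\mathcal N}^\diam(X)$ can be realized in the real closure of the residue ring, which embeds into ${\mathcal S}^{0\diam}(X)$ by evaluation along semialgebraic arcs. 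For the residue fields one argues as in Theorem~\ref{main3}: at a prime $\gtp\subset{\mathcal S}^{0\diam}(X)$ lying over $\gtq\subset{\mathcal N}^\diam(X)$, every class $f+\gtp$ satisfies a Nash equation modulo $\gtq$ (use Nash approximation to write $f$ as $g/h$ with $g,h\in{\mathcal N}^\diam(X)$, $h\notin\gtq$), so $\kappa(\gtq)=\kappa(\gtp)$.

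The main obstacle I foresee is handling the possibly non-coherent case, where the restriction ${\mathcal N}(U)\to{\mathcal N}(X)$ need not be surjective and the ideal $I(X)\subset{\mathcal N}(U)$ is not well-behaved under taking radicals. To circumvent this I would work directly with the intrinsic ring ${\mathcal N}^\diam(X)$ (whose elements extend to some, possibly shrinking, neighborhood) and use the direct-limit description of Fact~\ref{directlimit} over the filtered family of Nash neighborhoods of $X$: the real closure commutes with direct limits, and at each neighborhood level the statement reduces to the Nash manifold case treated above. Passing to the limit then yields $\mathrm{rcl}({\mathcal N}^\diam(X))={\mathcal S}^{0\diam}(X)$, which is the claim.
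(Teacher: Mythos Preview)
Your overall strategy via Fact~\ref{rclc} is legitimate, but there is a genuine gap in the last paragraph, and the paper's route is both shorter and avoids the difficulty you flag.

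The obstacle you anticipate---that the restriction ${\mathcal N}(U)\to{\mathcal N}(X)$ may fail to be surjective in the non-coherent case---is not an obstacle at all. By Shiota's extension theorem \cite[Thm.~II.5.2]{sh}, for \emph{any} Nash subset $X$ of an open semialgebraic set $U$ one has ${\mathcal N}(X)={\mathcal N}(U)/I(X)$; surjectivity of the restriction holds irrespective of coherence. (Coherence concerns whether $I(X_x)=I(X){\mathcal N}(U_x)$ at each point, which is a different question.) Your proposed workaround through a direct limit over open neighborhoods $V\supset X$ is not correctly formulated: the directed system $\{{\mathcal N}^\diam(V)\}_V$ with restriction maps has limit equal to the ring of Nash \emph{germs} along $X$, not ${\mathcal N}^\diam(X)$; to recover ${\mathcal N}^\diam(X)$ you must still quotient by the ideal of germs vanishing on $X$, and then you need compatibility of real closure with quotients anyway---which is precisely the tool the paper uses from the start. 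Separately, your sketch for (iii) (``write $f$ as $g/h$ with $g,h\in{\mathcal N}^\diam(X)$ by approximation'') does not follow from Nash approximation as stated; approximation yields $|f-g|<\veps$, not a quotient representation, and turning this into equality of residue fields requires the kind of argument carried out in Fact~\ref{Nash*sper}.

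The paper's argument sidesteps direct verification of the spectral conditions. After reducing (via the graph embedding $x\mapsto(x,1/h(x))$) to the case where $X$ is a Nash subset of $\R^m$, Shiota's theorem and Lemma~\ref{ext} give ${\mathcal N}^\diam(X)={\mathcal N}^\diam(\R^m)/I(X)$ and ${\mathcal S}^{0\diam}(X)={\mathcal S}^{0\diam}(\R^m)/\ker(\varphi)$ with $I(X)=\ker(\varphi)\cap{\mathcal N}^\diam(\R^m)$. Schwartz's lemma \cite[\S I, Lem.~4.5]{s4} on compatibility of real closure with quotients then reduces everything to $X=\R^m$. There the unbounded case is a one-line sandwich: since $\R[\x]\subset{\mathcal N}(\R^m)\subset{\mathcal S}^0(\R^m)$ and ${\mathcal S}^0(\R^m)$ is already the real closure of $\R[\x]$, it is automatically the real closure of the intermediate ring. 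The bounded case on $\R^m$ does require a convexity argument via \cite[Prop.~29]{s7} together with Nash approximation, closer in spirit to what you sketch, but confined to $\R^m$ where it is explicit.
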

\begin{proof}
We can assume that $X$ is a Nash subset of $\R^m$. 

Indeed, by \cite[(2.12)]{fg2} $X$ is a Nash subset of $V:=\R^m\setminus(\cl(X)\setminus X)$. By \cite[Thm.II.5.2]{sh} it holds ${\mathcal N}(X)={\mathcal N}(V)/I(X)$ where $I(X):=\{f\in{\mathcal N}(V):\ f|_X=0\}$. By \cite[Prop.2.7.5]{bcr} there exists $h\in{\mathcal S}^0(\R^m)$ such that $Z(h)=\R^m\setminus V=\cl(X)\setminus X$ and $h|_V$ is Nash. Consider the embedding $\varphi:V\to\R^{m+1},\ x\mapsto(x,\frac{1}{h(x)})$ whose image is the closed semialgebraic set $C:=\{(x,y)\in\R^{m+1}:\ yh(x)=1\}$. Note that $Z:=\varphi(X)$ is a Nash subset of $V\times \R$. As $Z$ is closed in $\R^{m+1}$, again by \cite[(2.12)]{fg2} $Z$ is also a Nash subset of $\R^{m+1}$.

The maps 
\begin{align*}
\varphi_1^*&:{\mathcal N}^{\diam}(Z)\to{\mathcal N}^{\diam}(X),\ f\mapsto f\circ\varphi|_X,\\
\varphi_2^*&:{\mathcal S}^{0\diam}(Z)\to{\mathcal S}^{0\diam}(X),\ f\mapsto f\circ\varphi|_X
\end{align*}
are isomorphisms. Hence ${\mathcal S}^{0\diam}(Z)$ is the real closure of ${\mathcal N}^{\diam}(Z)$ if and only if ${\mathcal S}^{0\diam}(X)$ is the real closure of ${\mathcal N}^{\diam}(X)$.

Next, we can assume that $X=\R^m$. 

Indeed, consider the surjective homomorphism $\varphi:{\mathcal S}^{0\diam}(\R^m)\to{\mathcal S}^{0\diam}(X)$ given by the restriction to $X$. By \cite[Thm.II.5.2]{sh} and Lemma \ref{ext} we have ${\mathcal N}^\diam(X)={\mathcal N}^\diam(\R^m)/I(X)$ and ${\mathcal S}^{0\diam}(X)={\mathcal S}^{0\diam}(\R^m)/\ker(\varphi)$ where $I(X)=\ker(\varphi)\cap{\mathcal N}^\diam(\R^m)$. By \cite[\S I.Lem.4.5]{s4} it follows that ${\mathcal S}^{0\diam}(X)$ is the real closure of ${\mathcal N}^\diam(X)$ once we have proved the statement for $X=\R^m$. 

We have $\R[\x]:=\R[\x_1,\ldots,\x_n]\subset{\mathcal N}(\R^m)\subset{\mathcal S}^0(\R^m)$, so ${\mathcal S}^0(\R^m)$ is the real closure of ${\mathcal N}(\R^m)$, because ${\mathcal S}^0(\R^m)$ is by \cite[\S III.1]{s4} the real closure of $\R[\x]$. 

It only remains to prove the ${\mathcal N}^*$ case. As ${\mathcal S}^{0*}(\R^m)$ is convex in the real closed ring ${\mathcal S}^0(\R^m)$, it is also real closed \cite[Thm. 5.12]{s6}. Thus, the real closure $A$ of the ring ${\mathcal N}^*(\R^m)$ is contained in ${\mathcal S}^*(\R^m)$. Note that $g:=\frac{1}{1+\| x\|^2}\in{\mathcal N}^*(\R^m)\subset A$ and the localization $A_g\subset{\mathcal S}^0(\R^m)$ is a real closed ring which contains ${\mathcal N}(\R^m)={\mathcal N}^*(\R^m)_g$, so $A_g={\mathcal S}^0(\R^m)$. Therefore, to prove the converse inclusion ${\mathcal S}^{0*}(\R^m)\subset A$, it is enough to show by \cite[Prop.29]{s7} the following: \em for each $k\geq1$ and $a\in A$ with $0\leq a\leq g^k$ it holds $a\in g^kA$\em. 

Indeed, as $\frac{1}{2}g^{2k}>0$, there exists $h\in{\mathcal N}^*(\R^m)$ (use Nash approximation of continuous semialgebraic functions \cite[Thm.8.8.4]{bcr}) such that $|h-(a+\frac{1}{2}g^{2k})|<\frac{1}{2}g^{2k}$, so $0<h-a<g^{2k}$. As $h<g^{2k}+a<2g^k$ (recall that by hypothesis $0\leq a\leq g^k$), we get $h/g^k\in{\mathcal N}^*(\R^m)\subset A$, so $h\in g^kA$. As $A$ is real closed and $0<h-a<g^{2k}$, there exists by \cite[Def.]{ps} $b\in A$ such that $(h-a)^2=g^{2k}b$. As $b(\alpha)\geq0$ for each $\alpha\in\Sper(A)$, we deduce by \cite[\S I.Lem.4.9]{s4} that $\sqrt{b}\in A$, so $h-a=g^k\sqrt{b}\in g^kA$. We conclude that $a\in g^kA$, as required.
\end{proof}

As $\mathcal{S}^{0\diam}(\R^n)$ is the real closure of $\mathcal{N}^\diam(\R^n)$, it is well-known that there exists a strong relation between their real spectra and their residue fields. Since this is a relevant effect, we also give here a direct proof of this relation (in the bounded case) with explicit computations.

\begin{fact}\label{Nash*sper}
The inclusion ${\tt j}:{\mathcal N}^*(\R^m)\to{\mathcal S}^*(\R^m)$ induces a homeomorphism
$$
\Sper({\tt j}):\Sper({\mathcal S}^*(\R^m))\to\Sper({\mathcal N}^*(\R^m)),\ \alpha\mapsto\alpha\cap{\mathcal N}^*(\R^m).
$$
In addition, given $\alpha\in\Sper({\mathcal S}^*(\R^m))$ denote $\beta:=\Sper({\tt j})(\alpha)$, $\gtp_\alpha:=\alpha\cap (-\alpha)$ and $\gtq_\beta:=\beta\cap (-\beta)$. Then the field of fractions $\kappa(\alpha):=\qf({\mathcal S}^*(\R^m)/\gtp_\alpha)$ is the real closure of the field of fractions $\kappa(\beta):=\qf({\mathcal N}^*(\R^m)/\gtq_\beta)$ for each $\alpha\in\Sper({\mathcal S}^*(\R^m))$.
\end{fact}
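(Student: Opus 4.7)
The plan is to leverage Fact \ref{f:rcl2} --- which identifies ${\mathcal S}^*(\R^m)={\mathcal S}^{0*}(\R^m)$ with the real closure of ${\mathcal N}^*(\R^m)$ --- together with the universal property of the real closure, making the constructions explicit via Nash approximation (\cite[Thm.8.8.4]{bcr}) where useful.

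For the bijectivity of $\Sper({\tt j})$, take $\beta\in\Sper({\mathcal N}^*(\R^m))$ with canonical homomorphism $\rho_\beta:{\mathcal N}^*(\R^m)\to\rho(\beta)$. By the universal property of the real closure, $\rho_\beta$ extends uniquely to a homomorphism $\bar\rho:{\mathcal S}^*(\R^m)\to\rho(\beta)$. Then $\alpha:=\bar\rho^{-1}(\{x\in\rho(\beta):x\geq 0\})\in\Sper({\mathcal S}^*(\R^m))$ satisfies $\alpha\cap{\mathcal N}^*(\R^m)=\beta$, which yields surjectivity. For injectivity, any prime cone $\alpha'\in\Sper({\mathcal S}^*(\R^m))$ above $\beta$ produces $\rho_{\alpha'}:{\mathcal S}^*(\R^m)\to\rho(\alpha')$; composing with the canonical embedding $\rho(\beta)\hookrightarrow\rho(\alpha')$ yields a second extension of $\rho_\beta$ to ${\mathcal S}^*(\R^m)$, and uniqueness forces this composition to factor through $\bar\rho$. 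Consequently, the image of $\rho_{\alpha'}$ lies inside $\rho(\beta)\subset\rho(\alpha')$, whence $\alpha'=\alpha$.

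For the residue field statement, the argument above yields a canonical inclusion $\kappa(\alpha)\subset\rho(\beta)$ containing $\kappa(\beta)$. Since $\rho(\beta)$ is algebraic over $\kappa(\beta)$, so is $\kappa(\alpha)$. By Corollary \ref{RCf}, $\kappa(\alpha)$ is a real closed field, and a real closed algebraic subextension of the real closure $\rho(\beta)/\kappa(\beta)$ must coincide with $\rho(\beta)$. Hence $\kappa(\alpha)=\rho(\beta)$, i.e., $\kappa(\alpha)$ is the real closure of $\kappa(\beta)$.

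For the topological statement, continuity of $\Sper({\tt j})$ is automatic from functoriality. To obtain openness (hence a homeomorphism, $\Sper({\tt j})$ being a continuous bijection of spectral spaces), Nash approximation enters directly: given $f\in{\mathcal S}^*(\R^m)$ and $h_n\in{\mathcal N}^*(\R^m)$ with $|f-h_n|<\tfrac{1}{n}$ uniformly on $\R^m$, the elements $\tfrac{1}{n}\pm(f-h_n)$ are nonnegative, hence squares in the real closed ring ${\mathcal S}^*(\R^m)$, from which $|\bar\rho(f)-h_n(\beta)|\leq\tfrac{1}{n}$ in $\rho(\beta)$ for each $\beta$. Combined with the identifying support map of the real closed ring ${\mathcal S}^*(\R^m)$ (Definition \ref{def:rcr}(ii)), this permits to express, via the bijection already established, the basic open sets $\Uu(f)$ as boolean combinations of basic open sets $\Uu(h)$ with $h\in{\mathcal N}^*(\R^m)$, securing homeomorphy. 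The main obstacle lies precisely in this last transfer: the bijection is nearly mechanical from universal properties, whereas topological compatibility is exactly what the explicit Nash-approximation estimates are needed for.
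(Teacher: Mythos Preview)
Your bijectivity and residue-field arguments are correct and take a genuinely different route from the paper. The paper splits $\Sper({\mathcal S}^*(\R^m))$ into the open piece $\mathcal U=\{g(\alpha)>0\}$ (where $g=\tfrac{1}{1+\|x\|^2}$) and its complement, reduces $\mathcal U$ to the unbounded case via localization at $g$, and handles the complement by the explicit ``closure'' construction $\widehat{\gtq}_\beta$ from Lemma~\ref{gorro}. Your approach instead invokes Fact~\ref{f:rcl2} directly and reads off bijectivity and the residue fields from the universal property of the real closure together with Corollary~\ref{RCf}; this is shorter and more conceptual, at the cost of hiding the concrete description of the inverse map that the paper exhibits.

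The openness argument, however, has a real gap. Approximating $f$ by $h_n\in{\mathcal N}^*(\R^m)$ with $|f-h_n|<\tfrac{1}{n}$ does give $|f(\alpha)-h_n(\beta)|\le\tfrac{1}{n}$ at every prime cone, but this does \emph{not} let you capture $\{\alpha:f(\alpha)>0\}$ by conditions on the $h_n$: at a prime cone $\alpha$ whose residue field is non-Archimedean over $\R$, the element $f(\alpha)$ can be a positive infinitesimal, so $f(\alpha)>0$ while $h_n(\beta)<\tfrac{1}{n}$ for every $n$. Thus the image of $\Uu(f)$ is not expressed as a union (or Boolean combination) of the sets $\Uu(h_n-\tfrac{1}{n})$, and your appeal to the identifying support map does not repair this. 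Even granting that the image is constructible, a bijective spectral map that is a homeomorphism for the constructible topology need not be a homeomorphism for the spectral topology without an additional argument about specializations, which you do not supply.

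The paper's fix is to approximate with $\varepsilon=g$ rather than with constants: for $\alpha_0$ on the boundary one has $g(\alpha_0)=0$, so an $h\in{\mathcal N}^*(\R^m)$ with $f-2g<h<f$ on $\R^m$ satisfies $\alpha_0\in\Uu(f-2g)\subset\Uu(h)\subset\Uu(f)$, and $\Sper({\tt j})(\Uu(h))$ is a basic open of $\Sper({\mathcal N}^*(\R^m))$ containing $\beta_0$. Alternatively, since you are already using Fact~\ref{f:rcl2}, you can simply cite the general fact \cite[\S I, Prop.~3.23]{s4} that $\Sper({\rm rcl}(A))\to\Sper(A)$ is a homeomorphism and dispense with the explicit openness computation altogether.
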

\begin{proof}
Consider the Nash function $g:=\frac{1}{1+\|x\|^2}\in{\mathcal N}^*(\R^m)$. Recall that for each $f\in{\mathcal N}(\R^m)$ there exists a non-negative integer $\ell$ such that $h:=fg^\ell\in{\mathcal N}^*(\R^m)$, see \cite[Prop.2.6.2]{bcr}. Thus, ${\mathcal N}(\R^m)={\mathcal N}^*(\R^m)_g$. Analogously, ${\mathcal S}(\R^m)={\mathcal S}^*(\R^m)_g$. Define the open constructible sets ${\mathcal U}:=\{\alpha\in\Sper({\mathcal S}^*(\R^m)):\ g(\alpha)>0\}$ and ${\mathcal V}:=\{\beta\in\Sper({\mathcal N}^*(\R^m)):\ g(\beta)>0\}$. Similarly as in \cite[Prop.3.11 \& Ch.3, Ex. 21]{am}, we have the following commutative diagram,
\begin{equation}\label{cd}
\xymatrix{
\Sper({\mathcal S}(\R^m))\ar@{<->}[r]^\cong\ar@{<->}[d]_\cong&\Sper({\mathcal S}^*(\R^m)_g)\ar@{<->}[r]^(0.825)\cong\ar@{<->}[d]_\cong&{\mathcal U}\ar@{^{(}->}[r]\ar@{<->}[d]^{\Sper({\tt j})|_{\mathcal U}}&\Sper({\mathcal S}^*(\R^m))\ar@{<->}[d]^{\Sper({\tt j})}\\
\Sper({\mathcal N}(\R^m))\ar@{<->}[r]^\cong&\Sper({\mathcal N}^*(\R^m)_g)\ar@{<->}[r]^(0.825)\cong&{\mathcal V}\ar@{^{(}->}[r]&\Sper({\mathcal N}^*(\R^m)).
}
\end{equation}

Now, pick $\alpha\in{\mathcal U}$ and let us show: \em $\kappa(\alpha)$ is the real closure of $\kappa(\beta)$, where $\beta:=\alpha\cap{\mathcal N}^{*}(\R^m)$\em. 

Indeed, the injective ring homomorphism 
$$
{\mathcal N}^*(\R^m)/\gtq_{\beta}\to{\mathcal S}^*(\R^m)/\gtp_{\alpha}
$$
induces a homomorphism $\kappa(\beta)\hookrightarrow\kappa(\alpha)$. As ${\mathcal S}^*(\R^m)$ is a real closed ring, the field $\kappa(\alpha)$ is real closed. To prove that $\kappa(\alpha)$ is the real closure of $\kappa(\beta)$ it only remains to prove: \em each homomorphism ${\tt i}:\kappa(\beta)\to R$ into a real closed field $R$ extends to $\kappa(\alpha)$\em. 

Consider the homomorphism $\psi:={\tt i}\circ\pi:{\mathcal N}^*(\R^m)\to R$ where $\pi:{\mathcal N}^*(\R^m)\to{\mathcal N}^*(\R^m)/\gtq_\beta$ is the canonical projection. As $g\notin\gtq_\beta$, the previous homomorphism extends to $\Psi:{\mathcal N}(\R^m)={\mathcal N}^*(\R^m)_g\to R$. Finally, the ring ${\mathcal S}(\R^m)$ is the real closure of ${\mathcal N}(\R^m)$, so we obtain a homomorphism $\widehat{\Phi}:{\mathcal S}(\R^m)\to R$ extending $\Phi$, which in turn induces a homomorphism $\kappa(\beta)\to R$ extending ${\tt i}$.

We prove next: \em $\Sper({\tt j})$ is a homeomorphism\em. To show that $\Sper({\tt j})$ is surjective it is enough to find a preimage of $\beta\in\Spec({\mathcal N}^*(\R^m))$ with $g(\beta)\leq 0$. As $g>0$ in $\R^m$ and $\R^m$ is dense in $\Sper({\mathcal S}^*(\R^m))$, we have $g(\beta)=0$, so $g\in\gtq_\beta$. Define
$$
\widehat{\gtq}_\beta:=\{f\in{\mathcal S}^{0*}(\R^m):\ \forall\veps\in{\mathcal S}^{0*}(\R^m),\ \veps>0\ \exists h\in\gtq_\beta\text{ such that}\ |f-h|<\veps\}.
$$
Similarly to what we have done in Lemma \ref{gorro}, the reader proves that $\widehat{\gtq}_\beta$ is a prime ideal of ${\mathcal S}^*(\R^m)$ and $\widehat{\gtq}_\beta\cap{\mathcal N}^*(\R^m)=\gtq_\beta$. We claim: \em the injective homomorphism 
$$
\psi:{\mathcal N}^*(\R^m)/\gtq_\beta\hookrightarrow{\mathcal S}^*(\R^m)/\widehat{\gtq}_\beta,\ h+\gtq_\beta\mapsto h+\widehat{\gtq}_\beta
$$
is an isomorphism\em. To that end, we prove: \em $\psi$ is surjective\em.

Pick $f\in{\mathcal S}^*(\R^m)$ and let $h\in{\mathcal N}^*(\R^m)$ be an approximation of $f$ such that $|f-h|<g$, see \cite[Thm.8.8.4]{bcr}. As $Z(g)=\varnothing$, the function
$$
a:=\frac{f-h}{g}\in{\mathcal N}^*(\R^m),
$$
so $f-h=ga\in\gtq_\beta{\mathcal S}^*(\R^m)\subset\widehat{\gtq}_\beta$ and $\psi(h+\gtq_\beta)=f+\widehat{\gtq}_\beta$, so $\psi$ is surjective and it is an isomorphism. 

Let $\alpha$ be the unique prime cone of $\Sper({\mathcal S}^*(\R^m))$ such that $\gtp_\alpha=\widehat{\gtq}_\beta$ and $\alpha/\gtp_\alpha=\psi(\beta/\gtq_\beta)$. We conclude $\Sper({\tt j})(\alpha)=\alpha\cap{\mathcal N}^{*}(\R^m)=\beta$.

Next, let us show: \em $\Sper({\tt j})$ is injective\em. 

Let $\alpha_1,\alpha_2\in\Sper({\mathcal S}^*(\R^m))$ be such that $g(\alpha_1)=g(\alpha_2)=0$ and $\alpha_1\cap{\mathcal N}^*(\R^m)=\alpha_2\cap{\mathcal N}^*(\R^m)$. Assume there exists $f\in\alpha_1\setminus\alpha_2$. Let $h\in{\mathcal N}^*(\R^m)$ be such that $|f-h|<g$. Then
$$
a:=\frac{h-f}{g}\in{\mathcal S}^*(\R^m),
$$
so $h-f=ag\in\gtp_{\alpha_i}$ for $i=1,2$. Thus, $h=f+ag\in\alpha_1\cap{\mathcal N}^*(\R^m)=\alpha_2\cap{\mathcal N}^*(\R^m)$, so $f=h-ag\in\alpha_2$, which is a contradiction. Hence, $\alpha_1=\alpha_2$.

Consequently, the continuous real spectral map (see \cite[Prop.7.1.7]{bcr})
$$
\Sper({\tt j}):\Sper({\mathcal S}^*(\R^m))\to\Sper({\mathcal N}^*(\R^m)),\ \beta\mapsto\beta\cap{\mathcal N}^*(\R^m).
$$
is bijective and it only remains to prove: \em $\Sper({\tt j})$ is an open map\em. 

We already know that $\Sper({\tt j})|_{\mathcal V}$ is open. As $\mathcal V$ is an open subset of $\Sper({\mathcal S}^*(\R^m))$, it only remains to check: \em if $\alpha_0\in\Sper({\mathcal S}^*(\R^m))\setminus{\mathcal V}=\{\alpha\in\Sper({\mathcal S}^*(\R^m)):\ g(\alpha)=0\}$, then the image under $\Sper({\tt j})$ of each open semialgebraic neighborhood of $\alpha_0$ contains an open semialgebraic neighborhood of $\Sper({\tt j})(\alpha_0)$ in $\Sper({\mathcal N}^*(\R^m))$\em. 

Let $f\in{\mathcal S}^*(\R^m)$ be such that $\alpha_0\in\Uu_0:=\{\alpha\in\Sper({\mathcal S}^*(\R^m)):\ f(\alpha)>0\}$. Let us show: \em $\Sper({\tt j})(\Uu_0)$ is an open neighborhood of $\beta_0:=\Sper({\tt j})(\alpha_0)$ in $\Sper({\mathcal N}^*(\R^m))$\em. 

Note that
$$
\alpha_0\in\Uu_1:=\{\alpha\in\Sper({\mathcal S}^*(\R^m)):\ (f-2g)(\alpha)>0\}\subset\Uu_0.
$$
By \cite[Thm.8.8.4]{bcr} there exists $h\in{\mathcal N}^*(\R^m)$ such that $|h-(f-g)|<g$, that is, $f-2g<h<f$ on $\R^m$. Define $\Uu_2:=\{\alpha\in\Sper({\mathcal S}^*(\R^m)):\ h(\alpha)>0\}$ and observe that $\alpha_0\in\Uu_1\subset\Uu_2\subset\Uu_0$. Consequently, $\beta_0\in\Sper({\tt j})(\Uu_2)=\{\beta\in\Sper({\mathcal N}^*(\R^m)):\ h(\beta)>0\}\subset\Sper({\tt j})(\Uu_0)$, as required.
\end{proof}

We would like to stress that the fact that ${\mathcal S}^{0*}(\R^m)$ is the real closure of ${\mathcal N}^*(\R^m)$ can be also deduced from Fact \ref{f:rcl2} and the following:

\begin{fact}[{\cite[Thm.5.2]{sm}}]\label{rclc}
Let $A$ be a commutative ring with unity and let $B$ be a real closed ring. Let ${\tt i}:A\to B$ be an injective homomorphism of rings such that: 
\begin{itemize}
\item[(i)] The map ${\tt i}^*:\Sper(B)\to\Sper(A),\ \beta\mapsto{\tt i}^{-1}(\beta)$ is a homeomorphism.
\item[(ii)] For each $\beta\in\Sper(B)$ the field $\qf(B/\gtq_{\beta})$ is the real closure of $\qf(A/\gtp_\alpha)$, where $\alpha:={\tt i}^*(\beta)$, $\gtp_\alpha:=\alpha\cap (-\alpha)$ and $\gtq_\beta:=\beta\cap (-\beta)$.
\end{itemize}
Then $B$ is the real closure of $A$.
\end{fact}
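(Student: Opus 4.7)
The plan is to prove $B$ equipped with ${\tt i}$ satisfies the universal property of the real closure of $A$. By the universal property of real closure and the assumption that $B$ is real closed, the homomorphism ${\tt i}:A\to B$ factors uniquely as ${\tt i}=\psi\circ\eta$, where $\eta:A\to{\rm rcl}(A)$ is the canonical homomorphism and $\psi:{\rm rcl}(A)\to B$ is a ring homomorphism. It suffices to show that $\psi$ is an isomorphism.

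First I would verify that $\psi^*:\Sper(B)\to\Sper({\rm rcl}(A))$ is a homeomorphism: this is immediate from the factorization ${\tt i}^*=\eta^*\circ\psi^*$ together with hypothesis (i) and the standard fact that $\eta^*$ is a homeomorphism for any real closure. Next, fixing $\beta\in\Sper(B)$ and setting $\gamma:=\psi^*(\beta)\in\Sper({\rm rcl}(A))$ and $\alpha:={\tt i}^*(\beta)=\eta^*(\gamma)\in\Sper(A)$, I would observe that the induced inclusion of residue fields $\kappa(\gamma)\hookrightarrow\kappa(\beta)$ sits over the inclusion $\kappa(\alpha)\hookrightarrow\kappa(\gamma)$; by the universal property of real closure, $\kappa(\gamma)$ is a real closure of $\kappa(\alpha)$, and by hypothesis (ii) so is $\kappa(\beta)$, so the uniqueness of real closure forces $\kappa(\gamma)\to\kappa(\beta)$ to be an isomorphism.

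Injectivity of $\psi$ now follows from the fact that real closed rings are reduced: if $\psi(x)=0$, then $x\in\psi^{-1}(\gtq_\beta)=\gtp_{\psi^*(\beta)}$ for every $\beta\in\Sper(B)$, and since $\psi^*$ is surjective, $x$ lies in every support of ${\rm rcl}(A)$; because the support map is surjective onto $\Spec({\rm rcl}(A))$, the intersection of supports equals the nilradical of ${\rm rcl}(A)$, which is $0$.

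The hard part will be surjectivity. The plan is to identify ${\rm rcl}(A)$ with its image in $B$ via $\psi$ and show that the inclusion ${\rm rcl}(A)\hookrightarrow B$ is an equality. This inclusion is an extension of real closed rings inducing (via the previous two steps) a homeomorphism of real spectra with isomorphisms on all residue fields. The argument is to invoke Schwartz's representation of a real closed ring $R$ as a ring of ``abstract semialgebraic sections'' of a canonical sheaf on $\Sper(R)$ whose stalk at $\beta$ is built from the convex hull of $R/\gtp_\beta$ inside $\kappa(\beta)$ (see \cite[\S I]{s4}); since $\psi^*$ is a homeomorphism, pulls back the sheaf of $B$ onto that of ${\rm rcl}(A)$, and since the residue field data coincide by Step 2, the two sheaves are canonically isomorphic and hence so are their rings of global sections. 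An equivalent formulation one could use instead is the criterion that a tight extension of real closed rings with trivial residue extensions is an isomorphism (cf.\ \cite[\S5]{sm}), applied to the inclusion $\psi({\rm rcl}(A))\hookrightarrow B$. Either route completes the proof that $\psi$ is an isomorphism, so $(B,{\tt i})$ is the real closure of $A$.
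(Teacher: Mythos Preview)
The paper does not prove this fact at all: it is stated as a citation of \cite[Thm.~5.2]{sm} and used as a black box (to establish Proposition~\ref{f:rcl1}). So there is no proof in the paper to compare against; what you have written is an outline of how one might prove the cited result from Schwartz--Madden.

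Your reduction steps are sound: the factorization ${\tt i}=\psi\circ\eta$, the deduction that $\psi^*$ is a homeomorphism from ${\tt i}^*=\eta^*\circ\psi^*$, the residue-field comparison via uniqueness of real closures, and the injectivity of $\psi$ from reducedness are all correct and standard. The weak point is your surjectivity argument. The sheaf on $\Sper(R)$ in Schwartz's theory does not have stalks determined by the residue fields $\kappa(\beta)$ alone; the local data involve the convex subrings (the images of the localizations) inside those fields, and you have not argued that $\psi$ identifies these. So ``the residue field data coincide, hence the sheaves are isomorphic'' skips the actual content of the theorem. Your alternative route---invoking that a tight extension of real closed rings with trivial residue extensions is an isomorphism---is essentially a restatement of \cite[Thm.~5.2]{sm} itself, so appealing to it is circular. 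If you want a self-contained proof, you would need to unpack the representation of real closed rings as global sections over the real spectrum (as in \cite[\S I.4]{s4}) and check that the map on sections induced by $\psi$ is surjective using both the spectral homeomorphism and a comparison of the local convex subrings, not just the residue fields.
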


We present next a result that involves the central object of our work:

\begin{prop}\label{f:rcl1}
Let $r\geq1$ be an integer and let $M\subset\R^m$ be a semialgebraic set of dimension $\geq 1$. Then the ring ${\mathcal S}^{r\diam}(M)$ is not a real closed ring and the inclusion ${\mathcal S}^{r\diam}(M)\hookrightarrow{\mathcal S}^{0\diam}(M)$ provides its real closure.
\end{prop}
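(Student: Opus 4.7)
My plan is to apply Fact \ref{rclc} to the injective inclusion ${\tt i}:A\hookrightarrow B$, where $A:={\mathcal S}^{r\diam}(M)$ and $B:={\mathcal S}^{0\diam}(M)$. First I would observe that $B$ is itself a real closed ring: when $M$ is locally compact we have $B={\mathcal S}^{\diam}(M)$, which is classically real closed (as recalled in the Introduction), while the general case follows from Lemma \ref{LD} by writing $B=\displaystyle\lim_{\longrightarrow}{\mathcal S}^{\diam}(E)$ over $(E,{\tt j})\in{\mathfrak C}^{0\diam}$ and using that a direct limit of real closed rings is real closed (\cite[\S 1 Thm. 4.8]{s4}, as already invoked in Corollary \ref{RCf}). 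Hypothesis (ii) of Fact \ref{rclc} is then essentially Theorem \ref{main3}: for every $\beta\in\Sper(B)$ with support $\gtq_\beta:=\beta\cap(-\beta)$ and image $\alpha:={\tt i}^*(\beta)$ having support $\gtp_\alpha=\gtq_\beta\cap A$, the inclusion induces an isomorphism $\kappa(\gtp_\alpha)\xrightarrow{\cong}\kappa(\gtq_\beta)$; as $\kappa(\gtq_\beta)$ is real closed (since $B$ is), it trivially coincides with the real closure of $\kappa(\gtp_\alpha)$.

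The substantive step is verifying hypothesis (i), namely that $\Sper({\tt i}):\Sper(B)\to\Sper(A)$ is a homeomorphism. Because every residue field of $A$ at a prime is real closed by Theorem \ref{main3}, and a real closed field admits a unique ordering, the support map $s_A:\Sper(A)\to\Spec(A)$ is a continuous bijection. Since $B$ is real closed, $s_B:\Sper(B)\to\Spec(B)$ is a homeomorphism, and by Theorem \ref{main1} the map $\Spec({\tt i}):\Spec(B)\to\Spec(A)$ is also a homeomorphism. The functoriality square
\[
\xymatrix{
\Sper(B)\ar[r]^{\Sper({\tt i})}\ar[d]_{s_B}&\Sper(A)\ar[d]^{s_A}\\
\Spec(B)\ar[r]^{\Spec({\tt i})}&\Spec(A)
}
\]
then shows $s_A\circ\Sper({\tt i})=\Spec({\tt i})\circ s_B$ is a homeomorphism. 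Setting $\psi:=(s_A\circ\Sper({\tt i}))^{-1}$, the continuous composition $\Sper({\tt i})\circ\psi$ is a right inverse of the bijection $s_A$, so it must equal $s_A^{-1}$; consequently $s_A$ is a homeomorphism and $\Sper({\tt i})=s_A^{-1}\circ\Spec({\tt i})\circ s_B$ is a homeomorphism too. Fact \ref{rclc} then yields that $B$ is the real closure of $A$.

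To conclude that $A$ is not real closed when $r\geq 1$ and $\dim M\geq 1$, note that any real closed ring equals its own real closure, so it is enough to exhibit $f\in{\mathcal S}^{0\diam}(M)\setminus{\mathcal S}^{r\diam}(M)$. Pick a regular point $p\in M$ at which $M$ is locally a Nash manifold of dimension $\geq 1$, and consider $f:=\min\{1,\|x-p\|\}\in{\mathcal S}^{0*}(M)\subset{\mathcal S}^{0\diam}(M)$. For any non-constant Nash arc $\gamma:(-\varepsilon,\varepsilon)\to M$ with $\gamma(0)=p$ and $\gamma'(0)\neq 0$ one has $f\circ\gamma(t)=\|\gamma'(0)\|\cdot|t|+O(t^2)$ near $0$, which fails to be ${\mathcal C}^1$. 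If $f$ were in ${\mathcal S}^r(M)\subset{\mathcal S}^1(M)$, then Theorem \ref{comp} would force $f\circ\gamma$ to be an ${\mathcal S}^1$-function on $(-\varepsilon,\varepsilon)$, contradicting its non-differentiability at $0$. Hence the inclusion $A\hookrightarrow B$ is strict and $A$ cannot be real closed.

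The main obstacle is the topological upgrade in the middle paragraph: Theorems \ref{main1} and \ref{main3} readily supply the bijectivity of $\Sper({\tt i})$, but promoting this to a genuine homeomorphism in the general (possibly non-locally-compact) setting requires the careful interplay between the two support maps and the Zariski-level homeomorphism encoded in the commutative square above.
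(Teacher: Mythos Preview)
Your proposal is correct and follows essentially the same route as the paper: both arguments reduce to Fact \ref{rclc}, verify that ${\mathcal S}^{0\diam}(M)$ is real closed via the direct-limit description of Lemma \ref{LD}, invoke Theorem \ref{main3} for the residue-field condition, and exhibit an explicit ${\mathcal S}^{0\diam}$-function that is not ${\mathcal S}^r$ to rule out real-closedness of ${\mathcal S}^{r\diam}(M)$. Your elaboration of the $\Sper$-homeomorphism via the commutative square with the support maps is exactly what the paper leaves implicit here (and spells out later in the proof of Corollary \ref{a}); your counterexample $\min\{1,\|x-p\|\}$ is a harmless variant of the paper's $1-\|x\|$ construction.
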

\begin{proof}
Pick a point $p\in M$ such that there exists an open semialgebraic neighborhood $U\subset M$ that is an (affine) Nash manifold of dimension $d:=\dim(M)$ endowed with a Nash diffeomorphism $u:U\to\R^d$ such that $u(p)=0$. Consider the ${\mathcal S}^{0*}$-function 
$$
f:\R^d\to\R,\ x\mapsto\begin{cases}
1-\|x\|&\text{if $\|x\|\leq1$},\\
0&\text{otherwise}
\end{cases}
$$ 
The composition $f\circ u^{-1}:U\to\R$ extends by $0$ to an ${\mathcal S}^{0*}$-function $F$ on $M$ that is not an ${\mathcal S}^{r*}$ function. Thus, $\mathcal{S}^{r\diam}(M)\neq \mathcal{S}^{0\diam}(M)$.

By Theorems \ref{main1} and \ref{main3} and Fact \ref{rclc} to prove the second part of the statement it is enough to show: \em${\mathcal S}^{0\diam}(M)$ is a real closed ring for each semialgebraic set $M\subset\R^m$\em. 

If $M=\mathbb{R}^m$, the statement is well-known, see \cite[\S III. 1]{s4} and \cite[Thm.10.5]{t2}. If $M$ is a closed semialgebraic subset of $\R^m$, the restriction map ${\mathcal S}^{0\diam}(\R^m)\to{\mathcal S}^{0\diam}(M)$ is an epimorphism and therefore ${\mathcal S}^{0\diam}(M)$ is a real closed ring by \cite[Thm.3.8]{s6}. If $M\subset\R^m$ is locally compact, the statement follows from Lemma \ref{ext} and the closed semialgebraic case. 

Finally, if $M\subset\R^m$ is an arbitrary semialgebraic set, it follows from Lemma \ref{LD} that
$$
{\mathcal S}^{0\diam}(M)\cong\displaystyle\lim_{\longrightarrow}({\mathcal S}^{0\diam}(E),{\tt j})
$$
where $(E, {\tt j})\in{{\mathfrak C}^{0\diam}}$. As the direct limit of real closed rings is a real closed ring \cite[\S I.Thm.4.8]{s4}, we conclude ${\mathcal S}^{0\diam}(M)$ is a real closed ring, as required. 
\end{proof}

\begin{remark}
Some particular cases of the lemma above can be deduced from the general theory of real closed rings developed in \cite{s4,t2}. 

Indeed, if $M\subset\R^m$ is a semialgebraic set, the real closure of ${\mathcal S}^r(M)$ is contained in ${\mathcal S}^0(M)$. To show that ${\rm rcl}({\mathcal S}^r(M))={\mathcal S}^0(M)$, pick $f\in{\mathcal S}^0(M)$. Let $F_1\in{\mathcal S}^0(N)$ be an extension of $f$ to an open semialgebraic neighborhood $N$ of $M$ in $\cl(M)$. By Lemma \ref{ext} we may assume $N$ is closed in $\R^m$. Thus, there exists an extension $F\in{\mathcal S}^0(\R^m)$ of $f$. As $x_i|_{M}\in{\mathcal S}^r(M)$ for each $i=1,\ldots,n$, we deduce by \cite[Lem.2.10]{t2} that $F\circ (x_1|_M,\ldots,x_n|_M)=f$ belongs to the real closure of ${\mathcal S}^r(M)$.

If $M\subset\R^m$ is a locally compact semialgebraic subset of $\R^m$, one deduces that \em ${\mathcal S}^{0*}(M)$ is the real closure of ${\mathcal S}^{r*}(M)$ \em using \cite[Prop.29]{s7} as we did in the proof of Fact \ref{f:rcl2}. However, for an arbitrary semialgebraic set $M\subset\R^m$ the only proof we know of the previous fact is the one provided above, which depends strongly on the results of this paper. 
\end{remark}

We identify next `the missing property' of the rings of ${\mathcal S}^{r\diam}$-functions to be real closed.

\begin{cor}\label{a}
Let $M\subset\R^m$ be a semialgebraic set of dimension $\geq1$ and let $r\geq1$. Then ${\mathcal S}^{r\diam}(M)$ satisfies conditions \em (i) \em to \em (iii) \em of Definition \em \ref{def:rcr}\em, but it does not satisfy condition \em (iv).
\end{cor}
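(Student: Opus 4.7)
The plan is to derive conditions (i)--(iii) from the fact that $\mathcal{S}^{0\diam}(M)$ is a real closed ring (Proposition \ref{f:rcl1}) together with Theorems \ref{main1} and \ref{main3}, and then to exhibit an explicit pair of radical ideals whose sum is not radical. Condition (i) is immediate since $\mathcal{S}^{r\diam}(M)$ is a subring of $\mathcal{S}(M)$. For (ii), the inclusion ${\tt j}\colon\mathcal{S}^{r\diam}(M)\hookrightarrow\mathcal{S}^{0\diam}(M)$ realizes the real closure of $\mathcal{S}^{r\diam}(M)$, so by \cite[\S I]{s4} the induced map $\Sper(\mathcal{S}^{0\diam}(M))\to\Sper(\mathcal{S}^{r\diam}(M))$ is a constructible-preserving homeomorphism; combining this with the Zariski spectrum homeomorphism of Theorem \ref{main1} yields a commutative square of support maps whose top row is identifying (because $\mathcal{S}^{0\diam}(M)$ is real closed) and whose vertical arrows are constructible-preserving homeomorphisms, so the bottom support map is identifying too.

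For (iii), fix $\gtq\in\Spec^{r\diam}(M)$ and let $\gtp\in\Spec^{0\diam}(M)$ be the prime lying over $\gtq$ under the homeomorphism of Theorem \ref{main1}. Theorem \ref{main3} gives $\kappa(\gtq)\cong\kappa(\gtp)$, a real closed field; any element of $\kappa(\gtq)$ integral over $\mathcal{S}^{r\diam}(M)/\gtq$ is then integral over $\mathcal{S}^{0\diam}(M)/\gtp$, hence lies in $\mathcal{S}^{0\diam}(M)/\gtp$ (which is integrally closed in its fraction field), and the lift back to $\mathcal{S}^{r\diam}(M)/\gtq$ can be extracted using Proposition \ref{crucial}; this yields (iii)(a). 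Condition (iii)(b) follows by applying Theorem \ref{main1} to the quotients and transferring convexity of prime ideals from the real closed ring $\mathcal{S}^{0\diam}(M)/\gtp$.

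The main work is constructing radical ideals witnessing the failure of (iv). Since $\dim M\geq 1$, fix a point $p\in M$ at which $M$ is a Nash manifold of positive dimension, and (after invoking Remark \ref{potabs}(iv) to assume $M$ bounded in the ${\mathcal S}^{r*}$ case) choose a Nash function $u\colon M\to\R$ with $u(p)=0$ that is submersive at $p$. Define
\[
\gta:=\{f\in\mathcal{S}^{r\diam}(M)\colon f|_{\{u\leq 0\}\cap M}\equiv 0\},\qquad\gtb:=\{f\in\mathcal{S}^{r\diam}(M)\colon f|_{\{u\geq 0\}\cap M}\equiv 0\},
\]
which are radical ideals as vanishing ideals of closed semialgebraic sets. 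Setting $g:=\max(u,0)^{r+1}$ and $h:=\min(u,0)^{r+1}$, a direct inspection shows that both belong to $\mathcal{S}^{r\diam}(M)$ (all derivatives up to order $r$ vanish along $\{u=0\}$, so the two pieces glue of class $C^r$), that $g\in\gta$, $h\in\gtb$ and $g+h=u^{r+1}$; hence $u\in\sqrt{\gta+\gtb}$. On the other hand, any decomposition $u=g_0+h_0$ with $g_0\in\gta$ and $h_0\in\gtb$ would force $g_0=\max(u,0)$ in a neighborhood of $p$, which fails to be of class $C^1$ because $u$ is submersive at $p$; thus $u\notin\gta+\gtb$ and $\gta+\gtb$ is not radical. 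The subtle point is the careful verification of $C^r$-regularity at the gluing locus $\{u=0\}$ and the argument that no $C^1$ decomposition of $u$ exists, which pinpoints the essential role of the hypothesis $r\geq 1$.
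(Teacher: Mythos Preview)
Your treatment of (i), (ii) and (iii)(b) follows the paper's line. For (iii)(a) there is a gap: after locating the integral element inside $\mathcal{S}^{0\diam}(M)/\gtp$, you assert that the ``lift back to $\mathcal{S}^{r\diam}(M)/\gtq$ can be extracted using Proposition~\ref{crucial}''. That proposition is a Nullstellensatz-type divisibility criterion for pairs of $\mathcal{S}^r$-functions and says nothing about pulling an element of $\mathcal{S}^{0\diam}(M)/\gtp$ down into the (in general strictly smaller) subring $\mathcal{S}^{r\diam}(M)/\gtq$; no such lift is available from the tools you cite. In fairness, the paper's own proof of (iii)(a) simply invokes Theorem~\ref{main3}, which only yields that $\kappa(\gtq)$ is real closed and is equally silent on the integral closedness of $\mathcal{S}^{r\diam}(M)/\gtq$ in $\kappa(\gtq)$, so neither argument cleanly settles that half of the condition.

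For (iv) your construction is correct and close in spirit to the paper's, but packaged differently. The paper first reduces to a $1$-dimensional compact $C\subset M$ via the surjection $\mathcal{S}^{r\diam}(M)\twoheadrightarrow\mathcal{S}^{r\diam}(C)$, and on $C$ builds two \emph{prime} ideals $\gtp_1,\gtp_2$ of one-sided germs at a point, showing $f^{2r}\in\gtp_1+\gtp_2$ while $f\notin\gtp_1+\gtp_2$. You work directly on $M$ with a submersive $u$ and the vanishing ideals of $\{u\le0\}\cap M$ and $\{u\ge0\}\cap M$, obtaining $u^{r+1}\in\gta+\gtb$ but $u\notin\gta+\gtb$. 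Your route avoids the reduction step and achieves the smaller exponent $r+1$, at the cost of using radical rather than prime witnesses; both arguments establish the failure of (iv). One bookkeeping point: in the bounded case you should ensure $u\in\mathcal{S}^{r*}(M)$ (e.g.\ replace $u$ by $u/(1+u^2)$).
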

\begin{proof}
Condition (i) is trivially satisfied. To prove condition (ii) consider the commutative diagram
$$
\xymatrix{
\Sper({\mathcal S}^{0\diam}(M))\ar[d]_{\supp^0}^\cong\ar[r]^{\Sper({\tt j})}_\cong&\Sper({\mathcal S}^{r\diam}(M))\ar[d]^{\supp^r}\\
\Spec^{0\diam}(M)\ar[r]^\varphi_\cong&\Spec^{r\diam}(M)
}\ 
\xymatrix{
\alpha\ar@{|->}[r]\ar@{|->}[d]&\beta=\alpha\cap{\mathcal S}^{r\diam}(M)\ar@{|->}[d]\\
\gtp_\alpha:=\alpha\cap(-\alpha)\ar@{|->}[r]&\gtq_\beta:=\beta\cap(-\beta)=\gtp_\alpha\cap{\mathcal S}^{r\diam}(M)
}
$$
where ${\tt j}:{\mathcal S}^{r\diam}(M)\hookrightarrow{\mathcal S}^{0\diam}(M)$ is the inclusion. By Theorem \ref{main1} we know that $\varphi$ is a homeomorphism \cite[Prop. 3.11 \& 3.23]{s4}, whereas $\Sper({\tt j})$ is as well a homeomorphism by Proposition \ref{f:rcl1}. As ${\mathcal S}^{0\diam}(M)$ is a real closed ring, the support map $\supp^0$ in the left is also a homeomorphism. Thus, also the support map $\supp^r$ in the right is a homeomorphism. We can also prove the latter statement in another way. As $\varphi$ and $\supp^0$ are homeomorphisms, $\supp^r$ is injective. By Theorem \ref{main3} $\supp^r$ surjective, so both $\Sper({\tt j})$ and $\supp^r$ are homeomorphisms.

We have to show in addition that $\supp^r$ is identifying. Recall that the open constructible subsets of $\Sper({\mathcal S}^{r\diam}(M))$ are exactly the open quasicompact ones and similarly for $\Sper^{r\diam}(M)$. Therefore, the homeomorphism $\varphi$ induces a bijection between the open constructible subsets, and consequently between the constructible ones. However, this can be proved directly.

Take $f\in{\mathcal S}^{r\diam}(M)$ and consider the constructible subset $\Uu(f):=\{\alpha\in\Sper({\mathcal S}^{r\diam}(M)):\ f(\alpha)>0\}$ of $\Sper({\mathcal S}^{r\diam}(M))$. By Remark \ref{potabs} (ii) $|f|^{2r+1}\in{\mathcal S}^{r\diam}(M)$. As $\Uu(f)=\Uu(f^{2r+1})$, we deduce $\supp^r(\Uu(f))=\Dd(f^{2r+1}+|f|^{2r+1})$ is a constructible subset of $\Spec^{r\diam}(M)$.

Condition (iii.a) follows from Theorem \ref{main3}. To prove condition (iii.b) let $\gtp\subset\gtq$ be prime ideals of ${\mathcal S}^r(M)$ and let $f,g\in{\mathcal S}^r(M)$ be elements whose classes modulo $\gtp$ are positive with respect to the unique ordering of ${\mathcal S}^r(M)/\gtp$ such that $f+g\in\gtq$. Then $f+g\in\varphi^{-1}(\gtq)$ and the classes of $f$ and $g$ modulo $\varphi^{-1}(\gtp)$ are positive with respect to the unique ordering of ${\mathcal S}^0(M)/\varphi^{-1}(\gtp)$. As ${\mathcal S}^0(M)$ is a real closed ring, $\varphi^{-1}(\gtq)$ is convex with respect to the unique ordering of ${\mathcal S}^0(M)/\varphi^{-1}(\gtp)$, so $f,g\in\varphi^{-1}(\gtq)\cap{\mathcal S}^r(M)=\gtq$ and $\gtq/\gtp$ is convex with respect to the unique ordering of ${\mathcal S}^r(M)/\gtp$.

Let us prove next: \em ${\mathcal S}^{r\diam}(M)$ does not satisfy condition \em (iv). Let us check first: \em it is enough to analyze the $1$-dimensional case\em. 

Let $C\subset M$ be a $1$-dimensional compact semialgebraic set and consider the surjective restriction map $\rho:\mathcal{S}^r(M)\rightarrow \mathcal{S}^r(C)$. By the $1$-dimensional case there exist $\gtp_1,\gtp_2\in\Spec^r(C)$ and $f\in \mathcal{S}^r(C)$ such that $f\in \sqrt{\gtp_1+\gtp_2}\setminus \gtp_1+\gtp_2$. Define $\gtq_i:=\rho^{-1}(\gtp_i)$ for $i=1,2$ and let us show: $\sqrt{\gtq_1+\gtq_2}\neq\gtq_1+\gtq_2$. 

Let $k\geq2$ and $f_i\in\gtp_i$ for $i=1,2$ be such that $f^k=f_1+f_2$. Pick $F,F_1\in \mathcal{S}^r(M)$ satisfying $F|_C=f$ and $F_1|_C=f_1$ and define $F_2:=F^k-F_1$. It holds $F_i\in\gtq_i$ for $i=1,2$, so $F\in \sqrt{\gtq_1+\gtq_2}$. If $F\in\gtq_1+\gtq_2$, there exist $G_i\in\gtq_i$ such that $F=G_1+G_2$. Consequently, $f=g_1+g_2$ where $g_i:=G_i|_C\in\gtp_i$ for $i=1,2$, which is a contradiction. 

Thus, we are only left to prove: \em ${\mathcal S}^{r\diam}(M)$ does not satisfy condition (iv) if $M$ is a $1$-dimensional semialgebraic set\em.

Let $U\subset M$ be an open semialgebraic subset of $M$ for which there exist a Nash diffeomorphism $\Psi:U\to\R$ and let $p\in U$ be such that $\Psi(p)=0$. Let $\gtp_k$ be the prime ideal of all ${\mathcal S}^r$-functions on $M$ that vanishes identically on the germ $\Psi^{-1}(\{(-1)^kt>0\}_0)$ for $k=1,2$. We claim: \em $\gtp_1+\gtp_2$ is not a radical ideal\em. To prove this pick $f\in{\mathcal S}^r(M)$ such that $f\circ\Psi^{-1}|_{(-1,1)}=t$ and $Z(f)=\{p\}$ (an appropriate finite $\mathcal{S}^r$-partition of unity could be useful to find $f$). Let us check: $f\in\sqrt{\gtp_1+\gtp_2}\setminus(\gtp_1+\gtp_2)$. 

Let $f_1\in{\mathcal S}^r(M)$ be such that 
$$
f_1\circ\Psi^{-1}|_{(-1,1)}(t)=\begin{cases}
t^{2r}&\text{if $t\in[0,1)$},\\
0&\text{if $t\in(-1,0]$}
\end{cases}
$$
and $Z(f_1)\subset\Psi^{-1}([-1,1])$. Let $f_2\in{\mathcal S}^r(M)$ be such that $f_2\circ\Psi^{-1}|_{(-1,1)}(t)=f_1\circ\Psi^{-1}|_{(-1,1)}(-t)$. Then $f_1+f_2\in{\mathcal S}^r(M)$ satisfies $(f_1+f_2)\circ\Psi^{-1}|_{(-1,1)}=t^{2r}$. As $Z(f)=\{p\}$ there exists a unit $u\in{\mathcal S}^r(M)$ such that $u\circ\Psi^{-1}|_{(-1,1)}=1$ and $f^{2r}=(f_1+f_2)u\in\gtp_1+\gtp_2$.

Suppose $f\in\gtp_1+\gtp_2$ and let $g_i\in\gtp_i$ for $i=1,2$ be such that $f=g_1+g_2$. We have
$$
t=f\circ\Psi^{-1}|_{(-1,1)}=g_1\circ\Psi^{-1}|_{(-1,1)}+g_2\circ\Psi^{-1}|_{(-1,1)}.
$$
Let $\veps>0$ be such that $g_1\circ\Psi^{-1}$ is identically zero on $(-\veps,0)$ and $g_2\circ\Psi^{-1}$ is identically zero on $(0,\veps)$. In particular,
$$
g_1\circ\Psi^{-1}|_{(-\veps,\veps)}=\begin{cases}
0&\text{if $t\in (-\veps,0)$},\\
t&\text{if $t\in(0,\veps)$}
\end{cases}
$$
is an $\mathcal{S}^r$-function, which is a contradiction.
\end{proof}

\subsection{Nash functions versus ${\mathcal S}^{\infty}$ functions} 
A first natural question that arises when dealing with the ring ${\mathcal S}^{\infty}(M):=\bigcap_{r\geq1}{\mathcal S}^r(M)$ of ${\mathcal S}^\infty$-functions on a semialgebraic set $M\subset\R^m$ is whether it coincides with the ring ${\mathcal N}(M)$ of Nash functions on $M$. This is false in general even if $M$ is compact or Nash as the following examples show.

\begin{examples}\label{cex:nash}
(i) Let $M\subset\R^2$ be the compact semialgebraic set 
$$
([-2,-1]\times[-1,1])\cup([1,2]\times[-1,1])\cup\{\y=0,-1\leq\x\leq 1\}
$$ 
and define $f:M\to\R,\ (x,y)\mapsto y\sqrt{x^2+y^2}$. As $f$ is the restriction to $M$ of a Nash function on $\R^2\setminus\{(0,0)\}$ and $f|_{M\cap B((0,0),1/2)}\equiv0$, we deduce $f$ is an ${\mathcal S}^{\infty}$-function. By the identity principle $f$ does not admit a Nash extension to an open semialgebraic neighborhood of $M$ in $\R^2$.

(ii) Let $X:=\{z(x^2+y^2)-x^3=0\}\subset\R^3$ be Cartan's umbrella and define
$$
f:X\to\R,\ (x,y,z)\mapsto\begin{cases}
\frac{(z-1)^2}{(z-1)^2+x^2+y^2}&\text{if $x^2+y^2\neq0$,}\\
1&\text{otherwise,}
\end{cases}
$$ 
which is not a Nash function on $M$ \cite[\S 3]{e}. However, $f$ has Nash local extensions to both $\mathbb{R}^{3}\setminus \{(0,0,1)\}$ and $\{z>\frac{1}{2}\}$. Thus, $f$ admits semialgebraic jets on $M$ of order $r$ for each $r\geq 1$.
\end{examples}

However, as we show next, ${\mathcal S}^\infty$-functions are locally Nash functions:

\begin{lem}\label{local}
Let $M\subset\R^m$ be a semialgebraic set and let $f\in{\mathcal S}^{\infty\diam}(M)$. For each $x\in M$ there exists an open semialgebraic neighborhood $V^x\subset\R^m$ of $x$ and a Nash function $F_x\in{\mathcal N}^{\diam}(V^x)$ such that $F_x|_{M\cap V^x}=f|_{M\cap V^x}$.
\end{lem}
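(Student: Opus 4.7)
The plan is to use the semialgebraicity of $f$ to obtain an algebraic equation it satisfies, and then to use the $\mathcal{C}^\infty$-smoothness encoded in the ${\mathcal S}^{\infty\diam}$-structure to identify $f$ locally with a Nash branch of that equation. Concretely, I would first produce an irreducible polynomial $P(\x,\y)\in\R[\x,\y]$ of positive $\y$-degree such that $P(x,f(x))=0$ for every $x\in M$; such a $P$ exists because the graph $\Gamma(f)\subset\R^{m+1}$ has dimension at most $m$ and is therefore contained in a proper algebraic hypersurface. Irreducibility in $\y$ guarantees that $\partial P/\partial\y$ does not vanish identically on $\{P=0\}$.

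Fix $x\in M$. In the regular case $\partial P/\partial\y(x,f(x))\neq 0$, the Nash implicit function theorem produces a Nash function $F_x$ on an open semialgebraic neighborhood $V^x\subset\R^m$ of $x$ satisfying $P(\cdot,F_x)\equiv 0$ and $F_x(x)=f(x)$. By the isolation of continuous roots of $P(u,\cdot)=0$ near the regular value $(x,f(x))$, the continuity of $f$ forces $f|_{V^x\cap M}=F_x|_{V^x\cap M}$; in the bounded case, shrinking $V^x$ further yields $F_x\in{\mathcal N}^*(V^x)$.

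The hard part is the singular case $\partial P/\partial\y(x,f(x))=0$, where one must use the full ${\mathcal S}^{\infty\diam}$-structure to exclude non-smooth branches. Applying Lemma \ref{lips}(iii) for every $r\geq 1$ together with Lemma \ref{ext} yields, for each such $r$, an ${\mathcal S}^{r-1}$ semialgebraic extension of $f$ to some open semialgebraic neighborhood of $x$ in $\R^m$. Meanwhile, the Weierstrass preparation theorem factors $P$ locally at $(x,f(x))$ as a unit times a Weierstrass polynomial in $\y-f(x)$, whose roots describe the analytic branches (possibly ramified) of $\{P=0\}$. By continuity, $f$ coincides on $M$ with exactly one of these branches, and the ${\mathcal S}^r$-Taylor estimates valid for all $r$ force the Taylor expansion of $f$ at $x$ to agree with a convergent power series. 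This rules out branches with fractional Puiseux exponents or other non-smooth singularities, leaving an analytic and algebraic branch, which provides the required Nash extension $F_x$. The principal obstacle is rigorously executing this last step in arbitrary dimension $m$, where one must analyze the local structure of $\{P=0\}$ at the singular point $(x,f(x))$; this may require a careful Puiseux analysis when $m=1$ combined with an inductive reduction, or a resolution argument for the hypersurface $\{P=0\}$.
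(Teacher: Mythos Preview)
Your regular case is fine: once you have an irreducible $P\in\R[\x,\y]$ of positive $\y$-degree with $P|_{\Gamma(f)}=0$ and $\frac{\partial P}{\partial\y}(x,f(x))\neq0$, the Nash implicit function theorem plus continuity of $f$ on $M$ give the local Nash extension. (A small point: when $\dim M<m$, the fact that $\Gamma(f)$ lies in \emph{some} hypersurface does not by itself yield a $P$ of positive $\y$-degree; one needs the extra observation that the projection $\Gamma(f)\to M$ is a bijection, so each irreducible component of the Zariski closure of $\Gamma(f)$ maps generically finitely to its image in $\R^m$.)

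The singular case has a genuine gap. The central difficulty, which your sketch does not confront, is that when $\dim_xM<m$ the semialgebraic jets $(f_\alpha)_{|\alpha|\le r}$ of $f$ are \emph{not uniquely determined by $f$}: different $\mathcal{S}^r$-extensions of $f$ to a neighborhood of $x$ in $\R^m$ give different jets, so there is no canonical ``Taylor expansion of $f$ at $x$'' to match against analytic branches of $\{P=0\}$. Your sentence ``the $\mathcal{S}^r$-Taylor estimates valid for all $r$ force the Taylor expansion of $f$ at $x$ to agree with a convergent power series'' therefore does not make sense as stated. In addition, for $m\ge2$ the roots of a Weierstrass polynomial in $\y$ are only single-valued on sectors or after resolution, and $f|_M$ near $x$ need not sit inside a single such branch; your closing remark that this ``may require'' a Puiseux or resolution argument concedes that you do not have a proof here.

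The paper's proof is structurally different and avoids both problems. It fixes \emph{some} consistent sequence of jets at $x$ and assembles them into a formal series $h_0\in\R[[\x]]$, then takes a Nash stratification $\Delta_1,\dots,\Delta_s$ of the germ $M_x$ with Nash closures $Z_i$ and graph closures $X_i$. A curve-by-curve order computation (using the Taylor estimates only along arcs lying inside $M$, where they are meaningful) shows that $g_i(\x,h_0)\in I(Z_i)\R[[\x]]$ for each generator $g_i$ of $I(X_i)$; this is the correct way to say that $h_0$ is compatible with $\Gamma(f)$ modulo the freedom in directions transverse to $M$. Artin's approximation theorem then replaces $h_0$ by a Nash germ $h$ satisfying the same congruences, and an irreducibility and dimension count gives $h|_{\Delta_i}=f|_{\Delta_i}$ for all $i$. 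The use of Artin approximation, rather than branch selection, is the missing idea.
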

\begin{proof}
Let $f\in{\mathcal S}^{\infty}(M)$ and assume for simplicity $x=0\in M$. For each $r\geq0$ let $F^r:=(f^r_\alpha)_{|\alpha|\leq r}$ be a semialgebraic jet of order $r$ associated to $f$. We may assume: \em $f^ r_\alpha(0)=f_\alpha^k(0)$ for each pair of integers $r\leq k$ and each $\alpha\in \N^n$ with $|\alpha|\leq r$\em. Thus, the definition $f_\alpha(0):=f_{\alpha}^r(0)$ if $|\alpha|=r$ is consistent.

To that end, recall first that $f_0^r=f$ for each $r\geq0$. Next, replace recursively the semialgebraic jet $F_{r+1}$ by $\widetilde{F}_{r+1}:=(\widetilde{f}^{r+1}_\alpha)$ where $\widetilde{f}_\alpha^{r+1}(x):=f_\alpha^{r+1}(x)-f^{r+1}_\alpha(0)+\tilde{f}^r_\alpha(0)$ whenever $|\alpha|\leq r$. Using property \ref{diff2} one shows that $\widetilde{F}_r$ is a semialgebraic jet associated to $f$ as an ${\mathcal S}^r$-function and $\tilde{f}_\alpha^k(0)=\tilde{f}_\alpha^r(0)$ if $k\geq r$ and $|\alpha|\leq r$. 

Consider the formal series 
$$
h_0:=\sum_\alpha\frac{1}{\alpha!}f_\alpha(0)\x^\alpha\in\R[[\x]]
$$ 
and let $M_0$ be the germ at the origin of $M$. For simplicity we identify ${\mathcal N}(\R_0^m)$ with $\R[[\x]]_{\rm alg}$. It is enough to prove: \em $f|_{M_0}$ is the germ of a Nash function on $M_0$\em.

Let $\Delta_1,\ldots,\Delta_s$ be a Nash stratification of $M_0$ such that each restriction $f|_{\Delta_i}$ is a Nash germ. Let $Z_i$ be the Nash closure of the germ $\Delta_i$ and let $X_i$ be the Nash closure of $\Gamma(f|_{\Delta_i})$. It holds $\dim(X_i)=\dim(\Gamma_i)=\dim(\Delta_i)=\dim(Z_i)$ and $X_i$ is an irreducible Nash germ (because it is the Nash closure of the graph of a Nash function germ on the germ at the origin of a Nash manifold).
 
To ease notation write for a while $\Delta:=\Delta_i$, $Z:=Z_i$, $\Gamma:=\Gamma_i$ and $X:=X_i$. Let $g(\x,\y)\in\R[[\x,\y]]_{\rm alg}$ be such that $X=Z(g)$. We claim: $g(\x,h_0(\x))\in I(Z)\R[[\x]]$.

Let ${\mathfrak G}$ be the collection of the germs $\gamma_0$ at the origin of (continuous) semialgebraic curves $\gamma:[0,1)\to\Delta\subset\R^m$ such that $\gamma(0)=0$. We identify each germ $\gamma_0$ with a Puiseux tuple $\R[[\t^*]]^m$. For each $\gamma\in{\mathfrak G}$ define the homomorphism
$$
\gamma^*:\R[[\x]]\to\R[[\t^*]],\ \zeta\mapsto\zeta\circ\gamma.
$$
Using a Nash stratification of $M$ one proves $M_0=\bigcup_{\gamma\in {\mathfrak G}}\text{Im}(\gamma)_0$, so 
$$
I(Z)=\bigcap_{\gamma\in{\mathfrak G}}\ker(\gamma^*)\cap\R[[\x]]_{\rm alg}.
$$
The completion of the local noetherian ring $\R[[\x]]_{\rm alg}$ is $\R[[\x]]$. We have
$$
I(Z)\R[[\x]]=\Big(\bigcap_{\gamma\in{\mathfrak G}}\ker(\gamma^*)\cap\R[[\x]]_{\rm alg}\Big)\R[[\x]]=\bigcap_{\gamma\in{\mathfrak G}}(\ker(\gamma^*)\cap\R[[\x]]_{\rm alg})\R[[\x]]=\bigcap_{\gamma\in{\mathfrak G}}\ker(\gamma^*).
$$ 
Assume $\xi:=g(\x,h_0(\x))\not\in I(Z)\R[[\x]]$. Then there exists $\gamma\in{\mathfrak G}\subset\R[[\t^*]]^m_{\rm alg}$ such that $\xi\notin\ker(\gamma^*)$.
After reparameterizing the variable $\t$ we may assume $\gamma\in\R[[\t]]^m_{\rm alg}$. Let $k\geq1$ be the order of the series $\xi(\gamma)$. Write $h_0=a+b$ where $a:=\sum_{|\alpha|\leq k}\frac{1}{\alpha!}f_\alpha(0)\x^\alpha\in\R[\x]$ has degree $\leq k$ and $b$ has degree $>k$. There exists a power series $s\in\R[[\x,\z_1,\z_2]]$ such that 
\begin{equation}\label{s}
g(\x,\z_1+\z_2)=g(\x,\z_1)+\z_2s(\x,\z_1,\z_2). 
\end{equation}
Consequently,
$$
g(\x,h_0)=g(\x,a+b)=g(\x,a)+bs(\x,a,b).
$$
Thus, 
$$
\xi(\gamma)=g(\gamma,a(\gamma))+bs(\gamma,a(\gamma),b(\gamma)).
$$
As $\omega(\xi(\gamma))=k$ and $\omega(bs(\x,a,b))\geq k+1$, we deduce $\omega(g(\gamma,a(\gamma)))=k$. Let us analyze next the Puiseux series $f(\gamma)\in\R[[\t^*]]$. We have
$$
|f(x)-a(x)|=\Big|f(x)-\sum_{|\alpha|\leq k}\frac{1}{\alpha!}f_{\alpha}(0)x^\alpha\Big|=o(\|x\|^k)
$$
for $x\in M$ when $x\to 0$. Thus,
$$
\lim_{t\to 0^+}\Big(\frac{f(\gamma(t))-a(\gamma(t))}{t^k}\Big)=0,
$$
so $f(\gamma)-a(\gamma)\in\R[[\t^*]]$ has order strictly greater than $k$. By \eqref{s} we deduce
$$
\omega(g(\gamma,f(\gamma))-g(\gamma,a(\gamma)))>k,
$$ 
so $\omega(g((\gamma),f(\gamma)))=k$, which is a contradiction because $g(\gamma(t),f(\gamma(t)))=0$ for each $t\in [0,1)$. Consequently, $g(\x,h_0(\x))\in I(Z)\R[[\x]]$, as claimed.

Let $f_{i1},\ldots,f_{ip}\in\R[[\x]]_{\rm alg}$ be a system of generators of $I(Z_i)$ and let $g_i\in\R[[\x,\y]]_{\rm alg}$ be such that $Z(g_i)=X_i$. We know that $g_i(\x,h_0(\x))\in I(Z_i)\R[[x]]$. Thus, there exist $a_{i1},\ldots,a_{ip}\in\R[[\x]]$ such that $g_i(\x,h_0)=a_{i1}f_{i1}+\cdots+a_{ip}f_{im}$. By Artin's approximation theorem \cite[Thm.8.3.1]{bcr} there exist Nash functions $h,\widetilde{a}_{i1},\ldots,\widetilde{a}_{ip}\in\R[[\x]]_{\rm alg}$ such that $g_i(\x,h)=\widetilde{a}_{i1}f_{i1}+\cdots+\widetilde{a}_{iq}f_{iq}$ for $i=1,\ldots,s$. Consequently, $\Gamma(h|_{Z_i})\subset X_i$ for each $i=1,\ldots,r$. As $Z_i$ is an irreducible Nash set and $h$ is a Nash function germ, $\Gamma(h|_{Z_i})$ is an irreducible Nash set of dimension $\dim(Z_i)=\dim(X_i)$. Thus, $X_i=\Gamma(h|_{Z_i})$ for $i=1,\ldots,r$, so $h|_{\Delta_i}=f|_{\Delta_i}$ for $i=1,\ldots,r$. Hence, $h|_{M_0}=f|_{M_0}$, as required. 
\end{proof}

The following is a direct consequence of Lemma \ref{local} and Serre's coherence condition (see \cite[\S 2.B]{bfr}).

\begin{cor}\label{cohNash}
Let $U\subset\R^m$ be an open semialgebraic set and let $X\subset U$ be a coherent Nash subset of $U$. Then $\mathcal{N}^\diam(X)=\mathcal{S}^{\infty\diam}(X)$. 
\end{cor}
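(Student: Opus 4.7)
The inclusion ${\mathcal N}^\diam(X)\subseteq{\mathcal S}^{\infty\diam}(X)$ is immediate from the definitions, so the content is in the reverse inclusion. I would start by fixing $f\in{\mathcal S}^{\infty\diam}(X)$ and applying Lemma \ref{local} to obtain, for every $x\in X$, an open semialgebraic neighborhood $V^x\subset U$ of $x$ together with a Nash function $F_x\in{\mathcal N}^\diam(V^x)$ satisfying $F_x|_{X\cap V^x}=f|_{X\cap V^x}$. Note that the germ $(F_x)_z\in{\mathcal N}(U_z)$ at any $z\in X\cap V^x$ is determined by $f$ modulo $I(X_z)$.

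Next I would use Serre's coherence hypothesis in the form recalled in the introduction and developed in \cite[\S 2.B]{bfr}: the ideal sheaf ${\mathcal I}_X$ of Nash germs vanishing on $X$ is a coherent sheaf of ${\mathcal N}_U$-modules and at every $z\in X$ one has $I(X_z)=I(X){\mathcal N}(U_z)$. Consequently, on each overlap $V^x\cap V^y$ the Nash function $F_x-F_y$ vanishes identically on $X\cap V^x\cap V^y$ and, after shrinking the $V^x$ if necessary, represents a section of ${\mathcal I}_X$ over $V^x\cap V^y$. In other words $\{F_x-F_y\}_{x,y}$ is a Čech $1$-cocycle with values in the coherent sheaf ${\mathcal I}_X$ on the open semialgebraic neighborhood $W:=\bigcup_{x\in X}V^x$ of $X$ in $U$.

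The main step, and the only nontrivial point of the plan, is to show that this cocycle is a coboundary. This is exactly a Cartan-theorem-B type vanishing for coherent sheaves of Nash modules on the affine Nash manifold $W$, which is the heart of the coherence theory assembled in \cite[\S 2.B]{bfr}. Granting it, there exist Nash functions $G_x\in{\mathcal N}(W\cap V^x)$ such that $F_x-F_y=G_x-G_y$ on overlaps. Then $F_x-G_x$ agrees with $F_y-G_y$ on $V^x\cap V^y$ and these local pieces glue to a global Nash function $F\in{\mathcal N}(W)$ with $F|_X=f$, so $f\in{\mathcal N}(X)$.

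For the bounded case $\diam={*}$, once the extension $F\in{\mathcal N}(W)$ is produced, I would shrink $W$ to $W':=\{y\in W:|F(y)|<\sup_X|f|+1\}$, an open semialgebraic neighborhood of $X$ on which $F$ is bounded; hence $f\in{\mathcal N}^*(X)$. The hard part is purely the Cartan-B step, and this is precisely why coherence (not merely the property of being a Nash set) is needed: Example \ref{cex:nash}(ii) already exhibits a function with Nash local extensions at every point of Cartan's umbrella that cannot be globalized, a phenomenon which reflects the failure of the $H^1$-vanishing for the non-coherent ideal sheaf at the umbrella's singular vertex.
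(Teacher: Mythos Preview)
Your proposal is correct and is precisely the argument the paper has in mind: the authors give no proof beyond the sentence ``direct consequence of Lemma~\ref{local} and Serre's coherence condition (see \cite[\S2.B]{bfr})'', and your write-up unpacks exactly that---local Nash extensions from Lemma~\ref{local}, differences forming a \v{C}ech $1$-cocycle in the coherent ideal sheaf ${\mathcal I}_X$, and gluing via the Cartan~B vanishing for coherent Nash sheaves on the affine Nash manifold $W$. The only cosmetic point is that in the Nash category one works with \emph{finite} open semialgebraic covers, so before invoking $H^1(W,{\mathcal I}_X)=0$ you should pass from the pointwise family $\{V^x\}_{x\in X}$ to a finite subcover (this is standard and is part of the machinery in \cite[\S2.B]{bfr}); your bounded-case shrinking is fine.
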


We are now ready to prove Theorem \ref{nash} and Proposition \ref{nash2}, but beginning with the latter.

\begin{proof}[Proof of Proposition \em \ref{nash2}]
As $X:=\cl(M)\cap U$ is closed in $U$ and it is a Nash set, then it is by \cite[(2.12)]{fg2} a Nash subset of $U$. Thus, if $V\subset U$ is an open semialgebraic neighborhood of $M$, then $X\cap V$ is a Nash subset of $V$. By Lemma \ref{f:rcl2} we know that the real closure of ${\mathcal N}^{\diam}(X\cap V)$ is ${\mathcal S}^{0\diam}(X\cap V)$.

We claim: \em ${\mathcal S}^{0\diam}(M)$ is the real closure of ${\mathcal N}^{\diam}(M)$\em. 

Let ${\mathcal V}$ be the collection of open semialgebraic neighborhoods of $M$ in $U$. For each $V\in{\mathcal V}$ the restriction map ${\mathcal N}^{\diam}(X\cap V)\to{\mathcal N}^{\diam}(M)$ is injective because $\cl(M)\cap V=X\cap V$. For each $f\in{\mathcal N}(M)$ there exists by definition $V\in{\mathcal V}$ and a Nash function $F\in{\mathcal N}(V)$ such that $F|_V=f$. Thus, ${\mathcal N}^{\diam}(M)=\displaystyle\lim_{\substack{\longrightarrow\\V\in{\mathcal V}}}{\mathcal N}^{\diam}(X\cap V)$. As we have proved above, the real closure of ${\mathcal N}^{\diam}(X\cap V)$ is ${\mathcal S}^{0\diam}(X\cap V)$. By Lemma \ref{directlimit} the real closure of ${\mathcal N}^{\diam}(M)$ is $\displaystyle\lim_{\substack{\longrightarrow\\V\in{\mathcal V}}}{\mathcal S}^{0\diam}(X\cap V)$. Again the map ${\mathcal S}^{0\diam}(X\cap V)\to{\mathcal S}^{0\diam}(M)$ is injective for each $V\in{\mathcal V}$ because $\cl(M)\cap V=X\cap V$. By the definition of ${\mathcal S}^{0\diam}$-function it follows that ${\mathcal S}^0(M)=\displaystyle\lim_{\substack{\longrightarrow\\V\in{\mathcal V}}}{\mathcal S}^0(X\cap V)$. Consequently, ${\mathcal S}^{0\diam}(M)$ is the real closure of ${\mathcal N}^{\diam}(M)$.

Finally, ${\mathcal N}^{\diam}(M)\subset{\mathcal S}^{\infty\diam}(M)\subset{\mathcal S}^{0\diam}(M)$, so ${\mathcal S}^{0\diam}(M)$ is also the real closure of ${\mathcal S}^{\infty\diam}(M)$, as required.
\end{proof}

\begin{proof}[Proof of Theorem \em \ref{nash}]
Assume there exists $x\in M$ such that the germ $\cl(M)_x$ is not a germ of a Nash set. Let $\ol{M}^{\rm an}_x$ be the smallest Nash set germ that contains $M_x$. By the curve selection lemma there exists a Nash arc $\gamma:[0,1]\to\R^m$ such that $\gamma(0)=x$ and $\gamma((0,1])_x\subset\ol{M}^{\rm an}_x\setminus\cl(M)_x$. Let $f\in{\mathcal S}^{\infty\diam}(M)$. By Lemma \ref{local} there exist an open semialgebraic neighborhood $V^x\subset\R^m$ of $x$ and a Nash extension $F_x$ of $f|_{V^x\cap M}$ to $V^x$. Note that $F_x|_{\ol{M}^{\rm an}_x}$ is completely determined by $f$. Thus, we get a well-defined homomorphism 
$$
\varphi:{\mathcal S}^{\infty\diam}(M)\to{\mathcal S}^{\infty\diam}(\im(\gamma)_x),\ f\mapsto F_x|_{\im(\gamma)_x}.
$$
Pick an open semialgebraic neighborhood $U$ of $\cl(M)$ and let us consider the restriction homomorphism $\psi:{\mathcal S}^{\infty\diam}(U)\to{\mathcal S}^{\infty\diam}(M),\ f\mapsto f|_M$. By Proposition \ref{nash2} the real closures of ${\mathcal S}^{\infty\diam}(U)$ and ${\mathcal S}^{\infty\diam}(\im(\gamma)_x)$ are ${\mathcal S}^{0\diam}(U)$ and ${\mathcal S}^{0\diam}(\im(\gamma)_x)$. By the universal property of real closure we have the commutative diagram:
$$
\xymatrix{
{\mathcal S}^{0\diam}(U)\ar[r]^{\ol{\psi}}\ar[rd]&{\mathcal S}^{0\diam}(M)\ar[d]^{\ol{\varphi}}\\
&{\mathcal S}^{0\diam}(\im(\gamma)_x)}
$$ 
where $\ol{\varphi}$ and $\ol{\psi}$ are the restriction homomorphisms. Pick $f\in{\mathcal S}^{0\diam}(U)$ whose zero set is $\cl(M)$. Thus, $f|_{\im(\gamma)_x}=(\ol{\varphi}\circ\ol{\psi})(f)=\ol{\varphi}(f|_M)=\ol{\varphi}(0)=0$, so $\im(\gamma)_x\subset Z(f)_x=\cl(M)_x$, which is a contradiction.
\end{proof}

\bibliographystyle{amsalpha}

\begin{thebibliography}{vdDM}

\bibitem[AM]{am} M.F. Atiyah, I.G. Macdonald: Introduction to commutative algebra. Addison-Wesley Publishing Co., Reading, Mass.-London-Don Mills, Ont. (1969).

\bibitem[ATh]{at} M. Aschenbrenner, A. Thamrongthanyalak: Whitney's extension problem in o-minimal structures. \em Rev. Mat. Iberoam. \em (2019), to appear. {\tt http://www.math.ucla.edu/$\sim$matthias/pdf/Whitney.pdf}

\bibitem[B]{b} R. Bkouche: Couples spectraux et faisceaux associ\'es. Applications aux anneaux de fonctions. {\em Bull. Soc. Math. France} {\bf98} (1970), 253--295.

\bibitem[BCR]{bcr} J. Bochnak, M. Coste, M.-F. Roy: Real algebraic geometry. {\em Ergeb. Math.} {\bf 36}, Springer-Verlag, Berlin (1998).

\bibitem[BFR]{bfr} E. Baro, J.F. Fernando, J.M. Ruiz: Approximation on Nash sets with monomial singularities. \em Adv. Math. \em {\bf262} (2014), 59--114.

\bibitem[CC]{cc} M. Carral, M. Coste: Normal spectral spaces and their dimensions. {\em J. Pure Appl. Algebra} {\bf30} (1983) 227--235.

\bibitem[CD1]{cd1} G.L. Cherlin, M.A. Dickmann: Real closed rings. I. Residue rings of rings of continuous functions. {\em Fund. Math.} {\bf126} (1986), no. 2, 147--183.

\bibitem[CD2]{cd2} G.L. Cherlin, M.A. Dickmann: Real closed rings. II. Model theory. {\em Ann. Pure Appl. Logic} {\bf25} (1983), no. 3, 213--231.

\bibitem[DK1]{dk1} H. Delfs, M. Knebusch: Separation, Retractions and homotopy extension in semialgebraic spaces. {\em Pacific J. Math.} {\bf114} (1984), no. 1, 47--71.

\bibitem[DK2]{dk2} H. Delfs, M. Knebusch: Locally semialgebraic spaces. {\em Lecture Notes in Mathematics}, {\bf1173}. Springer-Verlag, Berlin (1985).

\bibitem[E]{e} G. Efroymson: The extension theorem for Nash functions. Real algebraic geometry and quadratic forms (Rennes, 1981), pp. 343--357, {\em Lecture Notes in Math.}, {\bf959}, Springer, Berlin-New York (1982).

\bibitem[F1]{feff} C. Fefferman: Whitney's Extension Problem for $C^m$. {\em Ann. of Math.} {\bf 164} (2006), no. 1, 313--359.

\bibitem[F2]{feff2} C. Fefferman: Whitney's extension problems and interpolation of data. {\em Bull. Amer. Math. Soc.} {\bf46} (2009), no. 2, 207--220.

\bibitem[Fe1]{fe1} J.F. Fernando: On chains of prime ideals in rings of semialgebraic functions. {\em Q. J. Math.} {\bf65} (2014), no. 3, 893--930.

\bibitem[Fe2]{fe2} J.F. Fernando: On the substitution theorem for rings of semialgebraic functions. {\em J. Inst. Math. Jussieu} {\bf14} (2015), no. 4, 857--894.

\bibitem[Fe3]{fe3} J.F. Fernando: On the size of the fibers of spectral maps induced by semialgebraic embeddings. {\em Math. Nachr.} {\bf289} (2016), no. 14-15, 1760--1791.

\bibitem[FG1]{fg1} J.F. Fernando, J.M. Gamboa: On open and closed morphisms between semialgebraic sets. {\em Proc. Amer. Math. Soc.} {\bf140} (2012), no. 4, 1207--1219.

\bibitem[FG2]{fg2} J.F. Fernando, J.M. Gamboa: On the irreducible components of a semialgebraic set. {\em Internat. J. Math.} {\bf23} (2012), no. 4, 1250031

\bibitem[FG3]{fg3} J.F. Fernando, J.M. Gamboa: On the spectra of rings of semialgebraic functions. {\em Collect. Math.} {\bf63} (2012), no. 3, 299--331.

\bibitem[FG4]{fg4} J.F. Fernando, J.M. Gamboa: On the semialgebraic Stone--\v{C}ech compactification of a semialgebraic set. \em Trans. Amer. Math. Soc \em {\bf 364} (2012), no. 7, 3479--3511.

\bibitem[FG5]{fg5} J.F. Fernando, J.M. Gamboa: On \L ojasiewicz's inequality and the Nullstellensatz for rings of semialgebraic functions. {\em J. Algebra} {\bf399} (2014), 475--488.

\bibitem[FG6]{fg6} J.F. Fernando, J.M. Gamboa: On the Krull dimension of rings of continuous semialgebraic functions. {\em Rev. Mat. Iberoam.} {\bf31} (2015), no. 3, 753--756.

\bibitem[FG7]{fg7} J.F. Fernando, J.M. Gamboa: On the remainder of the semialgebraic Stone-Cech compactification of a semialgebraic set. {\em J. Pure Appl. Algebra} {\bf222} (2018), no. 1, 1--18.

\bibitem[FGR]{fgr} J.F. Fernando, J.M. Gamboa, J.M. Ruiz: Finiteness problems on Nash manifolds and Nash sets. \em J. Eur. Math. Soc. \em (JEMS) {\bf 16} (2014) no. 3, 537--570.

\bibitem[FGh]{fgh} J.F. Fernando, R. Ghiloni: On the extension property for global analytic sets and Nash sets. \em Preprint RAAG\em. (2019).

\bibitem[KP1]{kp1} K. Kurdyka, W. Paw\l{}ucki: Subanalytic version of Whitney's extension theorem. \em Studia Math. \em {\bf 124} (1997), no. 3, 269--280.

\bibitem[KP2]{kp2} K. Kurdyka, W. Paw\l{}ucki: O-minimal version of Whitney's extension theorem. {\em Studia Math.} {\bf224} (2014), no. 1, 81--96.

\bibitem[HJ]{hj} M. Henriksen, M. Jerison: The space of minimal prime ideals of a commutative ring. \em Trans. Amer. Math. Soc. \em {\bf 115} (1965), 110--130.

\bibitem[M]{m} B. Malgrange: Ideals of differentiable functions. \em Tata Institute of Fundamental Research Studies in Mathematics\em, no. 3. Tata Institute of Fundamental Research, Bombay; Oxford University Press, London (1967).

\bibitem[PS]{ps} A. Prestel, N. Schwartz: Model theory of real closed rings. Valuation theory and its applications, Vol. I (Saskatoon, SK, 1999), 261--290, \em Fields Inst. Commun.\em, {\bf32}, Amer. Math. Soc., Providence, RI, 2002. 

\bibitem[S1]{s1} N. Schwartz: Real closed rings. \em Habilitationsschrift\em, M\"unchen (1984).

\bibitem[S2]{s2} N. Schwartz: Real closed rings. Algebra and order (Luminy-Marseille, 1984), 175--194, {\em Res. Exp. Math.}, {\bf14}, Heldermann, Berlin (1986).

\bibitem[S3]{s3} N. Schwartz: The Basic Theory of Real Closed Spaces. Regensburger Math. Schriften, {\bf15}, Fakult\"at f\"ur Mathematik der Universit\"at, Regensburg (1987).

\bibitem[S4]{s4} N. Schwartz: The basic theory of real closed spaces. {\em Mem. Amer. Math. Soc.} {\bf77} (1989), no. 397.

\bibitem[S5]{s5} N. Schwartz: Rings of continuous functions as real closed rings. Ordered algebraic structures (Cura\c{c}ao, 1995), 277--313, Kluwer Acad. Publ., Dordrecht (1997).

\bibitem[S6]{s6} N. Schwartz: Epimorphic extensions and Pr\"ufer extensions of partially ordered rings. {\em Manuscripta Math.} {\bf102} (2000), 347--381.

\bibitem[S7]{s7} N. Schwartz: Convex Extensions of Partially Ordered Rings. \em A series of lectures given at the conference ``G\'eom\'etrie alg\'ebrique et analytique r\'eelle''\em, Kenitra, Morocco, September 13--20, 2004.

\bibitem[SM]{sm} N. Schwartz, J.J. Madden: Semi-algebraic function rings and reflectors of partially ordered rings. {\em Lecture Notes in Mathematics}, {\bf1712}. Springer-Verlag, Berlin (1999).

\bibitem[ST]{scht} N. Schwartz, M. Tressl: Elementary properties of minimal and maximal points in Zariski spectra. {\em J. Algebra} {\bf323} (2010), no. 3, 698--728.

\bibitem[Sh]{sh} M. Shiota: Nash manifolds. \em Lecture Notes in Math.\em, {\bf 1269}. Springer-Verlag, Berlin (1987).

\bibitem[Th]{th} A. Thamrongthanyalak: Whitney's Extension Theorem in o-minimal structures, {\em Ann. Polon. Math.} {\bf119} (2017), no. 1, 49--67. 

\bibitem[T1]{t1} M. Tressl: The real spectrum of continuous definable functions in o-minimal structures. {\em S\'eminaire de Structures Alg\'ebriques Ordonn\'ees} 1997-1998, {\bf68}, Mars 1999, p. 1--15.

\bibitem[T2]{t2} M. Tressl: Super real closed rings. {\em Fund. Math.} {\bf194} (2007), no. 2, 121--177.

\bibitem[T3]{t3} M. Tressl: Bounded super real closed rings. Logic Colloquium 2007, 220--237, \em Lect. Notes Log.\em, {\bf35}, Assoc. Symbol. Logic, La Jolla, CA, (2010).

\bibitem[vdDM]{vdD} L. Van den Dries, C. Miller: Geometrical categories and o-minimal structures. {\em Duke Math. J.} {\bf 84} (1996), 497--539.

\end{thebibliography}

\end{document}